\numberwithin{equation}{section}
\DeclareMathOperator{\R}{\mathbb{R}} 
\DeclareMathOperator{\N}{\mathbb{N}} 
\newcommand{\p}{\mathbb{P}} 
\newcommand{\E}{\mathbb{E}} 
\newcommand{\eps}{\varepsilon} 
\DeclareMathOperator{\Var}{Var} 
\DeclareMathOperator{\Cov}{Cov} 
\newcommand{\TV}{\mathrm{TV}} 
\DeclareMathOperator{\poly}{poly} 
\newcommand{\Beta}{\mathrm{Beta}} 
\newcommand{\Dir}{\mathrm{Dir}} 
\DeclareMathOperator{\sgn}{sgn} 
\newcommand{\wh}{\widehat}  
\newcommand{\wt}{\widetilde} 
\newcommand{\PA}{\mathrm{PA}} 
\newcommand{\UA}{\mathrm{UA}} 
\newcommand{\cCG}{\mathcal{CG}} 
\newcommand{\CPA}{\mathrm{CPA}} 
\newcommand{\CUA}{\mathrm{CUA}} 
\newcommand{\Range}{\mathrm{Range}} 
\newcommand{\tcorr}{t_{\star}} 
\def\cA{{\mathcal A}}
\def\cB{{\mathcal B}}
\def\cC{{\mathcal C}}
\def\cD{{\mathcal D}}
\def\cE{{\mathcal E}}
\def\cG{{\mathcal G}}
\def\cH{{\mathcal H}}
\def\cN{{\mathcal N}}
\newtheorem{theorem}{Theorem}[section]
\newtheorem{lemma}[theorem]{Lemma}
\newtheorem{corollary}[theorem]{Corollary}
\newtheorem{definition}[theorem]{Definition}
\begin{document}

\title{Correlated randomly growing graphs}
\author{
	Mikl\'os Z.\ R\'acz
	\thanks{Princeton University; \texttt{mracz@princeton.edu}. Research supported in part by NSF grant DMS 1811724 and by a Princeton SEAS Innovation Award.} 
	\and
	Anirudh Sridhar
	\thanks{Princeton University; \texttt{anirudhs@princeton.edu}. Research supported in part by NSF grant DMS 1811724.}
}
\date{\today}

\maketitle


\begin{abstract}
We introduce a new model of correlated randomly growing graphs and study the fundamental questions of detecting correlation and estimating aspects of the correlated structure. The model is simple and starts with any model of randomly growing graphs, such as uniform attachment (UA) or preferential attachment (PA). Given such a model, a pair of graphs $(G_1, G_2)$ is grown in two stages: until time $t_{\star}$ they are grown together (i.e., $G_1 = G_2$), after which they grow independently according to the underlying growth model. 

We show that whenever the seed graph has an influence in the underlying graph growth model---this has been shown for PA and UA trees and is conjectured to hold broadly---then correlation can be detected in this model, even if the graphs are grown together for just a \emph{single time step}. We also give a general sufficient condition (which holds for PA and UA trees) under which detection is possible with probability going to $1$ as $t_{\star} \to \infty$. Finally, we show for PA and UA trees that the amount of correlation, measured by $t_{\star}$, can be estimated with vanishing relative error as $t_{\star} \to \infty$. 
\end{abstract}


\section{Introduction} \label{sec:intro} 

Understanding computational and inference tasks on networks is of paramount importance to solving problems in a variety of fields, including biology, sociology, and machine learning. While many of these tasks are NP-hard in the worst case, most graphs occurring in practice are not worst case, motivating the study of these problems under probabilistic generative models. Increasingly, these problems involve not just a single network but multiple networks that are correlated, and often the crux of the problem lies in understanding how the networks are correlated. Here we introduce a new model of \emph{correlated randomly growing graphs} and study the fundamental questions of detecting correlation and estimating aspects of the correlated structure.

The model is simple and starts with any model of randomly growing graphs. 
A model of randomly growing graphs is specified by a \emph{seed graph} $S$ and a (probabilistic) \emph{growth rule} $\cG$ (also referred to as an \emph{attachment rule}). 
We say that $\left\{ G_{t} \right\}_{t \geq \left| S \right|}$ is a sequence of randomly growing graphs with seed $S$ (with $\left| S \right|$ vertices) and growth rule $\cG$, 
if the following two things hold. 
First, $G_{\left| S \right|} = S$. 
Subsequently, the sequence of graphs is defined inductively using $\cG$: 
given $G_{t}$, the graph $G_{t+1}$ is formed from $G_{t}$ by adding a single vertex that is attached to some of the vertices in $G_{t}$, chosen according to the attachment rule $\cG$. 
We write $G_{n} \sim \cG \left( n, S \right)$ for an $n$-vertex graph generated in this way; see Figure~\ref{fig:schematic} for an illustration.

\begin{figure}[t]
\centering
\includegraphics[width=\textwidth]{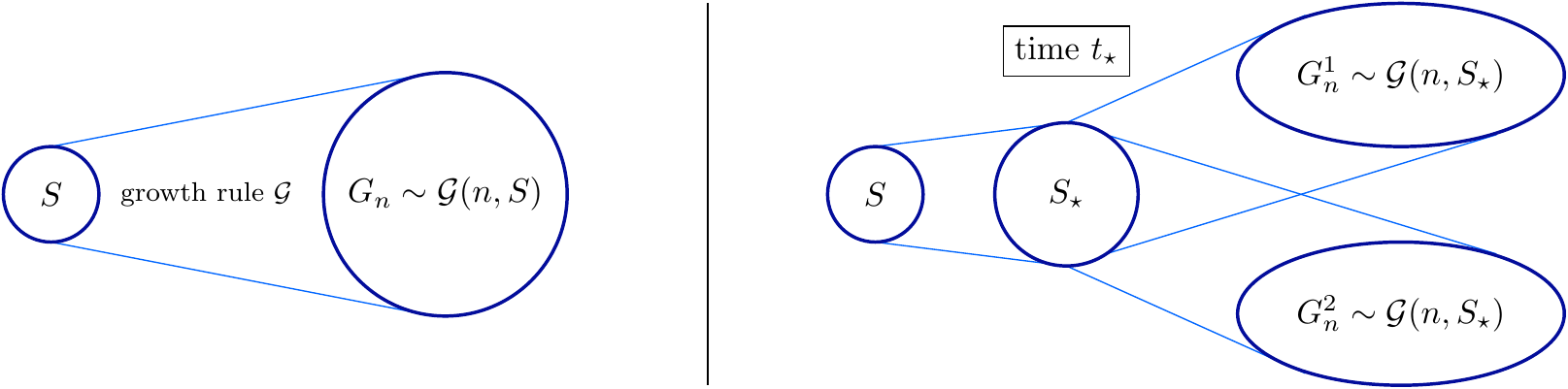}
\caption{Schematic illustrations of the models studied in this paper. \emph{Left:} a randomly growing graph, started from seed $S$ and growing according to growth rule $\cG$. \emph{Right:} two correlated randomly growing graphs, started from seed $S$, grown together 
until time $\tcorr$, and then growing independently.}
\label{fig:schematic}
\end{figure}

For instance, 
an attachment rule might involve a positive integer $m$ 
and the new vertex attaching to $m$ existing vertices chosen i.i.d.\ according to some distribution on the existing vertices. 
Canonical examples include uniform attachment (UA)~\cite{drmota2009random}, 
where each existing vertex is chosen with equal probability, 
and preferential attachment (PA)~\cite{Mah92,BA99,BRST01}, 
where each existing vertex is chosen with probability proportional to its degree. 
The case $m = 1$ corresponds to randomly growing trees.  
We write $\UA \left( n, S \right)$ for a UA tree on $n$ vertices started from the seed tree $S$, 
and similarly $\PA \left( n, S \right)$ for a PA tree on $n$ vertices started from $S$.

We are now ready to introduce the new model of \emph{correlated} randomly growing graphs. To keep things simple, we focus on the setting of two correlated graphs. In addition to a seed graph~$S$ and a growth rule $\cG$, the model takes an additional parameter $\tcorr$, which is a positive integer satisfying $\tcorr \geq \left|S \right|$. 
The model is simple: 
the two graphs $G_{t}^{1}$ and $G_{t}^{2}$ grow together until time~$\tcorr$, 
after which they grow independently. 
More precisely, the distribution of the sequence of 
the pair of graphs $\left\{ \left( G_{t}^{1}, G_{t}^{2} \right) \right\}_{t \geq \left| S \right|}$ is defined as follows. 
\begin{itemize}
\item Initially, the two graphs grow together: 
for $\left| S \right| \leq t \leq \tcorr$ we have that 
$G_{t}^{1} = G_{t}^{2} =: G_{t}$ and $G_{t} \sim \cG \left( t, S \right)$. 
\item Subsequently, the two graphs grow independently: 
conditioned on $G_{\tcorr}$, 
the two sequences of graphs $\left\{ G_{t}^{1} \right\}_{t \geq \tcorr}$ and $\left\{ G_{t}^{2} \right\}_{t \geq \tcorr}$ are independent randomly growing graphs, 
both starting from the graph $G_{\tcorr}$ and growing according to $\cG$. 
\end{itemize} 
This can model, for instance, the citation networks~\cite{redner1998popular} of two scientific fields which initially shared common beginnings but then grew apart. 
We write $\left( G_{n}^{1}, G_{n}^{2} \right) \sim \cCG \left( n, \tcorr, S \right)$ 
for two $n$-vertex graphs $G_{n}^{1}$ and $G_{n}^{2}$ generated according to this model; see Figures~\ref{fig:schematic} and~\ref{fig:correlated_trees_illustration} for illustrations. 
We also write $\CPA \left( n, \tcorr, S \right)$ and $\CUA \left( n, \tcorr, S \right)$ 
for correlated~PA trees and correlated UA trees, respectively. 
To the best of our knowledge, this model of correlated randomly growing graphs has not been studied before; see Section~\ref{sec:related} for discussion of related work.

\begin{figure}[h!]
\centering
\begin{subfigure}{0.45\textwidth}
\centering
\includegraphics[width=0.68\textwidth]{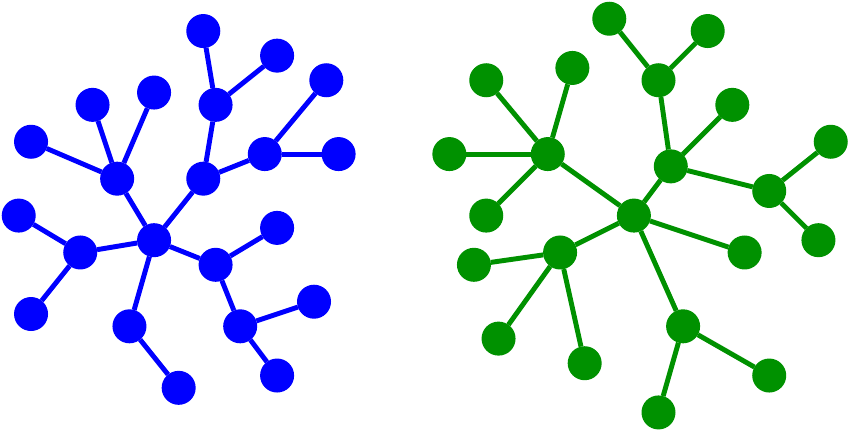}
\caption{Independent trees: $(T_n^1,T_n^2) \sim \mathcal{G}(n,S)^{\otimes 2}$.}
\end{subfigure}
\begin{subfigure}{0.45\textwidth}
\centering
\includegraphics[width=0.68\textwidth]{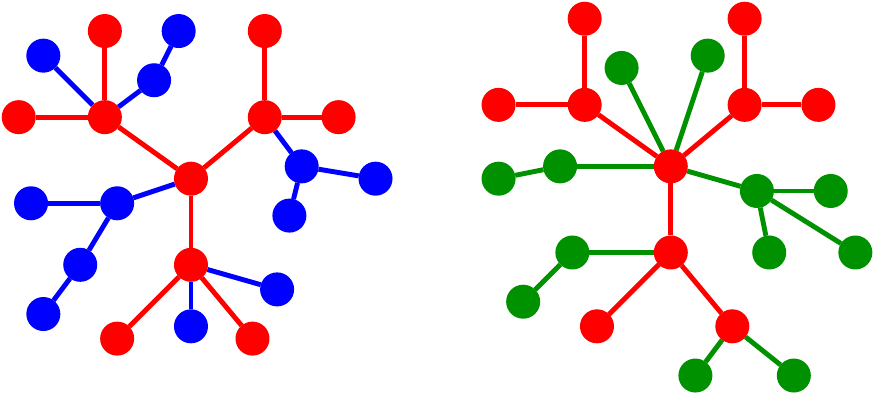}
\caption{Correlated trees: $(T_n^1,T_n^2) \sim \cCG(n,\tcorr,S)$ for $\tcorr = 10$, shared structure in red.}
\end{subfigure}
\caption{Differences between independent trees and correlated trees. 
Here $n = 22$ and $S$ is the unique tree on 3 vertices.}
\label{fig:correlated_trees_illustration}
\end{figure}

This model of correlation satisfies the natural property that the marginal processes are still randomly growing graphs with seed $S$ and rule $\cG$. 
That is, if 
$\left( G_{n}^{1}, G_{n}^{2} \right) \sim \cCG \left( n, \tcorr, S \right)$, 
then 
$G_{n}^{1} \sim \cG \left( n, S \right)$ and $G_{n}^{2} \sim \cG \left( n, S \right)$. 
Also, if $\tcorr = \left| S \right|$, 
then $\left\{ G_{t}^{1} \right\}_{t \geq \left| S \right|}$ and $\left\{ G_{t}^{2} \right\}_{t \geq \left| S \right|}$ 
are independent; 
we then write $\left( G_{n}^{1}, G_{n}^{2} \right) \sim \cG \left( n, S \right)^{\otimes 2}$ to emphasize the independence. 
Thus we see that $\tcorr$ (more precisely, $\tcorr - \left| S \right|$) explicitly measures the amount of correlation among the two graphs. 

\subsection{Questions: detection and estimation} \label{sec:questions} 

We study the fundamental questions of detecting correlation and estimating aspects of the correlated structure 
in the model of correlated randomly growing graphs introduced above. 

\textbf{Detection.} 
Given two (unlabeled) $n$-vertex graphs, $G_{n}^{1}$ and $G_{n}^{2}$, can we detect whether they are correlated or not? 
This question can be phrased as a simple hypothesis testing problem. 
Under the null hypothesis $H_{0}$, the two graphs are independent: 
$\left( G_{n}^{1}, G_{n}^{2} \right) \sim \cG \left( n, S \right)^{\otimes 2}$. 
Under the alternative hypothesis, denoted $H_{\tcorr}$, 
the two graphs are correlated, with a shared history until time~$\tcorr$: 
$\left( G_{n}^{1}, G_{n}^{2} \right) \sim \cCG \left( n, \tcorr, S \right)$. 
In brief: 
\begin{equation}\label{eq:hyp_test}
H_{0} : \left( G_{n}^{1}, G_{n}^{2} \right) \sim \cG \left( n, S \right)^{\otimes 2}, 
\qquad \qquad 
H_{\tcorr} : \left( G_{n}^{1}, G_{n}^{2} \right) \sim \cCG \left( n, \tcorr, S \right).
\end{equation} 
Note that we only observe a \emph{snapshot} of the two graphs at time $n$, 
we do not observe their history leading up to this snapshot. 
Is there a test that can distinguish between the two hypotheses with asymptotically (in $n$) non-negligible power? 
Under what circumstances can we distinguish with probability close to $1$? 
Studying these questions is equivalent to understanding the total variation distance between $\cG \left( n, S \right)^{\otimes 2}$ and $\cCG \left( n, \tcorr, S \right)$; 
recall that the total variation distance between two probability measures $P$ and~$Q$ is defined as 
$\TV \left( P, Q \right) := \frac{1}{2} \left\| P - Q \right\|_{1} = \sup_{A} \left| P(A) - Q(A) \right|$. 
We are particularly interested in the limit as $n \to \infty$: 
\begin{equation}\label{eq:limitingTV}
\lim_{n \to \infty} \TV \left( \cCG \left( n, \tcorr, S \right), \cG \left( n, S \right)^{\otimes 2} \right),
\end{equation}
a limit which is well-defined, 
because this total variation distance is non-increasing in $n$ (since one can simulate the future evolution of the process) and nonnegative. 
There exists a test with asymptotically non-negligible power for the hypothesis testing problem in~\eqref{eq:hyp_test} if and only if the quantity in~\eqref{eq:limitingTV} is positive. 

\textbf{Estimation.} If detection is possible, the natural next questions concern estimation. Is it possible to estimate the amount of correlation between two correlated randomly growing graphs? Is it possible to estimate the common shared subgraph? 
Formally, suppose that 
$\left( G_{n}^{1}, G_{n}^{2} \right) \sim \cCG \left( n, \tcorr, S \right)$, 
but $\tcorr$ is unknown. 
How well can we estimate $\tcorr$? 
How well can we estimate the shared subgraph $G_{\tcorr}$?

\subsection{Summary of results and methods} \label{sec:results} 

Our results concern the detection and estimation questions discussed in Section~\ref{sec:questions}, and can be summarized as follows. 

\begin{itemize}
\item \textbf{Detecting correlation whenever the seed has an influence.} 
We show that there exists a test with asymptotically (in $n$) non-negligible power for the hypothesis testing problem in~\eqref{eq:hyp_test} whenever the seed graph $S$ has an influence on the randomly growing graph $\cG \left( n, S \right)$ (in a sense to be made precise). 
This latter property has been shown for PA trees~\cite{BMR15,CDKM15} and UA trees~\cite{BEMR17}---and is conjectured to hold more broadly---which implies that detecting correlation is possible for these models. 
Remarkably, the results show that correlation can be detected whenever $\tcorr > \left|S \right|$, that is, even if the graphs are grown together for just a \emph{single time step}. 
 
\item \textbf{Detecting correlation with probability going to $1$ as $\tcorr \to \infty$.} 
We give a general condition under which correlation can be detected with probability going to $1$ as $\tcorr \to \infty$. We conjecture that this condition holds for a broad family of randomly growing graphs, and in particular, we show that it holds for PA and UA trees. 

\item \textbf{Estimating $\tcorr$ with vanishing relative error as $\tcorr \to \infty$.} 
Focusing on PA and UA trees, we show that the amount of correlation, measured by $\tcorr$, can be estimated with vanishing relative error as $\tcorr \to \infty$. 
\end{itemize}

In the most general setting, we establish results for sequential attachment rules that are \emph{Markov}, in the sense that for every $t \geq \left|S \right|$, we have that 
\[
\p \left( G_{t+1} = G \, \middle| \, G_{\left|S \right|}, G_{\left| S \right| + 1}, \ldots, G_{t} \right) 
= \p \left( G_{t+1} = G \, \middle| \, G_{t} \right),
\]
where 
$\left\{ G_{t} \right\}_{t \geq \left| S\right|}$ 
is a sequence of randomly growing graphs starting from seed $S$. 
This is a natural assumption, since in many real-world networks 
new nodes added to the network will not have access to the history of the network. 
We also establish stronger results for PA and UA trees, which are canonical models of randomly growing graphs. For what follows it will be useful to define 
\[
\Range \left( \cG, S \right) := \left\{ G : \exists n \text{ such that if } G_{n} \sim \cG \left( n, S \right) \text{ then } \p \left( G_{n} = G \right) > 0 \right\}, 
\]
the set of all possible graphs that can be obtained with positive probability starting from seed graph $S$ via the attachment rule $\cG$. 
We are now ready to detail our results. 

\subsubsection{Detecting correlation whenever the seed has an influence} 
Our first result is a general result that shows that correlation can be detected 
whenever the seed graph has an influence in the underlying randomly growing graph model. 

\begin{theorem}[Detecting correlation whenever the seed has an influence]\label{thm:positive_power_detection}
Fix a seed graph $S$, 
a positive integer $\tcorr$ such that $\tcorr > \left| S \right|$, 
and a Markov sequential attachment rule $\cG$. 
Suppose that there are graphs $G$ and $G'$ satisfying that 
$\left| G \right| = \left| G' \right| = \tcorr$, 
that 
$G, G' \in \Range \left( \cG, S \right)$, 
and that 
\begin{equation}\label{eq:influence_seed}
\lim_{n \to \infty} \TV \left( \cG \left( n, G \right), \cG \left( n, G' \right) \right) > 0.
\end{equation}
Then 
\[
\lim_{n \to \infty} \TV \left( \cCG \left( n, \tcorr, S \right), \cG \left( n, S \right)^{\otimes 2} \right) > 0.
\]
\end{theorem}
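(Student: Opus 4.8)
The plan is to reduce the detection problem for $\cCG(n, \tcorr, S)$ to the influence hypothesis~\eqref{eq:influence_seed} by conditioning on the shared graph $G_{\tcorr}$ and exploiting the Markov property. The key observation is that under $H_{\tcorr}$, once we condition on $G_{\tcorr} = H$, the pair $(G_n^1, G_n^2)$ is distributed as $\cG(n, H)^{\otimes 2}$ — two \emph{independent} copies grown from the \emph{same} graph $H$. Under $H_0$, the pair is $\cG(n,S)^{\otimes 2}$, which we can also write by conditioning: $G_n^1$ is grown from some $H_1 \sim \cG(\tcorr, S)$ and $G_n^2$ independently from some $H_2 \sim \cG(\tcorr, S)$, with $H_1, H_2$ independent. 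So the difference between the two hypotheses is entirely captured by the difference between "$H_1 = H_2$, both $\sim \cG(\tcorr,S)$" versus "$H_1, H_2$ i.i.d.\ $\sim \cG(\tcorr,S)$". Intuitively, if the seed at time $\tcorr$ leaves a detectable trace, then seeing two graphs grown from the \emph{same} time-$\tcorr$ graph should be distinguishable from seeing two grown from \emph{independently sampled} time-$\tcorr$ graphs.

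\textbf{Main steps.} First, I would set up notation: let $\mu_H := \cG(n, H)$ denote the law of the time-$n$ graph grown from $H$, and let $\pi$ denote the law of $G_{\tcorr} \sim \cG(\tcorr, S)$ on $\Range(\cG,S)$. Then under $H_{\tcorr}$ the joint law is $\int \mu_H \otimes \mu_H \, d\pi(H)$, while under $H_0$ it is $\bigl(\int \mu_H \, d\pi(H)\bigr)^{\otimes 2} = \int\!\!\int \mu_{H} \otimes \mu_{H'} \, d\pi(H)\, d\pi(H')$. Second, I would lower-bound the total variation distance between these two mixtures using a well-chosen test statistic (test event) $A \subseteq \mathcal{G} \times \mathcal{G}$ on the pair of observed graphs. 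The natural choice is driven by the two graphs $G, G'$ from the hypothesis: by~\eqref{eq:influence_seed} there is an event $B$ in the single-graph space with $\mu_G(B) - \mu_{G'}(B) =: \delta > 0$ in the limit. Consider the pair-event $A = B \times B$ (both graphs land in $B$). Third, I would compute/estimate $\P_{H_{\tcorr}}(A)$ and $\P_{H_0}(A)$ and show their difference stays bounded away from $0$. Under $H_0$, $\P_{H_0}(A) = \bigl(\E_{\pi}[\mu_H(B)]\bigr)^2 = (\E X)^2$ where $X := \mu_H(B)$, $H \sim \pi$. Under $H_{\tcorr}$, $\P_{H_{\tcorr}}(A) = \E_\pi[\mu_H(B)^2] = \E[X^2]$. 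Hence the difference is exactly $\E[X^2] - (\E X)^2 = \Var_\pi(X) \geq 0$, and strictly positive provided $X = \mu_H(B)$ is not $\pi$-a.s.\ constant.

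\textbf{The main obstacle} is therefore to show $\Var_\pi(\mu_H(B)) > 0$, i.e., that $\mu_H(B)$ genuinely depends on $H$ under the mixing measure $\pi$. The hypothesis gives us \emph{two specific} graphs $G, G'$ with $\mu_G(B) \neq \mu_{G'}(B)$, but a priori both $G$ and $G'$ might be $\pi$-null or might have the same value of $\mu_H(B)$ as each other even though they differ — wait, they differ by $\delta$, so that is fine — the real issue is whether $G$ and $G'$ carry positive mass under $\pi$. Since $G, G' \in \Range(\cG, S)$ and $|G| = |G'| = \tcorr$, we have $\pi(\{G\}) > 0$ and $\pi(\{G'\}) > 0$ by definition of $\Range$ (here it is crucial that $\tcorr \geq |S|$ so that time-$\tcorr$ graphs are exactly in the support of $\cG(\tcorr, S)$). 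A two-point perturbation argument then finishes it: conditioning on $H \in \{G, G'\}$, the conditional variance of $\mu_H(B)$ is $p(1-p)\delta^2$ where $p = \pi(\{G\})/(\pi(\{G\}) + \pi(\{G'\})) \in (0,1)$, and by the law of total variance $\Var_\pi(\mu_H(B)) \geq \pi(\{G,G'\}) \cdot p(1-p)\delta^2 > 0$. Finally I would pass to the limit carefully: the quantity $\mu_H(B) = \mu_H^{(n)}(B)$ depends on $n$, but the limit in~\eqref{eq:influence_seed} guarantees $\liminf_n (\mu_G^{(n)}(B) - \mu_{G'}^{(n)}(B)) \geq \delta > 0$ (after possibly redefining $B = B_n$ as a near-optimal test event for each $n$), so $\liminf_n \TV(\cCG(n,\tcorr,S), \cG(n,S)^{\otimes 2}) \geq \liminf_n \Var_\pi(\mu_H^{(n)}(B_n)) \geq c \delta^2 > 0$ for a constant $c = \pi(\{G,G'\}) p(1-p)$ independent of $n$, and since this TV distance is monotone in $n$ the limit exists and is positive.
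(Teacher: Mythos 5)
Your proposal is essentially the paper's proof: you condition on the time-$\tcorr$ graph, use the product event $B\times B$ coming from a near-optimal test for~\eqref{eq:influence_seed}, recognize the difference of probabilities as $\Var_\pi(\mu_H(B))$, and lower-bound that variance using the positive $\pi$-mass of $G$ and $G'$ (the paper drops all mixture terms except those for $G,G'$ and observes that one of $\mu_G(B),\mu_{G'}(B)$ is at distance at least $\delta/2$ from the mean; your law-of-total-variance phrasing gives the same conclusion). The argument is correct and complete.
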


Remarkably, this result holds whenever $\tcorr > \left| S \right|$, showing that correlation can be detected even if the graphs are grown together for just a single time step. 

The condition in~\eqref{eq:influence_seed} captures formally what it means for the seed to have an influence. 
The study of the influence of the seed in randomly growing graphs was initiated by Bubeck, Mossel, and R\'acz, who studied this question in PA trees~\cite{BMR15}. 
They showed that for any two seed trees $S$ and $T$ with at least $3$ vertices and different degree profiles, 
$\lim_{n \to \infty} \TV \left( \PA \left( n, S \right), \PA \left( n , T \right) \right) > 0$ holds. This already implies that~\eqref{eq:influence_seed} holds for PA trees whenever $\tcorr > 3$. 
In subsequent work, Curien, Duquesne, Kortchemski, and Manolescu 
showed that 
$\lim_{n \to \infty} \TV \left( \PA \left( n, S \right), \PA \left( n , T \right) \right) > 0$ 
whenever $S$ and $T$ are nonisomorphic trees with at least $3$ vertices~\cite{CDKM15}. 
This was then showed for UA trees as well by Bubeck, Eldan, Mossel, and R\'acz~\cite{BEMR17}. 
We refer to the recent survey~\cite{RB17} for an exposition of these results and the associated techniques. 
These results are summarized in the following two theorems. 
\begin{theorem}[\cite{BMR15,CDKM15}]\label{thm:PA_seed}
The seed has an influence in PA trees in the following sense.
We have that 
$\lim_{n \to \infty} \TV \left( \PA \left( n, S \right), \PA \left( n , T \right) \right) > 0$ for any trees $S$ and $T$ that are nonisomorphic and have at least 3 vertices. 
\end{theorem}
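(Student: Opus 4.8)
\textbf{Proof proposal for Theorem~\ref{thm:PA_seed}.}

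The plan is to recall (or reconstruct) the two-pronged argument from~\cite{BMR15,CDKM15}, which I would organize around a single invariant of the growth process that distinguishes nonisomorphic seeds. First, I would set up the right object: rather than tracking the full random tree $\PA(n,S)$, I would track a statistic $\phi(\PA(n,S))$ whose distribution stabilizes as $n\to\infty$ and whose limiting law depends on the isomorphism type of $S$. If I can exhibit such a $\phi$ with distinct limiting laws for nonisomorphic $S$ and $T$, then $\TV(\PA(n,S),\PA(n,T))$ is bounded below by the total variation distance between those two limiting laws (by the data-processing inequality for $\TV$), which is positive, and we are done. The nonincreasing-in-$n$ monotonicity of $\TV$ (noted already in the excerpt) then even upgrades this to a uniform-in-$n$ lower bound.

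The first and easier case is when $S$ and $T$ have different degree profiles (the sorted degree sequence). Here the natural choice of $\phi$ is the empirical degree profile restricted to the large-degree vertices, or more precisely a P\'olya-urn reformulation: in a PA tree the vector of degrees of the initial seed vertices, suitably normalized (each degree divided by $\sqrt{n}$, say, or rescaled to a Dirichlet-type limit), converges almost surely to a nondegenerate random vector whose law is an explicit function of the seed's degree sequence. Concretely, I would use the embedding of PA trees into a P\'olya urn / continuous-time branching (Yule) process: the seed vertex of degree $d$ contributes an independent $\mathrm{Gamma}$-type clock, and the limiting normalized degrees form a Dirichlet distribution with parameters determined by $(d_1,\dots,d_{|S|})$. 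Different degree profiles give different Dirichlet parameter vectors, hence mutually singular — in particular $\TV$-separated — limits, so~\eqref{eq:influence_seed}-type positivity follows. This reproduces the $\cite{BMR15}$ half.

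The harder case, due to~\cite{CDKM15}, is when $S$ and $T$ are nonisomorphic but have the \emph{same} degree profile — then the degree statistic is useless and a finer invariant is needed. The key idea I would import is that of a \emph{balanced statistic}: one constructs a functional of the tree that is a martingale (or becomes one after the right normalization) under the PA dynamics, so that it converges a.s.\ and in $L^2$, and whose limit still remembers the seed. Candidates are weighted counts of small subtrees hanging off distinguished vertices, or the ``centroid/maximum-degree embedding'' statistics used in that paper, where one roots the observed tree at a canonical vertex and records the isomorphism type of a bounded neighborhood together with the asymptotic mass of each subtree. One shows (i) these masses converge a.s.\ to a random vector with a continuous law, (ii) the \emph{combinatorial skeleton} on which the masses sit is exactly the seed (up to isomorphism) with positive probability, and (iii) two nonisomorphic same-degree-profile seeds induce either different skeletons or the same skeleton with a different joint law of masses — in both situations the limiting laws of $\phi$ are not equal, giving positive $\TV$. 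I would also need the base-case bookkeeping that for $|S|\ge 3$ the seed is genuinely recoverable from the limit (for $|S|\le 2$ all trees are isomorphic, which is why the hypothesis ``at least $3$ vertices'' appears).

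The main obstacle is case (iii) of the same-degree-profile situation: proving that the limiting random embedding actually \emph{separates} all pairs of nonisomorphic seeds, not just some of them. This requires a careful induction on $|S|$ — peeling off a leaf of the seed, relating $\PA(n,S)$ to $\PA(n,S')$ for the smaller seed $S'$, and showing the extra leaf leaves a detectable trace in the limit — together with an absolute-continuity/anticoncentration argument ensuring the mass vectors don't collapse onto a lower-dimensional set where the seeds would become indistinguishable. I would lean on the P\'olya-urn representation throughout to make these limits explicit, and on the $\TV$ data-processing inequality to convert ``limiting laws differ'' into ``$\TV(\PA(n,S),\PA(n,T))>0$'' uniformly in $n$; citing~\cite{RB17} for the detailed execution of the separation argument is the pragmatic route, since the full proof of (iii) is genuinely intricate.
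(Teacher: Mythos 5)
This statement is not proved in the paper at all: Theorem~\ref{thm:PA_seed} is imported verbatim from~\cite{BMR15,CDKM15}, so there is no in-paper argument to compare yours against. The only related in-paper material is Lemma~\ref{lem:tail} (the max-degree tail asymptotics from~\cite{BMR15}) and its use in Section~\ref{sec:detection_PA}, which concerns the correlated-detection corollary rather than the seed-influence theorem itself. Your overall architecture --- find a statistic of the snapshot whose limiting law depends on the seed, then invoke the data-processing inequality for $\TV$ and the monotonicity of $\TV$ in $n$ --- is indeed the correct skeleton of both cited proofs, and your split into the ``different degree profiles'' case (\cite{BMR15}) and the ``same degree profile, nonisomorphic'' case (\cite{CDKM15}) matches how the literature actually divides the problem.

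As a proof, however, the proposal has a genuine gap exactly where the theorem is hard. For the same-degree-profile case, the entire content of~\cite{CDKM15} is the separation step you label (iii): showing that the family of limiting martingale observables is injective on isomorphism classes of seeds. You explicitly defer this to a citation, so nothing is actually proved in that case; moreover your description of the observables (centroid-based embeddings, a combinatorial ``skeleton'') does not match what~\cite{CDKM15} do --- their observables are normalized weighted counts of decorated subtrees, and the injectivity is established by an analytic/perturbative argument, not by peeling off leaves. Two further inaccuracies in the easier case: the limiting normalized degrees in a PA tree are \emph{not} jointly Dirichlet (Dirichlet limits arise for normalized \emph{subtree sizes}, as in Section~\ref{sec:subtree_distributions_anti-centrality}; the rescaled degrees $d_n(v)/\sqrt{n}$ have M\'ori-type limits), and two Dirichlet laws with different parameters on the same simplex are mutually absolutely continuous, not singular --- they are merely distinct, which is all you need, but the word ``singular'' is wrong. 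Finally, be careful that the statistic must be a function of the \emph{unlabeled} snapshot: ``the vector of degrees of the seed vertices'' is not observable, whereas the point process of large normalized degrees (which is what~\cite{BMR15} use) is.
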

\begin{theorem}[\cite{BEMR17}]\label{thm:UA_seed}
The seed has an influence in UA trees in the following sense.
We have that 
$\lim_{n \to \infty} \TV \left( \UA \left( n, S \right), \UA \left( n , T \right) \right) > 0$ for any trees $S$ and $T$ that are nonisomorphic and have at least 3 vertices. 
\end{theorem}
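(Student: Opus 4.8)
The plan is to reduce the theorem to a statement about limiting distributions of a carefully chosen \emph{label-free} graph statistic, and then to spend the bulk of the effort on identifying the seed from that limiting law. First, recall that $\TV\left(\UA(n,S),\UA(n,T)\right)$ is non-increasing in $n$: given a sample of $\UA(n,S)$ one can run $m$ further steps of the uniform attachment rule to obtain a sample of $\UA(n+m,S)$ (and likewise for $T$), so by the data-processing inequality the total variation distance cannot increase --- this is the same monotonicity used in the excerpt around~\eqref{eq:limitingTV}. Hence $\lim_{n\to\infty}\TV\left(\UA(n,S),\UA(n,T)\right)$ exists and is nonnegative, and it suffices to bound it away from $0$. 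For that it is enough to exhibit, for each $n$, a measurable function $f_n$ on isomorphism classes of $n$-vertex trees, together with laws $\mu_S \neq \mu_T$, such that $f_n(\UA(n,S)) \Rightarrow \mu_S$ and $f_n(\UA(n,T)) \Rightarrow \mu_T$; since $f_n$ depends only on the unlabeled tree, this gives $\liminf_{n\to\infty}\TV\left(\UA(n,S),\UA(n,T)\right) \ge \TV\left(\mu_S,\mu_T\right) > 0$. (One notes at the outset that the hypothesis $|S|,|T|\ge 3$ is what rules out the degenerate identification $\UA(n,\text{single vertex})=\UA(n,\text{single edge})$.)

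For the statistic, the driving structural fact is that the $|S|-1$ seed edges are never removed, so they partition $\UA(n,S)$ into $|S|$ components, one per seed vertex, whose sizes evolve as a classical P\'olya urn with $|S|$ colors each starting from one ball; thus the normalized size vector converges almost surely to $\left(p_1,\dots,p_{|S|}\right)\sim\Dir(1,\dots,1)$, and for a seed edge $e$ separating the seed vertices into $L_e$ and $R_e$, removing $e$ splits $\UA(n,S)$ into pieces of normalized sizes converging to $\left(\sum_{l\in L_e}p_l,\sum_{l\in R_e}p_l\right)$, with analogous joint P\'olya/Dirichlet limits describing the internal subtree sizes of each component. Since the seed edges are not marked in the observed tree, I would work instead with the \emph{$\eps$-skeleton} $Y_n^\eps$: repeatedly delete pendant subtrees of fewer than $\eps n$ vertices, suppress degree-two vertices, and label each surviving edge with the normalized size of the piece it cuts off. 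For fixed $\eps>0$ one shows $Y_n^\eps$ converges in distribution as $n\to\infty$ to a random finite edge-weighted tree, whose law is the candidate $\mu_S$. (A more hands-on alternative is to take $f_n$ to be a normalized count of a fixed local configuration and to separate $S$ from $T$ at the level of first and second moments, which dodges the distributional convergence at the cost of comparable combinatorial bookkeeping.)

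The crux is to show that if the law of $Y_n^\eps$ is the same under $S$ and $T$ for every small $\eps$, then $S\cong T$. The skeleton $Y^S$ consists of a ``core'' that is combinatorially isomorphic to (a coarsening of) $S$, carrying the $\Dir(1,\dots,1)$-type weights above, together with i.i.d.\ seed-independent ``decorations'' --- themselves skeletons of single-vertex uniform attachment trees, rescaled by the $p_i$ --- hanging off the core at the seed vertices. From the joint law of the core-edge weights one reads off, for any two core edges $e,e'$, the intersection sizes $|L_e\cap L_{e'}|$ and higher-order analogues via Dirichlet moments; these determine the fundamental cut system of $S$, hence $S$ up to isomorphism. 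The genuinely delicate point is \emph{separating the core from the decorations}: a decoration at a seed vertex $v_i$ can itself carry nondegenerate weights and inflate the degree of $v_i$, so one needs an estimate --- comparing the relevant products of Beta variables --- certifying that core edges are distinguishable in law from decoration edges, after which an induction on $|S|$ reconstructs the seed (the small cases being forced directly, e.g.\ through the number of core edges, which already reveals whether $|S|=|T|$).

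The main obstacle is exactly this last step: disentangling the footprint of the seed from the random, seed-independent structure that accretes around it as the tree grows. This is precisely what makes uniform attachment harder than preferential attachment, where the degree profile already encodes the seed at the clean scale $\sqrt{n}$, whereas under uniform attachment degrees stay bounded and one is forced onto these more global and more entangled subtree-size statistics. Granting the separation of core from decorations, one gets $\TV\left(\mu_S,\mu_T\right)>0$ whenever $S\not\cong T$, which combined with the monotonicity and the reduction above completes the proof.
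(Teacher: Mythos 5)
First, note that the paper does not prove this theorem: it is quoted from~\cite{BEMR17}, and the closest the present paper comes to it is the explicit statistic $H(T)=\sum_{e}h(T,e)$ in Section~\ref{sec:detection_UA}, which is a special case of the machinery of~\cite{BEMR17}. Your reduction at the start is sound: monotonicity of total variation under the Markov growth rule, lower semicontinuity of $\TV$ under weak convergence, and the P\'olya-urn/Dirichlet description of the normalized sizes of the components obtained by deleting the seed edges are all correct, and the observation that $|S|,|T|\ge 3$ rules out the degenerate identifications is the right one.

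The problem is that your argument stops exactly where the theorem begins. The entire content of the statement is that the limiting law of some observable \emph{separates} nonisomorphic seeds, and you explicitly grant this ("Granting the separation of core from decorations\dots"). The difficulty is not cosmetic: the decorations are themselves uniform attachment trees rescaled by the $p_i$, so a decoration hanging off a seed vertex produces skeleton edges whose weights are again products of Beta variables of the same type as the core-edge weights, and nothing in your proposal certifies that the joint law of the $\eps$-skeleton cannot coincide for two nonisomorphic seeds once core and decorations are mixed together. (You also assert, without proof, the distributional convergence of $Y_n^{\eps}$, but that part is routine compared to the separation.) This is precisely where~\cite{BEMR17} does its real work: rather than attempting to reconstruct the seed from a limiting skeleton, they build a countable family of polynomial statistics in subtree sizes (of which $H$ is the simplest), prove convergence of their first two moments via martingale/P\'olya-urn estimates, and then show by a delicate algebraic argument that the collection of limiting first moments, as a function of the seed, is injective on isomorphism classes. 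Your write-up is a reasonable program and correctly diagnoses why UA is harder than PA, but as it stands it is an outline with the central lemma missing, not a proof.
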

These two theorems, together with Theorem~\ref{thm:positive_power_detection}, 
directly imply that correlation can be detected in PA and UA trees. 
These results are formalized in the following two corollaries. 
\begin{corollary}[Detecting correlation in PA trees]\label{thm:detection_PA}
Let $S$ be a finite tree with at least two vertices. 
Let $\tcorr \in \N$ be such that $\tcorr > \left| S \right|$ and $\tcorr > 3$. 
Then 
\[
\lim_{n \to \infty} \TV \left( \CPA \left( n, \tcorr, S \right), \PA \left( n, S \right)^{\otimes 2} \right) > 0.
\]
\end{corollary}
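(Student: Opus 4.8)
The plan is to derive Corollary~\ref{thm:detection_PA} directly from Theorem~\ref{thm:positive_power_detection} and Theorem~\ref{thm:PA_seed}. Since $\PA$ is a Markov sequential attachment rule and $\tcorr > |S|$ is assumed, to invoke Theorem~\ref{thm:positive_power_detection} it suffices to exhibit two trees $G$ and $G'$, each on exactly $\tcorr$ vertices, both in $\Range(\PA, S)$, with $\lim_{n \to \infty} \TV(\PA(n, G), \PA(n, G')) > 0$. By Theorem~\ref{thm:PA_seed}, the last condition holds as soon as $G$ and $G'$ are nonisomorphic and each has at least three vertices, and the hypothesis $\tcorr > 3$ makes the latter automatic. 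So the whole proof reduces to a small combinatorial construction.

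For the construction, first note that any tree obtained from $S$ by repeatedly attaching a new vertex to some existing vertex lies in $\Range(\PA, S)$: in a $\PA$ tree every existing vertex has strictly positive degree, hence strictly positive probability of receiving the next vertex. Now fix a vertex $u$ of $S$ of maximum degree $\Delta := \Delta(S)$ and a leaf $\ell$ of $S$ (both exist since $|S| \ge 2$). Let $G$ be obtained from $S$ by attaching all $\tcorr - |S|$ new vertices directly to $u$, so that $\deg_G(u) = \Delta + (\tcorr - |S|)$, and let $G'$ be obtained from $S$ by attaching the $\tcorr - |S|$ new vertices as a path pendant at $\ell$, so that $\Delta(G') = \max(\Delta, 2)$. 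Both have exactly $\tcorr$ vertices, and both lie in $\Range(\PA, S)$ by the previous observation.

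It then remains to check $G \not\cong G'$, which I would do via their maximum degrees. If $|S| \ge 3$ then $\Delta \ge 2$, so $\Delta(G') = \Delta$ while $\Delta(G) = \Delta + (\tcorr - |S|) \ge \Delta + 1$ because $\tcorr > |S|$; hence $\Delta(G) > \Delta(G')$. If $|S| = 2$, i.e.\ $S = K_2$, then $\tcorr > 3$ forces $\tcorr - |S| \ge 2$, so $\Delta(G) = 1 + (\tcorr - 2) = \tcorr - 1 \ge 3 > 2 = \Delta(G')$. In either case $G$ and $G'$ are nonisomorphic, and since $\tcorr \ge 4$ they each have at least three vertices. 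Theorem~\ref{thm:PA_seed} then gives $\lim_{n\to\infty}\TV(\PA(n,G),\PA(n,G')) > 0$, and Theorem~\ref{thm:positive_power_detection} (applied with $\cG = \PA$) yields $\lim_{n\to\infty}\TV(\CPA(n,\tcorr,S), \PA(n,S)^{\otimes 2}) > 0$, as claimed.

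There is no serious obstacle here; the only point requiring care is that the construction must yield two nonisomorphic trees even in the extreme regimes $\tcorr = |S| + 1$ and $|S| \in \{2, 3\}$, which is precisely why the argument tracks maximum degrees and splits off the case $S = K_2$.
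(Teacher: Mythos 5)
Your proposal is correct and takes the same route the paper itself uses: the paper simply asserts that Corollary~\ref{thm:detection_PA} ``directly'' follows from Theorem~\ref{thm:positive_power_detection} and Theorem~\ref{thm:PA_seed} without spelling out the choice of $G, G' \in \Range(\PA, S)$, and your explicit construction (attach everything to a maximum-degree vertex for $G$, grow a pendant path from a leaf for $G'$, then separate them by maximum degree, with the $S = K_2$ edge case handled via $\tcorr > 3$) is a clean and correct way to supply that missing detail, including verifying that any single-vertex-at-a-time extension of $S$ has positive $\PA$ probability since degrees in a tree are always positive. The paper additionally gives a separate ``algorithmic'' proof in Section~\ref{sec:detection_PA} using tail asymptotics of the maximum degree (via Lemma~\ref{lem:tail}), which is a genuinely different and more quantitative argument; you did not attempt that route, but it is not needed to establish the corollary.
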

\begin{corollary}[Detecting correlation in UA trees]\label{thm:detection_UA}
Let $S$ be a finite tree. 
Let $\tcorr \in \N$ be such that $\tcorr > \left| S \right|$ and $\tcorr > 3$. 
Then 
\[
\lim_{n \to \infty} \TV \left( \CUA \left( n, \tcorr, S \right), \UA \left( n, S \right)^{\otimes 2} \right) > 0.
\]
\end{corollary}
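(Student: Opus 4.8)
The plan is to obtain Corollary~\ref{thm:detection_UA} directly from Theorem~\ref{thm:positive_power_detection} together with Theorem~\ref{thm:UA_seed}. First I would note that $\UA$ is a Markov sequential attachment rule: conditioned on the current tree $G_t$, the new vertex attaches to a vertex of $G_t$ chosen uniformly at random, and this conditional distribution depends only on $G_t$ (in fact only on $t = |G_t|$). Hence Theorem~\ref{thm:positive_power_detection} applies with $\cG = \UA$, the given seed $S$, and the given $\tcorr > |S|$, provided I can exhibit two trees $G, G'$ on exactly $\tcorr$ vertices, both lying in $\Range(\UA, S)$, with $\lim_{n \to \infty} \TV(\UA(n, G), \UA(n, G')) > 0$. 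By Theorem~\ref{thm:UA_seed}, this last requirement is met as soon as $G$ and $G'$ are nonisomorphic trees on at least $3$ vertices; since $\tcorr > 3$, it therefore suffices to produce two nonisomorphic trees on $\tcorr$ vertices, each obtainable from $S$ by repeatedly adding leaves (any such tree automatically belongs to $\Range(\UA, S)$, since every step of such a construction has positive probability under $\UA$).

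Constructing the pair $(G, G')$ is the only real content, and it splits into two easy cases according to how much room there is between $|S|$ and $\tcorr$. Suppose first that $\tcorr - |S| \geq 2$. Fix a vertex $v$ of $S$ of maximum degree, let $G$ be $S$ with $\tcorr - |S|$ new leaves all attached to $v$, and let $G'$ be $S$ with a path of $\tcorr - |S|$ new vertices attached to $v$. Both are built from $S$ by adding one leaf at a time, so both lie in $\Range(\UA, S)$; moreover $\Delta(G) = \Delta(S) + (\tcorr - |S|) \geq \Delta(S) + 2$, whereas $\Delta(G') \leq \max\{\Delta(S) + 1, 2\} < \Delta(G)$, so $G \not\cong G'$. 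Suppose instead that $\tcorr - |S| = 1$; then $|S| = \tcorr - 1 \geq 3$, so $S$ has both a leaf and a vertex of degree at least $2$. Let $G$ be $S$ with one new leaf attached to a leaf of $S$, and let $G'$ be $S$ with one new leaf attached to a vertex of $S$ of degree at least $2$. Then $G$ has the same number of leaves as $S$, while $G'$ has one more, so again $G \not\cong G'$, and both lie in $\Range(\UA, S)$ by construction. In either case $|G| = |G'| = \tcorr \geq 4$.

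Putting the pieces together, the pair $(G, G')$ just constructed satisfies all the hypotheses of Theorem~\ref{thm:positive_power_detection} — with \eqref{eq:influence_seed} verified via Theorem~\ref{thm:UA_seed} — and the conclusion of that theorem is precisely $\lim_{n \to \infty} \TV(\CUA(n, \tcorr, S), \UA(n, S)^{\otimes 2}) > 0$, which is the claim. (The companion statement, Corollary~\ref{thm:detection_PA}, follows in exactly the same way, using Theorem~\ref{thm:PA_seed} in place of Theorem~\ref{thm:UA_seed}; there the hypotheses $\tcorr > 3$ and $\tcorr > |S|$ together guarantee that the constructed $G, G'$ have at least $3$ vertices, as Theorem~\ref{thm:PA_seed} requires.) I do not anticipate a genuine obstacle: the only points that require care are the boundary case $\tcorr = |S| + 1$ in the construction of the nonisomorphic pair (handled above), and the routine check that $\UA$ fits the definition of a Markov sequential attachment rule.
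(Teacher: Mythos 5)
Your proof is correct and follows exactly the route the paper has in mind: the paper presents Corollary~\ref{thm:detection_UA} as a direct consequence of Theorem~\ref{thm:positive_power_detection} together with Theorem~\ref{thm:UA_seed}, and your two cases ($\tcorr - |S| \geq 2$ and $\tcorr - |S| = 1$) merely make explicit the construction of nonisomorphic trees $G, G' \in \Range(\UA, S)$ on $\tcorr$ vertices that the paper leaves to the reader, and both cases check out (including the degenerate seeds $|S| = 1$ and $|S| = 2$). The paper also supplies a second, algorithmic proof in Section~\ref{sec:detection_UA} via the balancedness statistic $H$, but that is a separate argument and not the one you are reproducing.
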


Theorem~\ref{thm:positive_power_detection} reduces detecting correlation to detecting the influence of the seed. As such, it can be viewed as an \emph{existence} result, since it does not give specific statistics of the two graphs that can detect correlation. 
We therefore complement Theorem~\ref{thm:positive_power_detection} and Corollaries~\ref{thm:detection_PA} and~\ref{thm:detection_UA} by providing  alternative, \emph{algorithmic} proofs of Corollary~\ref{thm:detection_PA} and Corollary~\ref{thm:detection_UA}. Specifically, inspired by~\cite{BMR15}, we will show that the maximum degrees of the two trees can be used to detect correlation in PA trees. Furthermore, inspired by~\cite{BEMR17}, we will show that there are certain statistics that measure global balancedness properties of a tree (and which are efficiently computable) that can be used to detect correlation in UA trees. See Section~\ref{sec:detection_specific} for details. 

\subsubsection{Detecting correlation with probability going to $1$ as $\tcorr \to \infty$} 
Ideally, we would like to detect correlation with probability close to $1$. 
However, for any fixed finite $\tcorr$, the probability of successfully being able to detect correlation is strictly bounded away from~$1$. This is simply because if $G_{\tcorr}^{1} \sim \cG \left( \tcorr, S \right)$ and $G_{\tcorr}^{2} \sim \cG \left( \tcorr, S \right)$ are independent, then there is a positive probability (which depends only on $\cG$ and $\tcorr$) that $G_{\tcorr}^{1} = G_{\tcorr}^{2}$. With this probability we may couple $\cG \left( n, S \right)^{\otimes 2}$ and $\cCG \left( n, \tcorr, S \right)$, 
showing that there exists $\eps = \eps \left( \cG, \tcorr \right) > 0$ such that 
\begin{equation}\label{eq:TV_bounded_1minuseps}
\TV \left( \cCG \left( n, \tcorr, S \right), \cG \left( n, S \right)^{\otimes 2} \right) 
\leq 1 - \eps 
\end{equation}
for every $n \geq \tcorr$. 
Our focus is thus to show that correlation can be detected with probability going to $1$ as $\tcorr \to \infty$. 
We first present a general result, 
which gives a sufficient condition on the underlying model of randomly growing graphs for this to occur. 
\begin{theorem}[Detecting correlation with probability going to $1$ as $\tcorr \to \infty$]\label{thm:high_prob_detection}
Fix a seed graph~$S$ and a Markov sequential attachment rule $\cG$. 
Let $\left\{ G_{t} \right\}_{t \geq \left| S \right|}$ be a sequence of randomly growing graphs with seed $S$ and attachment rule $\cG$. 
Suppose that there is a function $f : \Range \left( \cG, S \right) \to \R$ such that the limit $\lim_{t \to \infty} f \left( G_{t} \right) =: f_{\infty}$ exists almost surely and that $f_{\infty}$ is an absolutely continuous random variable. 
Then we have that 
\[
\lim_{\tcorr \to \infty} \lim_{n \to \infty} 
\TV \left( \cCG \left( n, \tcorr, S \right), \cG \left( n, S \right)^{\otimes 2} \right)
= 1.
\]
\end{theorem}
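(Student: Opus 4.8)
The plan is to build an explicit threshold test from the statistic $f$ and to control its two error probabilities in the iterated limit. Write $H_0$ and $H_\tcorr$ for the two hypotheses in~\eqref{eq:hyp_test}, and, under $H_\tcorr$, let $\left\{\left(G_t^1, G_t^2\right)\right\}_{t \geq |S|}$ be the correlated pair, with shared part $G_\tcorr := G_\tcorr^1 = G_\tcorr^2$. We record two structural observations. First, by the construction of $\cCG$, each marginal sequence $\left\{G_t^i\right\}_{t \geq |S|}$ is itself a sequence of randomly growing graphs with seed $S$ and rule $\cG$, so the hypothesis on $f$ gives $f\!\left(G_t^i\right) \to f_\infty^i$ almost surely with $f_\infty^i$ distributed as $f_\infty$; the same holds under $H_0$. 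Second, because $\cG$ is Markov, conditionally on $G_\tcorr$ the two continuations $\left\{G_t^1\right\}_{t \geq \tcorr}$ and $\left\{G_t^2\right\}_{t \geq \tcorr}$ are independent and each is distributed as a $\cG$-process started from $G_\tcorr$; consequently $f_\infty^1$ and $f_\infty^2$ are conditionally independent given $G_\tcorr$, and the pair $\left(f(G_\tcorr), f_\infty^i\right)$ has, under $H_\tcorr$, the same joint law as $\left(f(G_\tcorr), f_\infty\right)$ computed along a single $\cG$-process.

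First I would show that under $H_\tcorr$ we have $\left|f_\infty^1 - f_\infty^2\right| \to 0$ in probability as $\tcorr \to \infty$. Indeed, $\left|f_\infty^1 - f_\infty^2\right| \leq \left|f_\infty^1 - f(G_\tcorr)\right| + \left|f_\infty^2 - f(G_\tcorr)\right|$, and by the second observation each summand is distributed as $\left|f_\infty - f(G_\tcorr)\right|$ along a single process; since $f(G_t) \to f_\infty$ almost surely, $f(G_\tcorr) \to f_\infty$ almost surely as $\tcorr \to \infty$, hence in probability, which gives the claim. Separately, under $H_0$ the variables $f_\infty^1$ and $f_\infty^2$ are i.i.d.\ copies of the absolutely continuous random variable $f_\infty$, so their difference is absolutely continuous, and therefore $\P_{H_0}\!\left(\left|f_\infty^1 - f_\infty^2\right| \leq \delta\right) \to 0$ as $\delta \downarrow 0$.

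Next I would transfer these statements about the limits back to finite $n$ through a threshold test. Fix $\delta > 0$ and consider the event $A_{n,\delta} := \left\{\left|f\!\left(G_n^1\right) - f\!\left(G_n^2\right)\right| \leq \delta\right\}$, which is a function of the observed snapshot $\left(G_n^1, G_n^2\right)$, so that
\[
\TV\!\left(\cCG(n,\tcorr,S), \cG(n,S)^{\otimes 2}\right) \geq \P_{H_\tcorr}\!\left(A_{n,\delta}\right) - \P_{H_0}\!\left(A_{n,\delta}\right).
\]
Using $f\!\left(G_n^i\right) \to f_\infty^i$ almost surely under each hypothesis and the reverse Fatou lemma for the bounded indicators of $A_{n,\delta}$ and of its complement, I would obtain $\liminf_{n\to\infty} \P_{H_\tcorr}\!\left(A_{n,\delta}\right) \geq 1 - \P_{H_\tcorr}\!\left(\left|f_\infty^1 - f_\infty^2\right| \geq \delta\right)$ and $\limsup_{n\to\infty} \P_{H_0}\!\left(A_{n,\delta}\right) \leq \P_{H_0}\!\left(\left|f_\infty^1 - f_\infty^2\right| \leq \delta\right)$. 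Since the limit in $n$ of the total variation distance exists, letting $n \to \infty$, then $\tcorr \to \infty$ (using the first fact of the previous paragraph), and finally $\delta \downarrow 0$ (using the second), we get $\liminf_{\tcorr \to \infty} \lim_{n \to \infty} \TV\!\left(\cCG(n,\tcorr,S), \cG(n,S)^{\otimes 2}\right) \geq 1$; since the total variation distance never exceeds $1$, the iterated limit equals $1$.

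The step I expect to be the main obstacle is the passage from the convergence of the statistics $f\!\left(G_n^i\right)$ to a statement about total variation distance: total variation is not continuous along this convergence, so one cannot simply move the limit inside $\TV$. Committing to the explicit test $A_{n,\delta}$ and estimating its two error probabilities separately — each via almost-sure convergence plus a one-sided Fatou bound — is what circumvents this, and it forces a specific ordering of the limits: $\delta$ must be sent to $0$ only after both $n$ and $\tcorr$. The other point requiring care is the bookkeeping in the first paragraph, where the Markov assumption on $\cG$ is exactly what lets the almost-sure convergence $f(G_t) \to f_\infty$ of a single process be read off as control on $f_\infty^1$ and $f_\infty^2$ under $H_\tcorr$; without it, the conditional-independence description of the two continuations would fail.
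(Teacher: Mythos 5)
Your proposal is correct and follows essentially the same strategy as the paper: you use the threshold test on $\left|f\!\left(G_n^1\right) - f\!\left(G_n^2\right)\right|$, you exploit the almost-sure convergence $f\!\left(G_t\right) \to f_\infty$ both to pass from finite $n$ to the limits $f_\infty^i$ and to drive the correlated difference to zero as $\tcorr \to \infty$, and you use absolute continuity of $f_\infty$ to keep the two independent limits apart under $H_0$. The only differences are bookkeeping: the paper fixes a target error $\delta$, chooses a threshold $\eps$ small enough that the $H_0$ error is below $\delta/2$, then sends $n \to \infty$ and $\tcorr \to \infty$; you keep the threshold $\delta$ as the free parameter, pass to $n \to \infty$ via Fatou-type bounds, then $\tcorr \to \infty$, and finally $\delta \downarrow 0$. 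The paper also phrases the $H_\tcorr$-error bound via a symmetry argument directly on the finite-$n$ quantities $\left|f\!\left(G_n^i\right) - f\!\left(G_\tcorr^i\right)\right|$, whereas you first establish convergence in probability of $\left|f_\infty^1 - f_\infty^2\right|$ and then transfer to finite $n$ — this is a cosmetic reordering of the same triangle-inequality argument.
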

The test that distinguishes correlated graphs from independent graphs is simple: 
we compare $\left| f \left( G_{n}^{1} \right) - f \left( G_{n}^{2} \right) \right|$ to an appropriately chosen threshold. 
The idea behind the proof is that this quantity tends to $0$ as $\tcorr \to \infty$ under the alternative hypothesis $H_{\tcorr}$, 
but $f \left( G_{n}^{1} \right)$ and $f \left( G_{n}^{2} \right)$ are independent under the null hypothesis $H_{0}$, so the difference stays away from $0$ in this case. 

Theorem~\ref{thm:high_prob_detection} is a general theorem that we expect applies to a wide class of models of randomly growing graphs. To demonstrate its utility, we show that PA trees and UA trees satisfy its conditions. 
For PA trees, we may choose $f$ to be the normalized maximum degree. 
For both cases, we may choose $f$ to be a function that is closely related to notions of centrality in trees. 
These have been used to study a variety of statistical problems, such as estimating the source of a rumor on a tree~\cite{shah2010detecting,sz11rumor,shah2016finding} and estimating the seed in randomly growing trees~\cite{BDL16, LP19, DR19}. 
We thus obtain the following results for PA and UA trees. 
\begin{theorem}\label{thm:high_prob_detection_PA}
Let $S$ be a finite tree with at least two vertices. 
Then 
\[
\lim_{\tcorr \to \infty} \lim_{n \to \infty} 
\TV \left( \CPA \left( n, \tcorr, S \right), \PA \left( n, S \right)^{\otimes 2} \right) 
= 1.
\]
\end{theorem}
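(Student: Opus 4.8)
The plan is to deduce Theorem~\ref{thm:high_prob_detection_PA} directly from Theorem~\ref{thm:high_prob_detection}: it suffices to exhibit a function $f : \Range(\PA, S) \to \R$ such that, along a sequence $\{G_t\}_{t \ge |S|}$ of PA trees with seed $S$, the values $f(G_t)$ converge almost surely to a limit $f_\infty$ whose law is absolutely continuous. Following the heuristic behind~\cite{BMR15}, I would take $f$ to be the normalized maximum degree, $f(G) := \Delta(G)/\sqrt{|G|}$, where $\Delta(G)$ is the maximum degree of $G$, so that $f(G_t) = \Delta(G_t)/\sqrt{t}$. It then remains to verify (i) almost sure convergence of $f(G_t)$ as $t \to \infty$, and (ii) absolute continuity of the limit $f_\infty$; the theorem is then immediate.

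For (i), write $D_k(t)$ for the degree of vertex $k$ at time $t$. Preferential attachment gives $\E[D_k(t+1) \mid G_t] = D_k(t)\bigl(1 + \tfrac{1}{2(t-1)}\bigr)$, so $D_k(t)/c_t$ is a nonnegative martingale for the explicit deterministic sequence $c_t$ with $c_{t+1}/c_t = 1 + \tfrac{1}{2(t-1)}$ and $c_t \asymp \sqrt{t}$; one checks moreover that it is bounded in $L^2$. Hence $D_k(t)/\sqrt{t}$ converges almost surely (and in $L^2$) to a finite nonnegative random variable $\xi_k$. To upgrade this coordinatewise convergence to convergence of the maximum, I would invoke M\'ori-type estimates on the maximal degree of the Barab\'asi--Albert tree, which show that the normalized degrees of late-arriving vertices are uniformly negligible: $\lim_{K \to \infty} \limsup_{t \to \infty} \sup_{k > K} D_k(t)/\sqrt{t} = 0$ almost surely (the seed $S$ only alters finitely many initial conditions and does not affect this). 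Combining with the coordinatewise limits yields $f(G_t) \to f_\infty := \sup_k \xi_k$ almost surely, and the same estimate shows this supremum is almost surely attained.

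Step (ii) --- showing that $f_\infty$ is absolutely continuous --- is where the real work lies, and it is the step I expect to be the main obstacle. I would condition on the identity of the maximizer: let $K^\star$ be the smallest index $k$ with $\xi_k = f_\infty$, so that $\{K^\star = j\}_{j \ge 1}$ partitions the sample space with $f_\infty = \xi_j$ on $\{K^\star = j\}$. The law of $f_\infty$ is then the countable mixture over $j$ of the conditional law of $\xi_j$ given $\{K^\star = j\}$, weighted by $\P(K^\star = j)$. It therefore suffices to prove that each limit $\xi_j$ is absolutely continuous: if $\xi_j$ has density $p_j$, then the conditional law of $\xi_j$ on $\{K^\star = j\}$ has a sub-probability density dominated by $p_j$, and summing these contributions over $j$ gives a nonnegative function integrating to $\sum_j \P(K^\star = j) = 1$ --- a genuine density for $f_\infty$. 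To see that each $\xi_j$ is absolutely continuous, I would exploit the (generalized) P\'olya-urn structure of the degree of vertex $j$: conditioned on the tree at the arrival time of vertex $j$, the degree of $j$ is a fixed number and its subsequent increments form an urn-type process whose $\sqrt{t}$-normalized martingale limit has a density by standard urn arguments --- this can be made explicit through the Athreya--Karlin-type continuous-time embedding of preferential attachment, under which one reads off that $\xi_j$ has a density. The crux is thus to pin down this urn limit (or at least to establish that its distribution has no atomic or singular part) uniformly in $j$; once that is done, the argmax-decomposition above delivers the absolute continuity of $f_\infty$, and Theorem~\ref{thm:high_prob_detection} completes the proof.
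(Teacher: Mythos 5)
Your overall route is exactly the paper's: pick $f(G) = \Delta(G)/\sqrt{|G|}$ and feed it to Theorem~\ref{thm:high_prob_detection}. The difference is in how the hypotheses of that theorem are verified. The paper simply cites M\'ori~\cite[Theorem~3.1]{Mori:05}, which gives the almost-sure existence, positivity, finiteness, and absolute continuity of $f_\infty$ in one stroke, then handles a general seed $S$ by conditioning on $\PA(|S|, S_2) = S$ (an event of positive probability) so that only the $S = S_2$ case needs to be treated. You instead try to rederive these facts: per-vertex $L^2$-bounded martingales for the normalized degrees $D_k(t)/\sqrt{t} \to \xi_k$, a uniform-negligibility estimate to pass the limit through the maximum, and an argmax decomposition over the index of the maximizer to show $f_\infty = \sup_k \xi_k$ is absolutely continuous. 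Your argmax decomposition is correct in outline and is in fact parallel to how the paper proves Corollary~\ref{cor:continuous_centrality} in its second (anti-centrality-based) proof of the same theorem. What your sketch defers, though, is precisely the content of M\'ori's result: the uniform negligibility is itself a M\'ori-type input (you acknowledge this), and the absolute continuity of each $\xi_j$ via ``standard urn arguments / continuous-time embedding'' is real work rather than routine --- the degree process is a generalized P\'olya urn with triangular replacement matrix and the $\sqrt{t}$-scaled limit law must be pinned down. So your proposal is not wrong, but it replaces one citation by a rederivation whose hardest step is that same citation in disguise; the paper's direct cite, plus its clean conditioning reduction from $S$ to $S_2$, is the more economical route.
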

\begin{theorem}\label{thm:high_prob_detection_UA}
Let $S$ be a finite tree. 
Then 
\[
\lim_{\tcorr \to \infty} \lim_{n \to \infty} 
\TV \left( \CUA \left( n, \tcorr, S \right), \UA \left( n, S \right)^{\otimes 2} \right) 
= 1.
\]
\end{theorem}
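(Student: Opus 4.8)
The plan is to apply Theorem~\ref{thm:high_prob_detection}, exactly as one does for Theorem~\ref{thm:high_prob_detection_PA}: it suffices to exhibit a real-valued functional $f$ on (isomorphism classes of) finite trees such that, for $T_t \sim \UA(t,S)$, the sequence $f(T_t)$ converges almost surely to a limit $f_\infty$ that is an absolutely continuous random variable. The normalized maximum degree used for PA trees is useless here (the degrees in a UA tree are far too concentrated, each growing like $\log t$), so instead I would take $f$ to be a centrality/balancedness statistic: for a finite tree $T$, let $f(T) := \frac{1}{|T|}\min_{v \in V(T)} \max_{C}|C|$, where $C$ ranges over the connected components of $T - v$ (so the minimizer is a centroid and $f(T)\in(0,\tfrac12]$); an essentially equivalent choice is $\frac{1}{|T|}\max_{e}\min(|A_e|,|B_e|)$ over the most balanced edge cut $\{A_e,B_e\}$ of $T-e$. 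Both are graph invariants, hence legitimate statistics of the observed unlabeled trees.

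Second, I would establish the almost sure convergence and absolute continuity through the Pólya-urn structure of uniform attachment. Root $T_t$ at an arbitrarily fixed seed vertex. For any vertex $w$ present by time $t$, the size of the subtree hanging below $w$ evolves, as $t$ increases, as a two-color Pólya urn — at each step the incoming vertex lands in that subtree with probability equal to its current fraction of all vertices — so its normalized size converges almost surely to a $\Beta(1,\tau_w-1)$ variable, where $\tau_w$ is $w$'s arrival time (more precisely, the joint vector of normalized subtree sizes of the seed vertices converges to a $\Dir(1,\dots,1)$ law). Consequently, for each fixed $v$ the normalized component sizes of $T_t-v$ converge almost surely, and so does $L_v := \lim_t \frac1t\max_C|C|$, a maximum of countably many $\Beta$-type limits, which is therefore absolutely continuous. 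A Borel--Cantelli argument — since $\P(\text{normalized subtree size of }w > \eps)\le(1-\eps)^{\tau_w-1}$ is summable over $w$ — shows that almost surely only finitely many vertices ever occupy a large side, so almost surely there is a finite random set $F$ of vertices outside of which $L_v > \tfrac12+\eps$; for large $t$ the centroid lies in $F$, whence $f(T_t)=\min_{v\in F}\frac1t\max_C|C|\to\min_{v\in F}L_v=\inf_v L_v=:f_\infty$ almost surely. Finally, since each $L_v$ is absolutely continuous and there are only countably many of them, $\P(f_\infty\in A)\le\sum_v\P(L_v\in A)=0$ for every Lebesgue-null set $A$, so $f_\infty$ is absolutely continuous; Theorem~\ref{thm:high_prob_detection} then delivers the conclusion, and the argument is unchanged when $|S|=1$ (root at the unique seed vertex).

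I expect the absolute continuity of $f_\infty$ to be the crux. Theorem~\ref{thm:high_prob_detection} genuinely requires $f_\infty$ to be absolutely continuous rather than merely atomless, which is why the analysis must be pushed down to the level of individual subtree-size limits — honest $\Beta$ variables — so that they can be recombined using nothing more than countable subadditivity over Lebesgue-null events. The accompanying technical nuisance is promoting the easy convergence-in-probability statements to almost sure convergence of a $\min$/$\max$ taken over the infinitely many vertices (equivalently, edges) of the growing tree, which is precisely what the Borel--Cantelli estimate above is designed to handle.
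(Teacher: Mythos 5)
Your overall strategy is the same as the paper's: you apply Theorem~\ref{thm:high_prob_detection} with the same statistic $f(T)=\frac{1}{|T|}\min_{v}\max_{u\in\cN_v(T)}|(T,v)_{u\downarrow}|$ (the normalized anti-centrality of a centroid), you correctly dismiss the normalized maximum degree as unusable for UA, you realize absolute continuity (not mere atomlessness) is what is needed, and you reduce the problem to fixed-vertex pendant-subtree sizes viewed as P\'olya-urn martingales with $\Beta$-distributed limits, finishing with countable subadditivity over Lebesgue-null sets---exactly the shape of the paper's Theorems~\ref{thm:continuous_centrality},~\ref{thm:anti-centrality_representation} and Corollary~\ref{cor:continuous_centrality}. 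Where you genuinely deviate is the step that replaces $\min$ over all of $T_n$ by a $\min$ over a fixed finite collection. The paper invokes the persistent-centroid theorem of Jog and Loh~\cite{persistent_centrality} (the centroid $\theta(n)$ almost surely stabilizes), whereas you substitute a self-contained Borel--Cantelli argument showing only the weaker statement that the centroid is eventually confined to a finite random set $F$, which still suffices since $\frac1t\min_{v\in F}\Psi_t(v)\to\min_{v\in F}L_v=\inf_v L_v$. That is a legitimate and appealing alternative, since it avoids importing an external structural theorem.

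However, as written, your Borel--Cantelli step has a gap. The bound you quote, $\P(\text{normalized subtree size of }w>\eps)\le(1-\eps)^{\tau_w-1}$, is the upper tail of the \emph{limiting} $\Beta(1,\tau_w-1)$ law, but your conclusion ``only finitely many vertices \emph{ever} occupy a large side'' requires a summable bound on $\P\bigl(\sup_{t\ge\tau_w}\tfrac1t|S_w(t)|>\eps\bigr)$, a supremum over the whole trajectory. These are not the same, and Doob's maximal inequality applied naively gives only $O(1/(\eps\tau_w))$, which is not summable. The repair is the same device the paper uses in the proof of Lemma~\ref{lem:B}: since $\tfrac1t|S_w(t)|$ is a $[0,1]$-valued martingale, optional stopping at the first time it exceeds $\eps$, combined with Markov's inequality applied to $1-M_\infty$, yields $\P(\sup_t M_t>\eps)\le\tfrac{2}{\eps}\,\P(M_\infty>\eps/2)$, which is again geometrically small in $\tau_w$ and restores summability. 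With that step made explicit (and the analogous adjustment with P\'olya urns of replacement matrix $\left(\begin{smallmatrix}2&0\\0&2\end{smallmatrix}\right)$ if you wanted the PA version), your argument is correct and constitutes a clean, citation-free alternative to the route through~\cite{persistent_centrality}.
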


\subsubsection{Estimating $\tcorr$ with vanishing relative error as $\tcorr \to \infty$} 
We now turn to questions of estimation. These are more involved than questions concerning detection and hence we restrict our attention to PA and UA trees, started from the seed $S = S_{2}$, the unique tree on two vertices. 
We focus on estimating $\tcorr$, which measures the amount of correlation between the two correlated trees; 
we leave the very interesting question of estimating the common subgraph $G_{\tcorr}$ for future work (see Section~\ref{sec:discussion}). 
Ideally, we would like good estimates of $\tcorr$ that hold with probability close to $1$.  
From~\eqref{eq:TV_bounded_1minuseps} it follows that this is only possible as $\tcorr \to \infty$. 

Our main result on estimation is that $\tcorr$ can be estimated with vanishing relative error as $\tcorr \to \infty$; this is the content of the following theorem.  

\begin{theorem}[Estimating $\tcorr$ in PA and UA trees]\label{thm:estimation} 
Let $S = S_{2}$ be the unique tree on two vertices 
and let $\left( T_{n}^{1}, T_{n}^{2} \right) \sim \CPA \left( n, \tcorr, S \right)$. 
There exists an estimator 
$\wh{t}_{n} \equiv \wh{t} \left( T_{n}^{1}, T_{n}^{2} \right)$, 
computable in polynomial time, such that 
\[
\lim_{\tcorr \to \infty} \liminf_{n \to \infty} 
\p \left( \left( 1 - \tfrac{\log \log \tcorr}{\sqrt{\log \tcorr}} \right) \tcorr 
\leq \wh{t}_{n} 
\leq \left( 1 + \tfrac{\log \log \tcorr}{\sqrt{\log \tcorr}} \right) \tcorr \right)
= 1. 
\]
The same result also holds when 
$\left( T_{n}^{1}, T_{n}^{2} \right) \sim \CUA \left( n, \tcorr, S \right)$. 
\end{theorem}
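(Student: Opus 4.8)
\emph{Proof plan.} We describe the approach for $\CPA$; the $\CUA$ case follows the same outline, with the degree-based statistics below replaced throughout by the balancedness/centrality statistics used for UA trees in~\cite{BEMR17}. The key structural fact is that both marginals of $\CPA(n,\tcorr,S_2)$ equal $\PA(n,S_2)$, so no functional of a single tree can carry information about $\tcorr$: any estimator must use the joint law of $(T_n^1,T_n^2)$, and the object linking the two trees is the common subgraph $G_{\tcorr}$, which has exactly $\tcorr$ vertices. Our target is therefore the \emph{size} of this common core. Since the relevant total variation distance is non-increasing in $n$ and the evolution of the trees past time $n$ can be simulated, it is enough to exhibit the estimator in the regime $n\to\infty$ (with $\tcorr$ fixed --- matching the iterated-limit order in the statement), and in that regime the useful functionals become essentially deterministic given the tree: for PA trees, normalized degrees $\deg_{T_n^i}(v)/\sqrt n$ and, for any fixed rooting, normalized subtree sizes converge almost surely by standard P\'olya urn arguments. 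We may thus treat the limiting subtree fractions $\beta^i_v := \lim_{n\to\infty}|\mathrm{subtree}(v)|/n$ (in the appropriate rooting) as observable, up to an error negligible once $n$ is large relative to $\tcorr$.

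\emph{Anchoring a common vertex.} The first step of the construction is to locate a single vertex of $G_{\tcorr}$ common to both trees. For $\PA(n,S_2)$ with $n$ large the maximum degree is attained by one of the two seed vertices, and which of the two is decided by comparing their limiting normalized degrees; these limits are already pinned down to within a $1+o_{\tcorr}(1)$ relative factor by time $\tcorr$, so with probability tending to $1$ (first $n\to\infty$, then $\tcorr\to\infty$) the maximum-degree vertex $r^1$ of $T_n^1$ and the maximum-degree vertex $r^2$ of $T_n^2$ are the \emph{same} vertex of $G_{\tcorr}$, and we root both trees there. (For $\CUA$ one instead anchors at a centroid-type vertex, shown to lie in $G_{\tcorr}$ with probability $\to 1$ as $\tcorr\to\infty$ via balancedness estimates.)

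\emph{The estimator.} Rooted at $r^i$, the tree $T_n^i$ is $G_{\tcorr}$ (so rooted) together with a forest of pendant subtrees hung along it, and a vertex lies in $G_{\tcorr}$ exactly when its entire root-path does. The plan is to reconstruct $V(G_{\tcorr})$ top-down: given a vertex $v\in G_{\tcorr}$ matched across the two trees, match its children by comparing their limiting subtree fractions, using that a $G_{\tcorr}$-child $w$ satisfies $\beta^1_w,\beta^2_w=(s_w/\tcorr)\big(1+o_{\tcorr}(1)\big)$, where $s_w$ is the size of $w$'s subtree inside $G_{\tcorr}$, while the intrinsic discrepancy $|\beta^1_w-\beta^2_w|$ --- governed by a P\'olya urn started with $\asymp s_w$ out of $\asymp\tcorr$ --- is small relative to the separation between children of sufficiently different sizes; children carrying large subtrees are placed by recursing into them. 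We take $\wh{t}_n$ to be the size of the common core reconstructed in this way --- equivalently, the largest $k$ for which the $k$ ``oldest-looking'' vertices of the two trees admit a consistent matching. (An essentially equivalent bookkeeping: once the core is identified, estimate $\log\tcorr$ by $\log n$ minus an average of per-vertex age proxies over the core, and take $\wh{t}_n=e^{\wh{\log\tcorr}}$.) Every ingredient --- degrees, subtree sizes, the top-down matching --- is computable in polynomial time.

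\emph{Error analysis: the main obstacle.} The reconstruction can only \emph{undershoot} $\tcorr$ by vertices of $G_{\tcorr}$ it fails to place, and only \emph{overshoot} by post-$\tcorr$ vertices it wrongly absorbs into the core; both kinds of error are confined to a thin ``boundary layer'' of $G_{\tcorr}$, consisting of vertices whose age proxy is indistinguishable from that of a vertex born just after time $\tcorr$. The crux will be to show this layer has relative size only $O\!\big(\tfrac{\log\log\tcorr}{\sqrt{\log\tcorr}}\big)$ with probability $\to 1$: one must push the top-down matching through essentially all $\Theta(\log\tcorr)$ levels of $G_{\tcorr}$ and control how the small per-level matching errors accumulate, and, dually, rule out that independent post-$\tcorr$ growth imitates part of the core outside this layer --- the $\sqrt{\log\tcorr}$ reflecting that subtree-fraction fluctuations along root-paths of $G_{\tcorr}$ are only $\Theta(\log\tcorr)$-effectively independent, so that averaging per-vertex age estimates over $G_{\tcorr}$ determines $\log\tcorr$ only up to additive error of order $1/\sqrt{\log\tcorr}$ (exponentiating to the stated multiplicative window for $\wh{t}_n$). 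Making this boundary-layer estimate rigorous, uniformly in $n$ before letting $\tcorr\to\infty$, is the main technical work; the PA and UA cases differ only in which proxy --- degrees versus balancedness --- is used, and in the corresponding concentration inputs.
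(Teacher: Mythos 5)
Your plan diverges fundamentally from the paper's proof, and the point at which it diverges is exactly where it runs into trouble. The paper's estimator never attempts to reconstruct $V(G_{\tcorr})$ or count its vertices. It anchors at the (persistent) centroid, matches only the $K \asymp \log\tcorr$ largest pendent subtrees of the centroid across the two trees by size-ranking, and then estimates $1/\tcorr$ directly from the \emph{fluctuations}: for each matched pair it forms $Y_n(\ell) = (X_n^1(\ell)-X_n^2(\ell))^2 / \bigl(2X_n^1(\ell)(1-X_n^1(\ell))\bigr)$, whose conditional mean given $T_{\tcorr}$ is $\approx 1/\tcorr$ because $(Z^1(\ell),Z^2(\ell))$ are i.i.d.\ Beta with total parameter $\approx\tcorr$, so $\E[(Z^1-Z^2)^2] = 2\mathrm{Var}(Z) \approx 2\,\tfrac{a}{t}\tfrac{b}{t}\cdot\tfrac{1}{t}$. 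Averaging $K$ such statistics brings the relative error down to $\asymp 1/\sqrt K \asymp 1/\sqrt{\log\tcorr}$ via Chebyshev. No per-vertex age reconstruction is ever performed; the paper explicitly defers the problem of estimating $G_{\tcorr}$ itself to future work.

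Your route --- ``reconstruct the common core top-down and take $\wh t_n$ to be its size'' --- faces an obstruction that the boundary-layer heuristic does not resolve. A constant fraction of the $\tcorr$ vertices of $G_{\tcorr}$ are leaves (or near-leaves) of $T_{\tcorr}$, and conditionally on $T_{\tcorr}$, the post-$\tcorr$ growth in $T_n^1$ and $T_n^2$ is \emph{independent}. A leaf of $G_{\tcorr}$ that accumulates a small subtree in $T_n^1$ and a (different) small subtree in $T_n^2$ is, from the perspective of the matched pair, statistically indistinguishable from a vertex born shortly after $\tcorr$: the local structure carries essentially no cross-tree correlation. So the ``boundary layer'' you need to be of relative size $O(\log\log\tcorr/\sqrt{\log\tcorr})$ is, under any local/greedy matching, of relative size $\Theta(1)$; its thinness is precisely what would require solving the harder $G_{\tcorr}$-estimation problem. (Two smaller inaccuracies in the anchor step: for $\PA(n,S_2)$ the persistent hub is not necessarily one of the two seed vertices --- it is a random early vertex --- and the paper in fact anchors at the centroid in \emph{both} PA and UA, using Jog--Loh's persistent-centroid theorem, since subtree sizes, not degrees, are what the subsequent P\'olya-urn analysis needs.) To salvage your outline you would have to abandon counting the reconstructed core and instead aggregate a cross-tree fluctuation statistic over the (few, large, safely matched) subtrees, which is exactly the paper's mechanism.
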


In other words, the relative error of the estimator $\wh{t}_{n}$ is bounded by $\log \log \left( \tcorr \right) / \sqrt{\log \tcorr}$, 
with probability close to $1$, for large enough $\tcorr$. 
The proof of Theorem~\ref{thm:estimation} is the most involved proof in this paper and so we give here a high level overview of the proof strategy. 
The proof works equally for both PA and UA trees, with only minor changes.

The main idea is to match several pairs of vertices across the two trees. To explain this more precisely, we introduce some notation. Let $\left\{T_{n} \right\}_{n \ge 2}$ be a sequence of growing trees with seed $S_2$. For a vertex $v$ in $T_{n}$, let $\tau(v)$ be the {\it timestamp} of $v$. That is, $\tau(v) = k$ if $v$ is not in $T_{k-1}$ but is introduced in $T_{k}$. The two initial vertices are labelled 
$1$ and $2$ arbitrarily. We say that a pair of vertices 
$\left(v^{1},v^{2} \right)$, where $v^{1} \in V \left(T_{n}^{1} \right)$ and $v^{2} \in V\left(T_{n}^{2} \right)$, is {\it correctly matched} if $\tau(v^1) = \tau(v^2)$. 

\textbf{Correctly matching the centroids.} Let $\theta^{1}(n), \theta^{2}(n)$ be the centroids of the trees $T_n^1$ and~$T_n^2$, respectively (we rigorously define the notion of a tree centroid in Section \ref{sec:detection_large_tcorr_specific}). Jog and Loh~\cite{persistent_centrality} proved that PA and UA trees with seed $S_{2}$ have the {\it persistent centroid property}: almost surely, there is a finite time $N$ such that for all $t \geq N$, we have that $\theta(t) = \theta(N) := \theta$. Using this fact, it follows that the pair $\left(\theta^{1}(n), \theta^{2}(n) \right)$ is correctly matched with probability tending to 1 as $\tcorr \to \infty$. Although we have so far only matched one pair of vertices in the two graphs, this provides an important frame of reference going forward, to analyze the correlated structure in the two trees. 

\textbf{Matching neighbors of the centroids.} 
Next, assuming the high-probability event $\theta^1(n) = \theta^2(n) = \theta$, we consider the rooted trees $\left(T_{n}^{1}, \theta \right)$ and $\left(T_{n}^{2},\theta \right)$, with the goal of matching many neighbors of the centroids. We do so by examining subtrees of the two rooted trees. 
Let $(T_n^i,\theta)_{v \downarrow}$ denote the subtree of the rooted tree $(T_n^i,\theta)$ that has root $v$. In other words, the tree $(T_n^i,\theta)_{v \downarrow}$ consists of all vertices $u$ such that the unique path connecting $u$ and $\theta$ passes through $v$. 

The idea behind matching neighbors of the centroid is the ``rich-get-richer'' property of subtrees. To illustrate this concept, suppose that for a tree growing via uniform attachment, we consider neighbors $u$ and $v$ of $\theta$, and $|(T_{\tcorr},\theta)_{u \downarrow} |$ is much larger than $|(T_{\tcorr},\theta)_{v \downarrow} |$. Under the UA rule, the probability that a new vertex joins a subtree is proportional to the number of vertices in the subtree; thus it is very unlikely that $|(T_t,\theta)_{v \downarrow}|$ exceeds $|(T_t,\theta)_{u \downarrow}|$ at any future time $t$. 
Similar behavior holds for PA trees as well. This intuition tells us that if $|(T_{\tcorr},\theta)_{u \downarrow} |$ is much larger than $|(T_{\tcorr},\theta)_{v \downarrow} |$, then we should have $|(T_n^i, \theta)_{u \downarrow} | > | (T_n^i,\theta)_{v \downarrow} |$ for both $i = 1$ and $i = 2$. 

Taking this idea one step further, we may expect that if the largest $R$ subtrees (for some positive integer $R$) of $(T_{\tcorr}, \theta)$ do not have sizes that are too close to each other, then these should be the same $R$ largest subtrees in $(T_n^i,\theta)$, for both $i = 1$ and $i = 2$. Therefore, we will match the neighbors of the centroids with the largest subtrees, the second largest subtrees, and so on, until the $R$th largest subtrees. We indeed prove that such a matching procedure for the neighbors of the centroids, based on subtree ranking, gives us all correct matchings with probability tending to 1 as $\tcorr \to \infty$.

\textbf{Constructing estimators for $\tcorr$.} 
Suppose that $\left(v^{1},v^{2} \right)$ are a correctly matched pair of neighbors of the centroid. We can construct an estimator for $\tcorr$ by comparing the subtree sizes corresponding to $v^1$ and $v^2$. The evolution of subtree sizes in PA and UA trees exhibit the following stability property: the fraction of vertices that lie in a particular subtree has a limit almost surely as the size of the tree tends to infinity. This follows from viewing the subtree growth as a P\'{o}lya urn process. 

We then expect that as we send $\tcorr \to \infty$, the difference between $\frac{1}{n} |(T_n^1,\theta)_{v^1 \downarrow} |$ and $\frac{1}{n} |(T_n^2,\theta)_{v^2 \downarrow} |$ is close to 0, even for large $n$. We exploit this property to construct a nearly unbiased estimator for $\tcorr$ based on the difference between $\frac{1}{n} |(T_n^1,\theta)_{v^1 \downarrow} |$ and $\frac{1}{n} |(T_n^2,\theta)_{v^2 \downarrow} |$. However, the variance of the estimator corresponding to the matched pair $(v^1,v^2)$ is not small enough to ensure that we can estimate $\tcorr$ with vanishing relative error. 
This is the reason for matching many pairs of points: we can then average the estimators corresponding to many correctly matched pairs of vertices, in order to reduce the variance. We finish by applying Chebyshev's inequality.

\subsection{Related work} \label{sec:related} 

Though this paper is, to the best of our knowledge, 
the first to introduce this model of correlated randomly growing graphs, 
it is closely related to several well-studied problems in the literature.

\paragraph{Graph matching and the correlated Erd\H{o}s-R\'{e}nyi model.} 
Perhaps the most well-known related problem is graph matching. In this setting, we are given two graphs and we want to find a labeling on the vertices that maximizes the similarity between the two graphs. The applications of this problem are numerous, spanning data privacy in social networks~\cite{narayanan2009anonymizing,pedarsani2011privacy}, protein-protein interaction networks~\cite{singh2008global}, computer vision~\cite{cho2012progressive}, 
pattern recognition~\cite{conte2004thirty,berg2005shape}, machine learning~\cite{cour2007balanced}, and more. 
This problem is NP-hard in the worst case 
(see, e.g., the surveys~\cite{conte2004thirty,livi2013graph}); 
in fact, it is even hard to approximate under some hardness assumptions~\cite{o2014hardness}. 
However, most graphs occurring in applications are not worst case, 
which motivates the study of the graph matching problem under 
probabilistic generative models. 

The simplest random graph model is the Erd\H{o}s-R\'enyi random graph $G(n,p)$, 
which has $n$ vertices and every pair is connected with probability $p$, 
independently of any other pair. 
Thus naturally the simplest model of correlated random graphs involves two Erd\H{o}s-R\'enyi random graphs that are correlated. 
This model was introduced by Pedarsani and Grossglauser~\cite{pedarsani2011privacy} 
and has been widely studied in the past decade in several communities, including computer science, network science, information theory, probability, and statistics~\cite{yartseva2013performance,lyzinski2014seeded,kazemi2015growing,kazemi2015can,korula2014efficient,cullina2016improved,cullina2017exact,barak2019,mossel2019seeded,ding2018efficient,fan2019spectral1,fan2019spectral2,ganassali2020tree}. 
These works have resulted in obtaining the fundamental information-theoretic limits~\cite{cullina2016improved,cullina2017exact} and recent algorithmic advances~\cite{barak2019,mossel2019seeded,ding2018efficient,fan2019spectral1,fan2019spectral2}. 
The model of correlated randomly grown graphs introduced in this paper is fundamentally different from the correlated Erd\H{o}s-R\'enyi model 
and thus it is not possible to directly compare our results with those in these papers. 
Importantly, while Erd\H{o}s-R\'enyi random graphs have no inherent structure, 
the model of correlated randomly grown graphs is motivated by the fact that many real-world networks form via a growth process.

In the correlated Erd\H{o}s-R\'enyi model the pair $\left(G^{1}, G^{2} \right)$ is constructed as follows. 
First, sample an unobserved base graph $G^0 \sim G(n,p)$. 
Next, conditioned on $G^0$, construct $G^1$ and $G^2$ independently by including any given edge with probability $q$. 
Both $G^1$ and $G^2$ are distributed according to $G(n,pq)$, and they are correlated in the sense that the presence of specified edges are correlated. There is also a ``true'' labelling of the vertices in $G^1$ and $G^2$, given by inheriting the labels of the unobserved base graph $G^0$. The goal of the graph matching problem 
is to recover this true labelling (up to isomorphism). 
There is also a modified version of the problem in which the algorithm has side information in the form of a small number of matched vertices.

The problems of detecting and estimating correlation in a pair of randomly grown graphs can be viewed as an analog of the graph matching problem (without side information) for these kind of graphs. We highlight several papers in the graph matching literature that have related ideas. Barak, Chou, Lei, Schramm, and Sheng study the problem of detecting correlated structure for a pair of Erd\H{o}s-R\'{e}nyi graphs~\cite{barak2019}. Their approach to solving the detection problem in certain regimes relies on subgraph counts. Our approach is vastly different, relying on extremal statistics of the graphs (e.g., maximum degree, minimum anti-centrality) and general balancedess properties (all of which may be computed efficiently). Kazemi, Yartseva, and Grossglauser study a variant of the graph matching problem in a pair of correlated Erd\H{o}s-R\'{e}nyi graphs when there is {\it partial overlap} between the graphs; that is, there are vertices in either graph that are not part of any correlated structure \cite{kazemi2015can}. Our model of correlated randomly grown graphs has a similar characteristic: the subgraph of the shared history, $G_{\tcorr}$, is common, and the other vertices in the pair of graphs do not necessarily correspond to each other if they were born after time $\tcorr$. Their goal is somewhat different from ours; they aim to estimate the common part, with knowledge of the amount of overlap. On the other hand, we focus on estimating the amount of correlation, or equivalently, the size of the common part. 

Korula and Lattanzi study a version of the graph matching problem for preferential attachment graphs~\cite{korula2014efficient}, though the manner in which they generate a pair of correlated 
graphs is fundamentally different from our model. Similar to the process of generating correlated Erd\H{o}s-R\'{e}nyi graphs, they generate a base graph $G^0$ according to preferential attachment and independently construct $G^1$ and $G^2$ by including a given edge in $G^0$ with some fixed probability. However, in this case $G^1$ and $G^2$ are not distributed according to preferential attachment, which is unnatural. 
We also note that they require the use of side information in their algorithm, while we do not assume this, since it is possible to match key information in our case (e.g., matching the centroid).

\paragraph{Inferring the history of a dynamic graph process from a snapshot.} Our work naturally fits under this broad category in terms of the problem scope and the techniques used. There have been a variety of works of this theme in recent years, 
including rumor source estimation~\cite{shah2010detecting,sz11rumor,shah2016finding,fanti2015spy,fanti2016irregular,fanti2017hide}, 
the influence of the seed in randomly growing graphs~\cite{BMR15,CDKM15,BEMR17}, and 
finding the earliest vertices in randomly growing graphs~\cite{BDL16,LP19,DR19}. 
Applications include reconstructing the evolution of biological networks~\cite{navlakha2011network}. 

The works on the influence of the seed in randomly growing graphs~\cite{BMR15,CDKM15,BEMR17} are particularly relevant to our work---we refer to Section~\ref{sec:results} for a discussion of these detailed connections. These connections are further touched upon in the proofs. 

The notion of centrality in trees plays a significant role in our techniques (for the results specific to PA and UA trees), and in many of the cited works. Shah and Zaman formulated the notion of {\it rumor centrality} for maximum likelihood estimation of the source of a diffusion on a tree~\cite{shah2010detecting,sz11rumor,shah2016finding}. 
Bubeck, Devroye, and Lugosi introduced a related centrality measure based on subtree sizes to obtain confidence intervals for the first vertex in a PA or UA tree~\cite{BDL16}. This centrality measure lends itself to an easier analysis with PA and UA trees, since the evolution of subtree sizes can be understood as P\'{o}lya urn processes. Subsequently, this centrality measure was used by Lugosi and Pereira~\cite{LP19} and by Devroye and Reddad~\cite{DR19} for the more general problem of obtaining confidence intervals for the seed graph of a UA tree, as well as for the earliest vertices. Jog and Loh showed that UA trees and PA trees exhibit the persistent centroid property: the location of the centroid (with respect to the centrality measure of~\cite{BDL16}) only changes finitely many times as the number of vertices in the tree increases~\cite{persistent_centrality, jog_loh_sublinear_pa}. 
We are able to leverage these previous results on centrality 
in our study of the detection and estimation problems for PA and UA trees.

Bhamidi, Jin, and Nobel studied a variant of the preferential attachment model with a change point~\cite{bhamidi2018change} (see also~\cite{banerjee2018fluctuation})---this shares some similar elements to our model but is fundamentally different. In their model, they examine a single PA tree where, at some time point, the attachment rule changes. The goal is to estimate this change point, and to do so, they use knowledge of the history of the graph. Our problem can be viewed as a change point problem as well, but in a much different sense. Both of the randomly grown graphs have the marginal distribution of a standard randomly grown graph, and the correlation time $\tcorr$ may be interpreted as a change point when the two growing graphs begin to evolve independently. Also, we observe a single snapshot, rather than the entire history, which is a more appropriate and interesting setting for our problem.

Finally, there are many important aspects of modeling network formation that are beyond the scope of the present article. 
We refer the reader to the recent work of Overgoor, Benson, and Ugander~\cite{overgoor2019choosing}, which unifies a host of network formation models using a framework based on discrete choice theory. 
(See also the references therein for an overview of the related literature.) 
Our hope is that the novel phenomena presented in this article 
can contribute to the broader discussion on modeling the formation of multiple correlated networks.

\subsection{Discussion and open problems} \label{sec:discussion} 

This paper initiates the study of correlated randomly growing graphs and leaves open several problems. We end the introduction by discussing possible future directions. 

\begin{itemize}
\item \textbf{Estimating the correlation time $\tcorr$.} We have shown (in PA and UA trees) that the correlation time $\tcorr$ can be estimated with vanishing relative error as $\tcorr \to \infty$. It would be interesting to understand the limits of how well $\tcorr$ can be estimated. 
\item \textbf{Estimating the common subgraph $G_{\tcorr}$.} It is of great interest to estimate the common subgraph $G_{\tcorr}$ shared by the two correlated randomly growing graphs. This question can be formalized in several ways: for instance, we might want to find a large subgraph of $G_{\tcorr}$ or a small supergraph of $G_{\tcorr}$, with probability close to $1$. 
Recent work by Lugosi and Pereira~\cite{LP19} and Devroye and Reddad~\cite{DR19} (following work by Bubeck, Devroye, and Lugosi~\cite{BDL16}) 
has studied seed-finding algorithms for UA trees. 
We suspect that their results and the techniques they have developed will be useful for estimating $G_{\tcorr}$. 
\item \textbf{Other models of randomly growing graphs.} 
In our work we focus on PA and UA trees when studying specific models of randomly growing graphs. Our general result in Theorem~\ref{thm:positive_power_detection} says that correlation can be detected if~\eqref{eq:influence_seed} holds. This is a much weaker form of the influence of the seed than is established in Theorems~\ref{thm:PA_seed} and~\ref{thm:UA_seed} for PA and UA trees. 
Are there models of randomly growing graphs for which it is possible to show that~\eqref{eq:influence_seed} holds even if showing the analogue of Theorems~\ref{thm:PA_seed} and~\ref{thm:UA_seed} is currently out of reach? 
\item \textbf{Large amounts of correlation.} In our work we have focused on $\tcorr$ being fixed compared to the graph size $n$. What if $\tcorr$ is a function of $n$? This introduces much more correlation among the two graphs 
and it would be interesting to understand how much stronger results can be obtained. 
\item \textbf{Three or more correlated graphs.} The introduced model of correlated randomly growing graphs naturally extends to three or more correlated graphs. How do the questions of detection and estimation change in this setting? 
For instance, is it much easier to estimate the common subgraph $G_{\tcorr}$ if we have samples from many correlated graphs?
\end{itemize}

\subsection{Outline} \label{sec:outline} 

The rest of the paper is organized as follows. We start with proving Theorem~\ref{thm:positive_power_detection} in Section~\ref{sec:detection_seed}. 
We then present explicit algorithmic proofs of Corollaries~\ref{thm:detection_PA} and~\ref{thm:detection_UA} in Section~\ref{sec:detection_specific}. 
In Section~\ref{sec:detection_large_tcorr} we turn to detecting correlation with probability going to $1$ as $\tcorr \to \infty$ 
and prove Theorems~\ref{thm:high_prob_detection},~\ref{thm:high_prob_detection_PA}, and~\ref{thm:high_prob_detection_UA}. 
Finally, we turn to estimating $\tcorr$ as $\tcorr \to \infty$. 
We first provide an initial, coarse estimate of~$\tcorr$ in Section~\ref{sec:estimation_tcorr_coarse}; this section contains the main ideas of our estimators. 
However, further ideas are needed in order to obtain an estimator of $\tcorr$ which has vanishing relative error as $\tcorr \to \infty$: 
these, and a proof of Theorem~\ref{thm:estimation}, can be found in Section~\ref{sec:estimation_large_tcorr}. 

\section{Detecting correlation when the seed has an influence} \label{sec:detection_seed} 

In this section we prove Theorem~\ref{thm:positive_power_detection}. To abbreviate notation, in the following we denote by $\p_{0}$ the underlying probability measure when 
$\left( G_{n}^{1}, G_{n}^{2} \right) \sim \cG \left( n, S \right)^{\otimes 2}$ 
and by $\p_{\tcorr}$ the underlying probability measure when 
$\left( G_{n}^{1}, G_{n}^{2} \right) \sim \cCG \left(n, \tcorr, S \right)$. 
Furthermore, 
for a graph $H$  
we denote by $\p_{H}$ the probability measure on the sequence of randomly growing graphs $\left\{ G_{n} \right\}_{n \geq \left| H \right|}$ with seed $H$ and attachment rule $\cG$. 

\begin{proof}[Proof of Theorem~\ref{thm:positive_power_detection}] 
From~\eqref{eq:influence_seed} it follows that there exist $\delta > 0$ 
and a sequence $\left\{ \cE_{n} \right\}_{n \geq \tcorr}$ such that 
\begin{equation}\label{eq:TV_diff_delta}
\left| \p_{G} \left( G_{n} \in \cE_{n} \right) - \p_{G'} \left( G_{n} \in \cE_{n} \right) \right| \geq \delta 
\end{equation}
for every $n \geq \tcorr$. Turning now to a pair of graphs 
$\left( G_{n}^{1}, G_{n}^{2} \right)$, with $n \geq \tcorr$, we consider the event 
\[
\left\{ G_{n}^{1} \in \cE_{n} \right\} 
\cap 
\left\{ G_{n}^{2} \in \cE_{n} \right\}. 
\]
Under the null hypothesis $H_{0}$, the two graphs $G_{n}^{1}$ and $G_{n}^{2}$ are independent, and thus we have that 
\[
\p_{0} \left( G_{n}^{1} \in \cE_{n}, G_{n}^{2} \in \cE_{n} \right) 
= \p_{0} \left( G_{n}^{1} \in \cE_{n} \right) \p_{0} \left( G_{n}^{2} \in \cE_{n} \right) 
= \left( \p_{S} \left( G_{n} \in \cE_{n} \right) \right)^{2}.
\]
Note also that by conditioning on the graph at time $\tcorr$ 
and using the fact that the sequential attachment rule $\cG$ is Markov, 
we have that 
\[
\mu := \p_{S} \left( G_{n} \in \cE_{n} \right) 
= \sum_{H\, :\, \left| H \right| = \tcorr} \p_{H} \left( G_{n} \in \cE_{n} \right) \p_{S} \left( G_{\tcorr} = H \right),
\]
where the sum is over all graphs on $\tcorr$ vertices. 

Turning 
to the alternative hypothesis $H_{\tcorr}$, we can again condition on the graph at time $\tcorr$, and use the fact $G_{n}^{1}$ and $G_{n}^{2}$ are independent conditioned on the graph at time $\tcorr$. We thus obtain that 
\begin{multline*}
\p_{\tcorr} \left( G_{n}^{1} \in \cE_{n}, G_{n}^{2} \in \cE_{n} \right) \\
\begin{aligned}
&=  \sum_{H\, :\, \left| H \right| = \tcorr} 
\p_{\tcorr} \left( G_{n}^{1} \in \cE_{n}, G_{n}^{2} \in \cE_{n} \, \middle| \, G_{\tcorr}^{1} = G_{\tcorr}^{2} = H \right) 
\p_{S} \left( G_{\tcorr} = H \right) \\
&= \sum_{H\, :\, \left| H \right| = \tcorr} 
\p_{\tcorr} \left( G_{n}^{1} \in \cE_{n} \, \middle| \, G_{\tcorr}^{1} = G_{\tcorr}^{2} = H \right) 
\p_{\tcorr} \left( G_{n}^{2} \in \cE_{n} \, \middle| \, G_{\tcorr}^{1} = G_{\tcorr}^{2} = H \right) 
\p_{S} \left( G_{\tcorr} = H \right) \\
&= \sum_{H\, :\, \left| H \right| = \tcorr} 
\left( \p_{H} \left( G_{n} \in \cE_{n} \right) \right)^{2} \p_{S} \left( G_{\tcorr} = H \right).
\end{aligned}
\end{multline*}
Altogether, we have thus obtained that 
\begin{multline*}
\p_{\tcorr} \left( G_{n}^{1} \in \cE_{n}, G_{n}^{2} \in \cE_{n} \right)
- \p_{0} \left( G_{n}^{1} \in \cE_{n}, G_{n}^{2} \in \cE_{n} \right) \\
\begin{aligned}
&= \sum_{H\, :\, \left| H \right| = \tcorr} 
\left( \p_{H} \left( G_{n} \in \cE_{n} \right) \right)^{2} \p_{S} \left( G_{\tcorr} = H \right)
- \left( \sum_{H\, :\, \left| H \right| = \tcorr} 
\p_{H} \left( G_{n} \in \cE_{n} \right) \p_{S} \left( G_{\tcorr} = H \right) \right)^{2} \\
&= \sum_{H\, :\, \left| H \right| = \tcorr}
\p_{S} \left( G_{\tcorr} = H \right) \left( \p_{H} \left( G_{n} \in \cE_{n} \right) - \mu \right)^{2}.
\end{aligned}
\end{multline*}
Note that all terms in this sum are nonnegative. 
Dropping all terms except those corresponding to $G$ and $G'$, we have that 
\begin{multline*}
\p_{\tcorr} \left( G_{n}^{1} \in \cE_{n}, G_{n}^{2} \in \cE_{n} \right)
- \p_{0} \left( G_{n}^{1} \in \cE_{n}, G_{n}^{2} \in \cE_{n} \right) \\
\geq 
\p_{S} \left( G_{\tcorr} = G \right) \left( \p_{G} \left( G_{n} \in \cE_{n} \right) - \mu \right)^{2} 
+ 
\p_{S} \left( G_{\tcorr} = G' \right) \left( \p_{G'} \left( G_{n} \in \cE_{n} \right) - \mu \right)^{2}.
\end{multline*}
By the condition that $G, G' \in \Range \left( \cG, S \right)$,  
we have that 
$\p_{S} \left( G_{\tcorr} = G \right)$ 
and 
$\p_{S} \left( G_{\tcorr} = G' \right)$ 
are both strictly positive, and note that these are not a function of $n$. 
By~\eqref{eq:TV_diff_delta} it follows that at least one of 
$\p_{G} \left( G_{n} \in \cE_{n} \right)$ and $\p_{G'} \left( G_{n} \in \cE_{n} \right)$ 
must be outside of the interval $\left( \mu - \delta / 2, \mu + \delta / 2 \right)$, 
showing that 
\[
\left( \p_{G} \left( G_{n} \in \cE_{n} \right) - \mu \right)^{2} 
+ 
\left( \p_{G'} \left( G_{n} \in \cE_{n} \right) - \mu \right)^{2} 
\geq \delta^{2} / 4.
\]
Putting everything together, we have shown that 
\[
\p_{\tcorr} \left( G_{n}^{1} \in \cE_{n}, G_{n}^{2} \in \cE_{n} \right)
- \p_{0} \left( G_{n}^{1} \in \cE_{n}, G_{n}^{2} \in \cE_{n} \right) 
\geq \frac{\delta^{2}}{4} \min \left\{ \p_{S} \left( G_{\tcorr} = G \right), \p_{S} \left( G_{\tcorr} = G' \right) \right\} 
\]
for every $n \geq \tcorr$, 
which implies that 
\[
\lim_{n \to \infty} \TV \left( \cCG \left( n, \tcorr, S \right), \cG \left( n, S \right)^{\otimes 2} \right) 
\geq 
\frac{\delta^{2}}{4} \min \left\{ \p_{S} \left( G_{\tcorr} = G \right), \p_{S} \left( G_{\tcorr} = G' \right) \right\} 
> 0. \qedhere
\]
\end{proof}

\section{Detecting correlation explicitly} \label{sec:detection_specific} 

In this section we give alternative proofs to Corollaries~\ref{thm:detection_PA} and~\ref{thm:detection_UA} that are algorithmic: they explicitly specify (efficiently computable) statistics that detect correlation in PA and UA trees. 
We first prove Corollary~\ref{thm:detection_PA} in Section~\ref{sec:detection_PA} and then turn to proving Corollary~\ref{thm:detection_UA} in Section~\ref{sec:detection_UA}. 

\subsection{Detecting correlation in correlated PA trees} \label{sec:detection_PA} 

Inspired by~\cite{BMR15}, 
we will prove Corollary~\ref{thm:detection_PA} by studying the maximum degrees in the two trees. 
For a tree~$T$, 
let $d_{T} (v)$ denote the degree of vertex $v$ in $T$, 
and let $\Delta(T)$ denote the maximum degree in~$T$. 
We will show that the pair of maximum degrees 
$\left( \Delta \left( T_{1} \right), \Delta \left( T_{2} \right) \right)$ 
has a different distribution 
under $\left( T_{1}, T_{2} \right) \sim \CPA \left( n, \tcorr, S \right)$ 
than 
under $\left( T_{1}, T_{2} \right) \sim \PA \left( n, S \right)^{\otimes 2}$, 
even in the limit as $n \to \infty$. 

Our starting point is the following lemma from~\cite{BMR15}, 
which determines how the tail behavior\footnote{Throughout the paper we use standard asymptotic notation, for instance, $f(t) \sim g(t)$ as $t\to \infty$ if $\lim_{t\to\infty} f(t)/g(t) = 1$.} of the maximum degree in a PA tree depends on the initial seed $S$, in the limit as $n \to \infty$. 
\begin{lemma}[\cite{BMR15}]\label{lem:tail}
Let $S$ be a finite tree with at least two vertices. 
Define the quantity $m (S):= \left| \left\{ v \in V(S) : d_{S} (v) = \Delta \left(S \right) \right\} \right|$. 
Then 
\[
\lim_{n \to \infty} \p \left( \frac{\Delta \left( \PA \left( n, S \right) \right)}{\sqrt{n}}  > u \right) 
\sim 
m (S) c \left( \left| S \right|, \Delta \left( S \right) \right)
u^{1 - 2\left|S\right| + 2\Delta \left(S \right)} 
\exp \left( - u^{2} / 4 \right)
\]
as $u \to \infty$, 
where the constant $c$ is defined as 
\[
c(a,b) := \frac{\Gamma \left( 2a - 2 \right)}{2^{b-1} \Gamma \left( a - 1/2 \right) \Gamma \left( b \right)}.
\]
\end{lemma}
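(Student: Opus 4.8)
The plan is to reduce this statement about the maximum degree to one about the limiting rescaled degree of a single vertex, and then to handle the latter with a P\'olya-urn computation. Write $s:=|S|$, and for a vertex $v$ of the growing tree let $\xi_v := \lim_{n\to\infty} d_{T_n}(v)/\sqrt{n}$ with $T_n\sim\PA(n,S)$ (the limit exists a.s., as recalled below). I expect $\Delta(T_n)/\sqrt n$ to converge a.s.\ to $\sup_v \xi_v$, and the upper tail of $\sup_v\xi_v$ to be governed, as $u\to\infty$, by the finitely many seed vertices --- indeed by the $m(S)$ seed vertices of degree $\Delta(S)$, since a vertex born at time $t$ with degree $d$ at birth will turn out to satisfy $\P(\xi_v > u)\sim c(t,d)\,u^{1-2t+2d}e^{-u^2/4}$, an expression whose polynomial exponent increases in $d$ and decreases in $t$. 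So the first task is this single-vertex tail asymptotic, and the second is to upgrade a union bound over vertices into an exact equivalence.

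For the single-vertex estimate, fix $v\in V(S)$ with $d:=d_S(v)$ and set $D_t:=d_{T_t}(v)$ for $t\ge s$. Under the PA rule, given $\mathcal F_t$ we have $D_{t+1}=D_t+1$ with probability $D_t/(2(t-1))$ and $D_{t+1}=D_t$ otherwise. The crucial point is that the rising factorial moments telescope: writing $x^{(k)}:=x(x+1)\cdots(x+k-1)$, a one-line computation gives $\E[D_{t+1}^{(k)}\mid\mathcal F_t]=D_t^{(k)}\cdot\frac{2(t-1)+k}{2(t-1)}$, hence
\[
\E\big[D_t^{(k)}\big]=\frac{\Gamma(d+k)}{\Gamma(d)}\cdot\frac{\Gamma(s-1)\,\Gamma(t-1+k/2)}{\Gamma(s-1+k/2)\,\Gamma(t-1)}.
\]
Taking $k=2$ shows the nonnegative martingale $D_t/\sqrt t$ is bounded in $L^2$, so it converges a.s.\ and in every $L^p$ to a limit $\xi_v=\xi_{s,d}$; passing to the limit above (justified by the $L^p$ bounds) yields $\E[\xi_{s,d}^k]=\frac{\Gamma(s-1)\,\Gamma(d+k)}{\Gamma(d)\,\Gamma(s-1+k/2)}$ for every $k\ge 0$.

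It remains to read the tail of $\xi_{s,d}$ off its moments. These satisfy Carleman's condition ($m_{2k}^{1/(2k)}\asymp\sqrt{k}$), so they determine the law. To extract the tail I would match the moment sequence against that of the candidate densities $u\mapsto \mathrm{const}\cdot u^{q}e^{-u^2/4}$: a law with $\P(\xi>u)\sim c\,u^{p}e^{-u^2/4}$ has $k$-th moment $\sim c\,2^{k+p}\,\Gamma(\tfrac{k+p}{2}+1)$ as $k\to\infty$, and the Legendre duplication formula $\Gamma(d+k)=\frac{2^{d+k-1}}{\sqrt\pi}\Gamma(\tfrac{d+k}{2})\Gamma(\tfrac{d+k+1}{2})$, combined with $\Gamma(a+k/2)/\Gamma(b+k/2)\sim(k/2)^{a-b}$, shows that $\E[\xi_{s,d}^k]$ is exactly of this form with $p=1-2s+2d$ and $c=\frac{\Gamma(s-1)\,2^{2s-d-2}}{\sqrt\pi\,\Gamma(d)}$; a second use of duplication (with argument $s-1$) rewrites this as $c(s,d)=\frac{\Gamma(2s-2)}{2^{d-1}\Gamma(s-1/2)\Gamma(d)}$. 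Making this heuristic matching rigorous --- via a Mellin--Barnes representation of the density of $\xi_{s,d}$ and a steepest-descent estimate, or by identifying $\xi_{s,d}$ with an explicit generalized-Gamma-type variable whose tail is known --- is the technical core and the step I expect to be the main obstacle.

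Finally I would assemble the pieces. One needs $\Delta(T_n)/\sqrt n\to\sup_v\xi_v$ a.s.: the lower bound is immediate from $d_{T_n}(v)/\sqrt n\to\xi_v$, and the matching upper bound rules out transient domination by late-born vertices, which follows from a maximal inequality / moment argument as in the known analysis of the Barab\'asi--Albert tree. Since $\sup_v\xi_v$ is absolutely continuous, $\P(\Delta(T_n)/\sqrt n>u)\to\P(\sup_v\xi_v>u)$ for every $u$. For the $u\to\infty$ behavior: the union bound gives $\P(\sup_v\xi_v>u)\le\sum_v\P(\xi_v>u)$, and the contributions of seed vertices of degree $<\Delta(S)$ and of non-seed vertices are of strictly smaller polynomial order (the non-seed sum being both finite and lower-order, controlled via the joint factorial-moment bounds on degrees); a matching lower bound comes from inclusion--exclusion over the $m(S)$ seed vertices of degree $\Delta(S)$, the pairwise terms $\P(\xi_v>u,\xi_{v'}>u)$ being negligible because two seed vertices cannot simultaneously carry an order-$\sqrt n$ fraction of the (linearly many) half-edges, which forces these joint tails to decay strictly faster. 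Combining the single-vertex asymptotic with $d=\Delta(S)$ and multiplicity $m(S)$ gives the claimed equivalence.
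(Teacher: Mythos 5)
The paper does not contain its own proof of this lemma; it is quoted directly from \cite{BMR15}, so there is no internal argument to compare against. That said, your sketch follows what I believe to be the route in the reference: pass from $\Delta(T_n)/\sqrt n$ to the almost-sure limit $\sup_v\xi_v$ via M\'ori's martingale, compute the single-vertex tail, and isolate the $m(S)$ maximum-degree seed vertices as dominant. Your factorial-moment recursion and the resulting limit moments $\E[\xi_{s,d}^k]=\frac{\Gamma(s-1)\Gamma(d+k)}{\Gamma(d)\,\Gamma(s-1+k/2)}$ are correct, and I verified that your duplication-formula bookkeeping does recover $p=1-2s+2d$ and the constant $c(s,d)$ as stated.

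The genuine gap, which you flag as the technical core, is the passage from moment asymptotics to tail asymptotics: matching the $k\to\infty$ growth of the moments against a candidate $c\,u^{p}e^{-u^2/4}$ tail is only a consistency check, since two moment-determinate laws can share the leading order of $m_k$ while having different tails. The cleanest repair --- which you mention but do not execute --- is to read off from the \emph{exact} moment sequence the identity $\xi_{s,d}^2/4 \stackrel{d}{=} G\cdot B$ for independent $G\sim\Ga(d/2)$ and $B\sim\Beta\bigl(\tfrac{d+1}{2},\tfrac{2s-d-3}{2}\bigr)$ (the Beta degenerating to the constant $1$ when $s=2$, $d=1$); an elementary Laplace expansion of $\p(GB>v)$ near $b=1$, followed by the substitution $v=u^2/4$, reproduces exactly $c(s,d)\,u^{1-2s+2d}e^{-u^2/4}$ and renders the Mellin--Barnes route unnecessary. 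Two smaller issues remain in the union-bound/inclusion-exclusion step: you need bounds on $\p(\xi_v>u)$ that are uniform enough in the birth time of $v$ to sum over late-born vertices, and the reason you give for the negligibility of $\p(\xi_v>u,\,\xi_{v'}>u)$ is not right as stated (two vertices can each carry $\Theta(\sqrt n)$ degree; the degree sum is $\Theta(n)$), though the conclusion holds because the joint tail decays like $e^{-u^2/2}$, which can be extracted from the joint factorial moments of $D_t(v)$ and $D_t(v')$.
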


We first prove Corollary~\ref{thm:detection_PA} in the special case when 
the seed tree is $S = S_{2}$, the unique tree on two vertices. 
This is to simplify exposition and so that the main ideas are clear; we then later show what needs to be changed for a general seed tree $S$. 
To abbreviate notation, 
in the following we denote by $\p_{0}$ the underlying probability measure when 
$\left( T_{1}(n), T_{2}(n) \right) \sim \PA \left( n, S \right)^{\otimes 2}$ 
and by~$\p_{\tcorr}$ the underlying probability measure when 
$\left( T_{1}(n), T_{2}(n) \right) \sim \CPA \left( n, \tcorr, S \right)$.

\begin{proof}[Proof of Corollary~\ref{thm:detection_PA} when $S = S_{2}$.] 
Given two trees on $n$ vertices, $T_{1}(n)$ and $T_{2}(n)$, define the event
\begin{equation}\label{eq:Aun_def}
A_{u,n} := \left\{ \frac{\Delta \left( T_{1} \left( n \right) \right)}{\sqrt{n}} > u, \frac{\Delta \left( T_{2} \left( n \right) \right)}{\sqrt{n}} > u \right\}.
\end{equation}
Under $\p_{0}$, the trees $T_{1}(n)$ and $T_{2}(n)$ are independent and identically distributed, 
so the probability of this event factorizes: 
\begin{equation}\label{eq:p0_prob_factor}
\p_{0} \left( A_{u,n} \right) 
= \p_{0} \left( \frac{\Delta \left( T_{1} \left( n \right) \right)}{\sqrt{n}} > u \right) \p_{0} \left( \frac{\Delta \left( T_{2} \left( n \right) \right)}{\sqrt{n}} > u \right) 
= \left( \p_{0} \left( \frac{\Delta \left( T_{1} \left( n \right) \right)}{\sqrt{n}} > u \right) \right)^{2}.
\end{equation}
Now taking the limit as $n \to \infty$ and using Lemma~\ref{lem:tail} (together with the facts that $m(S_{2}) = 2$ and $c(2,1) = 2 / \sqrt{\pi}$), we obtain that 
\begin{equation}\label{eq:probAun_asymp_null}
\lim_{n \to \infty} \p_{0} \left( A_{u,n} \right) 
\sim 
\frac{16}{\pi} u^{-2} \exp \left( - u^{2} / 2 \right)
\end{equation}
as $u \to \infty$. 

Next, our goal is to understand the probability of $A_{u,n}$ under $\p_{\tcorr}$. 
For a tree $T$ on $\tcorr$ vertices, define the event 
\[
\cE \left( T \right) := \left\{ T_{1} \left( \tcorr \right) = T_{2} \left( \tcorr \right) = T \right\}.
\]
Observe that if 
$\left( T_{1}(n), T_{2}(n) \right) \sim \CPA \left( n, \tcorr, S \right)$ 
and $n > \tcorr$, 
then $T_{1}(n)$ and $T_{2}(n)$ are conditionally i.i.d.\ given the event $\cE \left( T \right)$; more specifically, they are both distributed according to $\PA \left( n, T \right)$. 
Since 
$\left( T_{1}(n), T_{2}(n) \right) \sim \CPA \left( n, \tcorr, S \right)$ 
implies that $T_{1} \left( \tcorr \right) = T_{2} \left( \tcorr \right)$, 
we can condition on the tree obtained at time $\tcorr$ in order to compute the probability 
$\p_{\tcorr} \left( A_{u,n} \right)$: 
\[
\p_{\tcorr} \left( A_{u,n} \right) 
= \sum_{T} \p_{\tcorr} \left( A_{u,n} \, \middle| \, \cE \left( T \right) \right) \p_{\tcorr} \left( \cE \left( T \right) \right)
= \sum_{T} \left( \p \left( \frac{\Delta \left( \PA \left( n, T \right) \right)}{\sqrt{n}} > u \right) \right)^{2} \p_{\tcorr} \left( \cE \left( T \right) \right),
\]
where the sum is over all trees $T$ on $\tcorr$ vertices. 
Taking the limit as $n \to \infty$ we obtain that 
\[
\lim_{n \to \infty} \p_{\tcorr} \left( A_{u,n} \right) 
= 
\sum_{T} \left( \lim_{n \to \infty} \p \left( \frac{\Delta \left( \PA \left( n, T \right) \right)}{\sqrt{n}} > u \right) \right)^{2} \p_{\tcorr} \left( \cE \left( T \right) \right).
\]
We are interested in the asymptotics of this expression as $u \to \infty$,   
which we can read off of Lemma~\ref{lem:tail}. 
Using the fact that every tree in the sum has $\tcorr$ vertices, we obtain that 
\begin{equation}\label{eq:sumT'_asymp}
\lim_{n \to \infty} \p_{\tcorr} \left( A_{u,n} \right) 
\sim 
\sum_{T} 
\left\{ m\left( T \right) c \left( \tcorr , \Delta \left( T \right) \right) \right\}^{2} 
\p_{\tcorr} \left( \cE \left( T \right) \right) 
u^{2-4\tcorr + 4 \Delta \left( T \right)}
\exp \left( - u^{2} / 2 \right)
\end{equation}
as $u \to \infty$. 
Note that the $\exp( - u^{2} / 2)$ factor is common to all terms in the sum, 
but the polynomial factor in $u$ differs across the terms. 
When $T = S_{\tcorr}$, 
the star on $\tcorr$ vertices, 
we have that $\Delta \left( T \right) = \tcorr - 1$ 
and so the polynomial factor in $u$ is $u^{-2}$. 
Whenever $T \neq S_{\tcorr}$, 
we have that $\Delta \left( T  \right) \leq \tcorr - 2$ 
and so the polynomial factor in $u$ is $O(u^{-6})$ as $u \to \infty$. 
Therefore the terms corresponding to trees $T$ that are not a star 
are lower order (asymptotically as $u \to \infty$) compared to the term corresponding to $T = S_{\tcorr}$. 
In other words, the sum in~\eqref{eq:sumT'_asymp} is asymptotically equivalent to the term corresponding to $T = S_{\tcorr}$: 
\begin{equation}\label{eq:maxdegtail_tcorr}
\lim_{n \to \infty} \p_{\tcorr} \left( A_{u,n} \right) 
\sim 
\left\{ m\left( S_{\tcorr} \right) c \left( \tcorr , \tcorr - 1 \right) \right\}^{2} 
\p_{\tcorr} \left( \cE \left( S_{\tcorr} \right) \right) 
u^{-2} 
\exp \left( - u^{2} / 2 \right)
\end{equation}
as $u \to \infty$. 
Observe that 
$m \left( S_{\tcorr} \right) = 1$ 
whenever $\tcorr > 2$. 
From the fact that $\Gamma(z+1) = z \Gamma (z)$ 
it follows that 
$c(t+1,t) = 2 c(t,t-1)$. 
Thus we have that 
$c(\tcorr, \tcorr - 1) = 2^{\tcorr - 2} c(2,1) 
= 2^{\tcorr - 1} / \sqrt{\pi}$. 
Finally, turning to the probability 
$\p_{\tcorr} \left( \cE \left( S_{\tcorr} \right) \right)$, 
note that under $\p_{\tcorr}$ we have that 
$T_{1}(n) = T_{2}(n)$ for all $n \leq \tcorr$ and thus 
$\p_{\tcorr} \left( \cE \left( S_{\tcorr} \right) \right) 
= \p \left( \PA \left( \tcorr, S \right) = S_{\tcorr} \right)$. 
Since the star $S_{3}$ is the unique tree on $3$ vertices, 
we have that 
$\p_{\tcorr} \left( \cE \left( S_{\tcorr} \right) \right) = 1$ 
when $\tcorr = 3$. 
When $\tcorr > 3$, 
the only way that 
we can have $\PA \left( \tcorr, S_{3} \right) = S_{\tcorr}$ 
is if all vertices from time $4$ through $\tcorr$ attach to the center of the star in~$S_{3}$. Since at each of the $\tcorr - 3$ time steps the degree of the center of the star is equal to half of the sum of the degrees in the tree, 
this has probability $2^{-(\tcorr - 3)}$. 
Putting everything together we have thus computed the constant factor in~\eqref{eq:maxdegtail_tcorr} and obtained that 
\begin{equation}\label{eq:probAun_asymp_tcorr}
\lim_{n \to \infty} \p_{\tcorr} \left( A_{u,n} \right) 
\sim 
\frac{2^{\tcorr+1}}{\pi}
u^{-2} 
\exp \left( - u^{2} / 2 \right)
\end{equation}
as $u \to \infty$, for every $\tcorr \geq 3$. 
In particular, comparing the expressions in~\eqref{eq:probAun_asymp_null} and~\eqref{eq:probAun_asymp_tcorr}, 
we have that
\[
\lim_{n \to \infty}
\left\{ \p_{\tcorr} \left( A_{u,n} \right) 
- \p_{0} \left( A_{u,n} \right) \right\}
\sim 
\left( 2^{\tcorr - 3} - 1 \right)
\frac{16}{\pi}
u^{-2} 
\exp \left( - u^{2} / 2 \right)
\]
as $u \to \infty$. 
When $\tcorr > 3$ this quantity is positive for every $u > 0$, which concludes the proof. 
\end{proof}

\begin{proof}[Proof of Corollary~\ref{thm:detection_PA} for a general seed tree $S$.]
We assume in the following that $S \neq S_{2}$ and thus $\left| S \right| \geq 3$. Therefore the assumption that $\tcorr > \left| S \right|$ implies that $\tcorr > 3$. 
We again consider the event $A_{u,n}$ defined in~\eqref{eq:Aun_def}. 
The identity in~\eqref{eq:p0_prob_factor} holds again, 
and thus taking the limit as $n \to \infty$ and using Lemma~\ref{lem:tail} we obtain that 
\begin{equation}\label{eq:prob_tail_null}
\lim_{n\to\infty} \p_{0} \left( A_{u,n} \right) 
\sim 
\left\{ m (S) c \left( \left| S \right|, \Delta \left( S \right) \right) \right\}^{2}
u^{2 - 4\left|S\right| + 4\Delta \left(S \right)} 
\exp \left( - u^{2} / 2 \right)
\end{equation}
as $u \to \infty$. 

Next, our goal is to understand the probability of $A_{u,n}$ under $\p_{\tcorr}$. By the same arguments as in the case $S = S_{2}$, we have that~\eqref{eq:sumT'_asymp} holds. However, the subsequent analysis of this expression is different for general $S$. 

First, note that under $\p_{\tcorr}$ we have that $T_{1}(n) = T_{2} (n)$ for all $n \leq \tcorr$ and so 
$\p_{\tcorr} \left( \cE \left( T \right) \right) = \p \left( \PA \left( \tcorr, S \right) = T \right)$. 
Thus the sum in~\eqref{eq:sumT'_asymp} is only over trees $T$ for which this probability is positive (for general $S$, this is not every tree on $\tcorr$ vertices). 
Next, note that if 
$\p \left( \PA \left( \tcorr, S \right) = T \right) > 0$, 
then $\Delta \left( T \right) \leq \Delta \left( S \right) + \tcorr - \left| S \right|$, 
since the maximum degree can only increase by $1$ at each time step. 
Therefore 
$2 - 4\tcorr + 4 \Delta \left( T \right)
\leq 2 - 4 \left| S \right| + 4 \Delta \left( S \right)$ 
for every such tree $T$ 
and it follows that 
\[
\lim_{n \to \infty} \p_{\tcorr} \left( A_{u,n} \right) 
= O \left( u^{2 - 4 \left| S \right| + 4 \Delta \left( S \right)} \exp \left( - u^{2} / 2 \right) \right)
\]
as $u \to \infty$. 
This implies that the only terms that contribute to the sum in~\eqref{eq:sumT'_asymp} (asymptotically as $u \to \infty$) 
correspond to trees $T$ such that 
$\p \left( \PA \left( \tcorr, S \right) = T \right) > 0$ 
and 
$\Delta \left( T \right) = \Delta \left( S \right) + \tcorr - \left| S \right|$; 
the other terms are lower order (asymptotically as $u \to \infty$). 
In other words, we have shown that 
\begin{multline}\label{eq:sumT'_selected}
\lim_{n \to \infty} \p_{\tcorr} \left( A_{u,n} \right) \\
\sim 
\sum_{\substack{T\, :\, \p \left( \PA \left( \tcorr, S \right) = T \right) > 0, \\ \Delta \left( T \right) = \Delta \left( S \right) + \tcorr - \left| S \right|}}
\left\{ m\left( T \right) c \left( \tcorr , \Delta \left( T \right) \right) \right\}^{2} 
\p \left( \PA \left( \tcorr, S \right) = T \right) 
u^{2 - 4 \left| S \right| + 4 \Delta \left( S \right)} 
e^{-u^{2}/2}
\end{multline}
as $u \to \infty$. 
This expression is on the order of 
$u^{2 - 4 \left| S \right| + 4 \Delta \left( S \right)} 
\exp \left( - u^{2} / 2 \right)$, 
so what remains is to determine the constant.

First, from the definition of $c(a,b)$ and the fact that $\Gamma \left( z + 1 \right) = z \Gamma \left( z \right)$,  
we have that 
\[
c(t+1, s+1) = \frac{2(t-1)}{s} c(t,s).
\]
Iterating this expression we obtain that 
\begin{equation}\label{eq:c_expression}
c \left( \tcorr, \Delta \left( S \right) + \tcorr - \left| S \right| \right) 
= 2^{\tcorr - \left| S \right|} 
\left( \prod_{i=0}^{\tcorr - \left|S \right| - 1} \frac{\left| S \right| - 1 + i}{\Delta \left( S \right) + i} \right) 
c \left( \left| S \right|, \Delta \left( S \right) \right).
\end{equation}
Turning now to the other quantities in~\eqref{eq:sumT'_selected}, we have to understand for what trees $T$ do we have $\Delta \left( T \right) = \Delta \left( S \right) + \tcorr - \left| S \right|$. For this to happen, we must have that the maximum degree increases at every time step of the process, from time $\left| S \right|$ to time $\tcorr$. This happens if and only if at every time step of the process the incoming vertex attaches to a vertex of maximum degree. Initially, at time $\left|S \right|$, there are $m(S)$ vertices with degree equal to the maximum degree $\Delta(S)$, and the sum of the degrees is $2 \left( \left| S \right| - 1 \right)$. 
Therefore the probability that the maximum degree increases in the next time step is 
\[
\frac{m(S) \Delta(S)}{2 \left( \left| S \right| - 1 \right)}.
\]
Thereafter there is only a single vertex with maximum degree, whose degree is now $\Delta \left( S \right) + 1$, while the sum of the degrees is now $2 \left| S \right|$. Thus the probability that the maximum degree increases in the next time step is 
$\left( \Delta \left( S \right) + 1 \right)/ \left( 2 \left| S \right| \right)$. 
Continuing this argument recursively we
obtain that 
$m(T) = 1$ for any tree $T$ such that 
$\p \left( \PA \left( \tcorr, S \right) = T \right) > 0$ 
and 
$\Delta \left( T \right) = \Delta \left( S \right) + \tcorr - \left| S \right|$, 
and also that 
\begin{align}
\sum_{\substack{T\, :\, \p \left( \PA \left( \tcorr, S \right) = T \right) > 0, \\ \Delta \left( T \right) = \Delta \left( S \right) + \tcorr - \left| S \right|}}
\p \left( \PA \left( \tcorr, S \right) = T \right) 
&=
\frac{m(S) \Delta(S)}{2 \left( \left| S \right| - 1 \right)}
\prod_{i=1}^{\tcorr - \left|S\right|-1} \frac{\Delta \left( S \right) + i}{2 \left( \left|S \right| - 1 + i \right)} \notag \\
&= 
\frac{m(S)}{2^{\tcorr - \left|S\right|}} 
\prod_{i=0}^{\tcorr - \left|S\right|-1} \frac{\Delta \left( S \right) + i}{\left|S \right| - 1 + i}. \label{eq:sum_probs_T'}
\end{align}
Thus putting together~\eqref{eq:c_expression} and~\eqref{eq:sum_probs_T'} 
we obtain that 
\begin{multline*}
\sum_{\substack{T\, :\, \p \left( \PA \left( \tcorr, S \right) = T \right) > 0, \\ \Delta \left( T \right) = \Delta \left( S \right) + \tcorr - \left| S \right|}}
\left\{ m\left( T \right) c \left( \tcorr , \Delta \left( T \right) \right) \right\}^{2} 
\p \left( \PA \left( \tcorr, S \right) = T \right) \\
= 
2^{\tcorr - \left| S \right|} 
\left( \prod_{i=0}^{\tcorr - \left|S \right| - 1} \frac{\left| S \right| - 1 + i}{\Delta \left( S \right) + i} \right) 
m(S) 
\left( c \left( \left| S \right|, \Delta \left( S \right) \right) \right)^{2}.
\end{multline*}
Putting this expression back into~\eqref{eq:sumT'_selected} 
we thus have that 
\[
\lim_{n \to \infty} \p_{\tcorr} \left( A_{u,n} \right) 
\sim 
2^{\tcorr - \left| S \right|} 
\left( \prod_{i=0}^{\tcorr - \left|S \right| - 1} \frac{\left| S \right| - 1 + i}{\Delta \left( S \right) + i} \right) 
m(S) 
\left(c \left( \left| S \right|, \Delta \left( S \right) \right) \right)^{2}
u^{2 - 4 \left| S \right| + 4 \Delta \left( S \right)} 
\exp \left( - u^{2} / 2 \right)
\]
as $u \to \infty$. 
Comparing this expression with~\eqref{eq:prob_tail_null}, 
we obtain that 
\begin{multline}\label{eq:tail_prob_tcorr_final}
\lim_{n \to \infty}
\left\{ \p_{\tcorr} \left( A_{u,n} \right) 
- \p_{0} \left( A_{u,n} \right) \right\} \\
\sim 
\left\{ 
2^{\tcorr - \left| S \right|} 
\prod_{i=0}^{\tcorr - \left|S \right| - 1} \frac{\left| S \right| - 1 + i}{\Delta \left( S \right) + i} 
- m(S) 
\right\}
m(S) 
\left( c \left( \left| S \right|, \Delta \left( S \right) \right) \right)^{2}
u^{2 - 4 \left| S \right| + 4 \Delta \left( S \right)} 
\exp \left( - u^{2} / 2 \right)
\end{multline}
as $u \to \infty$. 
To conclude the proof what remains to be shown is that the expression in the curly brackets above is strictly positive. 
To see this, first note that $\Delta(S) \leq \left|S\right| - 1$, 
so all the fractions in the product are at least $1$. 
Dropping all but the first fraction (corresponding to $i=0$), 
and using that $\tcorr > \left|S\right|$, 
we have that 
\begin{equation}\label{eq:drop_factors}
2^{\tcorr - \left| S \right|} 
\prod_{i=0}^{\tcorr - \left|S \right| - 1} \frac{\left| S \right| - 1 + i}{\Delta \left( S \right) + i} 
\geq 
\frac{2 \left( \left|S \right| - 1 \right)}{\Delta(S)}.
\end{equation}
Note that 
$2 \left( \left|S \right| - 1 \right)$ is equal to the sum of the degrees of vertices in $S$, 
while $m(S) \Delta(S)$ is the equal to the sum of the degrees of vertices in $S$ whose degree is equal to the maximum degree $\Delta(S)$. 
Since $\left| S \right| \geq 3$, we know that not every vertex has degree equal to the maximum degree (since there are leaves and also $\Delta(S) > 1$). 
Therefore we must have that 
$m(S) \Delta(S) < \sum_{v \in V(S)} d_{S} (v) = 2 \left( \left|S \right| - 1 \right)$. 
This, combined with~\eqref{eq:drop_factors}, shows that 
the bracketed expression in~\eqref{eq:tail_prob_tcorr_final} is positive. 
\end{proof}

\subsection{Detecting correlation in correlated UA trees} \label{sec:detection_UA} 

Inspired by~\cite{BEMR17}, we prove Corollary~\ref{thm:detection_UA} by considering a statistic that measures global balancedness properties of a tree. 
For a tree $T$ and an edge $e \in E(T)$, 
let $T'$ and $T''$ be the two connected components of $T \setminus \left\{ e \right\}$. 
Define 
\[
h \left( T, e \right) 
:= \frac{\left| T' \right|^{2} \left| T'' \right|^{2}}{\left| T \right|^{4}}
\]
and also
\[
H \left( T \right) := \sum_{e \in E(T)} h(T,e).
\]
We have that $0 \leq h(T,e) \leq 1/16$, 
and for ``peripheral'' edges $e$, the quantity $h(T,e)$ is closer to $0$, 
while for more ``central'' edges $e$, the quantity $h(T,e)$ is closer to $1/16$. 
The statistic $H(T)$ thus measures 
the global balancedness properties of the tree $T$ in a particular way, 
and ``central'' edges contribute the most to this statistic. 
This statistic was used in~\cite{BEMR17} to show that 
uniform attachment started from the seed $P_{4}$ (the path on four vertices) 
is different from uniform attachment started from the seed $S_{4}$ (the star on four vertices); 
formally, $\lim_{n \to \infty} \TV \left( \UA \left( n, P_{4} \right), \UA \left( n, S_{4} \right) \right) > 0$. 

We prove Corollary~\ref{thm:detection_UA} by showing that 
the pair 
$\left( H \left( T_{1} \right), H \left( T_{2} \right) \right)$ 
has a different distribution under 
$\left( T_{1}, T_{2} \right) \sim \CUA \left( n, \tcorr, S \right)$ 
than under 
$\left( T_{1}, T_{2} \right) \sim \UA \left( n, S \right)^{\otimes 2}$, 
even in the limit as $n \to \infty$. 
In fact, 
we do this  by showing that the product 
$H\left( T_{1} \right) H \left( T_{2} \right)$ 
has a different distribution in the two settings. 
To abbreviate notation, 
in the following we will denote by $\p_{0}$ the underlying probability measure when 
$\left( T_{1}(n), T_{2}(n) \right) \sim \UA \left( n, S \right)^{\otimes 2}$ 
and by~$\p_{\tcorr}$ the underlying probability measure when 
$\left( T_{1}(n), T_{2}(n) \right) \sim \CUA \left( n, \tcorr, S \right)$. 
Likewise, $\E_{0}$, $\E_{\tcorr}$, $\Var_{0}$, and $\Var_{\tcorr}$ refer to expectations and variances under these measures. 

To simplify exposition and to highlight the main ideas, we first prove Corollary~\ref{thm:detection_UA} in the special case when $S = S_{1}$ and $\tcorr = 4$; we then later show what needs to be changed in the general setting.

\begin{proof}[Proof of Corollary~\ref{thm:detection_UA} when $S = S_{1}$ and $\tcorr = 4$] 
We start by defining two random variables, in order to abbreviate notation. 
Define 
\begin{align*}
X_{n} &:= H \left( T_{1} \left( n \right) \right) H \left( T_{2} \left( n \right) \right), 
\qquad \text{ where } \left( T_{1}(n), T_{2}(n) \right) \sim \UA \left( n, S_{1} \right)^{\otimes 2}, \\
Y_{n} &:= H \left( T_{1} \left( n \right) \right) H \left( T_{2} \left( n \right) \right), 
\qquad \text{ where } \left( T_{1}(n), T_{2}(n) \right) \sim \CUA \left( n, 4, S_{1} \right).
\end{align*}
By the Cauchy-Schwarz inequality (see~\cite[Section~3.2]{BEMR17} for details), we have that 
\[
\TV \left( \CUA \left( n, 4, S_{1} \right), \UA \left( n, S_{1} \right)^{\otimes 2} \right)
\geq 
\TV \left( X_{n}, Y_{n} \right) 
\geq 
\frac{\left( \E \left[ X_{n} \right] - \E \left[ Y_{n} \right] \right)^{2}}{2 \Var \left( X_{n} \right) + 2 \Var \left( Y_{n} \right) + \left( \E \left[ X_{n} \right] - \E \left[ Y_{n} \right] \right)^{2}}.
\]
Thus in order to prove the claim, 
it suffices to show the following two things: 
\begin{equation}\label{eq:expectations_differ_UA}
\liminf_{n \to \infty} \left| \E \left[ X_{n} \right] - \E \left[ Y_{n} \right] \right| > 0
\end{equation} 
and 
\begin{equation}\label{eq:variances_bounded_UA}
\limsup_{n \to \infty} \left\{ \Var \left( X_{n} \right) + \Var \left( Y_{n} \right) \right\} < \infty.
\end{equation}

In order to understand the expectations in~\eqref{eq:expectations_differ_UA}, we first understand the evolution of the pair of trees 
$\left( T_{1}(n), T_{2}(n) \right)$ 
under $\p_{0}$ and under $\p_{4}$. 
First, for $n \leq 3$ we have that $T_{1} (n) = T_{2} (n) = S_{n}$ under both models, since $S_{n}$ is the only tree on $n$ vertices for $n \leq 3$. 
There are two trees on four vertices: the path $P_{4}$ and the star $S_{4}$.  
We know that $\p \left( \UA \left( 4, S_{1} \right) = P_{4} \right) = 2/3$ 
and that $\p \left( \UA \left( 4, S_{1} \right) = S_{4} \right) = 1/3$. 
Thus under $\p_{0}$ the two trees $T_{1}(4)$ and $T_{2}(4)$ are i.i.d.\ with this marginal distribution. 
Under $\p_{4}$ we have that $T_{1}(4) = T_{2}(4)$, so 
$\p_{4} \left( T_{1}(4) = T_{2}(4) = P_{4} \right) = 2/3$ 
and 
$\p_{4} \left( T_{1}(4) = T_{2}(4) = S_{4} \right) = 1/3$. 
Conditioned on $\left( T_{1}(4), T_{2}(4) \right)$, 
the evolution of the two trees $\left\{ T_{1}(n) \right\}_{n \geq 4}$ 
and $\left\{ T_{2}(n) \right\}_{n \geq 4}$ 
is independent (and according to uniform attachment) 
under both $\p_{0}$ and $\p_{4}$. 

We are now ready to compute the expectations $\E \left[ X_{n} \right]$ and $\E \left[ Y_{n} \right]$. 
To abbreviate notation, we introduce two quantities for $n \geq 4$: 
$m_{P,n} := \E \left[ H \left( \UA \left( n, P_{4} \right) \right) \right]$ 
and 
$m_{S,n} := \E \left[ H \left( \UA \left( n, S_{4} \right) \right) \right]$. 
By the previous paragraph we thus have that 
\[
\E \left[ H \left( \UA \left( n, S_{1} \right) \right) \right] 
= \frac{2}{3} m_{P,n} + \frac{1}{3} m_{S,n}.
\]
Under $\p_{0}$ we have that $T_{1}(n)$ and $T_{2}(n)$ are i.i.d., so 
\[
\E \left[ X_{n} \right] 
= \E_{0} \left[ H \left( T_{1} \left( n \right) \right) \right] \E_{0} \left[ H \left( T_{2} \left( n \right) \right) \right] 
= \left( \frac{2}{3} m_{P,n} + \frac{1}{3} m_{S,n} \right)^{2} 
= \frac{4}{9} m_{P,n}^{2} + \frac{1}{9} m_{S,n}^{2} 
+ \frac{4}{9} m_{P,n} m_{S,n}.
\]
To compute $\E \left[ Y_{n} \right]$ we may condition on the value of $T_{1}(4) = T_{2}(4)$: 
\begin{align*}
\E \left[ Y_{n} \right] 
&= 
\frac{2}{3} \E_{4} \left[ H \left( T_{1} \left( n \right) \right) H \left( T_{2} \left( n \right) \right) \, \middle| \, T_{1}(4) = P_{4} \right] 
+ 
\frac{1}{3} \E_{4} \left[ H \left( T_{1} \left( n \right) \right) H \left( T_{2} \left( n \right) \right) \, \middle| \, T_{1}(4) = S_{4} \right] \\
&= \frac{2}{3} m_{P,n}^{2} + \frac{1}{3} m_{S,n}^{2}.
\end{align*}
Computing the difference of the previous two displays we obtain that
\[
\E \left[ X_{n} \right] - \E \left[ Y_{n} \right] 
= - \frac{2}{9} \left( m_{P,n} - m_{S,n} \right)^{2}.
\]
In~\cite[Section~2.2]{BEMR17} it was shown that $\lim_{n \to \infty} \left( m_{P,n} - m_{S,n} \right) = 1/70$.   
This implies that 
\[
\lim_{n \to \infty} \left| \E \left[ X_{n} \right] - \E \left[ Y_{n} \right]  \right| 
= \frac{1}{22050} > 0,
\]
which establishes~\eqref{eq:expectations_differ_UA}.

We now turn to bounding the variances. Under $\p_{0}$ we have that $T_{1}(n)$ and $T_{2}(n)$ are i.i.d., so 
\[
\Var \left( X_{n} \right) 
= \Var_{0} \left( H \left( T_{1} \left( n \right) \right) H \left( T_{2} \left( n \right) \right) \right) 
= \Var_{0} \left( H \left( T_{1} \left( n \right) \right) \right)^{2} 
+ 2 \E_{0} \left[ H \left( T_{1} \left( n \right) \right) \right]^{2} \Var_{0} \left( H \left( T_{1} \left( n \right) \right) \right).
\]
The analysis in~\cite[Section~2.2]{BEMR17} shows that 
\[
\limsup_{n\to\infty} \Var_{0} \left( H \left( T_{1} \left( n \right) \right) \right) < \infty 
\qquad 
\text{ and }
\qquad 
\limsup_{n\to\infty} \E_{0} \left[ H \left( T_{1} \left( n \right) \right) \right] < \infty,
\]
which thus implies that $\limsup_{n \to \infty} \Var \left( X_{n} \right) < \infty$. 
The analysis of $\Var \left( Y_{n} \right)$ is similar, by conditioning on the tree at time $\tcorr = 4$; we leave the details to the reader. 
\end{proof}

We now show what changes in the proof when $S = S_{1}$ and $\tcorr$ is arbitrary. 

\begin{proof}[Proof of Corollary~\ref{thm:detection_UA} when $S = S_{1}$ and $\tcorr > 4$.]
Define $X_{n}$ as before and $Y_{n}$ analogously (with $4$ replaced by $\tcorr$). Again we have to show that~\eqref{eq:expectations_differ_UA} and~\eqref{eq:variances_bounded_UA} hold. 
The method for showing~\eqref{eq:variances_bounded_UA} (i.e., for bounding the variances) is unchanged; we explain here what changes in showing~\eqref{eq:expectations_differ_UA}. 

For a tree $T$ on $\tcorr$ vertices, let 
$p_{T} := \p \left( \UA \left( \tcorr, S_{1} \right) = T \right)$ 
and let 
$m_{T,n} := \E \left[ H \left( \UA \left( n, T \right) \right) \right]$. 
Note that $p_{T} > 0$ for every tree $T$ on $\tcorr$ vertices. 
By the same arguments as before we have that 
\[
\E \left[ X_{n} \right]
= \left( \sum_{T} p_{T} m_{T,n} \right)^{2}, 
\qquad \qquad 
\E \left[ Y_{n} \right] 
= \sum_{T} p_{T} m_{T,n}^{2} 
\] 
for every $n \geq \tcorr$, 
where in both sums $T$ ranges over all trees on $\tcorr$ vertices. 
Thus by Cauchy-Schwarz it follows that $\E \left[ X_{n} \right] \leq \E \left[ Y_{n} \right]$ for every $n \geq \tcorr$. 
In order to show that~\eqref{eq:expectations_differ_UA} holds, it thus suffices to show that there exist trees $T$ and $T'$ on $\tcorr$ vertices such that 
$\lim_{n \to \infty} \left( m_{T,n} - m_{T', n} \right) \neq 0$. 

We choose $T = S_{\tcorr}$ (the star on $\tcorr$ vertices), and $T'$ to be the tree on $\tcorr$ vertices where one of the vertices has degree $\tcorr - 2$ (that is, this is the star on $\tcorr - 1$ vertices with an extra edge attached to one of the leaves). 
Computing the difference $m_{T,n} - m_{T',n}$ 
was done explicitly in~\cite{BEMR17} for the case $\tcorr = 4$ (see above for the result). To do this calculation for general $\tcorr \geq 4$, we first introduce some notation. For all $\alpha,\beta,n \in \mathbb{N}$, let $B_{\alpha,\beta,n}$ be a random variable such that $B_{\alpha, \beta, n} - \alpha$ has the beta-binomial distribution with parameters $(\alpha,\beta,n)$; that is, it is a random variable satisfying
\[
\p \left( B_{\alpha,\beta,n} = \alpha + k \right) = \frac{(k+\alpha-1)! (n-k+\beta - 1)! (\alpha + \beta - 1)!}{ (n+\alpha+\beta - 1)! (\alpha - 1)! (\beta - 1)! }  \binom{n}{k}, ~~ \forall k \in \left\{0, 1, \dots, n \right\}.
\]
The key observation is the following distributional identity:  
if $e \in E \left( S \right)$ is such that the two components of 
$S \setminus \left\{ e \right\}$ 
have size $\alpha$ and $\left| S \right| - \alpha$, then 
\[
h \left( \UA \left( n, S \right), e \right) 
\stackrel{d}{=} 
\frac{1}{n^4} B_{\alpha,\left|S\right| - \alpha,n-|S|}^{2} \left( n-B_{\alpha,\left| S \right| - \alpha,n-|S|} \right)^2.
\]
This is an immediate consequence of the characterization of $\left( B_{\alpha,\beta,n}, n + \left( \alpha + \beta \right) - B_{\alpha, \beta, n} \right)$ as the distribution of a classical P\'olya urn with replacement matrix $\left(\begin{smallmatrix} 1 & 0 \\ 0 & 1 \end{smallmatrix}\right)$ and starting state $(\alpha,\beta)$ after $n$ draws. 
It then follows (by the same arguments as in~\cite[Section~2.2]{BEMR17}) that 
\[
m_{T,n} - m_{T',n} 
= \frac{1}{n^{4}} \left( 
\E \left[ B_{1,\tcorr-1,n-\tcorr}^{2} \left( n - B_{1,\tcorr-1,n-\tcorr} \right)^{2} \right] 
- \E \left[ B_{2,\tcorr-2,n-\tcorr}^{2} \left( n - B_{2,\tcorr-2,n-\tcorr} \right)^{2} \right]
\right).
\]
What remains is a straightforward calculation using explicit formulae for the first four moments of the beta-binomial distribution, and we obtain that 
\[
m_{T,n} - m_{T',n} 
= - \frac{\left( \tcorr - 3 \right) \left( n + 1 \right) \left\{ 4 \left( \tcorr - 1 \right) n^{2} - \left( \tcorr^{2} - 15 \tcorr + 26 \right) n + \left( - \tcorr^{2} + 19 \tcorr - 30 \right) \right\}}{\tcorr \left( \tcorr + 1 \right) \left( \tcorr + 2 \right) \left( \tcorr + 3 \right) n^{3}}.
\]
Taking the limit as $n \to \infty$ we have that 
\[
\lim_{n \to \infty} \left( m_{T,n} - m_{T',n} \right) 
= - \frac{4 \left( \tcorr - 1 \right) \left( \tcorr - 3 \right)}{\tcorr \left( \tcorr + 1 \right) \left( \tcorr + 2 \right) \left( \tcorr + 3 \right)} 
\neq 0. \qedhere
\]
\end{proof}

Essentially the same proof works for any seed graph $S$ and any $\tcorr$ satisfying $\tcorr > \left| S \right|$ and $\tcorr > 3$. 
Again we have to show that there exist two trees $T$ and $T'$ on $\tcorr$ vertices such that 
$\lim_{n \to \infty} \left( m_{T,n} - m_{T',n} \right) \neq 0$ 
\emph{and also} $T, T' \in \Range \left( \UA, S \right)$. 
When $S = S_{1}$, the latter condition always holds, as $\Range \left( \UA, S \right)$ consists of all (finite) trees. 
For general $S$, the trees $T$ and $T'$ defined above may not be in $\Range \left( \UA, S \right)$. However, one can still always choose $T$ and $T'$ in $\Range \left( \UA, S \right)$ such that 
\[
m_{T,n} - m_{T',n} 
= \frac{1}{n^{4}} \left( 
\E \left[ B_{1,\tcorr-1,n-\tcorr}^{2} \left( n - B_{1,\tcorr-1,n-\tcorr} \right)^{2} \right] 
- \E \left[ B_{2,\tcorr-2,n-\tcorr}^{2} \left( n - B_{2,\tcorr-2,n-\tcorr} \right)^{2} \right]
\right)
\]
holds; we leave this as an exercise to the reader. 
One can then conclude as above. 

\section{Detecting correlation with probability going to $1$ as $\tcorr \to \infty$} \label{sec:detection_large_tcorr} 

In this section we focus on detecting correlation with probability going to $1$ as $\tcorr \to \infty$. 
We first prove Theorem~\ref{thm:high_prob_detection} in Section~\ref{sec:detection_large_tcorr_general} 
and then prove Theorems~\ref{thm:high_prob_detection_PA} and~\ref{thm:high_prob_detection_UA} in Section~\ref{sec:detection_large_tcorr_specific}.

\subsection{A sufficient condition for Markov sequential attachment rules} \label{sec:detection_large_tcorr_general} 

To abbreviate notation, 
in the following we denote by $\p_{0}$ the underlying probability measure 
when 
$\left\{ \left( G_{t}^{1}, G_{t}^{2} \right) \right\}_{t \geq \left| S \right|}$ 
are two independent sequences of randomly growing graphs with seed $S$ and attachment rule $\cG$. 
Similarly, we denote by $\p_{\tcorr}$ the underlying probability measure when the two graphs are correlated until time $\tcorr$.

\begin{proof}[Proof of Theorem~\ref{thm:high_prob_detection}]
We will show that for every $\delta > 0$ there exists $t' = t' \left( \delta \right)$ such that for every $\tcorr \geq t'$ we have that 
\begin{equation}\label{eq:delta_goal}
\lim_{n \to \infty} 
\TV \left( \cCG \left( n, \tcorr, S \right), \cG \left( n, S \right)^{\otimes 2} \right)
\geq 1 - \delta. 
\end{equation}
To this end, fix $\delta > 0$. 
Let $\left\{ \left( G_{t}^{1}, G_{t}^{2} \right) \right\}_{t \geq \left| S \right|}$ be two  sequences of randomly growing graphs with seed $S$ and attachment rule $\cG$, under either $\p_{0}$ or $\p_{\tcorr}$. 
Let $f_{\infty}^{1} := \lim_{t \to \infty} f \left( G_{t}^{1} \right)$ 
and $f_{\infty}^{2} := \lim_{t \to \infty} f \left( G_{t}^{2} \right)$; 
by our assumptions these limits exist almost surely, under both $\p_{0}$ and $\p_{\tcorr}$. 
Observe that
\[
\lim_{\eps \to 0} \p_{0} \left( \left| f_{\infty}^{1} - f_{\infty}^{2} \right| \leq \eps \right) 
= 
\p_{0} \left( f_{\infty}^{1} = f_{\infty}^{2} \right)
= 0, 
\]
the latter equality holding because $f_{\infty}^{1}$ and $f_{\infty}^{2}$ are i.i.d.\  absolutely continuous random variables under~$\p_{0}$. 
Thus fix $\eps > 0$ such that 
\begin{equation}\label{eq:eps_delta}
\p_{0} \left( \left| f_{\infty}^{1} - f_{\infty}^{2} \right| \leq \eps \right) 
\leq \delta / 2. 
\end{equation}
Turning to the measure $\p_{\tcorr}$, note that under $\p_{\tcorr}$ we have that $G_{\tcorr}^{1} = G_{\tcorr}^{2}$ almost surely, 
and hence $f \left( G_{\tcorr}^{1} \right) = f \left( G_{\tcorr}^{2} \right)$ almost surely as well. 
So by the triangle inequality we have, for any $n \geq \tcorr$, that 
\begin{align*}
\p_{\tcorr} \left( \left| f \left( G_{n}^{1} \right) - f \left( G_{n}^{2} \right) \right| > \eps \right) 
&\leq \p_{\tcorr} \left( \left| f \left( G_{n}^{1} \right) - f \left( G_{\tcorr}^{1} \right) \right| > \eps / 2 \right) + \p_{\tcorr} \left( \left| f \left( G_{n}^{2} \right) - f \left( G_{\tcorr}^{2} \right) \right| > \eps / 2 \right) \\
&= 2 \p_{\tcorr} \left( \left| f \left( G_{n}^{1} \right) - f \left( G_{\tcorr}^{1} \right) \right| > \eps / 2 \right) 
= 2 \p_{0} \left( \left| f \left( G_{n}^{1} \right) - f \left( G_{\tcorr}^{1} \right) \right| > \eps / 2 \right),
\end{align*}
where the first equality is due to symmetry and the second equality is because the marginal processes are the same under $\p_{0}$ and $\p_{\tcorr}$. 
Now we can bound from below the total variation distance in question by considering the event 
$\left\{ \left| f \left( G_{n}^{1} \right) - f \left( G_{n}^{2} \right) \right| \leq \eps \right\}$ 
under $\p_{0}$ and $\p_{\tcorr}$. For any $n \geq \tcorr$ we have that 
\begin{multline*}
\TV \left( \cCG \left( n, \tcorr, S \right), \cG \left( n, S \right)^{\otimes 2} \right) 
\geq 
\p_{\tcorr} \left( \left| f \left( G_{n}^{1} \right) - f \left( G_{n}^{2} \right) \right| \leq \eps \right) 
- \p_{0} \left( \left| f \left( G_{n}^{1} \right) - f \left( G_{n}^{2} \right) \right| \leq \eps \right) \\
\geq  
1 - 2 \p_{0} \left( \left| f \left( G_{n}^{1} \right) - f \left( G_{\tcorr}^{1} \right) \right| > \eps / 2 \right) - \p_{0} \left( \left| f \left( G_{n}^{1} \right) - f \left( G_{n}^{2} \right) \right| \leq \eps \right).
\end{multline*}
Taking limits as $n \to \infty$, we obtain that 
\[
\lim_{n \to \infty} \TV \left( \cCG \left( n, \tcorr, S \right), \cG \left( n, S \right)^{\otimes 2} \right) 
\geq 
1 - 2 \p_{0} \left( \left| f_{\infty}^{1} - f \left( G_{\tcorr}^{1} \right) \right| > \eps / 2 \right) 
- \p_{0} \left( \left| f_{\infty}^{1} - f_{\infty}^{2} \right| \leq \eps \right).
\]
Since $f \left(G_{\tcorr}^{1} \right) \to f_{\infty}^{1}$ almost surely as $\tcorr \to \infty$, 
we also have that 
$\p_{0} \left( \left| f_{\infty}^{1} - f \left( G_{\tcorr}^{1} \right) \right| > \eps / 2 \right) \to 0$ 
as $\tcorr \to \infty$. 
Thus there exists $t' = t' \left( \delta \right)$ 
such that 
$\p_{0} \left( \left| f_{\infty}^{1} - f \left( G_{\tcorr}^{1} \right) \right| > \eps / 2 \right) \leq \delta / 4$ 
for every $\tcorr \geq t'$. 
Combining this with~\eqref{eq:eps_delta} shows~\eqref{eq:delta_goal} and concludes the proof. 
\end{proof}

\subsection{Applications to PA and UA trees} \label{sec:detection_large_tcorr_specific} 

Here we show how Theorem~\ref{thm:high_prob_detection} can be applied to PA and UA trees, 
in order to prove Theorems~\ref{thm:high_prob_detection_PA} and~\ref{thm:high_prob_detection_UA}. 
In order to apply Theorem~\ref{thm:high_prob_detection}, 
we have to find a function $f$ such that 
\[
\lim_{t \to \infty} f \left( G_{t} \right) =: f_{\infty} 
\]
exists almost surely and that $f_{\infty}$ is an absolutely continuous random variable, 
where $\left\{ G_{t} \right\}_{t \geq \left| S \right|}$ is a sequence of randomly growing graphs with seed $S$ and attachment rule $\cG$, and where $\cG$ corresponds to  either $\PA$ or $\UA$ trees. 

We first argue that it is enough to show this for the special case when the seed is $S_{2}$, the unique tree on two vertices, as this implies the same for any seed tree $S$ on at least two vertices. 
Indeed, for a tree $S$ on at least two vertices, $\PA(n,S)$ has the same distribution as $\PA(n,S_{2})$ conditioned on $\PA \left( \left| S \right| , S_{2} \right) = S$ (an event which has positive probability), 
and therefore the function $f$ that works for the seed $S_{2}$ (i.e., which has the desired properties) also works when the seed is $S$. 
The same argument works for UA trees as well. 
More generally, suppose that $\cG$ is a Markov sequential attachment rule 
and we have a function $f$ satisfying the desired properties when the seed is $S'$. Then the same function $f$ also satisfies the desired properties whenever the seed $S$ satisfies $S \in \Range \left( \cG, S' \right)$. 
For PA and UA trees we simply use that $\Range \left( \PA, S_{2} \right) = \Range \left( \UA, S_{2} \right)$ consists of all finite trees on at least two vertices. 

Therefore in the following we may, and thus will, assume that the seed is $S = S_{2}$. 
We start with PA trees, for which considering the normalized maximum degree suffices. 

\begin{proof}[Proof of Theorem~\ref{thm:high_prob_detection_PA}]
For a graph $G$, define $f(G) := \Delta \left( G \right) / \sqrt{\left| G \right|}$, 
where recall that $\Delta \left( G \right)$ is the maximum degree in $G$. 
M\'ori showed that the limit $f_{\infty} := \lim_{n \to \infty} f \left( \PA \left( n, S_{2} \right) \right)$ exists almost surely, 
and moreover that the limit is almost surely positive, finite, and it has an absolutely continuous distribution~\cite[Theorem~3.1]{Mori:05}. 
Now applying Theorem~\ref{thm:high_prob_detection} yields the desired conclusion. 
\end{proof}

We next present a method that works equally well for both PA and UA trees, 
with only minor changes needed between the two cases. Accordingly, we present a unified proof for Theorems~\ref{thm:high_prob_detection_PA} and~\ref{thm:high_prob_detection_UA}, and throughout the proof we will always explain what differs for PA and UA trees. 
The proof is based on a notion of centrality in trees, which we detail below.

Given a tree $T$ and a distinguished vertex $v$ in the tree, let $(T,v)$ be the rooted tree with root $v$. For any other vertex $u$, $(T,v)_{u \downarrow}$ is the rooted subtree of $(T,v)$ whose root is $u$ and whose vertex set contains all vertices $w$ such that the unique path connecting $w$ and $v$ contains $u$. The {\it anti-centrality} of a vertex $v$ in a tree $T$ is defined as 
\[
\Psi_{T} (v) : =  \max\limits_{u \in \mathcal{N}_{v} (T)} \left| (T,v)_{u \downarrow} \right|,
\]
where 
$\mathcal{N}_{v} (T) := \left\{ u \in V \left( T \right) : (u,v) \in E \left( T \right) \right\}$ 
is the neighborhood of $v$ in $T$; 
see Figure~\ref{fig:centrality} for an illustration. 
Note that $\Psi_{T} \left( v \right)$ is efficiently computable (i.e., in $\poly\left( \left| T \right| \right)$ time, e.g., using a breadth first search (BFS) algorithm). 
A \emph{centroid} is a vertex that has minimum anti-centrality. 
Note that there can be multiple centroids, but only at most two (see, e.g.,~\cite[Lemma~2.1]{persistent_centrality}). 
If there is a unique centroid (which is often the case), then we refer to it as \emph{the} centroid. 
Properties of this centrality measure and of the corresponding centroid(s) have been widely studied, both for trees in general and also more specifically in a variety of sequentially-generated trees, including PA and UA trees (see, e.g.,~\cite{persistent_centrality} and the references therein). 
Centroids and centrality were also used as a key tool in root-finding algorithms in PA and UA trees~\cite{BDL16,LP19,DR19}.

\begin{figure}[t]
\centering
\includegraphics[width=0.38\textwidth]{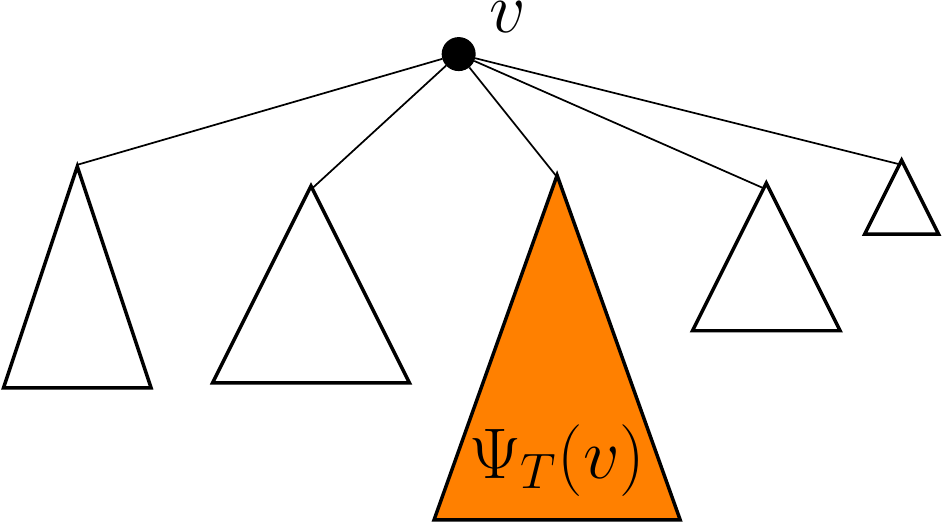}
\caption{The anti-centrality $\Psi_{T} \left( v \right)$ of vertex $v$ in tree $T$ is the size of the largest pendent subtree.}
\label{fig:centrality}
\end{figure}

In the following $\left\{ T_{n} \right\}_{n \geq 2}$ denotes a sequence of trees started from the seed $S_{2}$ and grown according to PA or UA. 
To abbreviate notation, we write 
$\Psi_{n} \left( v \right) := \Psi_{T_{n}} \left( v \right)$ 
for a vertex $v \in V \left( T_{n} \right)$. 
Recall that for a vertex $v$ in the tree $T_{n}$, we denote by $\tau(v)$ the {\it timestamp} of $v$. That is, $\tau(v) = k$ if $v$ is not in $T_{k-1}$ but is introduced in $T_{k}$. 
In the following when we refer to ``a fixed vertex $v$'', 
we mean that the timestamp $\tau(v)$ of $v$ is fixed (i.e., it does not change with $n$). 
The following theorem describes properties of the asymptotic behavior of the anti-centrality of a fixed vertex $v$ in PA and UA trees. 

 \newcommand{\thmcontinuouscentrality}{Let $\left\{ T_{n} \right\}_{n \geq 2}$ be a sequence of trees started from the seed $S_{2}$ and grown according to PA or UA. 
 Let $v$ be a fixed vertex. Then the limit 
 \[
 \Psi(v) := \lim_{n \to \infty} \frac{1}{n} \Psi_{n} \left( v \right)
 \]
 exists almost surely. 
 Furthermore, $\Psi(v)$ is an absolutely continuous random variable.}

\begin{theorem}
\label{thm:continuous_centrality}
\thmcontinuouscentrality
\end{theorem}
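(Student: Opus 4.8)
The plan is to track the evolution of the pendant subtrees hanging off a fixed vertex $v$ and recognize the joint process of their sizes as a (generalized) P\'olya urn, for which almost sure limiting proportions are classical. Fix $v$ with timestamp $\tau(v) =: t_0$, and work at times $n \ge t_0$. Root the tree at $v$, so that $T_n$ decomposes into $d_{T_n}(v)$ pendant subtrees $(T_n, v)_{u \downarrow}$ over $u \in \mathcal{N}_v(T_n)$. When a new vertex arrives at time $n+1$, either it attaches directly to $v$ (creating a brand new subtree of size $1$), or it attaches inside one of the existing pendant subtrees (increasing that subtree's size by $1$); in the UA case the probability of landing in a given subtree is proportional to its current size (with the event ``attach to $v$'' having probability $1/n$), and in the PA case it is proportional to the total degree inside that subtree, i.e.\ $2|(T_n,v)_{u\downarrow}| - 1$ for the subtree rooted at $u$ (the $-1$ accounting for the edge from $u$ up to $v$). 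In both cases this is exactly a P\'olya-type urn with immigration: each existing ``color'' (subtree) grows by one with probability proportional to an affine function of its size, and with the remaining probability a new color is born. The key structural fact I would invoke is that for such urns the vector of normalized sizes $\bigl( \tfrac{1}{n}|(T_n,v)_{u\downarrow}| \bigr)_u$ converges almost surely; this is precisely the machinery used in~\cite{BDL16} to analyze subtree sizes in PA and UA trees, and which underlies the persistent-centroid results of~\cite{persistent_centrality}.

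Granting that, the first step is to show the limit $\Psi(v) = \lim_{n\to\infty} \tfrac{1}{n}\Psi_n(v)$ exists a.s. Since $\Psi_n(v) = \max_{u \in \mathcal{N}_v(T_n)} |(T_n,v)_{u\downarrow}|$, and the set $\mathcal{N}_v(T_n)$ is nondecreasing in $n$, I would argue as follows: after finitely many steps the number of pendant subtrees that will ever acquire a positive limiting fraction is finite, and in fact a.s.\ only the subtrees present by some finite (random) time $N$ have positive limiting proportion, while all subtrees born later have limiting proportion $0$ (their urn "weight" at birth is negligible). Hence for $n$ large the maximum is a maximum over a fixed finite collection of subtrees, each of which has an a.s.\ limiting proportion, so $\tfrac1n \Psi_n(v) \to \max$ of those finitely many limits, which exists a.s.\ and is strictly positive (at least one subtree carries a positive fraction).

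The harder step — and the one I expect to be the main obstacle — is absolute continuity of $\Psi(v)$. Writing $\Psi(v) = \max_i W_i$ where $(W_i)$ is the a.s.-limiting-proportion vector, it suffices to show this maximum has no atoms. The natural route: condition on $T_N$ (the tree at the finite time after which no new subtree is ever again relevant, or just condition on $T_{t_0+k}$ for a suitable finite $k$) and on the identity of the eventual arg-max subtree. Conditionally, the relevant subtree sizes evolve as a Dirichlet-type urn, and the limiting proportions $(W_i)$ have a Dirichlet distribution (for UA, with parameters given by the subtree sizes at time $N$; for PA, a Dirichlet with parameters given by the "degree weights" $2|(T_N,v)_{u\downarrow}|-1$, via the standard correspondence between PA subtree growth and a Friedman/P\'olya urn). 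A single coordinate of a Dirichlet vector is Beta-distributed, hence absolutely continuous, and the event $\{W_i \le x\}$ for the designated arg-max coordinate pulls back to a Beta c.d.f. evaluated at a deterministic-given-$T_N$ point; summing over the finitely many possible arg-max identities and over the countably many conditioning configurations preserves absolute continuity. The subtle points to get right are: (i) the arg-max might not be a.s.\ unique, so one should instead show $\P(\Psi(v) \in A) = 0$ for any Lebesgue-null $A$ by bounding it by $\sum_i \P(W_i \in A)$ over the finite set of "candidate" subtrees, each term vanishing by the Beta marginal; (ii) making precise that only finitely many subtrees matter, which requires a quantitative statement that a subtree born at time $m$ has limiting proportion stochastically dominated by something summable in $m$, so that a.s.\ all but finitely many are $0$ and the sup over the infinitely many small ones contributes nothing; and (iii) handling the PA case's affine (rather than linear) attachment weights, which changes the urn parameters but not the Dirichlet conclusion. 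Once these are in place, Theorem~\ref{thm:continuous_centrality} follows, and feeding $f(G) := \tfrac{1}{|G|}\Psi_G(v)$ (for a fixed-timestamp vertex $v$) into Theorem~\ref{thm:high_prob_detection} — after checking that a function of the tree together with a marked vertex is admissible, e.g.\ by marking the vertex of timestamp $3$ — yields Theorems~\ref{thm:high_prob_detection_PA} and~\ref{thm:high_prob_detection_UA}.
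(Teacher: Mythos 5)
Your overall route matches the paper's: recognize the pendant subtree sizes around $v$ as P\'olya-urn processes, pass to a.s.\ limiting proportions, and deduce absolute continuity of the maximum via a union bound over Beta-distributed coordinates. The paper formalizes this via a nested (stick-breaking) construction, writing the limiting proportion of the $\ell$th subtree around $v$ as $\psi_\ell = \varphi_\ell \prod_{i<\ell}(1-\varphi_i)$ for mutually independent Beta random variables $\varphi_\ell$ with explicit parameters, and then proves $\Psi(v) \stackrel{d}{=} \max_{\ell\ge 0}\psi_\ell$. This is substantively the same picture as your urn-with-immigration formulation, so the strategy is right.

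There is, however, a genuine gap in the way you make the reduction to ``finitely many subtrees.'' You assert that a.s.\ only the subtrees present by some random finite time $N$ have positive limiting proportion and that all later-born subtrees have limiting proportion $0$. This is false: in both UA and PA, each $\psi_\ell$ is a product of Beta random variables that a.s.\ lie in $(0,1)$, so \emph{every} pendant subtree of $v$ has a strictly positive limiting proportion a.s., and $\sum_{\ell\ge 0}\psi_\ell = 1$. The correct statement, and the one the paper proves, is that a.s.\ there is a (random) finite index $k$ such that some $\psi_\ell$ with $\ell\le k$ exceeds the residual mass $1-\sum_{\ell\le k}\psi_\ell=\prod_{\ell\le k}(1-\varphi_\ell)$, hence exceeds every $\psi_m$ with $m>k$. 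For UA this is shown by observing that the events $\{\varphi_k>1/2\}$ are independent with probability $1/2$, so one occurs a.s.; for PA, where the $\varphi_k$ are not i.i.d., the paper instead shows $\prod_{\ell\le k}(1-\varphi_\ell)\to 0$ via a first-moment/Markov bound. Crucially, this argument is also what delivers \emph{existence} of the limit $\Psi(v)=\lim_n\frac1n\Psi_n(v)$, because it lets one conclude that for $n$ large the finite-$n$ maximum $\Psi_n(v)$ is itself already attained among the first $k$ subtrees; your proposal does not actually close that step, since a pointwise a.s.\ limit of each $\frac1n|(T_n,v)_{u\downarrow}|$ does not immediately give convergence of the running maximum over a growing neighbour set. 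Once this is repaired, the absolute-continuity step is fine and in fact simpler than your version: the paper just uses $\P(\Psi(v)\in F)\le\sum_{\ell\ge 0}\P(\psi_\ell\in F)=0$ for Lebesgue-null $F$, with no need to condition on a random stopping time or on the identity of the arg-max. As a side remark on your last sentence: the observed object is an unlabeled snapshot $T_n$, so one cannot ``mark the vertex of timestamp $3$''; the paper instead uses the centroid (which is intrinsic to $T_n$) and deduces the centroid version from Theorem~\ref{thm:continuous_centrality} via the persistent-centroid property in Corollary~\ref{cor:continuous_centrality} before feeding into Theorem~\ref{thm:high_prob_detection}.
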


We refer to $\Psi(v)$ as the \emph{limiting anti-centrality} of $v$. 
We defer the proof of this theorem to Section~\ref{sec:subtree_distributions_anti-centrality}, where, 
in addition to Theorem~\ref{thm:continuous_centrality},  
we also prove a distributional representation of~$\Psi(v)$; see Theorem~\ref{thm:anti-centrality_representation}. 
The key insight behind the proof is that the evolution of the sizes of the subtrees around $v$ can be described in terms of  P\'{o}lya urn processes. The limits of these P\'{o}lya urn processes are absolutely continuous random variables, 
from which 
we can show that $\Psi(v)$ is also an absolutely continuous random variable. The structure of $\Psi(v)$ is the same in both PA and UA trees (with only minor differences in the details), which allows us to develop techniques and proofs that simultaneously work for both models of random trees.

We are particularly interested in the anti-centrality of the centroid(s). 
Note that even if the tree $T$ has two centroids, 
the anti-centrality of the two centroids is equal, by definition. 
If $\theta(T)$ is a centroid of the tree $T$, then 
\[
\Psi_{T} \left( \theta(T) \right) 
= \min_{v \in T} \max\limits_{u \in \mathcal{N}_{v} (T)} \left| (T,v)_{u \downarrow} \right|.
\]
Turning to the sequence of trees $\left\{ T_{n} \right\}_{n \geq 2}$, 
let $\theta(n) := \theta \left( T_{n} \right)$ denote a centroid of $T_{n}$. 
Jog and Loh proved in~\cite{persistent_centrality}, for both PA and UA trees, 
that almost surely the centroid only changes finitely many times. 
That is, the limit 
$\theta := \lim_{n \to \infty} \theta \left( n \right)$ exists almost surely; 
we call $\theta$ the \emph{limiting centroid} of the sequence of trees $\left\{ T_{n} \right\}_{n \geq 2}$. 
Together with Theorem~\ref{thm:continuous_centrality} this implies the following corollary. 

\begin{corollary}
\label{cor:continuous_centrality}
Let $\left\{ T_{n} \right\}_{n \geq 2}$ be a sequence of trees started from the seed $S_{2}$ and grown according to PA or UA. 
Moreover, let $\theta(n) := \theta \left( T_{n} \right)$ denote a centroid of $T_{n}$. 
Then the limit 
\[
\lim_{n \to \infty} \frac{1}{n} \Psi_{n} \left( \theta \left( n \right) \right)
= \lim_{n \to \infty} \frac{1}{n} \min_{v \in T_{n}} \max\limits_{u \in \mathcal{N}_{v} (T_{n})} \left| (T_{n},v)_{u \downarrow} \right|
\]
exists almost surely and is an absolutely continuous random variable. 
\end{corollary}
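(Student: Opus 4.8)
The plan is to reduce the statement about the random centroid $\theta(n)$ to Theorem~\ref{thm:continuous_centrality}, which concerns a fixed vertex, by invoking the persistent centroid property of Jog and Loh~\cite{persistent_centrality}. Label the vertices of the infinite tree $T_\infty := \bigcup_{n \geq 2} T_n$ by $\N$, so that each $j \in \N$ is a fixed vertex in the sense of Theorem~\ref{thm:continuous_centrality}, and write $\Psi_n(j) := \Psi_{T_n}(j)$ whenever $j \in V(T_n)$. The persistent centroid property states that almost surely there is a finite random time $N$ with $\theta(n) = \theta$ for all $n \geq N$, where $\theta := \lim_{n \to \infty} \theta(n) \in \N$ is the limiting centroid; moreover $\Psi_n(\theta(n))$ does not depend on which centroid is chosen, since the (at most two) centroids of any tree have equal anti-centrality.

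First I would prove existence of the limit. By Theorem~\ref{thm:continuous_centrality}, for each fixed $j \in \N$ the limit $\Psi(j) := \lim_{n \to \infty} \frac{1}{n} \Psi_n(j)$ exists almost surely. Intersecting these countably many almost-sure events with the almost-sure event that $\theta(n)$ stabilizes at $\theta$, one obtains an almost-sure event on which, for all $n \geq N$, one has $\frac{1}{n}\Psi_n(\theta(n)) = \frac{1}{n}\Psi_n(\theta)$, and the right-hand side converges to $\Psi(\theta)$ as $n \to \infty$. Hence the limit in the statement exists almost surely and equals $\Psi(\theta)$.

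It then remains to show that $\Psi(\theta)$ is absolutely continuous. Fix a Borel set $A \subseteq \R$ of Lebesgue measure zero. On the almost-sure event identified above, $\{\Psi(\theta) \in A\}$ is contained in $\bigcup_{j \in \N} \{\Psi(j) \in A\}$, since $\theta$ takes values in $\N$. By Theorem~\ref{thm:continuous_centrality} each $\Psi(j)$ is absolutely continuous, so $\p(\Psi(j) \in A) = 0$ for every $j$, and a union bound over the countable index set gives $\p(\Psi(\theta) \in A) = 0$. As $A$ was an arbitrary null set, $\Psi(\theta)$ is absolutely continuous.

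I expect the whole argument to be essentially routine once Theorem~\ref{thm:continuous_centrality} and the persistence result are in hand; the only delicate point is the last step, where one must pass from a countable family of absolutely continuous random variables to the random selection $\Psi(\theta)$ among them. This is exactly where the reduction to fixed vertices matters: since the selecting index $\theta$ ranges over a \emph{countable} set, a Lebesgue-null set remains null after the union bound, whereas one cannot apply Theorem~\ref{thm:continuous_centrality} directly to $\theta$, which is not a fixed vertex.
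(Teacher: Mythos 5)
Your proposal is correct and follows essentially the same route as the paper: invoke the Jog--Loh persistence result to identify a limiting centroid $\theta$ in a countable vertex set, reduce $\frac{1}{n}\Psi_n(\theta(n))$ to $\frac{1}{n}\Psi_n(\theta)$ for $n$ large, and then use the countable union bound over fixed vertices together with the absolute continuity of each $\Psi(v_k)$ from Theorem~\ref{thm:continuous_centrality}. The only cosmetic difference is that the paper partitions $\{\Psi(\theta) \in F\}$ over the events $E_k = \{\theta = v_k\}$ before bounding, whereas you use the containment $\{\Psi(\theta) \in A\} \subseteq \bigcup_j \{\Psi(j) \in A\}$ directly; these are the same argument.
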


\begin{proof}
By~\cite{persistent_centrality}, the centroid stabilizes almost surely, 
that is, the limiting centroid 
$\theta := \lim_{n \to \infty} \theta \left( n \right)$ exists almost surely. 
Let $v_{1}, v_{2}, \ldots, v_{n}$ denote the vertices in $T_{n}$, labeled in order of appearance; 
that is, $\tau \left( v_{k} \right) = k$ for $k > 2$ (and $v_{1}$ and $v_{2}$ are the two vertices in the initial tree $T_{2}$). 
Fix $k \geq 1$ and let $E_{k} := \left\{ \theta = v_{k} \right\}$ be the event that the limiting centroid is $v_{k}$. 
Let $E := \cup_{k \geq 1} E_{k}$ and note that $\p \left( E \right) = 1$. 
On the event $E_{k}$ we have that 
$\lim_{n \to \infty} \frac{1}{n} \Psi_{n} \left( \theta \left( n \right) \right) 
= \Psi \left( v_{k} \right)$, 
so altogether we have that 
$\lim_{n \to \infty} \frac{1}{n} \Psi_{n} \left( \theta \left( n \right) \right) 
= \Psi \left( \theta \right)$ 
and thus the limit exists almost surely. 
To see that the limit is absolutely continuous, let $F$ be a set with Lebesgue measure zero. Then 
\[
\p \left( \Psi \left( \theta \right) \in F \right) 
= \sum_{k \geq 1} \p \left( \left\{ \Psi \left( \theta \right) \in F \right\} \cap E_{k} \right) 
= \sum_{k \geq 1} \p \left( \left\{ \Psi \left( v_{k} \right) \in F \right\} \cap E_{k} \right) 
\leq \sum_{k \geq 1} \p \left( \Psi \left( v_{k} \right) \in F \right) 
= 0,
\]
where in the second equality we used the definition of $E_{k}$ and in the last equality we used that $\Psi \left( v_{k} \right)$ is absolutely continuous for any fixed $k \geq 1$. 
\end{proof}

Corollary~\ref{cor:continuous_centrality} directly implies Theorems~\ref{thm:high_prob_detection_PA} and~\ref{thm:high_prob_detection_UA}, as follows. 

\begin{proof}[Proof of Theorems~\ref{thm:high_prob_detection_PA} and~\ref{thm:high_prob_detection_UA}]
For a tree $T$, define 
\[
f(T) := \frac{1}{\left| T \right|} \min_{v \in T} \max\limits_{u \in \mathcal{N}_{v} (T)} \left| (T,v)_{u \downarrow} \right|.
\]
By Corollary~\ref{cor:continuous_centrality}, 
the limit $f_{\infty} := f \left( T_{n} \right)$ exists almost surely and is absolutely continuous, for both PA and UA trees. 
Now applying Theorem~\ref{thm:high_prob_detection} yields the desired conclusion. 
\end{proof}

\subsection{The distribution of subtree sizes and anti-centrality}
\label{sec:subtree_distributions_anti-centrality}

In this section we derive the limiting distribution of the sizes of the subtrees around a fixed vertex~$v$, 
and using this we derive a distributional representation of the limiting anti-centrality $\Psi(v)$. 
Theorem~\ref{thm:continuous_centrality} then follows immediately. 
Before we state the main theorem of this section, we recall the definition of the timestamp $\tau(v)$ of $v$: $\tau(v) = k$ if $v$ is not in $T_{k-1}$ but is in $T_{k}$. 
In particular, we use the convention that the timestamp of both vertices in the initial tree $T_{2}$ is $2$. 

\begin{theorem}\label{thm:anti-centrality_representation}
Let $\left\{ T_{n} \right\}_{n \geq 2}$ be a sequence of trees started from the seed $S_{2}$ and grown according to PA or UA. 
Let $v$ be a fixed vertex. 
Let $\left\{ \varphi_{k} \right\}_{k \geq 0}$ be mutually independent random variables, all of them having a beta distribution, with parameters as follows: 
\[
\varphi_{0} \sim 
\begin{cases} 
\Beta \left( \tau(v) - 1, 1 \right) &\text{ for } \UA, \\
\Beta \left( \tau(v) - \frac{3}{2}, \frac{1}{2} \right) &\text{ for } \PA, 
\end{cases}
\]
and for $k \geq 1$, let 
\[
\varphi_{k} \sim 
\begin{cases} 
\Beta \left( 1, 1 \right) &\text{ for } \UA, \\
\Beta \left( \frac{1}{2}, \frac{k+1}{2} \right) &\text{ for } \PA. 
\end{cases}
\]
We then define the random variables $\left\{ \psi_{\ell} \right\}_{\ell \geq 0}$ as follows: $\psi_{0} := \varphi_{0}$, and for $\ell \geq 1$ let 
\[
\psi_{\ell} := \varphi_{\ell} \prod_{i=0}^{\ell-1} \left( 1 - \varphi_{i} \right).
\]
The limiting anti-centrality $\Psi(v)$ of $v$ exists almost surely and has the following distributional representation: 
\begin{equation}\label{eq:Psi_rep}
\Psi(v) \stackrel{d}{=} \max_{\ell \geq 0} \psi_{\ell}.
\end{equation}
\end{theorem}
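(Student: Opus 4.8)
The plan is to describe the evolution of the sizes of the subtrees hanging off $v$ by a family of \emph{nested} two-colour Pólya urns, read off the limiting subtree fractions from classical urn theory, and then pass the limit through the maximum; the absolute continuity claim and Theorem~\ref{thm:continuous_centrality} fall out at the end.

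\emph{Set-up.} Write $s := \tau(v)$. Root $T_t$ at $v$. Let $X_0(t)$ be the size of the (unique) subtree hanging off $v$ that is already present when $v$ is born, so $X_0(s) = s-1$; for $j \ge 1$ let $X_j(t)$ be the size of the subtree created at the moment the $j$-th vertex attaches directly to $v$ (and $X_j(t) = 0$ before that time). Then $\sum_{\ell \ge 0} X_\ell(t) = t-1$ and $\Psi_t(v) = \max_{\ell \ge 0} X_\ell(t)$. For $\ell \ge 0$ let the ``$\ell$-side'' be $A_\ell(t) := \{v\} \cup \bigcup_{j \ge \ell} (\text{subtree } j)$, so $A_0(t) = V(T_t)$ and $A_{\ell+1}(t) = A_\ell(t) \setminus (\text{subtree } \ell)$.

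\emph{Nested urns and the $\varphi_\ell$.} The structural heart of the proof is the following: for each fixed $\ell$, conditionally on the set of time steps at which the new vertex joins $A_\ell$, the pair (subtree $\ell$, $A_\ell \setminus \text{subtree } \ell$) evolves as a classical two-colour generalized Pólya urn --- for $\UA$, counting vertices with the replacement rule ``add $1$ to the drawn colour''; for $\PA$, counting degree-weights (a subtree on $m$ vertices hanging off $v$ has degree-weight $2m-1$) with the rule ``add $2$ to the drawn colour'' (one checks that \emph{every} update, whether the new vertex lands in subtree $\ell$, attaches to $v$, or enters a later subtree, adds $2$ to the drawn colour's weight). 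Crucially, the \emph{initial} configuration of this urn is history-independent: at the moment subtree $\ell$ is born, the other colour is exactly the single vertex $v$ (UA count $1$), and in the PA case $v$ has degree exactly $\ell+1$ (its subtrees are precisely $0,1,\dots,\ell$), while subtree $\ell$ has weight $1$; for $\ell = 0$ the initial counts/weights are $(s-1,1)$ (UA) and $(2s-3,1)$ (PA). By the standard convergence theory for Pólya urns (e.g.\ via the bounded martingale obtained after the usual reparametrization, together with matching the rising-factorial moments, which determine the Beta law), the fraction of colour ``subtree $\ell$'' within $A_\ell(t)$ converges almost surely to a random variable $\varphi_\ell$ with exactly the stated distribution: $\varphi_0 \sim \Beta(s-1,1)$ or $\Beta(s-3/2,1/2)$, and $\varphi_\ell \sim \Beta(1,1)$ or $\Beta(1/2,(\ell+1)/2)$ for $\ell \ge 1$. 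Mutual independence of the $\varphi_\ell$ follows from the history-independence of these initial configurations: conditionally on the urns for sides $0,\dots,\ell-1$ (equivalently, on $\varphi_0,\dots,\varphi_{\ell-1}$ together with their full trajectories), side $\ell$ evolves as a fresh urn with the same fixed initial data, so $\varphi_\ell$ is conditionally a Beta variable independent of the conditioning.

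\emph{From the $\varphi_\ell$ to the $\psi_\ell$ and the maximum.} Since $v$ is a single vertex, an easy induction on $\ell$ using $|A_{\ell+1}(t)| = |A_\ell(t)| - X_\ell(t)$ gives $|A_\ell(t)|/t \to \prod_{i=0}^{\ell-1}(1-\varphi_i)$ a.s., whence $X_\ell(t)/t \to \psi_\ell := \varphi_\ell \prod_{i=0}^{\ell-1}(1-\varphi_i)$ a.s.\ (the fact $|A_\ell(t)| \to \infty$, needed to invoke the urn limit, follows from the conditional Borel--Cantelli lemma, since at each step a vertex joins $A_\ell$ with probability at least $1/t$). For the maximum: on one hand $\liminf_t \Psi_t(v)/t \ge \psi_\ell$ for every $\ell$, hence $\ge \sup_\ell \psi_\ell$; on the other, for any fixed $K$, $\Psi_t(v)/t \le \max\{\max_{\ell \le K} X_\ell(t)/t,\ \sum_{\ell > K} X_\ell(t)/t\}$, and letting $t \to \infty$ and then $K \to \infty$ --- using that $\sum_{\ell \ge 0}\psi_\ell = 1$ a.s.\ (because $\prod_i(1-\varphi_i) \to 0$ a.s., the independent factors satisfying $\sum_i \E[-\log(1-\varphi_i)] = \infty$) --- yields $\limsup_t \Psi_t(v)/t \le \max_\ell \psi_\ell$. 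Thus $\Psi(v)$ exists a.s.\ and equals $\max_{\ell \ge 0} \psi_\ell$, which is~\eqref{eq:Psi_rep}. Finally, Theorem~\ref{thm:continuous_centrality} is immediate: for any Lebesgue-null set $F$, $\p(\Psi(v) \in F) \le \sum_{\ell \ge 0}\p(\psi_\ell \in F) = 0$, since each $\psi_\ell$ --- a product of independent non-degenerate random variables, one of which ($\varphi_0$ or $\varphi_\ell$) is absolutely continuous --- is absolutely continuous.

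\emph{Main obstacle.} The crux is making the nested-urn decomposition fully rigorous: choosing, for each $\ell$, the right filtration on which (subtree $\ell$, rest of $A_\ell$) is genuinely a two-colour Pólya urn with \emph{history-independent} initial data, and carrying out the PA degree bookkeeping cleanly (that $v$ has degree exactly $\ell+1$ when subtree $\ell$ appears, and that every update contributes weight $2$ to the drawn colour). Once this is set up, the marginal Beta laws, the independence, the sum-to-one property, and the interchange of limit and maximum are all routine.
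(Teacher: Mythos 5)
Your approach is essentially the same as the paper's: decompose the evolution of the pendent subtrees at $v$ into a family of nested two-colour P\'olya urns, read off the Beta laws of the $\varphi_\ell$ and their independence from the urn structure, multiply up to get the $\psi_\ell$, and then justify passing the limit through the maximum. The urn bookkeeping (sizes for UA, degree-weights $2m-1$ for PA, replacement matrix $\left(\begin{smallmatrix}2&0\\0&2\end{smallmatrix}\right)$, and the initial state $(1,\ell+1)$ from $v$ having degree $\ell+1$ at the moment subtree $\ell$ is born) is exactly what the paper does.

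Where you genuinely differ is the last step — interchanging the limit and the maximum. The paper treats UA and PA separately: for UA it defines events $E_k=\{\varphi_k>1/2\}$ and uses that the $\varphi_k$ ($k\geq 1$) are i.i.d.\ uniform, while for PA (where they are not uniform) it switches to events $E_k=\{\max_{\ell\le k}\psi_\ell>1-\sum_{\ell\le k}\psi_\ell\}$ and shows $\p(E_k)\to 1$. Your truncation bound
$\Psi_t(v)/t\le\max\{\max_{\ell\le K}X_\ell(t)/t,\ \sum_{\ell>K}X_\ell(t)/t\}$, followed by $t\to\infty$ then $K\to\infty$, is a single unified argument covering both models. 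That is a small but real simplification, and it is correct.

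One step is under-justified, though fixable in one line. You assert $\prod_i(1-\varphi_i)\to 0$ a.s.\ ``because $\sum_i\E[-\log(1-\varphi_i)]=\infty$'', but for independent nonnegative $Z_i$ the divergence of $\sum_i\E[Z_i]$ does not by itself imply $\sum_i Z_i=\infty$ a.s.\ (take $Z_i=i$ with probability $i^{-2}$ and $0$ otherwise). Making your route rigorous for PA would require a truncated three-series/Borel–Cantelli argument, because $\varphi_i\sim\Beta(1/2,(i+1)/2)$ is not bounded away from $1$. The cleaner fix — and what the paper effectively does — is to note that $P_K:=\prod_{i=0}^K(1-\varphi_i)$ is nonincreasing in $K$, hence converges a.s., and by independence $\E[P_K]=\prod_{i=0}^K\E[1-\varphi_i]$, which you can compute explicitly ($2^{-K}$ for UA, $\frac{1}{(\tau(v)-1)(K+2)}$ for PA) and which tends to $0$; bounded convergence (or Fatou) then gives $P_K\to 0$ a.s. With that replacement, your proof is complete.
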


In this representation $\psi_{\ell}$ is the asymptotic normalized size of the $\ell$th subtree around $v$; here counting starts at $\ell = 0$ and subtrees are ordered according to their first appearance around~$v$. 
Similar representations---of various limiting quantities using a sequence of independent (beta) random variables---are common in the study of preferential attachment, uniform attachment, and related random graph models (see, e.g.,~\cite{BeBoChSa:14,RB17}).

\begin{proof}
We first prove the claim for UA trees. 
Note that $v$ is a leaf in $T_{\tau(v)}$. Let $u_{0}$ denote the neighbor of $v$ in $T_{\tau(v)}$ and let $e_{0}$ denote the edge connecting $v$ and $u_{0}$. For $n \geq \tau(v)$, the edge $e_{0}$ partitions $T_{n}$ into two subtrees: 
$\left( T_{n}, v \right)_{u_{0} \downarrow}$ and $T_{n} \setminus \left( T_{n}, v \right)_{u_{0} \downarrow}$. 
When a new vertex joins the tree, it attaches to an existing vertex uniformly at random. Therefore, the probability of the new vertex joining either one of these two subtrees is proportional to their size. Thus the evolution of the pair of subtree sizes, 
$\left( \left| \left( T_{n}, v \right)_{u_{0} \downarrow} \right|, n - \left| \left( T_{n}, v \right)_{u_{0} \downarrow} \right| \right)$, 
follows a classical P\'olya urn. 
Initially, at time $n = \tau(v)$, the pair of subtree sizes is $\left( \tau(v) - 1, 1 \right)$. 
Therefore, by classical results on P\'olya urns (see, e.g.,~\cite[Section~4.5 and Example~4.7]{RB17}), 
the limit 
\begin{equation}\label{eq:varphi0}
\varphi_{0} := \lim_{n \to \infty} \frac{1}{n} \left| \left( T_{n}, v \right)_{u_{0} \downarrow} \right| 
\end{equation}
exists almost surely and $\varphi_{0} \sim \Beta \left( \tau(v) - 1, 1 \right)$. 

Next, let $u_{1}$ denote the first vertex that attaches to $v$ with $\tau(u_{1}) > \tau(v)$, and let $e_{1}$ denote the edge connecting $v$ and $u_{1}$. (Note that almost surely $\tau(u_{1}) < \infty$.) 
For $n \geq \tau \left( u_{1} \right)$, the edges $e_{0}$ and $e_{1}$ partition the tree $T_{n}$ into three subtrees: 
$\left( T_{n}, v \right)_{u_{0} \downarrow}$, 
$\left( T_{n}, v \right)_{u_{1} \downarrow}$, 
and $T_{n} \setminus \left(  \left( T_{n}, v \right)_{u_{0} \downarrow} \cup \left( T_{n}, v \right)_{u_{1} \downarrow} \right)$. 
When a new vertex joins the tree, it attaches to an existing vertex uniformly at random. We can view this as a multi-stage process as follows. 
First, the vertex decides whether it will join the subtree 
$\left( T_{n}, v \right)_{u_{0} \downarrow}$ 
or the subtree 
$T_{n} \setminus \left( T_{n}, v \right)_{u_{0} \downarrow}$; 
it does so by flipping a coin, with the probability of choosing either option being proportional to the size of the respective subtree. 
Next, if the vertex decides to join the subtree 
$T_{n} \setminus \left( T_{n}, v \right)_{u_{0} \downarrow}$, 
it then chooses whether to join the subtree 
$\left( T_{n}, v \right)_{u_{1} \downarrow}$ 
or the subtree 
$T_{n} \setminus \left(  \left( T_{n}, v \right)_{u_{0} \downarrow} \cup \left( T_{n}, v \right)_{u_{1} \downarrow} \right)$; 
it again does so by flipping a coin, with the probability of choosing either option being proportional to the size of the respective subtree. 
This second coin flip is independent of the first coin flip. 
Finally, once the vertex has decided which of the three subtrees to join, it attaches to a vertex chosen uniformly at random from the given subtree. 

From this construction it is immediate that, when viewed at the times when the new vertex joins the subtree 
$T_{n} \setminus \left( T_{n}, v \right)_{u_{0} \downarrow}$, 
the pair 
\begin{equation}\label{eq:u1_pair}
\left( \left| \left( T_{n}, v \right)_{u_{1} \downarrow} \right|, 
n - \left| \left( T_{n}, v \right)_{u_{0} \downarrow} \right| - \left| \left( T_{n}, v \right)_{u_{1} \downarrow} \right| \right)
\end{equation}
evolves as a classical P\'olya urn started from $(1,1)$. Thus the limit 
\begin{equation}\label{eq:varphi1}
\varphi_{1} := \lim_{n \to \infty} \frac{\left| \left( T_{n}, v \right)_{u_{1} \downarrow} \right|}{n - \left| \left( T_{n}, v \right)_{u_{0} \downarrow} \right|} 
\end{equation}
exists almost surely and $\varphi_{1} \sim \Beta \left( 1, 1 \right)$ (in other words, $\varphi_{1}$ is uniform on the interval $[0,1]$). 
Moreover, the evolution of the P\'olya urn describing the pair in~\eqref{eq:u1_pair} is independent of the process that determines the times at which the subtree 
$T_{n} \setminus \left( T_{n}, v \right)_{u_{0} \downarrow}$ increases, 
which means that $\varphi_{1}$ and $\varphi_{0}$ are independent. 
Putting together~\eqref{eq:varphi0} and~\eqref{eq:varphi1}, we obtain that 
\[
\lim_{n \to \infty} \frac{1}{n} \left| \left( T_{n}, v \right)_{u_{1} \downarrow} \right| 
= \lim_{n \to \infty}  \frac{\left| \left( T_{n}, v \right)_{u_{1} \downarrow} \right|}{n - \left| \left( T_{n}, v \right)_{u_{0} \downarrow} \right|} \frac{n - \left| \left( T_{n}, v \right)_{u_{0} \downarrow} \right|}{n}
= \varphi_{1} \left( 1 - \varphi_{0} \right) 
= \psi_{1} 
\]
almost surely. 

We can then iterate this argument. 
For $\ell \geq 2$, let $u_{\ell}$ denote the $\ell$th vertex to attach to $v$. 
The random variables $\varphi_{2}, \varphi_{3}, \ldots$ can be defined inductively by the limit 
\[
\varphi_{\ell} := \lim_{n \to \infty} \frac{\left| \left( T_{n}, v \right)_{u_{\ell} \downarrow} \right|}{n - \sum_{i = 0}^{\ell - 1} \left| \left( T_{n}, v \right)_{u_{i} \downarrow} \right|}; 
\]
the same argument as above shows that this limit exists almost surely, $\varphi_{\ell} \sim \Beta \left( 1, 1 \right)$ for every $\ell \geq 1$, 
and that $\varphi_{\ell}$ is independent of $\varphi_{0}, \varphi_{1}, \ldots, \varphi_{\ell - 1}$. Subsequently, this implies by induction that 
\[
\lim_{n \to \infty} \frac{1}{n} \left| \left( T_{n}, v \right)_{u_{\ell} \downarrow} \right| 
= \lim_{n \to \infty}  \frac{\left| \left( T_{n}, v \right)_{u_{\ell} \downarrow} \right|}{n - \sum_{i=0}^{\ell-1} \left| \left( T_{n}, v \right)_{u_{i} \downarrow} \right|} \frac{n - \sum_{i=0}^{\ell-1} \left| \left( T_{n}, v \right)_{u_{i} \downarrow} \right|}{n}
= \varphi_{\ell} \prod_{i=0}^{\ell-1} \left( 1 - \varphi_{i} \right) 
= \psi_{\ell} 
\]
almost surely. 
We have thus shown that the asymptotic normalized size of the $\ell$th subtree around~$v$ is given by $\psi_{\ell}$. 
What remains is to understand how the subtree sizes of these fixed neighbors of~$v$ relate to the anti-centrality of $v$. 

Define the event $E_{k} := \left\{ \varphi_{k} > 1 / 2 \right\}$ and let $E := \cup_{k \geq 1} E_{k}$. The events $\left\{ E_{k} \right\}_{k \geq 1}$ are mutually independent and $\p \left( E_{k} \right) = 1/2$ for every $k \geq 1$. Therefore $\p \left( E \right) = 1$. 
Since $E_{k}$ holds if and only if $\varphi_{k} > 1 - \varphi_{k}$, 
the event $E_{k}$ is equivalent to the event that 
\[
\lim_{n \to \infty} \frac{1}{n} \left| \left( T_{n}, v \right)_{u_{k} \downarrow} \right| 
> 
\lim_{n \to \infty} \frac{1}{n} \left( n - \sum_{\ell = 0}^{k} \left| \left( T_{n}, v \right)_{u_{\ell} \downarrow} \right| \right) 
\]
holds. 
Thus on the event $E_{k}$ we have, for all $n$ large enough, that 
\[
\left| \left( T_{n}, v \right)_{u_{k} \downarrow} \right| 
> 
n - \sum_{\ell = 0}^{k} \left| \left( T_{n}, v \right)_{u_{\ell} \downarrow} \right|.
\]
Since for any $u \in \cN_{v} \left( T_{n} \right) \setminus \left\{ u_{0}, u_{1}, \ldots, u_{k} \right\}$ we have that 
\[
\left| \left( T_{n}, v \right)_{u \downarrow} \right| 
\leq 
n - \sum_{\ell = 0}^{k} \left| \left( T_{n}, v \right)_{u_{\ell} \downarrow} \right|, 
\]
it follows that 
\[
\Psi_{n} \left( v \right) = \max_{\ell \in \left\{ 0, 1, \ldots, k \right\}} \left| \left( T_{n}, v \right)_{u_{\ell} \downarrow} \right| 
\]
for all $n$ large enough, on the event $E_{k}$. 
Thus dividing by $n$ and taking limits, we have that, on the event $E_{k}$, the limit 
$\Psi(v) := \lim_{n \to \infty} \frac{1}{n} \Psi_{n} \left( v \right)$ exists and moreover 
\[
\Psi(v) = \max_{\ell \in \left\{ 0, 1, \ldots, k \right\}} \psi_{\ell}. 
\]
Consequently, on the event $E$, the limit 
$\Psi(v) := \lim_{n \to \infty} \frac{1}{n} \Psi_{n} \left( v \right)$ exists and moreover 
$\Psi(v) = \max_{\ell \geq 0} \psi_{\ell}$. 
Since $E$ holds almost surely, this concludes the proof of~\eqref{eq:Psi_rep} for UA trees. 

For PA trees the arguments are similar, so we only explain the differences.  
In PA, when a new vertex joins the tree, it attaches to an existing vertex with probability proportional to its degree. 
Thus, if we partition the tree into finitely many subtrees, 
the probability that the new vertex joins a particular subtree is proportional to the sum of the degrees of the vertices in the subtree. 
Moreover, when a vertex joins a particular subtree, it increases the sum of the degrees in the subtree by $2$, due to the new edge. For more details, see~\cite[Section~4.5 and Example~4.11]{RB17}.

Thus there are two differences in the analysis of subtrees above: 
(1) the quantity associated with a subtree that we analyze is now the sum of the degrees of the vertices in the subtree (instead of the number of vertices in the subtree), 
and (2) the P\'olya urns that arise have replacement matrix 
$\left(\begin{smallmatrix} 2 & 0 \\ 0 & 2 \end{smallmatrix}\right)$ 
(see~\cite[Section~4.5]{RB17}). 
The first change also means that the initial conditions of the appropriate P\'olya urns are different. 
Specifically, 
the limiting random variable $\varphi_{0}$ 
arises from a P\'olya urn with 
replacement matrix 
$\left(\begin{smallmatrix} 2 & 0 \\ 0 & 2 \end{smallmatrix}\right)$ 
and initial condition 
$\left( 2 \tau \left( v \right) - 3, 1 \right)$, 
which is why $\varphi_{0} \sim \Beta \left( \tau(v) - \frac{3}{2}, \frac{1}{2} \right)$. 
For $k \geq 1$, 
the limiting random variable $\varphi_{k}$ arises from a P\'olya urn with 
replacement matrix 
$\left(\begin{smallmatrix} 2 & 0 \\ 0 & 2 \end{smallmatrix}\right)$ 
and initial condition 
$\left( 1, k+1 \right)$, 
which is why $\varphi_{k} \sim \Beta \left( \frac{1}{2}, \frac{k+1}{2} \right)$.

There is one more subtle point here: 
we are interested in the asymptotic behavior of the sizes of various subtrees (that is, the number of vertices in the subtrees), 
but the analysis concerns the sum of the degrees of the vertices in the subtrees. 
However, 
the map $x \mapsto 2x - 1$ 
takes the number of vertices in a subtree 
to the sum of the degrees of the vertices in the subtree 
(this uses the fact that we are considering subtrees where there is exactly one edge exiting the subtree). 
The normalization factor also differs by essentially a factor of $2$: 
it is $n$ when the considering the number of vertices 
and $2n -2$ when considering the sum of the degrees. 
Thus after normalization the quantity that we care about (subtree size) 
is asymptotically the same as the quantity that we analyze (sum of the degrees in a subtree).

With these changes we have thus determined that the asymptotic normalized size of the $\ell$th subtree around $v$ is given by $\psi_{\ell}$ for PA trees. 
What remains is to show~\eqref{eq:Psi_rep} for PA trees. 
Since the random variables $\left\{ \varphi_{k} \right\}_{k \geq 1}$ are no longer i.i.d. uniform on $[0,1]$ (as in the case of UA trees), a different argument is needed here. For $k \geq 1$ define the event 
\[
E_{k} := \left\{ \max_{\ell \in \left\{ 0, 1, \ldots, k \right\}} \psi_{\ell} > 1 - \sum_{\ell = 0}^{k} \psi_{\ell} \right\}
\]
and let 
$E := \cup_{k \geq 0} E_{k}$. 
An analogous argument as above shows that 
on the event $E_{k}$ we have that 
$\Psi(v) := \lim_{n \to \infty} \frac{1}{n} \Psi_{n} \left( v \right)$ 
exists and moreover 
$\Psi(v) = \max_{\ell \in \left\{ 0, 1, \ldots, k \right\}} \psi_{\ell}$. 
Thus on the event~$E$ we have that 
$\Psi(v) := \lim_{n \to \infty} \frac{1}{n} \Psi_{n} \left( v \right)$ 
exists and moreover 
$\Psi(v) = \max_{\ell \geq 0} \psi_{\ell}$. 
What remains to show is that $\p \left( E \right) = 1$, 
which is equivalent to showing that $\lim_{k \to \infty} \p \left( E_{k} \right) = 1$, since $\left\{ E_{k} \right\}_{k \geq 0}$ is an increasing sequence of events. This, in turn, follows from the fact that 
$1 - \sum_{\ell = 0}^{k} \psi_{\ell} \to 0$ in probability as $k \to \infty$. 
To see that this convergence in probability holds, 
first observe that 
$1 - \sum_{\ell = 0}^{k} \psi_{\ell} 
= \prod_{\ell = 0}^{k} \left( 1 - \varphi_{k} \right)$. 
Then by independence we have that 
\[
\E \left[ 1 - \sum_{\ell = 0}^{k} \psi_{\ell} \right] 
= \prod_{\ell = 0}^{k} \E \left[ 1 - \varphi_{k} \right]
= \frac{1}{2\tau(v) - 2} \prod_{\ell = 1}^{k} \frac{\ell + 1}{\ell + 2} 
= \frac{1}{\left( \tau(v) - 1 \right) \left( k + 2 \right)},
\]
which goes to $0$ as $k \to \infty$. 
The conclusion then follows from Markov's inequality. 
\end{proof} 

Theorem~\ref{thm:anti-centrality_representation} directly implies Theorem~\ref{thm:continuous_centrality}, as we now show. 

\begin{proof}[Proof of Theorem~\ref{thm:continuous_centrality}]
By Theorem~\ref{thm:anti-centrality_representation}, 
the limiting anti-centrality $\Psi(v)$ exists almost surely. 
Moreover, it satisfies the distributional representation given in~\eqref{eq:Psi_rep}. 
That is, it is the maximum of countably many absolutely continuous random variables. 
As such, it is absolutely continuous as well. Indeed, if $F$ is a set with Lebesgue measure zero, then 
\[
\p \left( \Psi(v) \in F \right) 
= \p \left( \max_{\ell \geq 0} \psi_{\ell} \in F \right) 
\leq \sum_{\ell \geq 0} \p \left( \psi_{\ell} \in F \right) 
= 0. \qedhere
\]
\end{proof}

\section{An initial, coarse estimate of $\tcorr$} \label{sec:estimation_tcorr_coarse} 

We now turn to the problem of estimating $\tcorr$. 
The estimator that we use to prove Theorem~\ref{thm:estimation} is somewhat involved, so in this section we first study a simpler estimator. The guarantees we prove for this simpler estimator are weaker than those in Theorem~\ref{thm:estimation} (see Theorem~\ref{thm:estimation_coarse} below), but studying this simpler estimator highlights some of the key ideas that also go into the more involved estimator studied subsequently in Section~\ref{sec:estimation_large_tcorr}. Moreover, as we shall see in Section~\ref{sec:estimation_large_tcorr}, our estimator for $\tcorr$ that achieves vanishing relative error needs as input an initial, coarse estimate of~$\tcorr$---and the simple estimator studied in this section provides this.

In this section we will thus prove the following result. 

\begin{theorem}[A coarse estimate of $\tcorr$ in PA and UA trees]\label{thm:estimation_coarse} 
Let $S = S_{2}$ be the unique tree on two vertices 
and let $\left( T_{n}^{1}, T_{n}^{2} \right) \sim \CPA \left( n, \tcorr, S \right)$. 
There exists an estimator 
$\wh{t}_{n} \equiv \wh{t} \left( T_{n}^{1}, T_{n}^{2} \right)$, 
computable in polynomial time, such that 
\[
\lim_{\tcorr \to \infty} \liminf_{n \to \infty} 
\p \left( \frac{\tcorr}{\log \tcorr}
\leq \wh{t}_{n} 
\leq \tcorr \log \tcorr \right)
= 1. 
\]
The same result also holds when 
$\left( T_{n}^{1}, T_{n}^{2} \right) \sim \CUA \left( n, \tcorr, S \right)$. 
\end{theorem}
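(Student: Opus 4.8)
The plan is to reduce the estimation to a single correctly matched pair of vertices and read off an estimate of $\tcorr$ from their subtree sizes; the estimator is coarse precisely because it rests on one pair rather than on an average over many (that refinement is what Section~\ref{sec:estimation_large_tcorr} adds).

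First I would \emph{match the centroid}. By the persistent centroid property of Jog and Loh~\cite{persistent_centrality}, for a PA or UA tree started from $S_{2}$ the centroid $\theta(n)$ stabilizes to a limiting centroid at an almost surely finite (random) time. Since each of $\{T_{n}^{1}\}$ and $\{T_{n}^{2}\}$ is marginally such a sequence, with probability tending to $1$ as $\tcorr \to \infty$ both centroids have already stabilized by time $\tcorr$; on this event $\theta^{1}(n) = \theta^{2}(n) = \theta$ for all $n \ge \tcorr$, where $\theta = \theta(T_{\tcorr})$ is a vertex of the \emph{shared} tree. Next I would \emph{match the heaviest branch at $\theta$}: letting $v$ be the neighbor of $\theta$ in $T_{\tcorr}$ whose branch $(T_{\tcorr},\theta)_{v\downarrow}$ is largest, of size $s_{1}$, I claim that with probability tending to $1$ as $\tcorr \to \infty$ the branch of $v$ is, for all large $n$ and both $i \in \{1,2\}$, the unique largest branch of $\theta$ in $T_{n}^{i}$ --- so $v$ is recovered identically in both trees as the root of that branch. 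Given $T_{\tcorr}$, the normalized branch sizes of $\theta$ in $T_{n}^{i}$ converge almost surely to a Dirichlet vector (the branch sizes, or degree-sums in the PA case, form a multicolor P\'olya urn, exactly as in the proof of Theorem~\ref{thm:anti-centrality_representation}), independently across $i = 1,2$. The crucial input is that $s_{1}$ exceeds the second-largest branch size of $\theta$ in $T_{\tcorr}$ by $\Theta(\tcorr)$ with probability close to $1$: by Theorem~\ref{thm:anti-centrality_representation} (and the analogous statement for the second-largest branch, which its proof also yields) the limiting branch proportions $\{\psi_{\ell}\}$ are almost surely distinct, so $\Psi(\theta) = \max_{\ell}\psi_{\ell}$ strictly exceeds the next largest $\psi_{\ell}$; a level of the gap can then be fixed to have probability close to $1$. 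Given such a gap, a Chernoff bound for the Beta/Dirichlet marginals together with a union bound over the at most $\tcorr$ branches (and over $i$) rules out any branch overtaking the heaviest.

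For the \emph{estimator}, the algorithm computes $\theta^{i}(n)$, roots $T_{n}^{i}$ there, lets $v^{i}$ be the root of the largest branch, and sets $X_{n}^{i} := \tfrac1n |(T_{n}^{i}, \theta^{i}(n))_{v^{i}\downarrow}|$, $D_{n} := X_{n}^{1} - X_{n}^{2}$, $\wh p_{n} := \tfrac12(X_{n}^{1} + X_{n}^{2})$, and $\wh t_{n} := 2\wh p_{n}(1 - \wh p_{n})/D_{n}^{2}$. On the good events above, $v^{1} = v^{2} = v$, and as $n \to \infty$ one has $X_{n}^{i} \to X_{\infty}^{i}$ almost surely, where given $T_{\tcorr}$ the variables $X_{\infty}^{1}, X_{\infty}^{2}$ are i.i.d.\ with distribution $\Beta(s_{1}, \tcorr - s_{1})$ for UA (resp.\ $\Beta(s_{1} - \tfrac12, \tcorr - s_{1} - \tfrac12)$ for PA), of mean $p := s_{1}/\tcorr$ and variance $(1 + o(1))\,p(1-p)/\tcorr$. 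On the good event $p$ is bounded away from $0$ and $1$: away from $1$ because a centroid branch contains at most $\lfloor \tcorr/2 \rfloor$ vertices, and away from $0$ because $s_{1}/\tcorr \to \Psi(\theta) > 0$ almost surely, so one first fixes a level $\eta > 0$ with $\p(\Psi(\theta) > \eta)$ close to $1$. Consequently, conditionally on $T_{\tcorr}$: Markov's inequality applied to $\E[D^{2} \mid T_{\tcorr}] = (1 + o(1))\,2p(1-p)/\tcorr$ gives $\p(D^{2} > c\log\tcorr/\tcorr \mid T_{\tcorr}) = O(1/\log\tcorr)$; and since $D = X_{\infty}^{1} - X_{\infty}^{2}$ is a difference of i.i.d.\ Beta variables each of maximal density $O(\sqrt{\tcorr})$, the density of $D$ is $O(\sqrt{\tcorr})$, so $\p(|D| < c/\sqrt{\tcorr\log\tcorr} \mid T_{\tcorr}) = O(1/\sqrt{\log\tcorr})$. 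Since $\wh t_{n} \to \wh t := 2\wh p(1-\wh p)/D^{2}$ almost surely, with $\wh p := \lim_{n} \wh p_{n}$ concentrating around $p$ on the scale $\tcorr^{-1/2}$ (so that the multiplicative error from replacing $p$ by $\wh p$ is $1 + o(1)$), these two bounds give $\p(\tcorr/\log\tcorr \le \wh t \le \tcorr\log\tcorr \mid T_{\tcorr}) = 1 - o(1)$ as $\tcorr \to \infty$; integrating over $T_{\tcorr}$ on the good event, adding the $o(1)$ error from the earlier stages, and using $\wh t_{n} \to \wh t$ almost surely (and that $\wh t$ has no atoms) to pass from $\wh t$ back to $\wh t_{n}$, yields the claim. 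The estimator is computable in polynomial time (centroids and subtree sizes via BFS, then arithmetic), and the UA case is handled identically, only the parameters of the urns changing.

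The hardest part will be the branch-matching step: it requires quantitative anti-concentration for the gap between the two largest branch sizes of the centroid at time $\tcorr$ (supplied by the absolute continuity in Theorem~\ref{thm:anti-centrality_representation}) combined with a tail bound uniform over the possibly $\Theta(\tcorr)$ remaining branches, so that the same vertex $v$ is reconstructed in both trees. A second essential technical point is the anti-concentration of $D$ near $0$ in the final stage, which is exactly what prevents $\wh t_{n}$ from blowing up and hence controls the \emph{relative} error.
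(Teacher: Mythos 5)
Your proposal is, in its essentials, the same route the paper takes in Section~\ref{sec:estimation_tcorr_coarse}: match the centroids by persistence, identify the largest pendent subtree at the centroid in both trees, reduce to i.i.d.\ (conditionally on $T_{\tcorr}$) Beta variables $Z^{1},Z^{2}$ via P\'olya urns, use Markov's inequality on $(Z^{1}-Z^{2})^{2}/[2p(1-p)]$ to rule out underestimation, and use the $O(\sqrt{\tcorr})$ bound on the Beta density to rule out overestimation. The estimator $2\wh{p}_{n}(1-\wh{p}_{n})/D_{n}^{2}$ with $\wh{p}_{n}=\tfrac12(X_n^1+X_n^2)$ is a cosmetic variant of the paper's $1/Y_{n}$ with $Y_{n}=(X_n^1-X_n^2)^2/[2X_n^1(1-X_n^1)]$; both work and the analysis is identical. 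Your use of a fixed level $\eta$ with $\p(\Psi(\theta)>\eta)$ close to $1$, followed by an $\eps$-argument, is a legitimate substitute for the paper's explicit $\tcorr$-dependent threshold $\Psi_{\tcorr}(\theta)\geq \tcorr/\sqrt{\log\tcorr}$ (event $\cB$, Lemma~\ref{lem:B}); the paper's version buys a quantitative rate $O(\log^{-1/4}\tcorr)$ but that is not needed for the theorem as stated.

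The one place your sketch is genuinely underspecified is the branch-matching step. You propose ``a Chernoff bound \dots together with a union bound over the at most $\tcorr$ branches.'' But the centroid $\theta$ continues to acquire new neighbors for all $n>\tcorr$ (up to order $\sqrt{n}$ of them in PA, order $\log n$ in UA), so the collection of pendent subtrees of $\theta$ is not fixed and is not bounded by $\tcorr$; a na\"ive union bound over branches present at time $\tcorr$ misses all branches born afterwards. What is needed --- and what the paper's Lemma~\ref{lem:nice_event} (Step 4) supplies --- is to aggregate all of the late-born branches of $\theta$ into a single ``residual'' subtree $T'_t$, observe that $(|T'_t|, |\text{largest early branch}|)$ itself evolves as a two-color P\'olya urn with a very lopsided initial condition $(1,\Psi_{s}(\theta))$, and apply the martingale maximal concentration of Lemma~\ref{lem:Polya_concentration} to show that this residual never exceeds half the total, hence no late branch can ever overtake the one identified at time $\tcorr$. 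Separately, one must handle the finitely many early branches that are comparable in size to the largest, which is where the anti-concentration from Theorem~\ref{thm:anti-centrality_representation} (distinctness of the $\psi_\ell$) and the stability of pairwise subtree-size rankings (Steps 3 and 5 of Lemma~\ref{lem:nice_event}) come in. Your intuition for both ingredients is correct, but the union bound as stated does not cover the growing set of late branches, so the claim needs the aggregate argument to close the gap.
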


We now describe the estimator used to prove Theorem~\ref{thm:estimation_coarse}. 
Recall all the notation introduced in Sections~\ref{sec:detection_large_tcorr_specific} and~\ref{sec:subtree_distributions_anti-centrality}, which we will use here. 
Moreover, for anything introduced previously in these sections, if we add a superscript $i$ to it (where $i \in \left\{1,2\right\}$), this means that it is the appropriate object in the tree $T_{n}^{i}$. 
For instance, $\theta^{1} \left( n \right)$ and $\theta^{2} \left( n \right)$ are the centroids in $T_{n}^{1}$ and $T_{n}^{2}$, respectively. 

The main idea is to consider the minimum anti-centrality in the two trees $T_{n}^{1}$ and $T_{n}^{2}$. In other words, we consider the sizes of the largest pendent subtrees of the two centroids. 
The heuristic, which we will make precise, is as follows. 
If $\tcorr$ is large, then the centroids in $T_{n}^{1}$ and $T_{n}^{2}$ correspond to the same vertex, with probability close to $1$. 
If this is the case, then the sizes of the largest pendent subtrees of the centroids should be similar, and their difference should concentrate on some function of $n$ and $\tcorr$---which should be a function of only $\tcorr$ in the limit as~$n\to\infty$. 
Estimating this function and inverting it then allows us to estimate $\tcorr$. 
See Figure~\ref{fig:coarse_estimate} for an illustration.

Thus we define, for $i \in \left\{ 1, 2 \right\}$, the random variable 
\begin{equation}\label{eq:X}
X_{n}^{i} 
:= \frac{1}{n} \min_{v \in T_{n}^{i}} \max_{u \in \cN_{v} \left( T_{n}^{i} \right)} \left| \left( T_{n}^{i}, v \right)_{u \downarrow} \right| 
= \frac{1}{n} \Psi_{T_{n}^{i}} \left( \theta^{i} \left( n \right) \right).
\end{equation}
Now define 
\begin{equation}\label{eq:Y}
Y_{n} := \frac{\left( X_{n}^{1} - X_{n}^{2} \right)^{2}}{2 X_{n}^{1} \left( 1 - X_{n}^{1} \right)}.
\end{equation}
As we shall see, 
$Y_{n}$ is concentrated around $1/\tcorr$, 
so we can define the estimator $\wh{t}_{n} := 1/ Y_{n}$. 
Theorem~\ref{thm:estimation_coarse} then follows immediately from the following result. 
\begin{theorem}\label{thm:estimation_coarse_inverse} 
Let $S = S_{2}$ be the unique tree on two vertices 
and let $\left( T_{n}^{1}, T_{n}^{2} \right) \sim \CPA \left( n, \tcorr, S \right)$.  
Define $Y_{n}$ via~\eqref{eq:X} and~\eqref{eq:Y}. 
We have that 
\[
\lim_{\tcorr \to \infty} \liminf_{n \to \infty} 
\p \left( \frac{1}{\tcorr \log \tcorr}
\leq Y_{n} 
\leq \frac{\log \tcorr}{\tcorr} \right)
= 1. 
\]
The same result also holds when 
$\left( T_{n}^{1}, T_{n}^{2} \right) \sim \CUA \left( n, \tcorr, S \right)$. 
\end{theorem}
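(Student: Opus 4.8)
The plan is to reduce, at a cost of $o(1)$ as $\tcorr\to\infty$, to the event $A_\tcorr$ that $G_\tcorr$ has a unique centroid $\theta$ and that the centroids of both $T_n^1$ and $T_n^2$ coincide with $\theta$ for every $n\ge\tcorr$. That $\p(A_\tcorr)\to1$ follows from the persistent centroid property of Jog and Loh~\cite{persistent_centrality} together with the fact that, conditioned on $G_\tcorr$, the two sequences of trees evolve independently: the failure probability is at most twice the probability that the centroid of a single PA (or UA) tree moves strictly after time $\tcorr$, plus the probability of a non-unique centroid at time $\tcorr$, and both vanish. On $A_\tcorr$ one has $X_n^i=\tfrac1n\Psi_{T_n^i}(\theta)$ for $n\ge\tcorr$, so it is natural to work with the unconditioned quantities $\wt X_n^i:=\tfrac1n\Psi_{T_n^i}(\theta(\tcorr))$ and with $\wt Y_n$ defined from $\wt X_n^1,\wt X_n^2$ exactly as $Y_n$ is defined from $X_n^1,X_n^2$ (interpreting $\theta(\tcorr)$ as the centroid of $G_\tcorr$ when the latter is unique, the other case contributing $o(1)$). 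Since $Y_n=\wt Y_n$ on $A_\tcorr$ and $\wt Y_n\to\wt Y_\infty$ almost surely, by Fatou it suffices to prove $\p\!\big(\tfrac{1}{\tcorr\log\tcorr}<\wt Y_\infty<\tfrac{\log\tcorr}{\tcorr}\big)\to1$.

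Next I would condition on $G_\tcorr$ and describe $\wt X_\infty^i=\lim_n\wt X_n^i$ via P\'olya urns, as in the proof of Theorem~\ref{thm:anti-centrality_representation}. Writing $a_1\ge a_2\ge\dots\ge a_d$ for the sizes of the pendant subtrees of $\theta$ in $G_\tcorr$ (so $\sum_j a_j=\tcorr-1$), the vector of their limiting normalized sizes in $T_n^i$, together with the limiting mass of everything else, is Dirichlet, so each subtree limit $\zeta_j^i$ is $\Beta(a_j,\tcorr-a_j)$ for UA (and $\Beta(a_j-\tfrac12,\tcorr-a_j-\tfrac12)$ for PA), the subtrees born after time $\tcorr$ contribute together at most $\zeta_{\mathrm{rest}}^i\sim\Beta(1,\tcorr-1)$, and conditionally on $G_\tcorr$ the families $(\zeta_j^1)_j$ and $(\zeta_j^2)_j$ are independent, with $\wt X_\infty^i=\max_j\zeta_j^i$. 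The crucial input is Corollary~\ref{cor:continuous_centrality}: because the limiting normalized anti-centrality of the centroid is absolutely continuous, hence a.s.\ positive, for each $\varepsilon>0$ there is $\delta=\delta(\varepsilon)>0$ with $\p(a_1\ge\delta\tcorr)\ge1-\varepsilon$ for all large $\tcorr$ (and always $a_1\le\tcorr/2$). On this event $a_1=\Theta(\tcorr)$, so $J:=\{j:a_j\ge\delta\tcorr/4\}$ has $|J|=O(1/\delta)$; a large-deviation estimate for the Beta distribution shows that each $\zeta_j^i$ with $j\notin J$, and each future subtree, is $<\delta/2$ with probability $1-e^{-\Omega_\delta(\tcorr)}$, whereas $\zeta_1^i\to\bar\mu:=a_1/\tcorr\ge\delta$, so $\wt X_\infty^i=\max_{j\in J}\zeta_j^i$ with probability $\to1$, for both $i$. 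Since the relevant $\zeta_j^i$ are $\Beta$'s with both parameters of order $\tcorr$, they concentrate on scale $\tcorr^{-1/2}$ about $a_j/\tcorr\le\bar\mu\le\tfrac12$, which gives $\wt X_\infty^i=\bar\mu+o_{\P}(1)$ and hence $M:=\wt X_\infty^1\big(1-\wt X_\infty^1\big)=\bar\mu(1-\bar\mu)+o_{\P}(1)$, uniformly over $\{\delta\tcorr\le a_1\le\tcorr/2\}$.

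Now fix a small $\eta>0$. For the upper bound I would use $\big|\max_{j\in J}\zeta_j^1-\max_{j\in J}\zeta_j^2\big|\le\max_{j\in J}|\zeta_j^1-\zeta_j^2|$; each difference $\zeta_j^1-\zeta_j^2$ is sub-Gaussian with variance proxy $O(1/\tcorr)$ (a Hoeffding-type bound for the Beta), so taking the threshold $t^2=\tfrac{2\bar\mu(1-\bar\mu)(1-\eta)\log\tcorr}{\tcorr}$ and union-bounding over the $O(1/\delta)$ indices in $J$ yields $\big(\wt X_\infty^1-\wt X_\infty^2\big)^2\le t^2$ with probability $\to1$; combined with $M\ge\bar\mu(1-\bar\mu)(1-\eta)$ this forces $\wt Y_\infty\le t^2/(2M)\le\tfrac{\log\tcorr}{\tcorr}$. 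For the lower bound I would use that, conditionally on $G_\tcorr$, $\wt X_\infty^1$ and $\wt X_\infty^2$ are i.i.d.\ and $\wt X_\infty=\max_{j\in J}\zeta_j$ has L\'evy concentration function at window $\varepsilon$ bounded by $O(\sqrt\tcorr)\,\varepsilon$ (a union bound over the $O(1/\delta)$ $\Beta$-densities, each of sup-norm $\asymp\sqrt\tcorr$ by Stirling/the local CLT); hence $\p\!\big(\big|\wt X_\infty^1-\wt X_\infty^2\big|\le(2\tcorr\log\tcorr)^{-1/2}\big)=O\big((\log\tcorr)^{-1/2}\big)\to0$, and since $M\le\tfrac14$ always this forces $\wt Y_\infty\ge2\big(\wt X_\infty^1-\wt X_\infty^2\big)^2\ge\tfrac{1}{\tcorr\log\tcorr}$ with probability $\to1$. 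Averaging these conditional-on-$G_\tcorr$ statements over the event $\{\delta\tcorr\le a_1\le\tcorr/2\}$ of probability $\ge1-\varepsilon$, and then letting $\varepsilon\to0$, completes the proof; the argument for $\CUA$ is identical, since Corollary~\ref{cor:continuous_centrality} and the urn computation above hold uniformly for PA and UA.

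The main obstacle is that the two (or more) largest pendant subtrees of $\theta$ at time $\tcorr$ may have nearly equal sizes, so that the subtree realizing the maximum anti-centrality may differ between $T_n^1$ and $T_n^2$; rather than isolating a single ``dominant'' subtree one must work with the whole bounded family $\{\zeta_j^i\}_{j\in J}$, using $|\max-\max|\le\max|{\cdot}|$ for the upper bound and the boundedness of the density of a maximum of boundedly many well-concentrated Betas for the lower (anti-concentration) bound. A secondary subtlety is that one must track constants carefully enough to land inside $\big[\tfrac{1}{\tcorr\log\tcorr},\tfrac{\log\tcorr}{\tcorr}\big]$ rather than merely a constant multiple of it — this is why the sharp $\tfrac{\bar\mu(1-\bar\mu)}{\tcorr}$ scale of the $\Beta$ variances and the identity $\Var(\zeta_j^i)=\tfrac{1}{\tcorr+O(1)}\,\E[\zeta_j^i(1-\zeta_j^i)]$ are used (with the factor $(1-\eta)$ absorbing the $o_{\P}(1)$ errors in $M$) in place of cruder estimates.
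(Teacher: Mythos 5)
Your proposal is correct, and it takes a genuinely different route from the paper's, so let me compare. Both proofs start from the same core representation: conditioning on $T_{\tcorr}$, the normalized limiting subtree sizes $\zeta_j^i$ around $\theta(\tcorr)$ are Dirichlet/Beta (equation~\eqref{eq:Z}), and conditionally i.i.d.\ across $i$. Beyond that the arguments diverge. The paper's upper bound is via a first-moment computation (Lemma~\ref{lem:coarse_first_moment}: $\E[Y_n\mathbf{1}_{\cA^1\cap\cA^2}]\lesssim 1/\tcorr$) followed by Markov's inequality; you replace this with a sub-Gaussian concentration bound for $\zeta_j^1-\zeta_j^2$, which avoids the moment computation entirely at the cost of needing the Marchal--Arbel-type sub-Gaussian constant for Beta laws. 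The paper's lower bound works on the event $\cA$, whose property~\ref{prop:nice2} (persistence of the \emph{root of the largest pendent subtree}) is what lets them compare a \emph{single} matched subtree in the two trees; establishing $\p(\cA^c)=O(1/\log\tcorr)$ occupies most of Section~\ref{sec:coarse_nice_event_proof}. You avoid~\ref{prop:nice2} and~\ref{prop:nice3} entirely by taking the $n\to\infty$ limit up front (Fatou), using the elementary inequality $|\max_{j\in J}\zeta_j^1-\max_{j\in J}\zeta_j^2|\le\max_{j\in J}|\zeta_j^1-\zeta_j^2|$ for the upper tail, and bounding the density of $\max_{j\in J}\zeta_j$ by $\sum_{j\in J}$ of the individual densities for the anti-concentration step — so you only need the persistent-centroid property (Jog--Loh) and a restriction to the boundedly-many subtrees of size $\gtrsim\delta\tcorr$. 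Finally, the paper's event $\cB$ ($\Psi_{\tcorr}(\theta(\tcorr))\ge\tcorr/\sqrt{\log\tcorr}$) is established quantitatively via a P\'olya-urn tail estimate; you instead invoke absolute continuity of $\Psi(\theta)$ (Corollary~\ref{cor:continuous_centrality}) for a soft $\varepsilon$--$\delta$ argument giving $a_1\ge\delta\tcorr$ with probability $\ge 1-\varepsilon$. The trade-off is that your proof gives only qualitative convergence (no rate), whereas the paper's yields explicit rates — and those rates, along with the machinery of events $\cA$ and $\cB$, are reused in Section~\ref{sec:estimation_large_tcorr} to prove the sharper Theorem~\ref{thm:estimation}; so although your argument is shorter and more self-contained for Theorem~\ref{thm:estimation_coarse_inverse} alone, it would need to be substantially quantified before it could serve the role the paper needs it for.
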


\begin{figure}[t]
\centering
\includegraphics[width=0.8\textwidth]{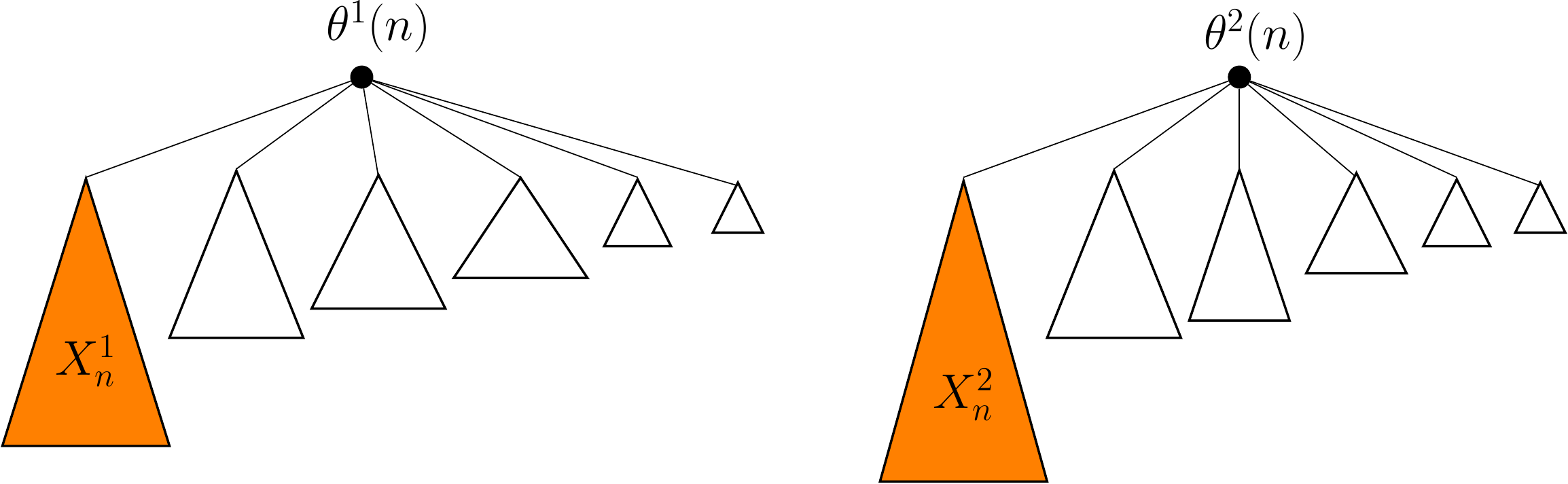}
\caption{The pendent subtrees of the centroids $\theta^{1}(n)$ and $\theta^{2}(n)$ in $T_{n}^{1}$ and $T_{n}^{2}$, respectively, are ordered in decreasing order. The estimator studied in Section~\ref{sec:estimation_tcorr_coarse} is a function of the (normalized) sizes of the largest pendent subtrees, $X_{n}^{1}$ and $X_{n}^{2}$.}
\label{fig:coarse_estimate}
\end{figure}

In the remainder of this section we prove this theorem. 
We start in Section~\ref{sec:coarse_preliminaries} with some preliminaries: 
specifically, we define a couple of ``nice'' events on the space of sequences of growing trees, 
on which we will obtain bounds for $Y_{n}$. 
We prove a first moment estimate for $Y_{n}$ in Section~\ref{sec:coarse_first_moment}. We then prove Theorem~\ref{thm:estimation_coarse_inverse} in Section~\ref{sec:coarse_conclusion}, using the fact that the previously defined ``nice'' events have probability close to~$1$. Finally, we prove this latter fact in Section~\ref{sec:coarse_nice_event_proof}.

\subsection{Preliminaries}\label{sec:coarse_preliminaries}

We start by introducing some notation on labeling vertices. 
Let $\left\{ T_{n} \right\}_{n \geq 2}$ be a growing sequence of trees started from the seed $S=S_{2}$, where at each step we add a single new node and a new edge. 
We denote the vertices of $T_{n}$ by $v_{1}, v_{2}, \ldots, v_{n}$, 
where $v_{1}$ and $v_{2}$ are the two initial vertices in~$S$, and for $k \geq 3$, $v_{k}$ is the unique vertex with timestamp $k$. 
As before, we write $\Psi_{n} (v) := \Psi_{T_{n}} (v)$ for a vertex $v \in V(T_{n})$. 
We write $\wt{v}_{i,n} (1)$ for the neighbor of $v_{i}$ that is the root of the largest subtree of $\left( T_{n}, v_{i} \right)$ (assuming that there is a unique largest subtree). 
With this notation we have that 
$\Psi_{n} \left( v_{i} \right) = \left| \left( T_{n}, v_{i} \right)_{\wt{v}_{i,n} (1) \downarrow} \right|$. 
More generally, for any $k \geq 1$ we write $\wt{v}_{i,n} (k)$ for the neighbor of $v_{i}$ that is the root of the $k$th largest subtree of $\left( T_{n}, v_{i} \right)$ (assuming that there is a unique such vertex). 
Finally, we write $\wt{\theta}_{n} \left( 1 \right)$ for the neighbor of the centroid $\theta(n)$ that is the root of the largest subtree of $\left( T_{n}, \theta \left( n \right) \right)$ (assuming that the centroid is unique and that there is a unique largest subtree).

We are now ready to define what we mean by the ``nice'' event on the space of sequences of growing trees. 

\begin{definition}[The event $\cA$]\label{def:nice_event}
Given a sequence of trees $\left\{ T_{n} \right\}_{n \geq \tcorr}$, we say that the event $\cA$ holds if and only if the following three properties all hold: 
\begin{enumerate}[label=(A\arabic*)]
\item\label{prop:nice1}
The centroid $\theta(n)$ is unique for all $n \geq \tcorr$ 
and $\theta \left( n \right) = \theta \left( \tcorr \right)$ for all $n \geq \tcorr$. 
\item\label{prop:nice2} 
The vertex $\wt{\theta}_{n} \left( 1 \right)$ is uniquely defined for all $n \geq \tcorr$ and $\wt{\theta}_{n} \left( 1 \right) = \wt{\theta}_{\tcorr} \left( 1 \right)$ for all $n \geq \tcorr$. 
\item\label{prop:nice3}
For all $n \geq \tcorr$ we have that 
\begin{equation}\label{eq:A3}
\left| \frac{1}{n} \Psi_{n} \left( \theta \left( n \right) \right) 
- \frac{1}{\tcorr} \Psi_{\tcorr} \left( \theta \left( \tcorr \right) \right) \right| 
\leq \frac{1}{\tcorr^{1/3}} \min \left\{ \frac{1}{\tcorr} \Psi_{\tcorr} \left( \theta \left( \tcorr \right) \right), 1 - \frac{1}{\tcorr} \Psi_{\tcorr} \left( \theta \left( \tcorr \right) \right) \right\}.
\end{equation}
\end{enumerate}
\end{definition}

The exponent $1/3$ in~\eqref{eq:A3} is chosen for simplicity; any positive constant that is less than $1/2$ is a good choice for everything that follows. 
Furthermore, we always have that $\Psi_{\tcorr} \left( \theta \left( \tcorr \right) \right) \leq \tcorr / 2$---this is a known property of tree centroids (see, e.g.,~\cite[Lemma~2.1]{persistent_centrality})---so the minimum in~\eqref{eq:A3} is always attained by the first term; 
we include the second term in the definition just for clarity. 
Given a sequence of trees $\left\{ T_{n} \right\}_{n \geq 2}$, 
we say that the event $\cA$ holds if and only if it holds for the subsequence $\left\{ T_{n} \right\}_{n \geq \tcorr}$. 
The event $\cA$ clearly depends on $\tcorr$, but we choose to omit $\tcorr$ from the notation in order to keep notation lighter. 
The following lemma shows that for PA and UA trees the event $\cA$ holds with probability close to $1$ when $\tcorr$ is large.

\begin{lemma}\label{lem:nice_event}
Let $\left\{ T_{n} \right\}_{n \geq 2}$ be a sequence of trees started from the seed $S$ and grown according to PA or UA. 
There exists a finite constant $C$ such that 
for every $\tcorr \geq 2$ we have that 
\begin{equation}\label{eq:notA}
\p \left( \cA^{c} \right) \leq \frac{C}{\log \tcorr},
\end{equation}
where $\cA^{c}$ denotes the complement of $\cA$. 
\end{lemma}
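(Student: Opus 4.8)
The plan is to reduce to the seed $S = S_{2}$, exactly as in the proof of Theorems~\ref{thm:high_prob_detection_PA} and~\ref{thm:high_prob_detection_UA}: since $\Range\left(\PA, S_{2}\right) = \Range\left(\UA, S_{2}\right)$ is the set of all finite trees on at least two vertices, the $S$-process is the $S_{2}$-process conditioned on the positive-probability event $\left\{ T_{\left| S \right|} = S \right\}$, so $\p\left(\cA^{c}\right)$ changes by at most a constant factor, absorbable into $C$. Assume henceforth $S = S_{2}$. I would bound separately the probability that each of \ref{prop:nice1}, \ref{prop:nice2}, and \ref{prop:nice3} fails, throughout exploiting the P\'olya-urn description of the sizes of the subtrees hanging off a fixed vertex developed in the proof of Theorem~\ref{thm:anti-centrality_representation}, together with the absolute continuity of the limiting anti-centralities from Theorem~\ref{thm:continuous_centrality}.

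\emph{Localizing the centroid.} First I would show that $\p\left(\tau(\theta) > L\right)$ is geometrically small in $L$. Indeed, if the vertex $v_{k}$ of timestamp $k$ is the limiting centroid, then the pendent subtree of $v_{k}$ on the side of its earliest neighbour has limiting normalized size $\varphi_{0}$, and being a centroid forces $\varphi_{0} < 1/2$; but $\varphi_{0} \sim \Beta\left(k - 1, 1\right)$ for UA and $\varphi_{0} \sim \Beta\left(k - \tfrac{3}{2}, \tfrac{1}{2}\right)$ for PA, distributions concentrating near $1$, so $\p\left(\varphi_{0} < 1/2\right)$ decays geometrically in $k$. Hence, modulo a term that is a negative power of $\tcorr$, one may condition on $\left\{ \theta = v_{k} \right\}$ for a fixed $k$ and take a union bound over $k \le L$, with $L$ of order $\log \tcorr$.

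\emph{Properties \ref{prop:nice1} and \ref{prop:nice2}.} On $\left\{ \theta = v_{k} \right\}$, these properties say that finitely many P\'olya-urn martingales have already committed to their eventual winner by time $\tcorr$: that for every edge incident to $v_{k}$ the side containing $v_{k}$ is strictly the larger side in the limit (so $v_{k}$ is the unique centroid at every $n \geq \tcorr$), and that the largest subtree around $v_{k}$ strictly beats the second largest in the limit. Each such comparison is governed by a bounded martingale $M_{n}$ (a ratio or difference of subtree sizes, agreeing up to an additive $O(1/n)$ with a normalized P\'olya urn) with $M_{n} \to M_{\infty}$ almost surely, where, by absolute continuity, $M_{\infty}$ almost surely avoids the relevant critical value; a Doob $L^{2}$-maximal inequality then gives $\p\left( \sup_{n \geq \tcorr} \left| M_{n} - M_{\infty} \right| > \delta \right) = O\left( \delta^{-2} / \tcorr \right)$, and integrating over the (almost surely positive) distance of $M_{\infty}$ from the critical value yields a bound vanishing as $\tcorr \to \infty$. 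The genuinely delicate point is that $v_{k}$ acquires unboundedly many neighbours, so a subtree born after time $\tcorr$ could a priori overtake the current largest; such a subtree has index $\ell$ at least the degree of $v_{k}$ at time $\tcorr$, and its limiting normalized size is at most $R_{\ell} := \prod_{i=0}^{\ell - 1}\left( 1 - \varphi_{i} \right)$. For UA the $\varphi_{i}$ are uniform and $\E\left[ R_{\ell} \right] = 2^{-\ell}$, so this is negligible; for PA one has $\E\left[ R_{\ell} \right] = \Theta\left( 1/\ell \right)$ while the degree of $v_{k}$ at time $\tcorr$ is only $\Theta\left( \sqrt{\tcorr}\, \right)$ by M\'ori's theorem~\cite{Mori:05}, and keeping track of this uniformly over $n \geq \tcorr$ and over the candidate centroids is what produces the weakest estimate.

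\emph{Property \ref{prop:nice3}.} On the event that \ref{prop:nice1} and \ref{prop:nice2} hold, $\Psi_{n}\left( \theta \right) = \left| \left( T_{n}, \theta \right)_{\wt{\theta}_{\tcorr}(1) \downarrow} \right|$ for every $n \geq \tcorr$, so $M_{n} := \tfrac{1}{n} \left| \left( T_{n}, \theta \right)_{\wt{\theta}_{\tcorr}(1) \downarrow} \right|$ agrees up to an additive $O(1/n)$ with a bounded martingale converging to $\Psi(\theta)$; since the urn describing this fixed subtree restarts at time $\tcorr$ with total of order $\tcorr$, one gets $\E\left[ \left( M_{\infty} - M_{\tcorr} \right)^{2} \,\middle|\, \cF_{\tcorr} \right] = O\left( M_{\tcorr}\left( 1 - M_{\tcorr} \right) / \tcorr \right)$, whence by Doob's inequality $\E\left[ \sup_{n \geq \tcorr} \left( M_{n} - M_{\tcorr} \right)^{2} \,\middle|\, \cF_{\tcorr} \right] = O\left( M_{\tcorr}\left( 1 - M_{\tcorr} \right) / \tcorr \right)$. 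Since \ref{prop:nice3} fails only if this supremum exceeds a constant times $\tcorr^{-1/3} M_{\tcorr}$ (using $M_{\tcorr} \leq 1/2$), Chebyshev's inequality yields that, conditionally on $\cF_{\tcorr}$, the probability that \ref{prop:nice3} fails is $O\left( \tcorr^{-1/3} / M_{\tcorr} \right)$; taking expectations and using a bound on the lower tail of $M_{\tcorr}$, i.e.\ of the limiting anti-centrality of the centroid (again via the beta representation of Theorem~\ref{thm:anti-centrality_representation}), keeps this $o(1)$.

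Adding the three contributions and choosing $L$ of order $\log \tcorr$ gives $\p\left( \cA^{c} \right) = O\left( 1 / \log \tcorr \right)$. I expect the main obstacle to be the bookkeeping in the second step: making the urn-martingale estimates uniform over all $n \geq \tcorr$ and over the unboundedly many (and a priori late-appearing) subtrees around the centroid, together with the union over the $\Theta\left( \log \tcorr \right)$ candidate centroid locations, is exactly what degrades a polynomial-in-$\tcorr$ rate down to $1/\log \tcorr$.
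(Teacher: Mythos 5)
Your outline has the right skeleton and correctly identifies the three hard points (locate the centroid, keep the top subtree fixed in spite of ever-newer neighbours, concentrate the early subtree sizes), and your Beta-tail argument for localising the centroid is a clean replacement for the paper's invocation of the Jog--Loh persistence lemma. But two steps, as sketched, don't yet yield the claimed $O(1/\log\tcorr)$ rate and would need substantial re-engineering.

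First, in the step for properties~\ref{prop:nice1}--\ref{prop:nice2} you condition on $\{\theta=v_k\}$ and then want to ``integrate over the (almost surely positive) distance of $M_\infty$ from the critical value.'' Two problems: (i) $\{\theta=v_k\}$ is an asymptotic event, so the urn martingales are no longer urn martingales under the conditioning, and the distribution of $M_\infty$ given $\{\theta=v_k\}$ is not the clean Beta law you are counting on; (ii) absolute continuity of $M_\infty$ only gives $o(1)$, whereas the $1/\log\tcorr$ rate comes from an explicit, quantitative small-interval bound on the density near the critical value. The paper avoids both issues by not conditioning on the identity of the centroid at all: it passes to \emph{unconditional} pairwise comparisons of anti-centralities between early vertices, reduces each comparison to a two-colour urn via the crossing lemma (Lemma~\ref{lem:from_centralities_to_subtrees}), and then uses the explicit $\Beta(1,J)$ resp.\ $\Beta(1/2,J-1/2)$ density of the limiting urn fraction to get a $C/\log^3\tcorr$ bound per pair, union-bounded over $O(\log^2\tcorr)$ pairs. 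You would need an analogue of that decomposition and density bound.

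Second, in the ``unboundedly many neighbours'' step you work directly at time $\tcorr$: the degree $d_\tcorr(v_k)$ is $\Theta(\sqrt{\tcorr})$ for PA and $\E[R_\ell]=\Theta(1/\ell)$, so the leftover mass has expectation $\Theta(\tcorr^{-1/2})$. But this bounds an unconditional \emph{limiting} expectation, not the quantity you need: $\ell=d_\tcorr(v_k)$ is random and \emph{positively correlated} with the subtree masses (a higher-degree centroid has more spread-out subtrees), so you cannot substitute a typical value of $\ell$ into $\E[R_\ell]$; M\'ori's theorem gives distributional convergence of $d_\tcorr/\sqrt{\tcorr}$, not the one-sided quantitative lower tail you would need for a union bound over early candidate centroids; and you still have to convert the limiting bound to one that holds \emph{uniformly in $n\geq\tcorr$} (the sup over $n$ of a ratio urn started at $(1,\tcorr-1)$ only concentrates at scale $\tcorr^{-1/2}$, which is not obviously below the $1/d_\tcorr(v_k)\asymp\tcorr^{-1/2}$ threshold you must beat). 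The paper sidesteps all of this by conditioning at the intermediate scale $s=\log\tcorr$: a simple Azuma supermartingale bound (Lemma~\ref{lem:deg_tail_bound}) shows $d_s(v_i)\leq\sqrt{s}\,\mathrm{polylog}(s)$ except with probability $\exp(-c(\log\log\tcorr)^3)$, pigeonhole then gives $\Psi_s(v_i)\gtrsim\sqrt{s}/\mathrm{polylog}(s)$ so the largest-vs-future urn starts with a constant gap in its fractions, and Lemma~\ref{lem:Polya_concentration} from time $s$ closes the argument. This intermediate time scale is the ingredient your sketch is missing, and without it I do not see how to carry the PA case through.
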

The proof of Lemma~\ref{lem:nice_event} is deferred to Section~\ref{sec:coarse_nice_event_proof}. 

The intuition behind defining $\cA$ in this way is as follows. 
On the event $\cA$, both the centroid and the largest subtree of the centroid do not change locations within the tree for $n \geq \tcorr$. 
Hence, by conditioning on the tree at time $\tcorr$, 
studying $\Psi_{n} \left( \theta \left( n \right) \right)$ essentially amounts to understanding the growth of a \emph{fixed} subtree that is present in $T_{\tcorr}$. 
Since the sizes of fixed subtrees grow according to P\'olya urn processes (in PA and UA trees), their distributions are very well understood. 

We are interested in a pair of correlated randomly growing (PA or UA) trees 
$\left\{ \left( T_{n}^{1}, T_{n}^{2} \right) \right\}_{n \geq 2}$. 
Let $\cA^{1}$ and $\cA^{2}$ denote the ``nice'' events corresponding to 
$\left\{ T_{n}^{1} \right\}_{n \geq 2}$ 
and 
$\left\{ T_{n}^{2} \right\}_{n \geq 2}$. 
Since $T_{n}^{1} = T_{n}^{2}$ for all $n \leq \tcorr$, 
we have, in particular, that 
$\theta^{1} \left( \tcorr \right) = \theta^{2} \left( \tcorr \right) =: \theta \left( \tcorr \right)$ 
and also that 
$\wt{\theta}_{\tcorr}^{1} \left( 1 \right) = \wt{\theta}_{\tcorr}^{2} \left( 1 \right) =: \wt{\theta}_{\tcorr} \left( 1 \right)$. 
A key observation is that on the event $\cA^{1} \cap \cA^{2}$ 
we have that 
$\theta^{1}(n) = \theta^{2}(n) = \theta \left( \tcorr \right)$ 
for all $n \geq \tcorr$ 
and that 
$\wt{\theta}_{n}^{1} \left( 1 \right) = \wt{\theta}_{n}^{2} \left( 1 \right) = \wt{\theta}_{\tcorr} \left( 1 \right)$ 
for all $n \geq \tcorr$, 
which implies that on the event $\cA^{1} \cap \cA^{2}$ we have that 
$X_{n}^{i} = \frac{1}{n} \left| \left( T_{n}^{i}, \theta \left( \tcorr \right) \right)_{\wt{\theta}_{\tcorr} \left( 1 \right) \downarrow} \right|$ for $i \in \left\{ 1, 2 \right\}$ and all $n \geq \tcorr$. 
Thus in order to understand the behavior of the statistic $Y_{n}$ on the event $\cA^{1} \cap \cA^{2}$, it suffices to condition on the tree at time $\tcorr$ and then analyze the behavior of fixed subtrees. We do this next. 

Condition now on the tree $T_{\tcorr}^{1} = T_{\tcorr}^{2} =: T_{\tcorr}$; that is, assume that $T_{\tcorr}$ is given. 
To abbreviate notation, we write 
$\theta := \theta \left( \tcorr \right)$ 
and 
$\wt{\theta} \left( 1 \right) := \wt{\theta}_{\tcorr} \left( 1 \right)$; 
importantly, note that these are now \emph{fixed} vertices (i.e., they do not change with $n$). 
Define the random variables 
\[
Z_{n}^{i} := \frac{1}{n} \left| \left( T_{n}^{i}, \theta \right)_{\wt{\theta} \left( 1 \right) \downarrow} \right|
\]
for $i \in \left\{ 1, 2 \right\}$ and $n \geq \tcorr$. 
As observed above, on the event $\cA^{i}$ we have that $X_{n}^{i} = Z_{n}^{i}$ for $n \geq \tcorr$. 

In UA trees, the evolution of 
$\left( n Z_{n}^{i}, n - n Z_{n}^{i} \right)$ for $n \geq \tcorr$ 
follows a classical P\'olya urn 
with initial condition 
$\left( \Psi_{\tcorr} \left( \theta \right), \tcorr - \Psi_{\tcorr} \left( \theta \right) \right)$, 
for $i \in \left\{ 1, 2 \right\}$. 
Moreover, the P\'olya urns for $i = 1$ and $i = 2$ are independent (recall that we are conditioning on $T_{\tcorr}$, so this is conditional independence given $T_{\tcorr}$). 
In PA trees, the evolution of 
$\left( 2n Z_{n}^{i} - 1, 2n - 2n Z_{n}^{i} - 1 \right)$ for $n \geq \tcorr$ 
follows a P\'olya urn with replacement matrix 
$\left(\begin{smallmatrix} 2 & 0 \\ 0 & 2 \end{smallmatrix}\right)$ 
and initial condition 
$\left( 2 \Psi_{\tcorr} \left( \theta \right) - 1, 2 \tcorr - 2 \Psi_{\tcorr} \left( \theta \right) - 1 \right)$, 
for $i \in \left\{ 1, 2 \right\}$. 
Moreover, the P\'olya urns for $i = 1$ and $i = 2$ are independent (again, this is conditional independence given $T_{\tcorr}$). 

Thus by classical results on P\'olya urns it follows that the limiting random variables 
\[
Z^{i} := \lim_{n \to \infty} Z_{n}^{i} 
\]
exist almost surely for $i \in \left\{ 1, 2 \right\}$, 
for both PA and UA trees. 
Moreover, $Z^{1}$ and $Z^{2}$ are i.i.d.\ (again, this is conditional independence given $T_{\tcorr}$) 
beta random variables, with parameters given as follows: 
\begin{equation}\label{eq:Z}
Z \sim 
\begin{cases} 
\Beta \left( \Psi_{\tcorr} \left( \theta \right), \tcorr - \Psi_{\tcorr} \left( \theta \right) \right) &\text{ for } \UA, \\
\Beta \left( \Psi_{\tcorr} \left( \theta \right) - \frac{1}{2}, \tcorr - \Psi_{\tcorr} \left( \theta \right) - \frac{1}{2} \right) &\text{ for } \PA. 
\end{cases}
\end{equation}
Here $Z$ is a random variable with the same distribution as $Z^{1}$ and $Z^{2}$. 

From~\eqref{eq:Z} it is clear that the quantity $\Psi_{\tcorr} \left( \theta \right)$ plays an important role in the distribution of~$Z$. 
We always have that $\Psi_{\tcorr} \left( \theta \right) \leq \tcorr / 2$. 
Typically $\Psi_{\tcorr} \left( \theta \right)$ is on the order $\tcorr$, but with some small probability it can be of smaller order. 
The following definition and lemma quantify this. 

\begin{definition}[The event $\cB$]\label{def:B}
Let $\cB$ denote the following event: 
\[
\cB := \left\{ \frac{\tcorr}{\sqrt{\log \tcorr}} \leq \Psi_{\tcorr} \left( \theta \left( \tcorr \right) \right) \leq \frac{\tcorr}{2} \right\}.
\]
\end{definition}

The event $\cB$ clearly depends on $\tcorr$, but we choose to omit $\tcorr$ from the notation in order to keep notation lighter. Also, as mentioned above, the bound 
$\Psi_{\tcorr} \left( \theta \right) \leq \tcorr / 2$ always holds, but we still include it in the definition of $\cB$ just for clarity. 

\begin{lemma}\label{lem:B}
Let $\left\{ T_{n} \right\}_{n \geq 2}$ be a sequence of trees started from the seed $S$ and grown according to PA or UA. 
There exists a finite constant $C$ such that 
for every $\tcorr \geq 2$ we have that 
\begin{equation}\label{eq:notB}
\p \left( \cB^{c} \right) \leq \frac{C}{\log^{1/4} \left( \tcorr \right)},
\end{equation}
where $\cB^{c}$ denotes the complement of $\cB$. 
\end{lemma}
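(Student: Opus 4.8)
The plan is to reduce the statement to a lower-tail estimate for a single P\'olya urn attached to the initial edge. Since the seed is $S = S_{2}$, the edge $\{v_{1}, v_{2}\}$ joining the two initial vertices is present in $T_{n}$ for every $n$; removing it splits $T_{\tcorr}$ into two subtrees $A_{\tcorr} \ni v_{1}$ and $B_{\tcorr} \ni v_{2}$, with $\left| A_{\tcorr} \right| + \left| B_{\tcorr} \right| = \tcorr$. The key structural observation is that for \emph{every} vertex $v \in V(T_{\tcorr})$ one has $\Psi_{\tcorr}(v) \ge \min\{ \left| A_{\tcorr} \right|, \left| B_{\tcorr} \right| \}$: if $v \in A_{\tcorr}$, then all of $B_{\tcorr}$ sits inside the single pendent subtree of $v$ rooted at the neighbor of $v$ on the path toward $v_{2}$, so $\Psi_{\tcorr}(v) \ge \left| B_{\tcorr} \right|$, and symmetrically when $v \in B_{\tcorr}$. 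Applying this to $v = \theta(\tcorr)$ and using the already-recalled fact that $\Psi_{\tcorr}(\theta(\tcorr)) \le \tcorr/2$ always holds, we obtain
\[
\cB^{c} \subseteq \left\{ \min\{ \left| A_{\tcorr} \right|, \left| B_{\tcorr} \right| \} < \tcorr / \sqrt{\log \tcorr} \right\}.
\]
By the symmetry of the roles of $v_{1}$ and $v_{2}$, $\left| A_{\tcorr} \right|$ and $\left| B_{\tcorr} \right|$ have the same law, so by a union bound it suffices to prove $\p\bigl( \left| B_{\tcorr} \right| < \tcorr/\sqrt{\log\tcorr} \bigr) \le C'(\log\tcorr)^{-1/4}$ for a universal constant $C'$.

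Next I would identify the law of $\left| B_{\tcorr} \right|$ via the P\'olya urn describing the split of $T_{n}$ across the edge $\{v_{1}, v_{2}\}$. For UA, a new vertex joins $B_{n}$ with probability proportional to $\left| B_{n} \right|$, so $(\left| A_{n} \right|, \left| B_{n} \right|)$ is a classical P\'olya urn with replacement matrix $\left(\begin{smallmatrix} 1 & 0 \\ 0 & 1 \end{smallmatrix}\right)$ started from $(1,1)$; hence $\left| B_{\tcorr} \right|$ is uniform on $\{1, \dots, \tcorr - 1\}$ and $\p\bigl(\left| B_{\tcorr} \right| < \tcorr/\sqrt{\log\tcorr}\bigr) \le 2/\sqrt{\log\tcorr}$. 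For PA, since $B_{\tcorr}$ has exactly one edge leaving it, the sum of degrees inside $B_{\tcorr}$ is $2\left| B_{\tcorr} \right| - 1$, and the pair of degree sums of $A_{n}$ and $B_{n}$ follows a P\'olya urn with replacement matrix $\left(\begin{smallmatrix} 2 & 0 \\ 0 & 2 \end{smallmatrix}\right)$ started from $(1,1)$; equivalently, $\left| B_{\tcorr} \right| - 1$ has the beta-binomial distribution with parameters $(1/2, 1/2, \tcorr - 2)$, i.e., $\left| B_{\tcorr} \right| - 1 \stackrel{d}{=} \Bin(\tcorr - 2, P)$ with $P \sim \Beta(1/2, 1/2)$.

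For the PA tail I would condition on $P$ and split according to whether $P \le c_{0}/\sqrt{\log\tcorr}$, for a suitably large constant $c_{0}$. On $\{P \le c_{0}/\sqrt{\log\tcorr}\}$ I bound the conditional probability by $1$ and integrate the arcsine density, $\int_{0}^{c_{0}/\sqrt{\log\tcorr}} \frac{dp}{\pi\sqrt{p(1-p)}} = O\bigl((\log\tcorr)^{-1/4}\bigr)$, the rate coming from the $p^{-1/2}$ blow-up of the density at $0$. On the complementary event the binomial mean $(\tcorr - 2)P$ exceeds $\tcorr/\sqrt{\log\tcorr}$ by a constant factor for all large $\tcorr$, so a multiplicative Chernoff bound gives $\p\bigl(\Bin(\tcorr - 2, P) \le \tcorr/\sqrt{\log\tcorr} \,\big|\, P\bigr) \le e^{-c\tcorr/\sqrt{\log\tcorr}}$, which is negligible next to $(\log\tcorr)^{-1/4}$. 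Combining the two cases with the union bound from the first paragraph yields $\p(\cB^{c}) \le C(\log\tcorr)^{-1/4}$ for all large $\tcorr$, and the finitely many small values of $\tcorr$ are absorbed into $C$ by making the bound trivially at least $1$ there. I expect the only mildly delicate point to be this last step: obtaining a bound on the lower tail of the $\Beta(1/2,1/2,\tcorr - 2)$ beta-binomial that is uniform in $\tcorr$ with the stated $(\log\tcorr)^{-1/4}$ rate --- note that the UA computation alone would give the sharper rate $(\log\tcorr)^{-1/2}$, so it is the arcsine behavior of PA near $0$ that forces the exponent $1/4$. The structural reduction in the first paragraph and the UA computation are routine.
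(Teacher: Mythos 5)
Your proof is correct, and it takes a genuinely different route from the paper's. The paper bounds $\p(\Psi_{\tcorr}(\theta(\tcorr)) < \tcorr/\sqrt{\log\tcorr})$ by a union bound over all $\tcorr$ vertices $v_{i}$; for each $i$ it passes to the P\'olya urn tracking the subtree rooted at the neighbor of $v_{i}$ in $T_{i}$, shows $M_{\infty}\sim\Beta(i-1,1)$ (UA) or $\Beta(i-3/2,1/2)$ (PA), controls the finite-$n$ tail by the martingale maximal argument $\p(M_{n}\le z)\le 2\p(M_{\infty}\le 2z)$, and sums the resulting geometric series, whose leading term $i=2$ gives the $(\log\tcorr)^{-1/4}$ rate. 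You instead use a deterministic structural observation --- every vertex's anti-centrality is at least $\min\{|A_{\tcorr}|,|B_{\tcorr}|\}$, the smaller side of the split across the seed edge $\{v_{1},v_{2}\}$ --- which collapses the union bound over $\tcorr$ vertices into a single P\'olya urn tail, and you then handle that tail directly at time $\tcorr$ via the beta-binomial / de Finetti representation, conditioning on $P\sim\Beta(1/2,1/2)$ and applying Chernoff. What your approach buys: it avoids both the $\tcorr$-term union bound (the constant $C$ in the paper's bound hides a sum) and the back-and-forth martingale step, giving a self-contained and arguably cleaner argument; it also makes completely transparent that the $(\log\tcorr)^{-1/4}$ exponent for PA comes from the $p^{-1/2}$ arcsine singularity of $\Beta(1/2,1/2)$ near $0$, which is exactly what dominates the paper's geometric sum as well. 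One small caveat: your structural inequality $\Psi_{\tcorr}(v)\ge\min\{|A_{\tcorr}|,|B_{\tcorr}|\}$ relies specifically on the seed being $S_{2}$ (so that a single fixed edge splits the tree for all $n$), whereas the paper's per-vertex argument is a bit more robust to changes of seed; since the lemma is stated for $S=S_{2}$, this is not a loss here.
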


The bound in~\eqref{eq:notB} can be improved to $C / \sqrt{\log \tcorr}$ for UA trees, but we choose to have a unified theorem for PA and UA trees for simplicity. 
The proof of Lemma~\ref{lem:B} is deferred to Section~\ref{sec:coarse_nice_event_proof}.

\subsection{First moment estimate}\label{sec:coarse_first_moment}

In this subsection we prove the following first moment estimate. 
\begin{lemma}\label{lem:coarse_first_moment} 
Let $\left( T_{n}^{1}, T_{n}^{1} \right) \sim \CPA \left( n, \tcorr, S \right)$. 
For all $\tcorr$ large enough we have that 
\begin{equation}\label{eq:coarse_first_moment_UB}
\limsup_{n \to \infty} \E \left[ Y_{n} \mathbf{1}_{\cA^{1} \cap \cA^{2}} \right] 
\leq 
\frac{1 + 3 \tcorr^{-1/3}}{\tcorr}.
\end{equation}
The same bound holds also when $\left( T_{n}^{1}, T_{n}^{1} \right) \sim \CUA \left( n, \tcorr, S \right)$.
\end{lemma}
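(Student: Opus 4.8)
The plan is to condition on $T_\tcorr$ and reduce $Y_n$, on the event $\cA^1 \cap \cA^2$, to a ratio of quantities governed by the P\'olya urns introduced in Section~\ref{sec:coarse_preliminaries}. Write $\eps := \tcorr^{-1/3}$ and $q := \tfrac{1}{\tcorr} \Psi_\tcorr(\theta(\tcorr))$; since the largest pendent subtree of a centroid has at most $\tcorr/2$ vertices, $q \le 1/2$, so the minimum in~\eqref{eq:A3} is attained by its first term. Thus, on $\cA^i$, property~\ref{prop:nice3} gives $(1-\eps) q \le X_n^i \le (1+\eps) q$ for all $n \ge \tcorr$. As established in Section~\ref{sec:coarse_preliminaries}, $\cA^1 \cap \cA^2$ is contained in the $T_\tcorr$-measurable event $\cD$ on which $\theta := \theta(\tcorr)$ and the root of its largest pendent subtree are unique; on $\cA^i$ one has $X_n^i = Z_n^i$ for all $n \ge \tcorr$; and conditionally on $T_\tcorr$ (on $\cD$) the limits $Z^1 := \lim_{n} Z_n^1$ and $Z^2 := \lim_{n} Z_n^2$ are i.i.d.\ with the $\Beta$ laws in~\eqref{eq:Z}. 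Putting these together, on $\cA^1 \cap \cA^2$ we have $Z^i \in [(1-\eps)q, (1+\eps)q]$, hence, using $q \le 1/2$ again,
\[
2 Z^1 (1 - Z^1) \;\ge\; 2(1-\eps) q \, \big( 1 - (1+\eps) q \big) \;\ge\; 2(1-\eps)^2 q (1-q).
\]

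\textbf{Passing to the limit.} Since $Z_n^i \to Z^i$ almost surely and $Z^i \in (0,1)$ almost surely, on $\cA^1 \cap \cA^2$ we have $Y_n \to Y_\infty := (Z^1 - Z^2)^2 / \big( 2 Z^1 (1 - Z^1) \big)$; moreover, by the display above together with $q \ge 1/\tcorr$ and $|Z_n^1 - Z_n^2| \le 1$, the random variables $Y_n \mathbf{1}_{\cA^1 \cap \cA^2}$ are bounded by a constant depending only on $\tcorr$. Bounded convergence therefore gives $\limsup_{n \to \infty} \E\big[ Y_n \mathbf{1}_{\cA^1 \cap \cA^2} \big] = \E\big[ Y_\infty \mathbf{1}_{\cA^1 \cap \cA^2} \big]$. (Alternatively, one may bound $\E[ Y_n \mathbf{1}_{\cA^1 \cap \cA^2} ]$ uniformly in $n$, using that the variance of a P\'olya urn fraction at any finite time is at most its limiting variance; the estimate below is unchanged.)

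\textbf{The variance computation.} Enlarging the event via $\cA^1 \cap \cA^2 \subseteq \cD \cap G^1 \cap G^2$ with $G^i := \{ |Z^i - q| \le \eps q \}$, conditioning on $T_\tcorr$, and applying the first display on $G^1$, we get
\[
\mathbf{1}_\cD \, \E\big[ Y_\infty \mathbf{1}_{\cA^1 \cap \cA^2} \mid T_\tcorr \big]
\;\le\; \mathbf{1}_\cD \, \frac{ \E\big[ (Z^1 - Z^2)^2 \mid T_\tcorr \big] }{ 2(1-\eps)^2 q (1-q) }
\;=\; \mathbf{1}_\cD \, \frac{ \Var(Z \mid T_\tcorr) }{ (1-\eps)^2 q (1-q) },
\]
where $Z$ is a generic copy of $Z^1$ and the last step uses conditional independence and the identical conditional laws of $Z^1$ and $Z^2$. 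From~\eqref{eq:Z}: for UA, $\Var(Z \mid T_\tcorr) = q(1-q) / (\tcorr + 1)$, so the bound is $\mathbf{1}_\cD / \big( (1-\eps)^2 (\tcorr + 1) \big)$; for PA, $\Var(Z \mid T_\tcorr) = (q\tcorr - \tfrac12)((1-q)\tcorr - \tfrac12) / \big( (\tcorr - 1)^2 \tcorr \big) \le q(1-q)\tcorr / (\tcorr - 1)^2$, so the bound is at most $\mathbf{1}_\cD / \big( (1-\eps)^2 (1 - 1/\tcorr)^2 \tcorr \big)$. Taking expectations over $T_\tcorr$ and using that $(1 - \tcorr^{-1/3})^{-2} (1 - \tcorr^{-1})^{-2} \le 1 + 3 \tcorr^{-1/3}$ for all $\tcorr$ large enough, we obtain $\E\big[ Y_\infty \mathbf{1}_{\cA^1 \cap \cA^2} \big] \le (1 + 3\tcorr^{-1/3})/\tcorr$, which together with the previous paragraph yields~\eqref{eq:coarse_first_moment_UB}.

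\textbf{Main obstacle.} The delicate point --- and the reason the leading constant comes out to be exactly $1$ rather than, say, $4$ --- is the handling of the random denominator $2 Z_n^1 (1 - Z_n^1)$. One must resist bounding $(Z_n^1 - Z_n^2)^2$ by its deterministic bound $4 \eps^2 q^2$ valid on $\cA^1 \cap \cA^2$, which is of order $\tcorr^{-2/3}$ and hence far too large; instead one lower-bounds the denominator by $\approx 2 q(1-q)$ using property~\ref{prop:nice3}, while keeping the numerator's conditional expectation $\E[(Z^1 - Z^2)^2 \mid T_\tcorr] = 2 \Var(Z \mid T_\tcorr) \approx 2 q(1-q)/\tcorr$ intact, so that the factors $q(1-q)$ cancel. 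The only other point requiring care is the $O(1/\tcorr)$ discrepancy between the PA and UA beta parameters, which is routine and is absorbed into the $\tcorr^{-1/3}$ error.
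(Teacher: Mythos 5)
Your proof is correct and follows essentially the same strategy as the paper: condition on $T_{\tcorr}$, use property~\ref{prop:nice3} to lower-bound the denominator $2X_n^1(1-X_n^1)$ by $2(1-\tcorr^{-1/3})^2 q(1-q)$ while leaving the numerator's conditional second moment intact, identify $X_n^i$ with $Z_n^i$ on $\cA^1 \cap \cA^2$, drop the indicator, pass to the beta limits via bounded convergence, and cancel the $q(1-q)$ factors against $\Var(Z \mid T_{\tcorr})$. The only cosmetic differences are that you pass to the limit $Y_\infty$ before bounding the denominator (the paper bounds at finite $n$ first), and you introduce an explicit $T_{\tcorr}$-measurable uniqueness event $\cD$ that the paper leaves implicit.
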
 

We note that a matching lower bound (of the form $(1-o(1))/\tcorr$ as $\tcorr \to \infty$) also holds, but since we will not use that direction, we do not give details here.

\begin{proof}
We condition on the tree $T_{\tcorr}$ at time $\tcorr$; by the tower rule we have that 
\begin{equation}\label{eq:coarse_UB_tower_rule}
\E \left[ Y_{n} \mathbf{1}_{\cA^{1} \cap \cA^{2}} \right] 
= \E \left[ \E \left[ Y_{n} \mathbf{1}_{\cA^{1} \cap \cA^{2}} \, \middle| \, T_{\tcorr} \right] \right].
\end{equation}
Now given $T_{\tcorr}$, 
observe that property~\ref{prop:nice3} in Definition~\ref{def:nice_event} 
implies that on the event $\cA^{1} \cap \cA^{2}$ we have that 
\[
X_{n}^{1} \left( 1 - X_{n}^{1} \right) 
\geq \tfrac{1}{\tcorr} \Psi_{\tcorr} \left( \theta \right) 
\left( 1 - \tfrac{1}{\tcorr} \Psi_{\tcorr} \left( \theta \right) \right) 
\left( 1 - \tcorr^{-1/3} \right)^{2}
\]
for $n \geq \tcorr$. Plugging this inequality into the definition of $Y_{n}$ we obtain that 
\begin{align*}
\E \left[ Y_{n} \mathbf{1}_{\cA^{1} \cap \cA^{2}} \, \middle| \, T_{\tcorr} \right] 
&\leq \frac{\E \left[ \left( X_{n}^{1} - X_{n}^{2} \right)^{2} \mathbf{1}_{\cA^{1} \cap \cA^{2}} \, \middle| \, T_{\tcorr} \right]}{2 \cdot \tfrac{1}{\tcorr} \Psi_{\tcorr} \left( \theta \right) 
\left( 1 - \tfrac{1}{\tcorr} \Psi_{\tcorr} \left( \theta \right) \right) 
\left( 1 - \tcorr^{-1/3} \right)^{2}} \\
&\leq \frac{\E \left[ \left( Z_{n}^{1} - Z_{n}^{2} \right)^{2} \, \middle| \, T_{\tcorr} \right]}{2 \cdot \tfrac{1}{\tcorr} \Psi_{\tcorr} \left( \theta \right) 
\left( 1 - \tfrac{1}{\tcorr} \Psi_{\tcorr} \left( \theta \right) \right) 
\left( 1 - \tcorr^{-1/3} \right)^{2}},
\end{align*}
where the second inequality follows by observing that on the event $\cA^{1} \cap \cA^{2}$ we have that $X_{n}^{i} = Z_{n}^{i}$ for $i \in \left\{ 1, 2 \right\}$, 
and then removing the indicator to get an upper bound. 
Taking the limit as $n \to \infty$ and applying the bounded convergence theorem 
we obtain that 
\begin{equation}\label{eq:coarse_first_moment_lim_UB}
\limsup_{n \to \infty} \E \left[ Y_{n} \mathbf{1}_{\cA^{1} \cap \cA^{2}} \, \middle| \, T_{\tcorr} \right] 
\leq 
\frac{\E \left[ \left( Z^{1} - Z^{2} \right)^{2} \, \middle| \, T_{\tcorr} \right]}{2 \cdot \tfrac{1}{\tcorr} \Psi_{\tcorr} \left( \theta \right) 
\left( 1 - \tfrac{1}{\tcorr} \Psi_{\tcorr} \left( \theta \right) \right) 
\left( 1 - \tcorr^{-1/3} \right)^{2}}.
\end{equation}
Now using conditional independence, the limiting conditional distribution obtained in~\eqref{eq:Z}, and 
plugging in the variance of the beta distribution, 
we have that 
\[
\E \left[ \left( Z^{1} - Z^{2} \right)^{2} \, \middle| \, T_{\tcorr} \right] 
= 2 \Var \left( Z \, \middle| \, T_{\tcorr} \right) 
= 
\begin{cases} 
\frac{2 \Psi_{\tcorr} \left( \theta \right) \left( \tcorr - \Psi_{\tcorr} \left( \theta \right) \right)}{\tcorr^{2} \left( \tcorr + 1 \right)} 
&\text{ for } \UA, \\
\frac{2 \left( \Psi_{\tcorr} \left( \theta \right) - 1/2 \right) \left( \tcorr - \Psi_{\tcorr} \left( \theta \right) - 1/2 \right)}{\left( \tcorr - 1 \right)^{2} \tcorr} 
&\text{ for } \PA. 
\end{cases}
\]
Plugging these formulas into~\eqref{eq:coarse_first_moment_lim_UB}, we obtain, for both PA and UA trees, that 
\[
\limsup_{n \to \infty} \E \left[ Y_{n} \mathbf{1}_{\cA^{1} \cap \cA^{2}} \, \middle| \, T_{\tcorr} \right] 
\leq 
\left( 1 + \tfrac{1}{\tcorr - 1} \right)^{2} \left( 1 - \tcorr^{-1/3} \right)^{-2} \frac{1}{\tcorr} 
\leq \frac{1 + 3 \tcorr^{-1/3}}{\tcorr},
\]
where the second inequality holds for all $\tcorr$ large enough. 
Since this holds for any tree $T_{\tcorr}$, taking an expectation and using~\eqref{eq:coarse_UB_tower_rule} we arrive at~\eqref{eq:coarse_first_moment_UB}. 
\end{proof}

\subsection{Putting everything together: proof of Theorem~\ref{thm:estimation_coarse_inverse}}\label{sec:coarse_conclusion}

\begin{proof}[Proof of Theorem~\ref{thm:estimation_coarse_inverse}]
We start with the upper bound, which is a consequence of 
Lemma~\ref{lem:coarse_first_moment} and Markov's inequality. First, by a union bound we have that 
\[
\p \left( Y_{n} \geq \frac{\log \tcorr}{\tcorr} \right) 
\leq \p \left( \left( \cA^{1} \cap \cA^{2} \right)^{c} \right) 
+ \p \left( \left\{ Y_{n} \geq \frac{\log \tcorr}{\tcorr} \right\} \cap \cA^{1} \cap \cA^{2} \right).
\]
By a union bound and Lemma~\ref{lem:nice_event} 
we have that the first term is at most $C / \log \tcorr$ for some constant~$C$, 
and so it remains to deal with the second term. 
By Markov's inequality we have that 
\[
\p \left( \left\{ Y_{n} \geq \frac{\log \tcorr}{\tcorr} \right\} \cap \cA^{1} \cap \cA^{2} \right) 
\leq 
\p \left( Y_{n} \mathbf{1}_{\cA^{1} \cap \cA^{2}} \geq \frac{\log \tcorr}{\tcorr} \right) 
\leq 
\frac{\tcorr}{\log \tcorr} \E \left[ Y_{n} \mathbf{1}_{\cA^{1} \cap \cA^{2}} \right].
\]
By~\eqref{eq:coarse_first_moment_UB} we thus have that 
\[
\limsup_{n \to \infty} \p \left( Y_{n} \geq \frac{\log \tcorr}{\tcorr} \right) 
\leq 
\frac{C+2}{\log \tcorr}
\]
for all $\tcorr$ large enough. This expression goes to zero as $\tcorr \to \infty$, which concludes the proof of the upper bound. 

We now turn to the lower bound. To abbreviate notation, 
we introduce $\delta_{\tcorr} := \left( \tcorr \log \tcorr \right)^{-1/2}$. 
Our goal is to show that 
\[
\lim_{\tcorr \to \infty} \limsup_{n \to \infty} 
\p \left( Y_{n} \leq \delta_{\tcorr}^{2} \right) = 0.
\]
Since $Y_{n} \leq \delta_{\tcorr}^{2}$ implies that 
$\left| X_{n}^{1} - X_{n}^{2} \right| \leq \delta_{\tcorr}$, 
we have that 
\[
\p \left( Y_{n} \leq \delta_{\tcorr}^{2} \right) 
\leq \p \left( \left| X_{n}^{1} - X_{n}^{2} \right| \leq \delta_{\tcorr} \right).
\]
By a union bound we have that 
\[
\p \left( \left| X_{n}^{1} - X_{n}^{2} \right| \leq \delta_{\tcorr} \right) 
\leq 
\p \left( \left( \cA^{1} \right)^{c} \right) 
+ \p \left( \left( \cA^{2} \right)^{c} \right) 
+ \p \left( \cB^{c} \right) 
+ \p \left( \left\{ \left| X_{n}^{1} - X_{n}^{2} \right| \leq \delta_{\tcorr} \right\} \cap \cA^{1} \cap \cA^{2} \cap \cB \right).
\]
By Lemmas~\ref{lem:nice_event} and~\ref{lem:B}, 
there exists a finite constant $C$ such that 
the first three terms in the display above are bounded above by $C / \log^{1/4} \left( \tcorr \right)$. 
Since this goes to zero as $\tcorr \to \infty$, what remains is to bound the last term in the display above. 
To do this, we first condition on the tree $T_{\tcorr}$. 
By the tower rule, using also the fact that the event $\cB$ is measurable with respect to $T_{\tcorr}$, we have that 
\begin{equation}\label{eq:prob_X1_X2_tower}
\p \left( \left\{ \left| X_{n}^{1} - X_{n}^{2} \right| \leq \delta_{\tcorr} \right\} \cap \cA^{1} \cap \cA^{2} \cap \cB \right) 
= 
\E \left[ \E \left[ \mathbf{1}_{\left\{ \left| X_{n}^{1} - X_{n}^{2} \right| \leq \delta_{\tcorr} \right\}} \mathbf{1}_{\cA^{1} \cap \cA^{2}} \, \middle| \, T_{\tcorr} \right] \mathbf{1}_{\cB} \right].
\end{equation}
We now fix $T_{\tcorr}$ and study the conditional expectation 
$\E \left[ \mathbf{1}_{\left\{ \left| X_{n}^{1} - X_{n}^{2} \right| \leq \delta_{\tcorr} \right\}} \mathbf{1}_{\cA^{1} \cap \cA^{2}} \, \middle| \, T_{\tcorr} \right]$. 
Recall that on the event $\cA^{1} \cap \cA^{2}$ we have that $X_{n}^{i} = Z_{n}^{i}$ for $i \in \left\{ 1, 2 \right\}$ and $n \geq \tcorr$. 
Therefore by the bounded convergence theorem we have that 
\begin{align}
\limsup_{n \to \infty} \E \left[ \mathbf{1}_{\left\{ \left| X_{n}^{1} - X_{n}^{2} \right| \leq \delta_{\tcorr} \right\}} \mathbf{1}_{\cA^{1} \cap \cA^{2}} \, \middle| \, T_{\tcorr} \right] 
&= 
\E \left[ \mathbf{1}_{\left\{ \left| Z^{1} - Z^{2} \right| \leq \delta_{\tcorr} \right\}} \mathbf{1}_{\cA^{1} \cap \cA^{2}} \, \middle| \, T_{\tcorr} \right] \notag \\
&\leq \E \left[ \mathbf{1}_{\left\{ \left| Z^{1} - Z^{2} \right| \leq \delta_{\tcorr} \right\}} \, \middle| \, T_{\tcorr} \right], \label{eq:cond_prob_Z1_Z2}
\end{align}
where the inequality follows by dropping the second indicator. 
For notational convenience, and in order to treat the cases of PA and UA trees simultaneously, we introduce 
\begin{equation}\label{eq:ab}
\left( a, b \right) := 
\begin{cases} 
\left( \Psi_{\tcorr} \left( \theta \right), \tcorr - \Psi_{\tcorr} \left( \theta \right) \right) &\text{ for } \UA, \\
\left( \Psi_{\tcorr} \left( \theta \right) - \frac{1}{2}, \tcorr - \Psi_{\tcorr} \left( \theta \right) - \frac{1}{2} \right) &\text{ for } \PA. 
\end{cases}
\end{equation}
Recall from~\eqref{eq:Z} that, conditioned on $T_{\tcorr}$, the random variables $Z^{1}$ and $Z^{2}$ are i.i.d.\ $\Beta \left( a, b \right)$ random variables. 
To bound the expression in~\eqref{eq:cond_prob_Z1_Z2}, we first condition on $Z^{1}$. By the tower rule, we have that 
\[
\E \left[ \mathbf{1}_{\left\{ \left| Z^{1} - Z^{2} \right| \leq \delta_{\tcorr} \right\}} \, \middle| \, T_{\tcorr} \right] 
= 
\E \left[ \E \left[ \mathbf{1}_{\left\{ \left| Z^{1} - Z^{2} \right| \leq \delta_{\tcorr} \right\}} \, \middle| \, Z^{1}, T_{\tcorr} \right] \, \middle| \, T_{\tcorr} \right].
\]
Conditioned on $Z^{1}$ and $T_{\tcorr}$, we have that $Z^{2} \sim \Beta \left( a, b \right)$, so we can compute this conditional expectation explicitly: 
\begin{equation}\label{eq:beta_density}
\E \left[ \mathbf{1}_{\left\{ \left| Z^{1} - Z^{2} \right| \leq \delta_{\tcorr} \right\}} \, \middle| \, Z^{1}, T_{\tcorr} \right] 
= 
\frac{1}{B\left( a, b\right)} \int_{\left( Z^{1} - \delta_{\tcorr} \right) \vee 0}^{\left( Z^{1} + \delta_{\tcorr} \right) \wedge 1} x^{a-1} \left( 1 - x \right)^{b-1} dx,
\end{equation}
where $B(a,b) = \Gamma \left( a \right) \Gamma \left( b \right) / \Gamma \left( a + b \right)$ is the beta function. 
Recall from~\eqref{eq:prob_X1_X2_tower} that we only care about bounding this expression when the event $\cB$ holds. 
From the definition of $\cB$, and also the definitions of $a$ and $b$ (see~\eqref{eq:ab}), it follows that if $\cB$ holds, then $a, b > 2$ for all $\tcorr$ large enough. 
We know that if $a,b > 1$, then the mode of the $\Beta(a,b)$ distribution is at $\tfrac{a-1}{a+b-2}$. Plugging this into~\eqref{eq:beta_density}, we obtain, for all $\tcorr$ large enough, that 
\begin{equation}\label{eq:beta_density_bound}
\E \left[ \mathbf{1}_{\left\{ \left| Z^{1} - Z^{2} \right| \leq \delta_{\tcorr} \right\}} \, \middle| \, Z^{1}, T_{\tcorr} \right] \mathbf{1}_{\cB}
\leq \frac{2\delta_{\tcorr}}{B \left( a, b \right)} 
\left( \frac{a-1}{a+b-2} \right)^{a-1} \left( \frac{b-1}{a+b-2} \right)^{b-1} \mathbf{1}_{\cB}.
\end{equation}
Now using the standard inequalities 
$\sqrt{2\pi} n^{n+1/2} e^{n} \leq n! \leq e n^{n+1/2} e^{n}$, 
which hold for all $n \geq 1$, we have that 
\[
B(a,b) = \frac{(a-1)! (b-1)!}{(a+b-1)!} 
\geq 
2\pi 
\left( \frac{a-1}{a+b-1} \right)^{a-1} \left( \frac{b-1}{a+b-1} \right)^{b-1}
\frac{\sqrt{\left( a - 1 \right) \left( b - 1 \right)}}{\left( a + b - 1 \right)^{3/2}}.
\]
Therefore 
\begin{align*}
\frac{1}{B \left( a, b \right)} 
\left( \frac{a-1}{a+b-2} \right)^{a-1} \left( \frac{b-1}{a+b-2} \right)^{b-1}
&\leq \frac{1}{2\pi} \frac{\left( a + b - 1 \right)^{3/2}}{\sqrt{\left( a - 1 \right) \left( b - 1 \right)}} \left( 1 + \frac{1}{a+b-2} \right)^{a+b-2} \\
&\leq \frac{e}{2\pi} \frac{\left( a + b - 1 \right)^{3/2}}{\sqrt{\left( a - 1 \right) \left( b - 1 \right)}}.
\end{align*}
Plugging this back into~\eqref{eq:beta_density_bound}, we obtain, for all $\tcorr$ large enough, that 
\[
\E \left[ \mathbf{1}_{\left\{ \left| Z^{1} - Z^{2} \right| \leq \delta_{\tcorr} \right\}} \, \middle| \, Z^{1}, T_{\tcorr} \right] \mathbf{1}_{\cB} 
\leq C \delta_{\tcorr} \frac{\left( a + b \right)^{3/2}}{\sqrt{ab}} \mathbf{1}_{\cB}
\]
for some constant $C$. 
From~\eqref{eq:ab} we have that $a+b \leq \tcorr$. 
We also have that $b \geq \tcorr / 2 - 1/2$. 
Furthermore, on the event $\cB$ we have that $a \geq \tcorr / \sqrt{\log \tcorr} - 1/2$. 
Altogether these imply that
\[
\frac{\left( a + b \right)^{3/2}}{\sqrt{ab}} \mathbf{1}_{\cB} 
\leq C' \tcorr^{1/2} \log^{1/4} \left( \tcorr \right).
\]
for some constant $C'$ and all $\tcorr$ large enough. 
Plugging this back into the previous display and using the definition of $\delta_{\tcorr}$ we obtain that 
\[
\E \left[ \mathbf{1}_{\left\{ \left| Z^{1} - Z^{2} \right| \leq \delta_{\tcorr} \right\}} \, \middle| \, Z^{1}, T_{\tcorr} \right] \mathbf{1}_{\cB} 
\leq 
\frac{C''}{\log^{1/4} \left( \tcorr \right)} 
\]
for some constant $C''$ and all $\tcorr$ large enough. 
Now taking an expectation over $Z^{1}$ and using~\eqref{eq:prob_X1_X2_tower} and~\eqref{eq:cond_prob_Z1_Z2}, 
we finally obtain that 
\[
\limsup_{n \to \infty} 
\p \left( \left\{ \left| X_{n}^{1} - X_{n}^{2} \right| \leq \delta_{\tcorr} \right\} \cap \cA^{1} \cap \cA^{2} \cap \cB \right) 
\leq 
\frac{C''}{\log^{1/4} \left( \tcorr \right)} 
\]
for all $\tcorr$ large enough. 
This expression goes to zero as $\tcorr \to \infty$, which concludes the proof. 
\end{proof}

\subsection{Proofs of remaining lemmas}
\label{sec:coarse_nice_event_proof}

In this subsection we prove Lemmas~\ref{lem:nice_event} and~\ref{lem:B}, proofs that we have deferred until now. 

\subsubsection{Proof of Lemma~\ref{lem:B}}

We start with the proof of Lemma~\ref{lem:B}, which is relatively short. 

\begin{proof}[Proof of Lemma~\ref{lem:B}]
First, by a union bound we have that 
\begin{equation}\label{eq:B_union_bound}
\p \left( \cB^{c} \right) 
= \p \left( \Psi_{\tcorr} \left( \theta \left( \tcorr \right) \right) < \frac{\tcorr}{\sqrt{\log \tcorr}} \right) 
\leq \sum_{i=1}^{\tcorr} \p \left( \Psi_{\tcorr} \left( v_{i} \right) < \frac{\tcorr}{\sqrt{\log \tcorr}} \right).
\end{equation}
Noting that the term for $i = 1$ is equal to the term for $i = 2$, 
we now fix $i \geq 2$. 
Note that $v_{i}$ is introduced in $T_{i}$. 
Let $w$ denote the neighbor of $v_{i}$ in $T_{i}$. 
By definition we have that 
\[
\Psi_{\tcorr} \left( v_{i} \right) 
= \max_{u \in \cN_{v_{i}} \left( T_{\tcorr} \right)} \left| \left( T_{\tcorr}, v_{i} \right)_{u \downarrow} \right| 
\geq 
\left| \left( T_{\tcorr}, v_{i} \right)_{w \downarrow} \right|
\]
and so---introducing 
$M_{n} := \frac{1}{n} \left| \left( T_{n}, v_{i} \right)_{w \downarrow} \right|$ 
for $n \geq i$ 
in order to abbreviate notation---we have that
\[
\p \left( \Psi_{\tcorr} \left( v_{i} \right) < \frac{\tcorr}{\sqrt{\log \tcorr}} \right) 
\leq 
\p \left( M_{\tcorr} \leq \frac{1}{\sqrt{\log \tcorr}} \right).
\]
This latter probability can be understood using P\'olya urn and martingale arguments. The proofs for PA and UA trees are similar, and we start with UA trees. 
For UA trees, the evolution of the pair 
$ \left( n M_{n}, n - n M_{n} \right)$ 
for $n \geq i$ 
follows a classical P\'olya urn with initial condition $\left( i - 1, 1 \right)$. 
By standard results on P\'olya urns we have that 
$\left\{ M_{n} \right\}_{n \geq i}$ is a martingale,  
the limit $M_{\infty} := \lim_{n\to\infty} M_{n}$ exists almost surely,  
and $M_{\infty} \sim \Beta \left( i - 1, 1 \right)$. 
By this latter property we have that 
\begin{equation}\label{eq:beta_prob}
\p \left( M_{\infty} \leq z \right) = z^{i-1} 
\end{equation}
for all $z \in \left( 0, 1 \right)$. 
Since $\left\{ M_{n} \right\}_{n \geq i}$ is a nonnegative martingale, we also have that 
\begin{equation*}\label{eq:mg_arg}
\p \left( M_{\infty} \leq 2 z \, \middle| \, M_{n} \leq z \right) \geq 1/2 
\end{equation*}
for all $z \geq 0$ and $n \geq i$, 
which implies that 
$\p \left( M_{n} \leq z \right) \leq 2 \p \left( M_{\infty} \leq 2 z \right)$. 
Thus using~\eqref{eq:beta_prob} 
we have that 
\[
\p \left( M_{\tcorr} \leq \frac{1}{\sqrt{\log \tcorr}} \right) 
\leq 2 \left( \frac{2}{\sqrt{\log \tcorr}} \right)^{i-1}
\]
for all $\tcorr$ large enough. 
Plugging this bound back into~\eqref{eq:B_union_bound} and noting that the geometric sum is on the same order as the largest term, we obtain that 
\[
\p \left( \cB^{c} \right) \leq \frac{12}{\sqrt{\log \tcorr}} 
\]
for all $\tcorr$ large enough.

Turning now to PA trees, the evolution of the pair 
$\left( 2n M_{n} - 1, 2n - 2n M_{n} - 1 \right)$ 
for $n \geq i$ 
follows a P\'olya urn with replacement matrix 
$\left(\begin{smallmatrix} 2 & 0 \\ 0 & 2 \end{smallmatrix}\right)$ 
and initial condition 
$\left( 2i - 3, 1 \right)$. 
Define $\wt{M}_{n} := \left( 2n M_{n} - 1 \right) / \left( 2n - 2 \right)$. 
The process 
$\left\{ \wt{M}_{n} \right\}_{n \geq i}$ 
is a bounded martingale and hence its limit as $n \to \infty$ exists almost surely. 
Since 
\[
\wt{M}_{n}
= M_{n} + \frac{1}{n-1} M_{n} - \frac{1}{2n-2} 
\]
and $M_{n} \in \left[ 0, 1 \right]$, 
the limit of the martingale equals the limit of $M_{n}$; 
that is, 
$M_{\infty} := \lim_{n \to \infty} M_{n} 
= \lim_{n \to \infty} \wt{M}_{n}$ 
exists almost surely. 
Furthermore, by standard results on P\'olya urns we know that 
$M_{\infty} \sim \Beta \left( i - 3/2, 1/2 \right)$. 
By this latter property, and using the bound $\left( 1 - z \right)^{-1/2} \leq \sqrt{2}$ for $z \in \left( 0, 1/2 \right)$ in the density function of the beta distribution, we have that 
\[
\p \left( M_{\infty} \leq z \right) 
\leq \frac{\sqrt{2}}{\left( i - \frac{3}{2} \right) B \left( i - \frac{3}{2}, \frac{1}{2} \right)} z^{i - 3/2}
\]
for all $z \in \left( 0, 1/2 \right)$. 
We can further bound this quantity using properties of the Gamma function. 
Specifically, we use the following identities: 
$\Gamma \left( z + 1 \right) = z \Gamma \left( z \right)$, 
for a positive integer $n$ we have that $\Gamma \left( n + 1 \right) = n!$ 
and also that $\Gamma \left( n + 1/2 \right) = \frac{(2n)! \sqrt{\pi}}{4^{n} n!}$, 
and finally that $\Gamma \left( 1 / 2 \right) = \sqrt{\pi}$. Using these we have that 
\begin{equation}\label{eq:beta}
\left( i - \frac{3}{2} \right) B \left( i - \frac{3}{2}, \frac{1}{2} \right) 
=  \frac{\left( i - \frac{3}{2} \right) \Gamma \left( i - \frac{3}{2} \right) \Gamma \left( \frac{1}{2} \right)}{\Gamma \left( i - 1 \right)} 
= \pi \left( i - 1 \right) \binom{2i-2}{i-1} 4^{-i + 1} 
\geq 4^{-i+1}.
\end{equation}
Plugging this back into the previous display we obtain that 
\[
\p \left( M_{\infty} \leq z \right) \leq 4 \left( 4z \right)^{i-3/2} 
\]
for all $z \in \left( 0, 1/2 \right)$. 
Using the fact that $\wt{M}_{n} \leq 2 M_{n}$, together with the same martingale argument as before, we have that 
\[
\p \left( M_{n} \leq z \right) 
\leq 
\p \left( \wt{M}_{n} \leq 2 z \right) 
\leq 
2 \p \left( M_{\infty} \leq 4 z \right).
\]
The previous two displays combined imply that 
$\p \left( M_{n} \leq z \right) \leq 8 \left( 16 z \right)^{i-3/2}$ for all $z \in \left( 0, 1/8 \right)$ and $n \geq i$. We have thus obtained that 
\[
\p \left( M_{\tcorr} \leq \frac{1}{\sqrt{\log \tcorr}} \right) 
\leq 
8 \left( \frac{16}{\sqrt{\log \tcorr}} \right)^{i-3/2}
\]
for all $\tcorr$ large enough. 
Plugging this bound back into~\eqref{eq:B_union_bound} and noting that the geometric sum is on the same order as the largest term, we obtain the desired bound~\eqref{eq:notB}. 
\end{proof}

\subsubsection{Proof of Lemma~\ref{lem:nice_event}}

We now turn to the proof of Lemma~\ref{lem:nice_event}, which is more involved. 
We start by stating and proving a few auxiliary lemmas that we will use. 

The following lemma gives us an exponential bound on the probability that a vertex of large timestamp ever becomes the centroid. This was proved in~\cite{persistent_centrality}; see their Lemmas~A.1 and~3.1.
\begin{lemma}\label{lem:persistent_centroid}
Consider a sequence of PA or UA trees started from the seed $S = S_{2}$. 
For all $t$ large enough we have that 
\[
\p \left( v_{t+1} \text{ becomes at least as central as } \theta \left( t \right) \text{ at some future time} \right) 
\leq \frac{P(t/2)}{2^{t/2}},
\]
where $P$ is a fixed polynomial. 
\end{lemma}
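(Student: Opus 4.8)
The plan is to reduce the statement to a tail estimate for a P\'olya urn, using exactly the urn machinery already deployed in the proof of Theorem~\ref{thm:anti-centrality_representation} and in Lemma~\ref{lem:B}. Write $v := v_{t+1}$ and let $w$ be the vertex of $T_{t}$ to which $v$ attaches, so that $e := (v,w)$ is an edge of $T_{n}$ for every $n \geq t+1$. Removing $e$ splits $T_{n}$ into two components, $V_{n} \ni v$ and $W_{n} \ni w$, with $\left| V_{n} \right| + \left| W_{n} \right| = n$, $\left| V_{t+1} \right| = 1$, and $\left| W_{t+1} \right| = t$. Since $W_{n}$ is precisely the pendent subtree of $v$ in the direction of $w$, we always have $\Psi_{n}(v) \geq \left| W_{n} \right|$; hence at any time $n$ at which $v$ is at least as central as a centroid of $T_{n}$ — which is the case relevant to the persistence argument — we have $\left| W_{n} \right| \leq \Psi_{n}(v) \leq n/2$, i.e.\ $\left| V_{n} \right| \geq n/2$. (The comparison with the time-$t$ centroid $\theta(t)$ phrased in the statement reduces to this; this bookkeeping, together with the corner case in which one of $v$'s own pendent subtrees rather than $W_{n}$ realizes $\Psi_{n}(v)$, in which case $\left| V_{n} \right| > n/2$ directly, is exactly what is carried out in~\cite[Lemmas~A.1 and~3.1]{persistent_centrality}.) It therefore suffices to bound $\p\!\left( \exists n \geq t+1 : \left| V_{n} \right| \geq n/2 \right)$ — the probability that the $v$-side of the birth edge, which starts out as a single vertex out of $t+1$, ever grows to at least half of the tree.

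Next I would invoke the P\'olya urn description of these subtree sizes. For UA trees, passing from $T_{n}$ to $T_{n+1}$ the new vertex joins $V_{n}$ with probability $\left| V_{n} \right|/n$, so $\left( \left| W_{n} \right|, \left| V_{n} \right| \right)_{n \geq t+1}$ is a classical P\'olya urn started from $(t,1)$; consequently $M_{n} := \left| V_{n} \right|/n$ is a martingale with values in $[0,1]$ and $M_{t+1} = 1/(t+1)$, and by the classical limit theorem its almost sure limit $M_{\infty}$ is $\Beta(1,t)$-distributed, so $\p\!\left( M_{\infty} \geq z \right) = (1-z)^{t}$ for $z \in (0,1)$. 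For PA trees the analogous statement holds for degree-sums: $\left( 2\left| W_{n} \right| - 1,\, 2\left| V_{n} \right| - 1 \right)_{n \geq t+1}$ is a P\'olya urn with replacement matrix $\left(\begin{smallmatrix} 2 & 0 \\ 0 & 2 \end{smallmatrix}\right)$ started from $(2t-1,1)$, whose normalized version $\widetilde{M}_{n} := (2\left| V_{n} \right| - 1)/(2n-2)$ is a bounded martingale with $\widetilde{M}_{\infty} \sim \Beta\!\left( \tfrac12, t - \tfrac12 \right)$; since $\left| \widetilde{M}_{n} - M_{n} \right| = \left| M_{n} - \tfrac12 \right|/(n-1) = O(1/n)$, the limits coincide, and the Gamma-function estimates already used in the proof of Lemma~\ref{lem:B} give $\p\!\left( M_{\infty} \geq z \right) \leq C \sqrt{t}\, (1-z)^{t-1/2}$ for a universal constant $C$.

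The last step passes from the limit to the supremum by a stopping-time argument, in the same spirit as the nonnegative-martingale bound in Lemma~\ref{lem:B}. Let $N := \inf\{ n \geq t+1 : \left| V_{n} \right| \geq n/2 \}$. On $\{ N < \infty \}$ we have $M_{N} \geq 1/2$ (and hence $\widetilde{M}_{N} \geq 1/2$ in the PA case), and optional stopping for the bounded martingale gives $\E\!\left[ M_{\infty} \mid \cF_{N} \right] = M_{N} \geq 1/2$ on this event. Since $M_{\infty} \in [0,1]$, this forces $\p\!\left( M_{\infty} \geq \mu \mid \cF_{N} \right) \geq \tfrac{1/2 - \mu}{1 - \mu}$ for any $\mu \in (0,1/2)$, and therefore $\p( N < \infty ) \leq \tfrac{1-\mu}{1/2 - \mu}\, \p\!\left( M_{\infty} \geq \mu \right)$. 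Taking $\mu$ to be a fixed constant slightly above $1 - 2^{-1/2}$ (so that $(1-\mu)^{t} \leq 2^{-t/2}$) and plugging in the tail bounds from the previous paragraph yields $\p( N < \infty ) \leq P(t)\, 2^{-t/2}$ for a fixed polynomial $P$, which is at most $P(t/2)/2^{t/2}$ for all $t$ large enough; in fact optimizing the choice of $\mu$ over $t$ improves this to $\poly(t)\, 2^{-t}$.

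The main obstacle is the reduction in the first paragraph: faithfully translating the event ``$v_{t+1}$ becomes at least as central as $\theta(t)$ at some future time'' into the clean urn event $\{\exists n : \left| V_{n} \right| \geq n/2\}$ requires a careful case analysis — isolating the situation in which $W_{n}$ realizes $\Psi_{n}(v_{t+1})$ from the one in which an own subtree of $v_{t+1}$ does (forcing $\left| V_{n} \right| > n/2$ outright), and, for the literal comparison $\Psi_{n}(v_{t+1}) \leq \Psi_{n}(\theta(t))$, separately controlling the rare event that $\theta(t)$ itself drifts toward the periphery — and this is precisely the content of~\cite[Lemmas~A.1 and~3.1]{persistent_centrality}. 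The remaining ingredients, namely the P\'olya urn identifications and the Gamma-function tail bounds, are routine and appear essentially in the required form in Lemma~\ref{lem:B} and in the proof of Theorem~\ref{thm:anti-centrality_representation}.
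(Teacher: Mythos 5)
The paper offers no proof of this lemma at all: it is imported verbatim from~\cite{persistent_centrality} (their Lemmas~A.1 and~3.1), so there is no in-paper argument to compare yours against. Judged on its own, your reconstruction is essentially the proof from that reference, and the probabilistic half is correct and uses exactly the machinery this paper deploys elsewhere: the P\'olya urn identifications (classical urn started from $(t,1)$ for UA, replacement matrix $\left(\begin{smallmatrix} 2 & 0 \\ 0 & 2 \end{smallmatrix}\right)$ started from $(2t-1,1)$ for PA) match those in the proofs of Theorem~\ref{thm:anti-centrality_representation} and Lemma~\ref{lem:B}, the Beta tail bounds and the Gamma-function estimate are the same as in Lemma~\ref{lem:B}, and your optional-stopping step $\p(N<\infty) \leq \tfrac{1-\mu}{1/2-\mu}\,\p(M_{\infty} \geq \mu)$ is a quantitative version of the nonnegative-martingale doubling trick used there; the choice of $\mu$ slightly above $1-2^{-1/2}$ does deliver the stated $P(t/2)/2^{t/2}$ form. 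The only place your sketch is genuinely loose is the deterministic reduction: by the sign identity of Lemma~\ref{lem:from_centralities_to_subtrees}, the event $\Psi_{n}(v_{t+1}) \leq \Psi_{n}(\theta(t))$ is equivalent to $\left|V_{n}\right| \geq (n-c)/2$, where $c \geq 0$ counts the vertices lying strictly between $v_{t+1}$ and $\theta(t)$ together with their pendent subtrees, so the clean threshold $\left|V_{n}\right| \geq n/2$ does not follow immediately when $v_{t+1}$ does not attach directly to $\theta(t)$; closing that gap (and handling the case where one of $v_{t+1}$'s own pendent subtrees realizes $\Psi_{n}(v_{t+1})$) is precisely the content of the cited Lemma~A.1. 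Since you flag this explicitly and source it to the same reference the paper leans on for the entire lemma, your proposal is at least as complete as the paper's treatment and fills in the probabilistic argument the paper omits.
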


The following lemma is useful in studying the relative (anti-)centralities of two vertices by examining the growth of specific subtrees. 

\begin{lemma}\label{lem:from_centralities_to_subtrees}
Let $\left\{ T_{n} \right\}_{n \geq 2}$ be a sequence of growing trees (such as PA or UA trees), where at every time step a single vertex is added to the tree, together with a single edge. Let $v_{1}, v_{2}, v_{3}, \ldots$ denote the vertices in order of appearance. Fix $t$ and let $i$ and $j$ be distinct positive integers such that $i, j \leq t$. 
Suppose that 
\begin{equation}\label{eq:assumption_begin}
\Psi_{t} \left( v_{i} \right) > \Psi_{t} \left( v_{j} \right) 
\end{equation}
and that there exists $N > t$ such that 
\begin{equation}\label{eq:assumption_end}
\Psi_{N} \left( v_{i} \right) \leq \Psi_{N} \left( v_{j} \right). 
\end{equation}
Then there must exist $M$ such that $t < M \leq N$ and 
\[
\left| \left( T_{M}, v_{i} \right)_{v_{j} \downarrow} \right| 
= 
\left| \left( T_{M}, v_{j} \right)_{v_{i} \downarrow} \right|.
\]
\end{lemma}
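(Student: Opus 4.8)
The plan is to reduce everything to tracking a single integer-valued quantity along the sequence of trees and then invoke a discrete intermediate value argument. For $n \ge t$ set
\[
D_n := \left| \left( T_n, v_j \right)_{v_i \downarrow} \right| - \left| \left( T_n, v_i \right)_{v_j \downarrow} \right| .
\]
The first step is to observe that $D_n$ changes by at most $1$ at each time step. Indeed, the vertex sets $\left( T_n, v_j \right)_{v_i \downarrow}$ and $\left( T_n, v_i \right)_{v_j \downarrow}$ are disjoint (no vertex can have its path to $v_j$ pass through $v_i$ and simultaneously its path to $v_i$ pass through $v_j$), and in passing from $T_n$ to $T_{n+1}$ we attach a single new vertex to some $u \in V\left( T_n \right)$; the new vertex joins $\left( T_{n+1}, v_j \right)_{v_i \downarrow}$ precisely when $u \in \left( T_n, v_j \right)_{v_i \downarrow}$, joins $\left( T_{n+1}, v_i \right)_{v_j \downarrow}$ precisely when $u \in \left( T_n, v_i \right)_{v_j \downarrow}$, and attaching a leaf changes no paths among the old vertices. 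Hence $D_{n+1} - D_n \in \left\{ -1, 0, 1 \right\}$. Granting this, it suffices to establish $D_t \le -1$ and $D_N \ge 0$: letting $M$ be the least index in $\left\{ t+1, \dots, N \right\}$ with $D_M \ge 0$, minimality together with the unit-step property force $D_{M-1} = -1$ and $D_M = 0$, which is exactly the desired identity $\left| \left( T_M, v_i \right)_{v_j \downarrow} \right| = \left| \left( T_M, v_j \right)_{v_i \downarrow} \right|$.

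The crux is the following purely combinatorial equivalence, which I would prove for an arbitrary tree $T$ with distinct vertices $v_i, v_j$: one has $\Psi_T\left( v_i \right) > \Psi_T\left( v_j \right)$ if and only if $\left| \left( T, v_i \right)_{v_j \downarrow} \right| > \left| \left( T, v_j \right)_{v_i \downarrow} \right|$. To prove it, let $x_1$ denote the neighbor of $v_i$ on the path to $v_j$, and write $a := \left| \left( T, v_j \right)_{v_i \downarrow} \right|$ and $b := \left| \left( T, v_i \right)_{v_j \downarrow} \right|$. One checks that $\left( T, v_i \right)_{x_1 \downarrow}$ is exactly the complement of $\left( T, v_j \right)_{v_i \downarrow}$ in $T$, so $\left| \left( T, v_i \right)_{x_1 \downarrow} \right| = \left| T \right| - a$; all remaining pendent subtrees of $v_i$ lie inside $\left( T, v_j \right)_{v_i \downarrow}$, so their sizes sum to $a - 1$ and each is at most $a - 1$. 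Thus $\Psi_T\left( v_i \right) = \max\left\{ A_i,\, \left| T \right| - a \right\}$ with $A_i \le a - 1$, and symmetrically $\Psi_T\left( v_j \right) = \max\left\{ A_j,\, \left| T \right| - b \right\}$ with $A_j \le b - 1$; disjointness gives $a + b \le \left| T \right|$. A short case split now finishes it: if $b > a$ then $\left| T \right| - a \ge b > A_j$ and $\left| T \right| - a > \left| T \right| - b$, so $\Psi_T\left( v_i \right) \ge \left| T \right| - a > \Psi_T\left( v_j \right)$; if $b \le a$ then $A_i \le a - 1 < a \le \left| T \right| - b \le \Psi_T\left( v_j \right)$ and $\left| T \right| - a \le \left| T \right| - b \le \Psi_T\left( v_j \right)$, so $\Psi_T\left( v_i \right) \le \Psi_T\left( v_j \right)$. (When $v_i$ has no pendent subtree other than the one through $x_1$, read $A_i = 0$, and likewise for $v_j$.)

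With the equivalence established, the conclusion is immediate. Applied at time $t$, the hypothesis $\Psi_t\left( v_i \right) > \Psi_t\left( v_j \right)$ yields $\left| \left( T_t, v_i \right)_{v_j \downarrow} \right| > \left| \left( T_t, v_j \right)_{v_i \downarrow} \right|$, i.e.\ $D_t \le -1$; applied at time $N$ in contrapositive form, the hypothesis $\Psi_N\left( v_i \right) \le \Psi_N\left( v_j \right)$ forbids $\left| \left( T_N, v_i \right)_{v_j \downarrow} \right| > \left| \left( T_N, v_j \right)_{v_i \downarrow} \right|$, i.e.\ $D_N \ge 0$. The discrete intermediate value argument from the first paragraph then supplies the required $M$ with $t < M \le N$. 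I expect the only delicate point to be the displayed equivalence: one genuinely needs both directions, since the hypothesis at time $N$ is the non-strict inequality $\Psi_N\left( v_i \right) \le \Psi_N\left( v_j \right)$, so a one-sided ``$\Psi$-gap implies subtree-size gap'' statement would not suffice to pin down the sign of $D_N$; the remaining steps are routine.
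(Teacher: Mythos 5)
Your proof is correct and follows essentially the same route as the paper's: both establish the sign equivalence $\sgn\bigl(\Psi_T(v_i) - \Psi_T(v_j)\bigr) = \sgn\bigl(|(T,v_i)_{v_j\downarrow}| - |(T,v_j)_{v_i\downarrow}|\bigr)$ (your $|T|-a$ is the paper's $c+1+\sum_\ell b_\ell$, and your $A_i \le a-1$ is the paper's $\max_k a_k \le \sum_k a_k$), and then run a discrete intermediate-value argument on the unit-step integer quantity. The only cosmetic difference is that you organize the equivalence as a direct two-way case split on $b \lessgtr a$, whereas the paper proves the forward implication by contradiction and obtains the converse by symmetry; your observation that the full iff is genuinely needed (because the hypothesis at time $N$ is non-strict) is correct and worth noting.
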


\begin{proof} 
We start with some notation. Fix $n \geq t$ and consider the tree $T_{n}$. 
Let $a_{1}, a_{2}, a_{3}, \ldots$ denote the sizes of the pendent subtrees of $v_{i}$, excluding the subtree that contains $v_{j}$. 
Similarly, let $b_{1}, b_{2}, b_{3}, \ldots$ denote the sizes of the pendent subtrees of $v_{j}$, excluding the subtree that contains $v_{i}$. 
Finally, let $c$ denote the number of vertices that are ``in between'' $v_{i}$ and $v_{j}$; 
that is, $c$ is the number of vertices $u$ such that the path from $u$ to $v_{i}$ does not contain $v_{j}$ and the path from $u$ to $v_{j}$ does not contain $v_{i}$. 
Note that with this notation we have the following equalities: 
\begin{align*}
\Psi_{n} \left( v_{i} \right) &= \max \left\{ c + 1 + \sum_{\ell} b_{\ell}, a_{1}, a_{2}, a_{3}, \ldots \right\}, \\
\Psi_{n} \left( v_{j} \right) &= \max \left\{ c + 1 + \sum_{k} a_{k}, b_{1}, b_{2}, b_{3}, \ldots \right\}, 
\end{align*}
and also 
\begin{equation}\label{eq:subtrees_sum}
\left| \left( T_{n}, v_{i} \right)_{v_{j} \downarrow} \right| = 1 + \sum_{\ell} b_{\ell}, 
\qquad \qquad 
\left| \left( T_{n}, v_{j} \right)_{v_{i} \downarrow} \right| = 1 + \sum_{k} a_{k}.
\end{equation}
We now claim that if 
$\Psi_{n} \left( v_{i} \right) > \Psi_{n} \left( v_{j } \right)$, 
then 
$\sum_{\ell} b_{\ell} > \sum_{k} a_{k}$. 
We prove this by contradiction; suppose that 
$\sum_{\ell} b_{\ell} \leq \sum_{k} a_{k}$. 
Then 
$\Psi_{n} \left( v_{j} \right) \geq c + 1 + \sum_{k} a_{k} \geq c + 1 + \sum_{\ell} b_{\ell}$, 
so $\Psi_{n} \left( v_{i} \right) > \Psi_{n} \left( v_{j } \right)$ 
implies that 
$\Psi_{n} \left( v_{i} \right) = a_{k'}$ for some $k'$. 
But then $\Psi_{n} \left( v_{j} \right) \geq c + 1 + \sum_{k} a_{k} \geq 1 + a_{k'} > \Psi_{n} \left( v_{i} \right)$, which is a contradiction. 
The same argument shows that if 
$\Psi_{n} \left( v_{i} \right) \geq \Psi_{n} \left( v_{j } \right)$, 
then 
$\sum_{\ell} b_{\ell} \geq \sum_{k} a_{k}$. 
As a corollary, we have that 
if 
$\Psi_{n} \left( v_{i} \right) = \Psi_{n} \left( v_{j } \right)$, 
then 
$\sum_{\ell} b_{\ell} = \sum_{k} a_{k}$.

Altogether, using~\eqref{eq:subtrees_sum}, we have shown that 
\begin{equation}\label{eq:signs_matching}
\sgn \left( \Psi_{n} \left( v_{i} \right) - \Psi_{n} \left( v_{j} \right) \right) 
= 
\sgn \left( \left| \left( T_{n}, v_{i} \right)_{v_{j} \downarrow} \right| - \left| \left( T_{n}, v_{j} \right)_{v_{i} \downarrow} \right| \right),
\end{equation}
where $\sgn$ is the sign function: $\sgn(x) = - 1$ if $x < 0$, $\sgn(x) = 0$ if $x=0$, and $\sgn(x) = 1$ if $x > 0$. 
Observe also that the integer-valued quantity 
\[
f(n) := \left| \left( T_{n}, v_{i} \right)_{v_{j} \downarrow} \right| - \left| \left( T_{n}, v_{j} \right)_{v_{i} \downarrow} \right|
\]
changes by $1$, $0$, or $-1$ as $n$ increases by one. 
The assumption~\eqref{eq:assumption_begin}, together with~\eqref{eq:signs_matching}, implies that $f(t) > 0$. 
The assumption~\eqref{eq:assumption_end}, together with~\eqref{eq:signs_matching}, implies that $f(N) \leq 0$. 
Therefore, by the previous observation, there must exist $M \in \left\{ t+1, \ldots, N \right\}$ such that $f(M) = 0$. 
\end{proof}

The following lemma gives concentration bounds for P\'olya urns. 

\begin{lemma}\label{lem:Polya_concentration}
Let $\left\{ \left( A_{n}, B_{n} \right) \right\}_{n \geq 0}$ be a stochastic process with a deterministic initial condition satisfying $A_{0}, B_{0} \geq 1$, 
and let $k := A_{0} + B_{0}$. 

If $\left\{ \left( A_{n}, B_{n} \right) \right\}_{n \geq 0}$ evolves as a classical P\'olya urn, then for any $\eps > 0$ we have that 
\[
\p \left( \exists\, n \geq 0 : \left| \frac{A_{n}}{k+n} - \frac{A_{0}}{k} \right| \geq \eps \right) 
\leq 
2 \exp \left( - k \eps^{2} / 2 \right).
\]

If $\left\{ \left( 2 A_{n} - 1, 2 B_{n} - 1 \right) \right\}_{n \geq 0}$ evolves as a P\'olya urn with replacement matrix 
$\left(\begin{smallmatrix} 2 & 0 \\ 0 & 2 \end{smallmatrix}\right)$, 
then for any $\eps \geq 2 / (k-1)$ we have that 
\[
\p \left( \exists\, n \geq 0 : \left| \frac{A_{n}}{k+n} - \frac{A_{0}}{k} \right| \geq \eps \right) 
\leq 
2 \exp \left( - (k-1) \eps^{2} / 8 \right).
\]
\end{lemma}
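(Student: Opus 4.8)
The plan is to use the classical fact that, for a balanced two-color urn, the appropriate normalized color-count is a bounded martingale, and then invoke a maximal form of the Azuma--Hoeffding inequality in which square-summability of the increment bounds plays the role of the finite-horizon variance sum. The one subtlety is converting between the quantity $A_n/(k+n)$ appearing in the statement and the quantity that is exactly a martingale in the second (replacement matrix $2I$) case; this conversion is what produces the threshold $\eps \ge 2/(k-1)$ and the constant $8$ in the exponent.

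For the classical urn, set $M_n := A_n/(k+n)$. A direct computation shows that if the $(n+1)$st draw has color $A$ then $M_{n+1}-M_n = (1-M_n)/(k+n+1)$, while if it has color $B$ then $M_{n+1}-M_n = -M_n/(k+n+1)$; thus $(M_n)_{n\ge 0}$ is a martingale with $|M_{n+1}-M_n|\le 1/(k+n+1)$, so $\sum_{n\ge 0}(M_{n+1}-M_n)^2 \le \sum_{m\ge k+1} m^{-2} \le 1/k$. Now observe that $\p(\exists\, n\ge 0: |M_n - M_0|\ge\eps)$ is the increasing limit as $N\to\infty$ of $\p(\max_{1\le n\le N}|M_n-M_0|\ge\eps)$, and the Azuma--Hoeffding maximal inequality (equivalently, Doob's inequality applied to the exponential supermartingale $\exp(t(M_n-M_0)-\tfrac{t^2}{2}\sum_{i\le n}c_i^2)$) bounds the latter by $2\exp(-\eps^2/(2\sum_{i}c_i^2)) \le 2\exp(-k\eps^2/2)$. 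Since $M_0 = A_0/k$, this is precisely the first claim.

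For the $2I$ urn, write $\tilde A_n := 2A_n-1$, $\tilde B_n := 2B_n-1$, so $\tilde A_n+\tilde B_n = 2(k-1+n)$, and set $\tilde M_n := \tilde A_n/(2(k-1+n))$. The standard balanced-urn computation again gives that $(\tilde M_n)_{n\ge 0}$ is a martingale, and writing $d:=k-1+n$, $u:=\tilde A_n\in[1,2d-1]$ one checks $\tilde M_{n+1}-\tilde M_n$ equals $(2d-u)/(2d(d+1))$ on a color-$A$ draw and $-u/(2d(d+1))$ on a color-$B$ draw, so $|\tilde M_{n+1}-\tilde M_n|\le 1/(k+n)$ and hence $\sum_{n\ge 0}(\tilde M_{n+1}-\tilde M_n)^2\le\sum_{j\ge k}j^{-2}\le 1/(k-1)$. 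The same maximal inequality yields $\p(\exists\, n: |\tilde M_n-\tilde M_0|\ge\lambda)\le 2\exp(-(k-1)\lambda^2/2)$. To transfer this to $A_n/(k+n)$, note $A_n=(\tilde A_n+1)/2$ gives $A_n/(k+n)-\tilde M_n = (d-u)/(2d(d+1))$ with $|d-u|\le d-1$, so $|A_n/(k+n)-\tilde M_n|\le 1/(2(k+n))\le 1/(2k)$ for all $n\ge 0$, and in particular $|A_0/k-\tilde M_0|\le 1/(2k)$. Therefore, when $\eps\ge 2/(k-1)\ (\ge 2/k)$, the event $\{|A_n/(k+n)-A_0/k|\ge\eps\}$ is contained in $\{|\tilde M_n-\tilde M_0|\ge\eps-1/k\}\subseteq\{|\tilde M_n-\tilde M_0|\ge\eps/2\}$; taking $\lambda=\eps/2$ in the previous display gives the claimed bound $2\exp(-(k-1)\eps^2/8)$.

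The only genuinely delicate point is the constant-chasing in the last paragraph: keeping the two conversion errors $|A_n/(k+n)-\tilde M_n|$ and $|A_0/k-\tilde M_0|$ summing to at most $\eps/2$ is exactly what forces the hypothesis $\eps\ge 2/(k-1)$ and the loss of a factor $4$ in the exponent. Everything else is the routine martingale-plus-Azuma machinery, and no new idea is required beyond citing the maximal inequality in its infinite-horizon form and the elementary identities $\sum_{m\ge k+1}m^{-2}\le 1/k$ and $\sum_{j\ge k}j^{-2}\le 1/(k-1)$.
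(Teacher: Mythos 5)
Your proof is correct and follows essentially the same approach as the paper: normalize to get a bounded-increment martingale, compute $\sum_n c_n^2 \le 1/k$ (resp.\ $1/(k-1)$), apply the infinite-horizon maximal Azuma--Hoeffding inequality, and in the $2I$ case transfer from $\widetilde M_n$ back to $A_n/(k+n)$ via the triangle inequality, which forces the hypothesis $\eps\ge 2/(k-1)$ and degrades the exponent by a factor of $4$. The only cosmetic difference is that you bound the conversion error by $1/k$ while the paper uses the slightly looser $1/(k-1)$; both suffice under the stated hypothesis.
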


\begin{proof}
We start with the first claim. Defining $M_{n} := A_{n} / (k+n)$, we have that $\left\{ M_{n} \right\}_{n \geq 0}$ is a martingale. 
The martingale differences satisfy 
$\left| M_{n} - M_{n-1} \right| \leq 1 / (k+n)$ 
for every $n \geq 1$. 
Therefore by the maximal version of Azuma's inequality we have for every $\eps > 0$ that 
\[
\p \left( \sup_{n \geq 0} \left| M_{n} - M_{0} \right| \geq \eps \right) 
\leq 
2 \exp \left( - \frac{\eps^{2}}{2 \sum_{n \geq 1} \left( k+n \right)^{-2}} \right).
\] 
The claim follows from the fact that 
$\sum_{n \geq 1} \left( k+n \right)^{-2} \leq 1/k$.

Turning to the second claim, first note that again $A_{n} + B_{n} = k + n$ for every $n \geq 0$. 
Define $\wt{M}_{n} := \left( 2 A_{n} - 1 \right) / \left( 2 A_{n} + 2 B_{n} - 2 \right) 
= \left( 2 A_{n} - 1 \right) / \left( 2 \left( k + n - 1 \right) \right)$ 
and observe that the process $\{ \wt{M}_{n} \}_{n \geq 0}$ is a martingale. Furthermore, the martingale differences satisfy 
$\left| \wt{M}_{n} - \wt{M}_{n-1} \right| \leq 1 / (k+n-1)$ 
for every $n \geq 1$. Therefore by the same argument as above we have for every $\eps > 0$ that 
\[
\p \left( \sup_{n \geq 0} \left| \wt{M}_{n} - \wt{M}_{0} \right| \geq \eps \right) 
\leq 
2 \exp \left( - \left( k - 1 \right)\eps^{2} / 2 \right).
\]
Now observe that 
$\left| \left( M_{n} - M_{0} \right) - \left( \wt{M}_{n} - \wt{M}_{0} \right) \right| \leq 1 / \left( k - 1 \right)$, so by the triangle inequality we have that 
\[
\p \left( \sup_{n \geq 0} \left| M_{n} - M_{0} \right| \geq \eps \right) 
\leq
\p \left( \sup_{n \geq 0} \left| \wt{M}_{n} - \wt{M}_{0} \right| \geq \eps - 1/ \left( k - 1 \right)\right) 
\leq
\p \left( \sup_{n \geq 0} \left| \wt{M}_{n} - \wt{M}_{0} \right| \geq \eps / 2 \right) 
\]
for any $\eps \geq 2 / (k-1)$. 
The result follows by putting the previous two displays together. 
\end{proof}

Finally, the following lemma gives a tail bound for degrees in PA and UA trees. 

\begin{lemma}\label{lem:deg_tail_bound}
Let $\left\{ T_{n} \right\}_{n \geq 2}$ be a sequence of trees started from the seed $S = S_{2}$ and grown according to PA or UA. Let $v_{1}, v_{2}, v_{3}, \ldots$ denote the vertices in order of appearance. Let $d_{n} (v)$ denote the degree of $v$ in $T_{n}$. 
There exists a positive constant $c$ such that for every $1 \leq i \leq n$ we have that 
\[
\p \left( d_{n} \left( v_{i} \right) \geq \sqrt{n} \log^{2} \left( n \right) \right) 
\leq 
\exp \left( - c \log^{3} \left( n \right) \right). 
\]
\end{lemma}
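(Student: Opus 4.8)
The plan is to prove a uniform‑in‑$i$ bound on all integer moments of $d_n(v_i)$ and then apply Markov's inequality with a moment order growing like $\log^{3} n$. Specifically, I will first show that there is a universal constant $C_0$ such that, for every positive integer $a$ and every $1 \le i \le n$,
\[
\E\left[ d_n(v_i)^{a} \right] \le \left( C_0\, a\, n \right)^{a/2},
\]
for both PA and UA trees. Granting this, Markov's inequality gives
\[
\p\left( d_n(v_i) \ge \sqrt{n}\,\log^{2} n \right)
\le \frac{\E\left[ d_n(v_i)^{a} \right]}{\left( \sqrt{n}\,\log^{2} n \right)^{a}}
\le \left( \frac{C_0\, a}{\log^{4} n} \right)^{a/2},
\]
and choosing $a = \lceil \log^{3} n \rceil$ makes the right‑hand side at most $\left( 2 C_0 / \log n \right)^{a/2}$, which for all $n$ large enough is at most $\exp\left( -\tfrac{1}{4} \log^{3} n \,\log\log n \right) \le \exp\left( -c \log^{3} n \right)$. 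The finitely many remaining values of $n$ are absorbed by decreasing $c$: for such $n$ the probability in question is either $0$ (when $\sqrt{n}\,\log^{2} n \ge n-1$, so that $d_n(v_i)$ cannot be that large) or at least strictly below $1$.

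For PA trees the moment bound comes from the classical rising‑factorial martingale attached to the degree process. Write $D_t := d_t(v_i)$, and recall that in a PA tree started from $S_2$ a newly arriving vertex attaches to $v_i$ with conditional probability $D_t / (2(t-1))$. Using the functional equation $\Gamma(D + a + 1) = (D+a)\,\Gamma(D+a)$, a one‑line computation shows that $G_t := \Gamma(D_t + a)/\Gamma(D_t)$ satisfies $\E\left[ G_{t+1} \,\middle|\, \mathcal F_t \right] = G_t \left( 1 + \tfrac{a}{2(t-1)} \right)$, so that $G_t / W_t$ is a martingale, where $W_t := \prod_{s=\tau(v_i)}^{t-1} \left( 1 + \tfrac{a}{2(s-1)} \right)$. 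Since $D_{\tau(v_i)} = 1$, this yields the exact identity
\[
\E\left[ G_n \right] = a!\cdot \frac{\Gamma\!\left( n-1+\tfrac{a}{2} \right)\,\Gamma\!\left( \tau(v_i)-1 \right)}{\Gamma\!\left( n-1 \right)\,\Gamma\!\left( \tau(v_i)-1+\tfrac{a}{2} \right)}.
\]
Combining $d_n(v_i)^{a} \le G_n$ (valid since $a$ is a positive integer) with the elementary estimates $\Gamma(n-1+\tfrac a2)/\Gamma(n-1) \le (2n)^{a/2}$ (for $a \le n$), $\Gamma(\tau(v_i)-1)/\Gamma(\tau(v_i)-1+\tfrac a2) \le 1/\Gamma(1+\tfrac a2)$, and the Stirling bound $a!/\Gamma(1+\tfrac a2) \le (2a)^{a/2}$ then produces the claimed inequality. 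The essential point — and the reason one gets Gaussian‑type moment growth $a^{a/2}$ rather than the $a^{a}$ coming from cruder recursions — is exactly the $1/\Gamma(1+\tfrac a2)$ gain arising from the initial condition $D_{\tau(v_i)}=1$.

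For UA trees the degree increments are independent, which makes the bound both easier and stronger: $d_n(v_i) = 1 + \sum_{t=\tau(v_i)}^{n-1} \xi_t$ with $\xi_t \sim \Bin(1,1/t)$ independent, so $\sum_t \E[\xi_t] \le \log n$, and a standard Chernoff estimate for sums of independent indicators already gives $\p\left( d_n(v_i) \ge \sqrt{n}\,\log^{2} n \right) \le \exp\left( -c' \sqrt{n}\,\log^{3} n \right)$, far beyond what is needed; alternatively one dominates $\sum_t \xi_t$ in the convex order by a Poisson variable of mean $\le \log n$ to obtain $\E[d_n(v_i)^{a}] \le (C a \log n)^{a} \le (C_0\, a\, n)^{a/2}$ throughout the relevant range $a \le \log^{3} n$, so that the unified argument of the first paragraph applies verbatim. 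Aside from the choice of the moment order, the one point that genuinely requires care is the uniformity in $i$: in the PA case this is where one uses that $\Gamma(\tau(v_i)-1)/\Gamma(\tau(v_i)-1+\tfrac a2)$ is largest when $\tau(v_i)=2$, and in the UA case it is automatic since $\sum_{t \ge \tau(v_i)} 1/t$ only shrinks as $\tau(v_i)$ grows.
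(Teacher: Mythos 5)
Your proof is correct and takes a genuinely different route from the paper's. The paper reduces everything to the degree of $v_1$ in a PA tree via two stochastic-domination observations (degree of $v_1$ dominates degree of $v_i$; PA degree of $v_1$ dominates UA degree of $v_1$), then defines the normalized degree $M_n := d_n(v_1)/\sqrt{n-2}$, verifies it is a supermartingale with bounded increments, and applies the maximal Azuma inequality for supermartingales with $\lambda = \log^2 n$. You instead bound the $a$-th moment of $d_n(v_i)$ directly (via the exact rising-factorial martingale $\Gamma(D_t+a)/\Gamma(D_t)$ in PA, and via independence of the degree increments and convex-order domination by a Poisson in UA) and then apply Markov's inequality with $a \asymp \log^3 n$.

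Both approaches are sound. The Azuma route is shorter and gets the sub-Gaussian tail in one stroke, but it leans on stochastic-domination reductions that the paper states without proof and that are not entirely obvious (especially PA-degree-of-$v_1$ dominating UA-degree-of-$v_1$). Your route is more explicit, handles PA and UA separately without any cross-model domination, and makes visible that UA actually enjoys a far stronger tail ($\exp(-c'\sqrt{n}\log^3 n)$) than the lemma requires. The moment computation in the PA case is exact and the uniformity in $i$ falls out cleanly from the log-convexity of $\Gamma$. One small quibble: in your disposal of finitely many small $n$, the claim that the probability is either $0$ or strictly below $1$ fails at $n=2$ (there $d_2(v_i)=1 \geq \sqrt{2}\log^2 2 \approx 0.68$ deterministically, so the probability equals $1$), meaning the stated inequality cannot hold for any $c>0$ at $n=2$. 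This is actually a defect in the lemma as stated rather than in your argument---the paper's own proof also only establishes the bound for $n$ large enough---so it would be more accurate to say the bound holds for all $n \geq n_0$ with $c$ depending on $n_0$, which is all that is needed downstream.
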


\begin{proof} 
The vertex $v_{3}$ attaches to either $v_{1}$ or $v_{2}$; without loss of generality, assume that it attaches to $v_{1}$, that is, $v_{1}$ has degree $2$ in $T_{3}$. 
For both PA and UA trees, $d_{n}(v_{1})$ stochastically dominates $d_{n} \left( v_{i} \right)$ for $1 < i \leq n$, so it suffices to prove the claim for $v_{1}$. 
Furthermore, the random variable $d_{n}(v_{1})$ in a PA tree stochastically dominates the random variable $d_{n}(v_{1})$ in a UA tree, hence it suffices to prove the claim for PA trees. 

For $n \geq 3$ let $M_{n} := d_{n}(v_{1}) / \sqrt{n-2}$. Observe that 
\[
\E \left[ d_{n+1} \left( v_{1} \right) \, \middle| \, d_{n} \left( v_{1} \right) \right] 
= 
\left( 1 + \frac{1}{2n-2} \right) d_{n} \left( v_{1} \right).
\]
Since $\left( 1 + 1 / (2n-2) \right) / \sqrt{n-1} \leq 1 / \sqrt{n-2}$ for every $n\geq 3$, 
it follows that $\left\{ M_{n} \right\}_{n \geq 3}$ is a supermartingale. 
Also, 
$\left| M_{n} - M_{n-1} \right| \leq 1 / \sqrt{n-1}$. 
Thus by Azuma's inequality for supermartingales, noting that $M_{3} = 2$, we have  for every $\lambda > 0$ that 
\[
\p \left( \frac{d_{n} \left( v_{1} \right)}{\sqrt{n-2}}  - 2 \geq \lambda \right) 
\leq 
\exp \left( - \frac{\lambda^{2}}{2 \sum_{i=4}^{n} 1 / (i-1)} \right) 
\leq 
\exp \left( - \frac{\lambda^{2}}{2 \log n} \right).
\]
Plugging in 
$\lambda = \log^{2} \left( n \right)$ yields the desired claim. 
\end{proof}

We are now ready to prove Lemma~\ref{lem:nice_event}. 

\begin{proof}[Proof of Lemma~\ref{lem:nice_event}]
We divide the proof into six steps. In the following we informally call a vertex an ``early'' vertex if its timestamp is at most $\log \tcorr$. 

\textbf{Step 1:} \emph{The centroid is an early vertex.} 

For a fixed $i \geq 1$, 
let $\cA_{1} \left( i \right)$ denote the event that $v_{i}$ never becomes a centroid during the whole process; that is, the event that $v_{i}$ is not a centroid in $T_{s}$ for any $s \geq i$. Define 
\[
\cA_{1} := \bigcap_{i > \log \tcorr} \cA_{1} \left( i \right). 
\]
An immediate consequence of Lemma~\ref{lem:persistent_centroid} is that 
$\p \left( \cA_{1} \left( i \right)^{c} \right) \leq \exp \left( - i / 3 \right)$ for all $i$ large enough. 
So by a union bound we have, for all $\tcorr$ large enough that 
\[
\p \left( \cA_{1}^{c} \right) 
\leq \sum_{i > \log \tcorr} \p \left( \cA_{1} \left( i \right)^{c} \right) 
\leq \sum_{i > \log \tcorr} e^{-i/3} 
\leq \frac{4}{\tcorr^{1/3}}. 
\]

\textbf{Step 2:} \emph{Early subtrees are large in $T_{\tcorr}$.} 

This is an important intermediate step towards the overarching goal of characterizing the centroid. 
Specifically, 
the consequence of early subtree sizes being large is that then many of the random variables we will consider in future steps will be ``stable'' in timesteps $t \geq \tcorr$. 

For distinct positive integers $i, j \leq \log \tcorr$, we will show that subtrees of the form 
$\left( T_{\tcorr}, v_{i} \right)_{v_{j} \downarrow}$ are large. 
Formally, for distinct positive integers $i, j \leq \log \tcorr$, define the event 
\[
\cE_{2} \left( i, j \right) := \left\{ 
\left| \left( T_{\tcorr}, v_{i} \right)_{v_{j} \downarrow} \right| 
\geq \frac{\tcorr}{\log^{7} \left( \tcorr \right)} 
\right\}.
\]
We proceed by bounding the probability of the complement of 
$\cE_{2} \left( i, j \right)$, using arguments similar to those found in the proof of Lemma~\ref{lem:B}. Since the details are repetitive, we only give the final bounds and leave the details to the reader. 

%

Assume in the following that 
$1 \leq i < j \leq \log \tcorr$. 
We start with UA trees. 
Let $\varphi_{j} \sim \Beta \left( 1, j - 1 \right)$. 
Then, by combining P\'olya urn and martingale arguments as in the proof of Lemma~\ref{lem:B}, 
we have for every $z \in [0,1]$ that 
\[
\max \left\{ \p \left( \frac{1}{\tcorr} \left| \left( T_{\tcorr}, v_{i} \right)_{v_{j} \downarrow} \right| \leq z \right), \p \left( \frac{1}{\tcorr} \left| \left( T_{\tcorr}, v_{j} \right)_{v_{i} \downarrow} \right| \leq z \right) \right\} 
\leq 
2 \p \left( \varphi_{j} \leq 2 z \right).
\]
For every $z \in [0,1]$ we have that 
\[
\p \left( \varphi_{j} \leq z \right) = (j-1) \int_{0}^{z} \left( 1 - x \right)^{j-2} dx 
\leq \left( j - 1 \right) z. 
\]
Combining the previous two displays and using the fact that 
$j \leq \log \tcorr$, we have that 
\[
\max \left\{ \p \left( \cE_{2} \left( i, j \right)^{c} \right), \p \left( \cE_{2} \left( j, i \right)^{c} \right) \right\} 
\leq \frac{4j}{\log^{7} \left( \tcorr \right)} 
\leq \frac{4}{\log^{6} \left( \tcorr \right)}. 
\]

Turning now to PA trees, let $\varphi_{j}' \sim \Beta \left( 1/2, j - 3/2 \right)$. 
Then, again by combining P\'olya urn and martingale arguments as in the proof of Lemma~\ref{lem:B}, 
we have for every $z \in [0,1]$ that 
\begin{equation}\label{eq:step2_PA}
\max \left\{ \p \left( \frac{1}{\tcorr} \left| \left( T_{\tcorr}, v_{i} \right)_{v_{j} \downarrow} \right| \leq z \right), \p \left( \frac{1}{\tcorr} \left| \left( T_{\tcorr}, v_{j} \right)_{v_{i} \downarrow} \right| \leq z \right) \right\} 
\leq 
2 \p \left( \varphi_{j}' \leq 4 z \right).
\end{equation}
We have that 
\[
\p \left( \varphi_{j}' \leq z \right) 
= \frac{1}{B\left(\frac{1}{2}, j - \frac{3}{2} \right)} \int_{0}^{z} x^{-1/2} \left( 1 - x \right)^{j - 5/2} dx 
\leq \frac{2\sqrt{2}}{B\left(\frac{1}{2}, j - \frac{3}{2} \right)} \sqrt{z}, 
\]
where the inequality holds for every $z \in \left( 0, 1/2 \right)$. 
From~\eqref{eq:beta} and the symmetry of the beta function we have that 
\begin{equation}\label{eq:beta_function_lb}
B \left( \frac{1}{2}, j - \frac{3}{2} \right) 
= \pi \frac{j-1}{j-\frac{3}{2}} 
\binom{2j-2}{j-1} 
4^{-j+1} 
\geq 
\frac{1}{\sqrt{j-1}},
\end{equation}
where the inequality follows by using the bound $\binom{2n}{n} \geq 4^{n} / \sqrt{4n}$ which holds for all $n \geq 1$. 
Combining the two previous displays 
we have obtained that 
$\p \left( \varphi_{j}' \leq z \right) \leq 2 \sqrt{2 j z}$ 
for all $z \in \left( 0 , 1 / 2 \right)$. 
Plugging this back into~\eqref{eq:step2_PA} and using the fact that $j \leq \log \tcorr$, we have, for all $\tcorr$ large enough, that 
\[
\max \left\{ \p \left( \cE_{2} \left( i, j \right)^{c} \right), \p \left( \cE_{2} \left( j, i \right)^{c} \right) \right\} 
\leq \frac{8\sqrt{2} \sqrt{j}}{\log^{7/2} \left( \tcorr \right)} 
\leq \frac{12}{\log^{3} \left( \tcorr \right)}
\]

Altogether we have shown in this step that for all distinct positive integers $i, j \leq \log \tcorr$, and for both PA and UA trees, we have, for all $\tcorr$ large enough, that 
\[
\p \left( \cE_{2} \left( i, j \right)^{c} \right) 
\leq \frac{12}{\log^{3} \left( \tcorr \right)}. 
\]

\textbf{Step 3:} \emph{The anti-centrality rankings for the early vertices are stable.} 

Using Step~2, we will now show that the relative anti-centrality of any pair of early vertices is ``stable'' (with probability close to $1$); that is, it does not change after a certain time. 
More specifically, we will show, for distinct positive integers $i, j \leq \log \tcorr$, 
that 
if $\Psi_{\tcorr} \left( v_{i} \right) > \Psi_{\tcorr} \left( v_{j} \right)$, 
then $\Psi_{t} \left( v_{i} \right) > \Psi_{t} \left( v_{j} \right)$ for every $t \geq \tcorr$, 
with probability close to $1$ (and similarly if the inequality goes the other way). 
We thus define the events 
\[
\cA_{3} \left( i, j \right) := \left\{ \forall\, t \geq \tcorr : \left( \Psi_{\tcorr} \left( v_{i} \right) - \Psi_{\tcorr} \left( v_{j} \right) \right) \left( \Psi_{t} \left( v_{i} \right) - \Psi_{t} \left( v_{j} \right) \right)  >  0 \right\}
\]
for distinct positive integers $i, j \leq \log \tcorr$, 
and also 
\[
\cA_{3} := \bigcap_{\substack{1 \leq i, j \leq \log \tcorr \\ i \neq j}} \cA_{3} \left( i, j \right).
\]

By Lemma~\ref{lem:from_centralities_to_subtrees}, if we wish to compare 
$\Psi_{t} \left( v_{i} \right)$ and $\Psi_{t} \left( v_{j} \right)$, 
it suffices to compare the sizes of the subtrees 
$\left( T_{t}, v_{i} \right)_{v_{j} \downarrow}$ and $\left( T_{t}, v_{j} \right)_{v_{i} \downarrow}$. This motivates defining the event 
\[
\cE_{3} \left( i, j \right) := \left\{ \forall\, t \geq \tcorr : \left| \frac{\left| \left( T_{t}, v_{i} \right)_{v_{j} \downarrow} \right|}{\left| \left( T_{t}, v_{i} \right)_{v_{j} \downarrow} \right| + \left| \left( T_{t}, v_{j} \right)_{v_{i} \downarrow} \right|} - \frac{1}{2} \right| > \frac{1}{\log^{3} \left( \tcorr \right)} \right\} \cap \cE_{2} \left( i, j \right) \cap \cE_{2} \left( j, i \right) 
\]
for distinct positive integers $i,j \leq \log \tcorr$. 
We claim that, for all $\tcorr$ large enough, if $\cE_{3} \left( i, j \right)$ holds, then $\cA_{3} \left( i, j \right)$ must also hold. 
To see this, first note that on $\cE_{2} \left( i, j \right) \cap \cE_{2} \left( j, i \right)$ we have that 
\begin{equation}\label{eq:E2_LB}
\left| \left( T_{t}, v_{i} \right)_{v_{j} \downarrow} \right| + \left| \left( T_{t}, v_{j} \right)_{v_{i} \downarrow} \right| 
\geq 
\left| \left( T_{\tcorr}, v_{i} \right)_{v_{j} \downarrow} \right| + \left| \left( T_{\tcorr}, v_{j} \right)_{v_{i} \downarrow} \right| 
\geq \frac{\tcorr}{\log^{7} \left( \tcorr \right)}.
\end{equation}
Since the quantity 
$\left| \left( T_{t}, v_{i} \right)_{v_{j} \downarrow} \right|$ can change by at most $1$ at a time, the display above implies that the ratio 
\begin{equation}\label{eq:subtree_sizes_ratio}
\frac{\left| \left( T_{t}, v_{i} \right)_{v_{j} \downarrow} \right|}{\left| \left( T_{t}, v_{i} \right)_{v_{j} \downarrow} \right| + \left| \left( T_{t}, v_{j} \right)_{v_{i} \downarrow} \right|}
\end{equation}
can only change by at most $\log^{7} \left( \tcorr \right) / \tcorr$ 
at each time step. Since this is smaller than $1 / \log^{3} \left( \tcorr \right)$ for all $\tcorr$ large enough, 
the event $\cE_{3} \left( i, j \right)$ thus implies, for all $\tcorr$ large enough, that the ratio in~\eqref{eq:subtree_sizes_ratio} is either strictly greater than $1/2$ for all $t \geq \tcorr$ or strictly smaller than $1/2$ for all $t \geq \tcorr$. 
In light of Lemma~\ref{lem:from_centralities_to_subtrees}, this implies that $\cA_{3} \left(i, j \right)$ holds for all $\tcorr$ large enough. 

In the remainder of this step we thus focus on bounding the probability of $\cE_{3} \left( i, j \right)$. 
Since $\cE_{3}(i,j) = \cE_{3}(j,i)$, we may, and thus will, assume in the following that $1 \leq i < j \leq \log \tcorr$. 
To abbreviate notation, 
we introduce $J := \left| \left( T_{j}, v_{j} \right)_{v_{i} \downarrow} \right|$, 
and note that $1 \leq J \leq j-1$. 
We first give the proof for UA trees and subsequently explain what changes for PA trees. 

Conditioned on $T_{j}$, the pair 
\[
\left( \left| \left( T_{t}, v_{i} \right)_{v_{j} \downarrow} \right|, \left| \left( T_{t}, v_{j} \right)_{v_{i} \downarrow} \right| \right),
\]
when viewed at times when one of the coordinates increases, 
evolves as a classical P\'olya urn started from $\left( 1, J \right)$. 
Therefore, conditioned on $T_{j}$, 
the limit 
\begin{equation}\label{eq:phi_ij}
\varphi_{i,j} := \lim_{t\to\infty} \frac{\left| \left( T_{t}, v_{i} \right)_{v_{j} \downarrow} \right|}{\left| \left( T_{t}, v_{i} \right)_{v_{j} \downarrow} \right| + \left| \left( T_{t}, v_{j} \right)_{v_{i} \downarrow} \right|}
\end{equation}
exists almost surely, 
and moreover $\varphi_{i,j} \sim \Beta \left( 1, J \right)$. 
Since this holds for every tree $T_{j}$ on $j$ vertices, 
the limiting random variable $\varphi_{i,j}$ exists almost surely unconditionally (and its distribution is a mixture of beta distributions). 
Plugging in the density of the $\Beta \left( 1, J \right)$ distribution we have, for all $\tcorr$ large enough, that 
\[
\p \left( \left| \varphi_{i,j} - \frac{1}{2} \right| \leq \frac{2}{\log^{3} \left( \tcorr \right)} \, \middle| \, T_{j} \right)
= J \int_{\frac{1}{2} - 2 / \log^{3} \left( \tcorr \right)}^{\frac{1}{2} + 2 / \log^{3} \left( \tcorr \right)} \left( 1 - x \right)^{J-1} dx 
\leq J \left( \frac{2}{3} \right)^{J-1} \frac{4}{\log^{3} \left( \tcorr \right)} 
\leq \frac{6}{\log^{3} \left( \tcorr \right)},
\]
where we used that $J \left( 2 / 3 \right)^{J-1} \leq 4/3$ for every positive integer $J$. 
Taking an expectation over $T_{j}$ we obtain that 
\begin{equation}\label{eq:close_to_half_prob}
\p \left( \left| \varphi_{i,j} - \frac{1}{2} \right| \leq \frac{2}{\log^{3} \left( \tcorr \right)} \right) 
\leq \frac{6}{\log^{3} \left( \tcorr \right)}
\end{equation}
for all $\tcorr$ large enough. 
We can now bound the probability of $\cE_{3} \left( i, j \right)^{c}$: 
\[
\p \left( \cE_{3} \left( i, j \right)^{c} \right) 
\leq 
\p \left( \left| \varphi_{i,j} - \frac{1}{2} \right| \leq \frac{2}{\log^{3} \left( \tcorr \right)} \right) 
+ 
\p \left( \cE_{3} \left( i, j \right)^{c} \bigcap \left\{ \left| \varphi_{i,j} - \frac{1}{2} \right| > \frac{2}{\log^{3} \left( \tcorr \right)} \right\} \right).
\]
By~\eqref{eq:close_to_half_prob} the first term above is at most $6/\log^{3} \left( \tcorr \right)$ for all $\tcorr$ large enough, so what remains is to bound the second term. 
To do this, we introduce the event 
\[
\cE' := \left\{ \exists\, t \geq \tcorr :  \left| \frac{\left| \left( T_{t}, v_{i} \right)_{v_{j} \downarrow} \right|}{\left| \left( T_{t}, v_{i} \right)_{v_{j} \downarrow} \right| + \left| \left( T_{t}, v_{j} \right)_{v_{i} \downarrow} \right|} - \varphi_{i,j} \right| \geq \frac{1}{\log^{3} \left( \tcorr \right)} \right\}.
\]
By the triangle inequality and a union bound we have that 
\[
\p \left( \cE_{3} \left( i, j \right)^{c} \bigcap \left\{ \left| \varphi_{i,j} - \frac{1}{2} \right| > \frac{2}{\log^{3} \left( \tcorr \right)} \right\} \right) 
\leq 
\p \left( \cE_{2} \left( i, j \right)^{c} \right) + \p \left( \cE_{2} \left( j, i \right)^{c} \right) 
+ \p \left( \cE' \cap \cE_{2} \left( i, j \right) \cap \cE_{2} \left( i, j \right) \right).
\]
The first two terms in the display above are bounded above by $C/\log^{3} \left( \tcorr \right)$ for some finite $C$, by Step 2. It thus remains to bound the third term. 
To do this, we condition on the tree $T_{\tcorr}$. 
By the tower rule, noting that $\cE_{2} \left( i, j \right)$ and $\cE_{2} \left( i, j \right)$ are measurable with respect to $T_{\tcorr}$, we have that 
\begin{equation}\label{eq:E'_tower}
\p \left( \cE' \cap \cE_{2} \left( i, j \right) \cap \cE_{2} \left( i, j \right) \right) 
= 
\E \left[ \p \left( \cE' \, \middle| \, T_{\tcorr} \right) \mathbf{1}_{\cE_{2} \left( i, j \right) \cap \cE_{2} \left( j, i \right)} \right]. 
\end{equation}
Now if $\cE'$ holds then there exists $t \geq \tcorr$ such that 
\[
\left| \frac{\left| \left( T_{t}, v_{i} \right)_{v_{j} \downarrow} \right|}{\left| \left( T_{t}, v_{i} \right)_{v_{j} \downarrow} \right| + \left| \left( T_{t}, v_{j} \right)_{v_{i} \downarrow} \right|} - \frac{\left| \left( T_{\tcorr}, v_{i} \right)_{v_{j} \downarrow} \right|}{\left| \left( T_{\tcorr}, v_{i} \right)_{v_{j} \downarrow} \right| + \left| \left( T_{\tcorr}, v_{j} \right)_{v_{i} \downarrow} \right|} \right| \geq \frac{1}{2\log^{3} \left( \tcorr \right)}. 
\]
Therefore, by Lemma~\ref{lem:Polya_concentration}, we have that 
\[
\p \left( \cE' \, \middle| \, T_{\tcorr} \right) 
\leq 
2 \exp \left(  - \frac{\left| \left( T_{\tcorr}, v_{i} \right)_{v_{j} \downarrow} \right| + \left| \left( T_{\tcorr}, v_{j} \right)_{v_{i} \downarrow} \right|}{8 \log^{6} \left( \tcorr \right)}  \right).
\]
By~\eqref{eq:E2_LB} this implies that 
\[
\p \left( \cE' \, \middle| \, T_{\tcorr} \right) \mathbf{1}_{\cE_{2} \left( i, j \right) \cap \cE_{2} \left( j, i \right)} 
\leq 
2 \exp \left( - \tfrac{1}{8} \tcorr \log^{-13} \left( \tcorr \right) \right)
\]
and so by~\eqref{eq:E'_tower} we have that 
\[
\p \left( \cE' \cap \cE_{2} \left( i, j \right) \cap \cE_{2} \left( i, j \right) \right) 
\leq 
2 \exp \left( - \tfrac{1}{8} \tcorr \log^{-13} \left( \tcorr \right) \right).
\]
Putting everything together we have thus shown for UA trees that 
\[
\p \left( \cE_{3} \left( i, j \right)^{c} \right) 
\leq 
\frac{C}{\log^{3} \left( \tcorr \right)}
\]
for some finite constant $C$ and all $\tcorr \geq 2$. 

The proof for PA trees is similar, so we only highlight the minor changes. First, conditioned on~$T_{j}$, the pair 
\[
\left( 2 \left| \left( T_{t}, v_{i} \right)_{v_{j} \downarrow} \right| - 1, 2 \left| \left( T_{t}, v_{j} \right)_{v_{i} \downarrow} \right| - 1 \right),
\]
when viewed at times when one of the coordinates increases, 
evolves as a P\'olya urn 
with replacement matrix 
$\left(\begin{smallmatrix} 2 & 0 \\ 0 & 2 \end{smallmatrix}\right)$, 
started from $\left( 1, 2J - 1 \right)$. 
This implies that, conditioned on $T_{j}$, we have that $\varphi_{i,j} \sim \Beta \left( 1/2, J - 1/2 \right)$. 
The probability estimate with the beta distribution follows similarly, 
resulting in the inequality in~\eqref{eq:close_to_half_prob}, with the constant $6$ replaced with a larger finite constant. 
The rest of proof is unchanged, except when Lemma~\ref{lem:Polya_concentration} is applied, then the constant in the exponent changes.

We have thus shown, for both PA and UA trees, that 
\[
\p \left( \cA_{3}^{c} \right) 
\leq 
\sum_{\substack{1 \leq i, j \leq \log \tcorr \\ i \neq j}} \p \left( \cA_{3} \left( i,j \right)^{c} \right) 
\leq 
\sum_{\substack{1 \leq i, j \leq \log \tcorr \\ i \neq j}} \p \left( \cE_{3} \left( i,j \right)^{c} \right)
\leq 
\sum_{\substack{1 \leq i, j \leq \log \tcorr \\ i \neq j}} \frac{C}{\log^{3} \left( \tcorr \right)} 
\leq \frac{C}{\log \tcorr} 
\]
for some finite constant $C$ and all $\tcorr \geq 2$. 

\emph{Brief recap.} We briefly pause to recap what we have proved so far. 
Observe that on the event $\cA_{1} \cap \cA_{3}$ we have that property~\ref{prop:nice1} of Definition~\ref{def:nice_event} holds. In Steps~1 and~3 above we proved that 
$\p \left( \left( \cA_{1} \cap \cA_{3} \right)^{c} \right) 
\leq
\p \left( \cA_{1}^{c} \right) + \p \left( \cA_{3}^{c} \right) 
\leq 
C / \log \tcorr$ 
for some finite constant $C$ and all $\tcorr \geq 2$. 
What remains is to deal with properties~\ref{prop:nice2} and~\ref{prop:nice3} of Definition~\ref{def:nice_event}.

\textbf{Step 4:} \emph{The root of the largest pendent subtree of the centroid is an early vertex.} 

Recall the definition of $\wt{v}_{i,t} \left( 1 \right)$ from Section~\ref{sec:coarse_preliminaries}: 
$\wt{v}_{i,t} \left( 1 \right)$ is the neighbor of $v_{i}$ that is the root of the largest subtree of $\left( T_{t}, v_{i} \right)$ (assuming that there is a unique largest subtree; if the largest subtree is not unique, let $\wt{v}_{i,t} \left( 1 \right)$ denote a neighbor of $v_{i}$ that is the root of a largest subtree of $\left( T_{t}, v_{i} \right)$). 
For $i \leq \log \tcorr$, define the event 
\[
\cA_{4} \left( i \right) := \left\{ 
\forall\, t \geq \tcorr  
\text{ the timestamp of } \wt{v}_{i,t} \left( 1 \right) \text{ is at most } \log \tcorr 
\right\}.
\]
Since $\wt{v}_{i,t} \left( 1 \right)$ may not be uniquely defined, the definition of $\cA_{4} \left( i \right)$ needs some clarification: 
in the definition of $\cA_{4} \left( i \right)$ it is understood that, 
if $\wt{v}_{i,t} \left( 1 \right)$ is not uniquely defined, 
then \emph{every} vertex that can be chosen as $\wt{v}_{i,t} \left( 1 \right)$ has timestamp at most $\log \tcorr$. 
In other words, $\cA_{4} \left( i \right)$ is the event that no neighbor of $v_{i}$ with timestamp greater than $\log \tcorr$ is the root of a largest subtree of $\left( T_{t}, v_{i} \right)$, for all $t \geq \tcorr$. 
Define also 
\[
\cA_{4} := \bigcap_{1 \leq i \leq \log \tcorr} \cA_{4} \left( i \right).
\]
Our goal in Step~4 is to bound $\p \left( \cA_{4}^{c} \right)$. 

To abbreviate notation, in the following we let $s := \log \tcorr$ and fix $i \leq s$. 
For any $t \geq s$ we define two subtrees. 
First, let $T_{t}' \left( i \right) := \left( T_{t}, v_{i} \right)_{\wt{v}_{i,s}(1) \downarrow}$; 
here if $\wt{v}_{i,s}(1)$ is not uniquely defined, then we fix a particular choice for the remainder of the argument. 
We also define $T_{t}'' \left( i \right)$ to be the subtree of $T_{t}$ rooted at $v_{i}$ that contains all subtrees of $\left( T_{t}, v_{i} \right)$ formed after time $s$. 
In particular, we have that 
$\left| T_{s}' \left( i \right) \right| = \Psi_{s} \left( v_{i} \right)$ 
and 
$\left| T_{s}'' \left( i \right) \right| = 1$. 
Now define the event 
\[
\cE_{4} \left( i \right) := \left\{ 
\forall\, t \geq s : \frac{\left| T_{t}'' \left( i \right) \right|}{\left| T_{t}'' \left( i \right) \right| + \left| T_{t}' \left( i \right) \right|} < \frac{1}{2}
\right\}.
\]
If $\cE_{4} \left( i \right)$ holds, then 
$\left| T_{t}'' \left( i \right) \right| < \left| T_{t}' \left( i \right) \right|$ 
for all $t \geq \tcorr$, 
which implies that no subtree of $v_{i}$ born after time $s$ ever becomes as large as the subtree rooted at $\wt{v}_{i,s}(1)$. 
Therefore if $\cE_{4} \left( i \right)$ holds, then $\cA_{4} \left( i \right)$ must also hold. 
Thus 
$\p \left( \cA_{4} \left( i \right)^{c} \right) \leq \p \left( \cE_{4} \left( i \right)^{c} \right)$, 
and in the following we bound this latter probability. 

Consider first the case of UA trees. 
Conditioned on $T_{s}$, the pair 
$
\left( \left| T_{t}'' \left( i \right) \right|, \left| T_{t}' \left( i \right) \right| \right), 
$
when viewed at times when one of the coordinates increases, evolves as a classical P\'olya urn started from 
$\left( 1, \Psi_{s} \left( v_{i} \right) \right)$. 
Therefore Lemma~\ref{lem:Polya_concentration} implies that 
\begin{equation}\label{eq:concentration_step4}
\p \left( \exists\, t \geq s : \frac{\left| T_{t}'' \left( i \right) \right|}{\left| T_{t}'' \left( i \right) \right| + \left| T_{t}' \left( i \right) \right|} \geq \frac{1}{1+\Psi_{s} \left( v_{i} \right)} + \lambda \, \middle| \, T_{s} \right) 
\leq 
\exp \left( - \frac{\lambda^{2}}{2} \Psi_{s} \left( v_{i} \right) \right) 
\end{equation}
for every $\lambda > 0$. 
For PA trees a similar argument shows that~\eqref{eq:concentration_step4} holds with a different constant in the exponent, and for all $\lambda \geq 2 / \Psi_{s} \left( v_{i} \right)$. 

Recalling that $d_{n}(v)$ denotes the degree of $v$ in $T_{n}$, define the event 
\[
\cE' \left( i \right) := \left\{ d_{s} \left( v_{i} \right) < \sqrt{s} \log^{2} \left( s \right) \right\}.
\]
By Lemma~\ref{lem:deg_tail_bound} we have, for both PA and UA trees, that 
\begin{equation}\label{eq:deg_bound_s}
\p \left( \cE' \left( i \right)^{c} \right) 
\leq 
\exp \left( - c \log^{3} \left( s \right) \right) 
= 
\exp \left( - c \left( \log \log \tcorr \right)^{3} \right) 
\end{equation}
for some positive constant $c$. 
On the event $\cE' \left( i \right)$ we have that 
\[
\Psi_{s} \left( v_{i} \right) 
= \left| \left( T_{s}, v_{i} \right)_{\wt{v}_{i,s} \left( 1 \right) \downarrow} \right| 
\geq \frac{s-1}{\sqrt{s} \log^{2} \left( s \right)} 
\geq \log^{1/3} \left( \tcorr \right),
\]
where the second inequality holds for all $\tcorr$ large enough. 
Here the first inequality follows from the pigeonhole principle: 
there are $s-1$ vertices in the rooted subtree $\left( T_{s}, v_{i} \right)$ apart from $v_{i}$, 
and there are at most $\sqrt{s} \log^{2} \left( s \right)$ subtrees, 
so at least one of them has at least 
$\left( s - 1 \right) / \left( \sqrt{s} \log^{2} \left( s \right) \right)$ 
vertices. 

Combining this argument with the inequality~\eqref{eq:concentration_step4}, we have, for all $\tcorr$ large enough, that 
\begin{equation}\label{eq:conc_bound_nice_set_step4}
\p \left( \cE_{4} \left( i \right)^{c} \, \middle| \, T_{s} \right) \mathbf{1}_{\cE' \left( i \right)} 
\leq 
\exp \left( - c \log^{1/3} \left( \tcorr \right) \right) 
\end{equation}
for some positive constant $c$, and both PA and UA trees. 
Putting together~\eqref{eq:deg_bound_s} and~\eqref{eq:conc_bound_nice_set_step4} 
we thus have that 
\begin{align*}
\p \left( \cE_{4} \left( i \right)^{c} \right) 
&= \E \left[ \p \left( \cE_{4} \left( i \right)^{c} \, \middle| \, T_{s} \right) \right] 
\leq \E \left[ \p \left( \cE_{4} \left( i \right)^{c} \, \middle| \, T_{s} \right) \mathbf{1}_{\cE' \left( i \right)} \right] 
+ \p \left( \cE' \left( i \right)^{c} \right) \\
&\leq 
\exp \left( - c \log^{1/3} \left( \tcorr \right) \right) 
+ \exp \left( - c \left( \log \log \tcorr \right)^{3} \right) 
\leq 2 \exp \left( - c \left( \log \log \tcorr \right)^{3} \right) 
\end{align*}
for some positive constant $c$ and all $\tcorr$ large enough. 
Finally, by a union bound we have that 
\[
\p \left( \cA_{4}^{c} \right) 
\leq \sum_{i=1}^{\log \tcorr} \p \left( \cA_{4} \left( i \right)^{c} \right) 
\leq \sum_{i=1}^{\log \tcorr} \p \left( \cE_{4} \left( i \right)^{c} \right) 
\leq 2 \log \left( \tcorr \right) \exp \left( - c \left( \log \log \tcorr \right)^{3} \right) 
\]
for some positive constant $c$ and all $\tcorr$ large enough. 
This is at most $1/ \log \tcorr$ for all $\tcorr$ large enough. 

\textbf{Step 5:} \emph{Early subtree rankings are stable.} 

For $i$ satisfying $1 \leq i \leq \log \tcorr$, 
let $\cA_{5} \left( i \right)$ denote the event that 
for every pair of neighbors $u_{1}$, $u_{2}$ of $v_{i}$ that are early vertices (that is, have timestamp at most $\log \tcorr$), 
we either have that 
$\left| \left( T_{t}, v_{i} \right)_{u_{1}\downarrow} \right| > \left| \left( T_{t}, v_{i} \right)_{u_{2}\downarrow} \right|$ 
for all $t \geq \tcorr$ 
or that 
$\left| \left( T_{t}, v_{i} \right)_{u_{1}\downarrow} \right| < \left| \left( T_{t}, v_{i} \right)_{u_{2}\downarrow} \right|$ 
for all $t \geq \tcorr$. 
In other words, 
the pairwise rankings of early subtrees of $v_{i}$ do not change after time $\tcorr$. 
Define also 
$\cA_{5} := \cap_{1 \leq i \leq \log \tcorr} \cA_{5} \left( i \right)$. 

Observe that, since $u_{1}$ and $u_{2}$ are neighbors of $v_{i}$, 
we have that 
$\left( T_{t}, v_{i} \right)_{u_{1}\downarrow} = \left( T_{t}, u_{2} \right)_{u_{1}\downarrow}$ 
and that 
$\left( T_{t}, v_{i} \right)_{u_{2}\downarrow} = \left( T_{t}, u_{1} \right)_{u_{2}\downarrow}$. 
Let $k,\ell \leq \log \tcorr$ be distinct positive integers and recall from Step~3 that, 
for all $\tcorr$ large enough,  
on the event $\cE_{3} \left( k, \ell \right)$ 
we either have that 
$\left| \left( T_{t}, v_{k} \right)_{v_{\ell}\downarrow} \right| > \left| \left( T_{t}, v_{\ell} \right)_{v_{k}\downarrow} \right|$ 
for all $t \geq \tcorr$ 
or that 
$\left| \left( T_{t}, v_{k} \right)_{v_{\ell}\downarrow} \right| < \left| \left( T_{t}, v_{\ell} \right)_{v_{k}\downarrow} \right|$ 
for all $t \geq \tcorr$. 
Putting the previous two sentences together we have that 
\[
\bigcap_{\substack{1 \leq i, j \leq \log \tcorr \\ i \neq j}} \cE_{3} \left( i, j \right) 
\subseteq 
\cA_{5} 
\]
for all $\tcorr$ large enough. Consequently, by Step~3 we have, for some finite constant $C$ and all $\tcorr$ large enough, that 
\[
\p \left( \cA_{5}^{c} \right) 
\leq 
\sum_{\substack{1 \leq i, j \leq \log \tcorr \\ i \neq j}} \p \left( \cE_{3} \left( i,j \right)^{c} \right)
\leq 
\sum_{\substack{1 \leq i, j \leq \log \tcorr \\ i \neq j}} \frac{C}{\log^{3} \left( \tcorr \right)} 
\leq \frac{C}{\log \tcorr}. 
\]

Finally, observe that on the event 
$\cA_{1} \cap \cA_{4} \cap \cA_{5}$ 
we have that property~\ref{prop:nice2} of Definition~\ref{def:nice_event} holds. 
Furthermore, we have shown that 
$\p \left( \left( \cA_{1} \cap \cA_{4} \cap \cA_{5} \right)^{c} \right) 
\leq \p \left( \cA_{1}^{c} \right) + \p \left( \cA_{4}^{c} \right) + \p \left( \cA_{5}^{c} \right) 
\leq C/ \log \tcorr$ 
for some finite constant $C$ and all $\tcorr \geq 2$.

\textbf{Step 6:} \emph{Concentration for early subtrees.} 

It remains to deal with property~\ref{prop:nice3} of Definition~\ref{def:nice_event}. So far we have shown that on the event 
$\cA_{1} \cap \cA_{3} \cap \cA_{4} \cap \cA_{5}$ 
we have that 
properties~\ref{prop:nice1} and~\ref{prop:nice2} 
of Definition~\ref{def:nice_event} hold, 
and moreover that $\theta \left( \tcorr \right)$ and $\wt{\theta}_{\tcorr} \left( 1 \right)$ are both early vertices. 
In light of this we define 
the events 
\[
\cA_{6} \left( i, j \right) := \left\{ 
\forall\, t \geq \tcorr : 
\left| \frac{1}{t} \left| \left( T_{t}, v_{i} \right)_{v_{j} \downarrow} \right| 
- \frac{1}{\tcorr} \left| \left( T_{\tcorr}, v_{i} \right)_{v_{j} \downarrow} \right| 
\right| 
\leq 
\frac{1}{\tcorr^{1/3}} \cdot \frac{1}{\tcorr} \left| \left( T_{\tcorr}, v_{i} \right)_{v_{j} \downarrow} \right|
\right\}
\]
for distinct positive integers $i, j \leq \log \tcorr$, and also 
\[
\cA_{6} := \bigcap_{\substack{1 \leq i, j \leq \log \tcorr \\ i \neq j}} \cA_{6} \left( i, j \right).
\]
Observe that on the event 
$\cA_{1} \cap \cA_{3} \cap \cA_{4} \cap \cA_{5} \cap \cA_{6}$ 
we have that property~\ref{prop:nice3} of Definition~\ref{def:nice_event} holds. 
Thus to conclude the proof what remains to be shown is that 
$\p \left( \cA_{6}^{c} \right) \leq C / \log \tcorr$ 
for some finite constant $C$ and all $\tcorr \geq 2$. 

Fix distinct positive integers $i,j \leq \log \tcorr$. By arguments similar to those in Step~3, in particular using Lemma~\ref{lem:Polya_concentration}, we have that 
\[
\p \left( \cA_{6} \left( i, j \right)^{c} \, \middle| \, T_{\tcorr} \right) 
\leq 
2 \exp \left( - c \tcorr \left( \frac{1}{\tcorr^{1/3}} \cdot \frac{1}{\tcorr} \left| \left( T_{\tcorr}, v_{i} \right)_{v_{j} \downarrow} \right| \right)^{2} \right) 
= 
2 \exp \left( - c \tcorr^{-5/3} \left| \left( T_{\tcorr}, v_{i} \right)_{v_{j} \downarrow} \right|^{2} \right)
\]
for some positive constant $c$ and all $\tcorr$ large enough. Recalling the definition of $\cE_{2} \left( i, j \right)$ we thus have that 
\[
\p \left( \cA_{6} \left( i, j \right)^{c} \, \middle| \, T_{\tcorr} \right) \mathbf{1}_{\cE_{2} \left( i, j \right)} 
\leq 
2 \exp \left( - c \tcorr^{1/3} \log^{-14} \left( \tcorr \right) \right)
\]
for all $\tcorr$ large enough. Using Step~2 we thus have that 
\[
\p \left( \cA_{6} \left( i, j \right)^{c} \right) 
\leq 
\E \left[ \p \left( \cA_{6} \left( i, j \right)^{c} \, \middle| \, T_{\tcorr} \right) \mathbf{1}_{\cE_{2} \left( i, j \right)} \right]
+ 
\p \left( \cE_{2} \left( i, j \right)^{c} \right) 
\leq 
2 \exp \left( - c \tcorr^{1/3} \log^{-14} \left( \tcorr \right) \right) 
+ \frac{12}{\log^{3} \left( \tcorr \right)}
\]
for all $\tcorr$ large enough. 
The conclusion follows by a union bound. 
\end{proof}

\section{Estimating $\tcorr$ with vanishing relative error as $\tcorr \to \infty$} \label{sec:estimation_large_tcorr} 

In this section we prove Theorem~\ref{thm:estimation}. To do this, we build on the ideas and the estimator introduced in Section~\ref{sec:estimation_tcorr_coarse}, which provided an initial, coarse estimate of $\tcorr$. 
The key additional idea compared to Section~\ref{sec:estimation_tcorr_coarse} is to average, over many subtrees, statistics similar to $Y_{n}$; see Figure~\ref{fig:finer_estimate} for an illustration. 
We start by defining precisely the estimator used to prove Theorem~\ref{thm:estimation}.  

For a tree $T_{n}$ on $n$ vertices, let $T_{n} \left( k \right)$ denote the $k$th largest subtree of the rooted tree $\left( T_{n}, \theta \left( n \right) \right)$ (with ties broken arbitrarily), 
with the root of this subtree denoted by $\wt{\theta}_{n} \left( k \right)$. 
In particular, with this notation we have that $\Psi_{n} \left( \theta \left( n \right) \right) = \left| T_{n} \left( 1 \right) \right|$. 
As before, for anything defined for a tree $T_{n}$, if we add a superscript $i$ to it (where $i \in \left\{ 1, 2 \right\}$), this means that it is the appropriate object in the tree~$T_{n}^{i}$. 
For $i \in \left\{ 1, 2 \right\}$ and $k \geq 1$, define the normalized subtree size 
\[
X_{n}^{i} \left( k \right) := \frac{1}{n} \left| T_{n}^{i} \left( k \right) \right|;
\]
see Figure~\ref{fig:finer_estimate} for an illustration. 
Now define 
\[
Y_{n} \left( k \right) := \frac{\left( X_{n}^{1} \left( k \right) - X_{n}^{2} \left( k \right) \right)^{2}}{2 X_{n}^{1} \left( k \right) \left( 1 - X_{n}^{1} \left( k \right) \right)}
\]
and note that $Y_{n} \left( 1 \right) \equiv Y_{n}$. 
For any $k \geq 1$ define 
\[
S_{n} \left( k \right) := \frac{1}{k} \sum_{\ell = 1}^{k} Y_{n} \left( \ell \right).
\]
For $k=1$ we have that $S_{n} \left( 1 \right) = Y_{n} \left( 1 \right) = Y_{n}$ 
and everything proved in Section~\ref{sec:estimation_tcorr_coarse} applies. 
For $k > 1$ (and $k$ not too large, to be made precise later), we still have 
that $S_{n} \left( k \right)$ is concentrated around~$1/\tcorr$. 
The improvement in $S_{n}(k)$ for large $k$, compared to $S_{n}(1)$, 
is that $S_{n} \left( k \right)$ has smaller variance than $S_{n} \left( 1 \right)$, by roughly a factor of order $k$. 

\begin{figure}[t]
\centering
\includegraphics[width=0.8\textwidth]{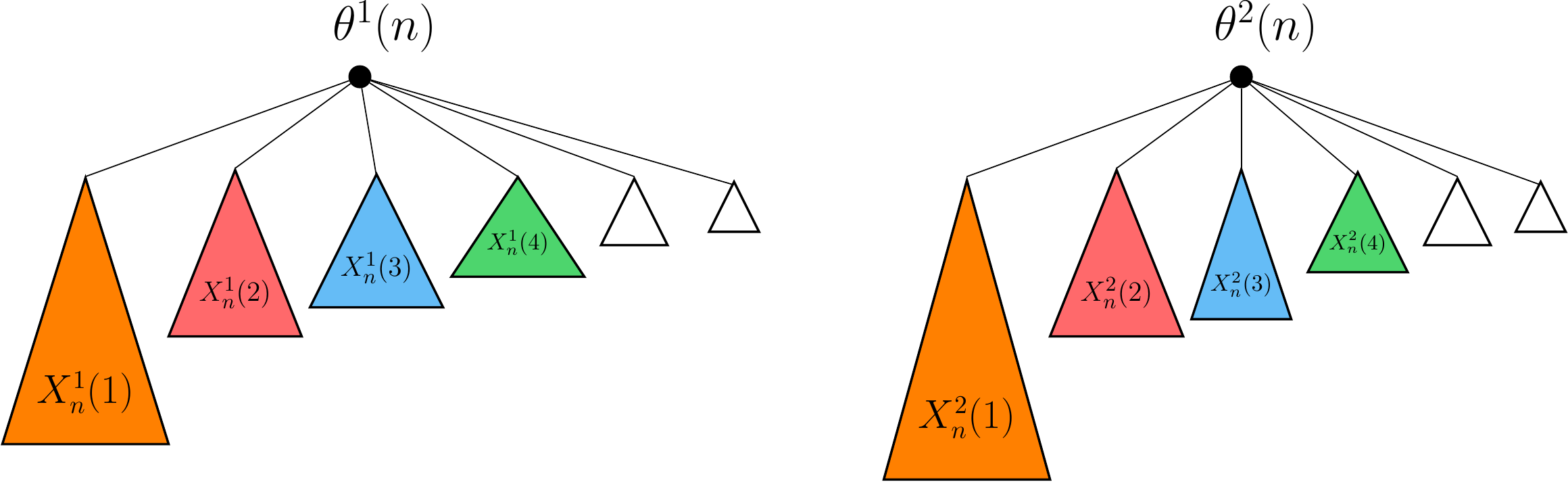}
\caption{The pendent subtrees of the centroids $\theta^{1}(n)$ and $\theta^{2}(n)$ in $T_{n}^{1}$ and $T_{n}^{2}$, respectively, are ordered in decreasing order. The estimator studied in Section~\ref{sec:estimation_large_tcorr} matches several of the largest pendent subtrees in the two trees, as indicated by the colors in the figure.}
\label{fig:finer_estimate}
\end{figure}

In order to obtain a significant improvement over $S_{n} \left( 1 \right)$, we aim to use $S_{n} \left( k \right)$ with a choice of~$k$ that diverges as $\tcorr \to \infty$. 
The catch is that $\tcorr$ is unknown---in fact, it is the quantity that we desire to estimate. This is where it is useful to have an initial, coarse estimate of $\tcorr$, 
which allows to choose an appropriate $k$. 
To this end, define 
\[
K_{n} := \left\lfloor - \tfrac{1}{400} \log Y_{n} \right\rfloor.
\]
Our estimator for $\tcorr$ is then 
\[
\wh{t}_{n} := \frac{1}{S_{n} \left( K_{n} \right)}.
\]
Theorem~\ref{thm:estimation} then follows immediately from the following result. 
\begin{theorem}\label{thm:estimation_inverse} 
Let $S$ be the unique tree on two vertices and let $\left( T_{n}^{1}, T_{n}^{2} \right) \sim \CPA \left( n, \tcorr, S \right)$. 
We have that 
\[
\lim_{\tcorr \to \infty} \liminf_{n \to \infty} 
\p \left( \left( 1 - \frac{\log \log \tcorr}{2 \sqrt{\log \tcorr}} \right) \frac{1}{\tcorr} 
\leq S_{n} \left( K_{n} \right) 
\leq \left( 1 + \frac{\log \log \tcorr}{\sqrt{\log \tcorr}} \right) \frac{1}{\tcorr} \right) 
= 1.
\]
The same result also holds when $\left( T_{n}^{1}, T_{n}^{2} \right) \sim \CUA \left( n, \tcorr, S \right)$. 
\end{theorem}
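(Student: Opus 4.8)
The plan is to follow the strategy of Section~\ref{sec:estimation_tcorr_coarse} --- condition on $T_{\tcorr}$, reduce to fixed subtrees whose normalized sizes evolve as P\'olya urns, and apply a second-moment bound --- but now for the averaged statistic $S_{n}(K)$. The first point is that $S_{n}(1)=Y_{n}$ is already controlled: by Theorem~\ref{thm:estimation_coarse_inverse}, with probability tending to $1$ as $\tcorr\to\infty$ one has $\tfrac{1}{\tcorr\log\tcorr}\le Y_{n}\le\tfrac{\log\tcorr}{\tcorr}$, hence $-\log Y_{n}\in[\log\tcorr-\log\log\tcorr,\ \log\tcorr+\log\log\tcorr]$, so that $K_{n}=\lfloor-\tfrac{1}{400}\log Y_{n}\rfloor$ lies in a window $\cK_{\tcorr}$ of only $O(\log\log\tcorr)$ consecutive integers, each of order $\log\tcorr$. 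On this event $K_{n}$ is a deterministically bounded integer, so it suffices to prove a concentration estimate for $S_{n}(K)$ that is uniform over $K\in\cK_{\tcorr}$ and then take a union bound over $\cK_{\tcorr}$ (a loss of only a factor $O(\log\log\tcorr)$). Fix such a $K$ throughout and set $K_{\max}:=\lceil\tfrac{1}{398}\log\tcorr\rceil\ge K$.

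The next step is to upgrade the events $\cA$ and $\cB$ of Section~\ref{sec:coarse_preliminaries} so that they control the top $K_{\max}$ pendent subtrees of the centroid simultaneously. I would take $\cA'$ to require that (i) the centroid is unique and persistent for $t\ge\tcorr$ (as in~\ref{prop:nice1}); (ii) for every $k\le K_{\max}$ the root $\wt{\theta}_{t}(k)$ of the $k$th largest subtree of $(T_{t},\theta(t))$ is uniquely determined and unchanged for all $t\ge\tcorr$ (extending~\ref{prop:nice2}); and (iii) for every $k\le K_{\max}$ and $t\ge\tcorr$, $\bigl|\tfrac{1}{t}|T_{t}(k)|-\tfrac{1}{\tcorr}|T_{\tcorr}(k)|\bigr|\le\tcorr^{-1/5}\cdot\tfrac{1}{\tcorr}|T_{\tcorr}(k)|$ (extending~\ref{prop:nice3}); and $\cB'$ to require that $|T_{\tcorr}(k)|\ge\tcorr^{7/10}$ and $|T_{\tcorr}(k)|-|T_{\tcorr}(k+1)|\ge\tcorr^{3/4}$ for all $k<K_{\max}$. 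I claim $\p((\cA')^{c})+\p((\cB')^{c})=O(1/\log^{c}\tcorr)$ for some $c>0$. The essential new input is Theorem~\ref{thm:anti-centrality_representation}: it identifies the normalized subtree sizes around a fixed (early) vertex with the stick-breaking variables $\psi_{\ell}=\varphi_{\ell}\prod_{i<\ell}(1-\varphi_{i})$, from which one reads off that the ordered subtree sizes of the centroid decay geometrically for UA trees (and only polynomially for PA trees). Combined with a union bound over the $O(\log\tcorr)$ candidate early vertices and over $k\le K_{\max}$, and with exponential tail bounds for sums of independent log-beta variables, this shows $\cB'$ holds with probability $1-O(1/\poly(\log\tcorr))$; the constant $\tfrac{1}{400}$ in the definition of $K_{n}$ is chosen precisely so that $|T_{\tcorr}(K_{\max})|$ stays comfortably above the threshold $\tcorr^{7/10}$ that the P\'olya-urn concentration with slack $\tcorr^{-1/5}$ demands (for PA the decay is only polynomial, and there is far more room). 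Given these size and separation bounds, property~(ii) of $\cA'$ follows from Lemmas~\ref{lem:Polya_concentration} and~\ref{lem:from_centralities_to_subtrees} exactly as in Step~5 of the proof of Lemma~\ref{lem:nice_event}, property~(iii) exactly as in Step~6, and property~(i) from Steps~1--4 verbatim. Finally, conditioning on $T_{\tcorr}$ and working on $\cA'\cap\cB'$, each $X_{n}^{i}(k)$ equals $Z_{n}^{i}(k):=\tfrac{1}{n}|(T_{n}^{i},\theta)_{\wt{\theta}_{\tcorr}(k)\downarrow}|$, and (for both PA and UA, exactly as in Section~\ref{sec:coarse_preliminaries}) the vector of old-subtree sizes of $\theta$ evolves as a classical P\'olya urn; hence $(Z_{n}^{1}(k))_{k\le K_{\max}}\to\mathbf Z^{1}$ and $(Z_{n}^{2}(k))_{k\le K_{\max}}\to\mathbf Z^{2}$ almost surely, where $\mathbf Z^{1}$ and $\mathbf Z^{2}$ are conditionally i.i.d.\ given $T_{\tcorr}$, each a Dirichlet-type vector whose $k$th coordinate has marginal $\Beta(|T_{\tcorr}(k)|,\tcorr-|T_{\tcorr}(k)|)$ for UA (and the $\pm\tfrac{1}{2}$-shifted version for PA).

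With the conditional law in hand I would estimate the first two moments of $S_{n}(K)$. Writing $p_{k}:=\E[Z^{i}(k)\mid T_{\tcorr}]$, a verbatim repetition of the computation in Lemma~\ref{lem:coarse_first_moment} (using property~(iii) of $\cA'$ for the denominators, bounded convergence, and the variance of the beta distribution) gives $\limsup_{n\to\infty}\E[Y_{n}(k)\mathbf{1}_{\cA'\cap\cB'}\mid T_{\tcorr}]\le\tfrac{1+O(\tcorr^{-1/5})}{\tcorr}$, with a matching lower bound $\tfrac{1-O(\tcorr^{-1/5})}{\tcorr}$ on $\cB'$; averaging over $k\le K$ yields $\E[S_{n}(K)\mathbf{1}_{\cA'\cap\cB'}\mid T_{\tcorr}]=\tfrac{1+O(\tcorr^{-1/5})}{\tcorr}$. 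For the variance, set $W_{k}:=Z^{1}(k)-Z^{2}(k)$; on $\cA'\cap\cB'$ one has $Y_{n}(k)\to W_{k}^{2}/(2Z^{1}(k)(1-Z^{1}(k)))$ with denominator bounded below by a constant times $p_{k}(1-p_{k})$, and $\Var(W_{k}\mid T_{\tcorr})=2\Var(Z^{i}(k)\mid T_{\tcorr})\asymp p_{k}(1-p_{k})/\tcorr$, so (using $\cB'$ to ensure both beta parameters are at least $\tcorr^{7/10}$, hence that $W_{k}$ has near-Gaussian fourth moment) each diagonal term satisfies $\limsup_{n\to\infty}\Var(Y_{n}(k)\mathbf{1}_{\cA'\cap\cB'}\mid T_{\tcorr})\asymp\tcorr^{-2}$, contributing $\asymp\tfrac{1}{K\tcorr^{2}}$ after the $1/K^{2}$ normalization. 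The off-diagonal terms are the delicate part but turn out to be negligible: since $|T_{\tcorr}(k)|\le\tcorr/k$ (the top $k$ subtrees sum to at most $\tcorr$) and $|T_{\tcorr}(k)|\le\tcorr/2$ (a centroid property), the conditional correlation $\rho_{k,\ell}$ of $W_{k}$ and $W_{\ell}$ satisfies $\rho_{k,\ell}^{2}\lesssim|T_{\tcorr}(k)|\,|T_{\tcorr}(\ell)|/\tcorr^{2}\lesssim 1/(k\ell)$, and since $(W_{k},W_{\ell})$ is (conditionally) approximately bivariate Gaussian, a routine fourth-moment estimate gives $\Cov(W_{k}^{2},W_{\ell}^{2}\mid T_{\tcorr})\lesssim\rho_{k,\ell}^{2}\,\Var(W_{k}\mid T_{\tcorr})\,\Var(W_{\ell}\mid T_{\tcorr})$, whence the off-diagonal contribution to $\Var(S_{n}(K)\mid T_{\tcorr})$ is $\lesssim K^{-2}\tcorr^{-2}\sum_{k\neq\ell}1/(k\ell)\lesssim K^{-2}\tcorr^{-2}(\log K)^{2}$, which is smaller than the diagonal term since $(\log K)^{2}/K\to0$. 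Some decay of $\rho_{k,\ell}$ is genuinely essential here, since otherwise the $K^{2}$ cross terms would swamp the variance reduction. Altogether $\Var(S_{n}(K)\mid T_{\tcorr})\mathbf{1}_{\cB'}\lesssim\tfrac{1}{K\tcorr^{2}}\asymp\tfrac{1}{\tcorr^{2}\log\tcorr}$.

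It remains to assemble the pieces. Conditioning on $T_{\tcorr}$ and applying Chebyshev's inequality on $\cA'\cap\cB'$, using the first- and second-moment estimates above and absorbing the $O(\tcorr^{-1/5}/\tcorr)$ bias into a factor-$2$ slack, gives for $\eps_{\tcorr}:=\tfrac{\log\log\tcorr}{\sqrt{\log\tcorr}}$, uniformly over $K\in\cK_{\tcorr}$,
\[
\limsup_{n\to\infty}\p\Bigl(\bigl|S_{n}(K)-\tfrac{1}{\tcorr}\bigr|>\tfrac{\eps_{\tcorr}}{\tcorr},\ \cA'\cap\cB'\Bigr)\ \lesssim\ \frac{\tcorr^{-2}(\log\tcorr)^{-1}}{(\eps_{\tcorr}/(2\tcorr))^{2}}\ \lesssim\ \frac{1}{\eps_{\tcorr}^{2}\log\tcorr}\ =\ \frac{1}{(\log\log\tcorr)^{2}}.
\]
Adding $\p((\cA')^{c})+\p((\cB')^{c})=O(1/\poly(\log\tcorr))$ together with the failure probability of the coarse estimate (which both bounds $K_{n}$ and places it in $\cK_{\tcorr}$), and then union bounding over the $O(\log\log\tcorr)$ values $K\in\cK_{\tcorr}$ --- which leaves a total of order $1/\log\log\tcorr$ --- shows that $\liminf_{n\to\infty}\p\bigl(|S_{n}(K_{n})-\tcorr^{-1}|\le\eps_{\tcorr}\tcorr^{-1}\bigr)\to1$ as $\tcorr\to\infty$. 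The elementary inversion $1/(1+x)\ge 1-x$ and $1/(1-x/2)\le 1+x$ (valid for small $x>0$) then converts this into the two-sided statement of Theorem~\ref{thm:estimation_inverse}, and hence Theorem~\ref{thm:estimation}. Throughout, the PA and UA cases run in parallel, the only differences being the $\pm\tfrac{1}{2}$ shifts in the beta parameters and the constants in the P\'olya-urn concentration bounds, exactly as in Section~\ref{sec:estimation_tcorr_coarse}. The main obstacle is the second step: establishing the strengthened nice event requires quantitative control of the \emph{entire} size profile of the top $\Theta(\log\tcorr)$ pendent subtrees of the centroid --- polynomial-in-$\tcorr$ lower bounds, pairwise separation, and ranking stability, simultaneously and with polylogarithmically small failure probability --- and it is here that the decay consequence of Theorem~\ref{thm:anti-centrality_representation} is indispensable and that the admissible constant $\tfrac{1}{400}$ gets pinned down.
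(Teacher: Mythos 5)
Your high-level plan is the right one and matches the paper's: localize $K_n$ via the coarse estimate to a window of $O(\log\log\tcorr)$ consecutive integers, strengthen the ``nice'' events of Section~\ref{sec:coarse_preliminaries} to control the top $\Theta(\log\tcorr)$ pendent subtrees simultaneously, condition on $T_{\tcorr}$ to reduce to i.i.d.\ Dirichlet-type limits, and run Chebyshev with a first-moment estimate plus a variance estimate whose off-diagonal terms you control via correlation decay. Your heuristic for the off-diagonal covariance---$\Cov(W_k^2,W_\ell^2)\lesssim\rho_{k,\ell}^2\Var(W_k)\Var(W_\ell)$ with $\rho_{k,\ell}^2\lesssim|T_\tcorr(k)||T_\tcorr(\ell)|/\tcorr^2\lesssim 1/(k\ell)$---is essentially what the paper proves rigorously in Lemma~\ref{lem:dirichlet} by a direct Dirichlet moment computation, and both routes land on an off-diagonal contribution strictly dominated by the diagonal one.

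The gap is in the strength of the bound you assume for the failure probability of the strengthened nice events. You claim $\p((\cA')^c)+\p((\cB')^c)=O(1/\log^c\tcorr)$ suffices, and that properties (ii) and (iii) of $\cA'$ follow ``exactly as in Steps 5 and 6'' of Lemma~\ref{lem:nice_event}, and (i) ``verbatim from Steps 1--4.'' This is not enough, and the verbatim claim is not accurate. The obstruction is in the first-moment lower bound and in the variance bound: both contain an error term of the form (with your thresholds) $\tcorr^{3/10}\,\p\bigl((\cC^1\cap\cC^2)^c\mid T_\tcorr\bigr)$ coming from removing the indicator $\mathbf{1}_{\cC^1\cap\cC^2}$ after dividing by a denominator that can be as small as $\tcorr^{-3/10}$. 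For this to be $o(1/\tcorr)$ (first moment) or $o(1/(k\tcorr^2))$ (variance) you need the conditional failure probability to decay \emph{polynomially} in $\tcorr$, not polylogarithmically. A polylogarithmic bound leaves an error term of order $\tcorr^{3/10}/\poly(\log\tcorr)$, which swamps $1/\tcorr$ entirely, so the lower bound on $\E[S_n(K)\mathbf{1}_{\cA'\cap\cB'}\mid\cB']$ and the variance bound both break. This is exactly why the paper does not reuse Lemma~\ref{lem:nice_event} as is: Lemma~\ref{lem:D} constructs a $T_\tcorr$-measurable event $\cD$ on which $\p(\cC^c\mid\cD)\le C/\tcorr^3$, and its proof (Modified Steps 1--6 in Section~\ref{sec:estimation_nice_events}) deviates substantially from the coarse proof---the ``early vertex'' threshold becomes $\gamma\log\tcorr$ with $\gamma=18$, the auxiliary time scales $s_1=\tcorr^{1/64}$, $s_2=\tcorr^{1/48}$ are introduced, Modified Step~4 is a genuinely new argument requiring Lemma~\ref{lem:deg_lb} to lower-bound early vertex degrees, and a Markov-inequality trick converts unconditional polynomial bounds into the conditional polynomial bound on $\cD$. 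In the coarse lemma, Step~4 only yields a failure probability of order $\log(\tcorr)\exp(-c(\log\log\tcorr)^3)$, which is not $o(\tcorr^{-\epsilon})$ for any $\epsilon>0$, so a verbatim port of Steps 1--6 would leave the first-moment lower bound and variance estimate unjustified. Relatedly, you state the first-moment lower bound as following ``verbatim'' from Lemma~\ref{lem:coarse_first_moment}, but that lemma proves only the upper bound; the paper explicitly defers the lower bound and supplies it in Lemma~\ref{lem:first_moment} with the additional argument whose error term is precisely the one that requires the polynomial conditional bound.
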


In the remainder of this section, which is structured similarly to Section~\ref{sec:estimation_tcorr_coarse}, we prove this theorem. 
We start in Section~\ref{sec:estimation_preliminaries} with some preliminaries: 
specifically, we define a couple of ``nice'' events on the space of sequences of growing trees, on which we will obtain bounds for $S_{n} \left( K_{n} \right)$. 
We state and prove first moment estimates in Section~\ref{sec:estimation_first_second_moments}, 
where we also state a variance estimate whose proof we defer to Section~\ref{sec:estimation_variance_proof}. 
We then prove Theorem~\ref{thm:estimation_inverse} in Section~\ref{sec:estimation_proof}, using the fact that the previously defined ``nice'' events have probability close to $1$. 
We prove this latter fact in Section~\ref{sec:estimation_nice_events}.

\subsection{Preliminaries} \label{sec:estimation_preliminaries} 

In Section~\ref{sec:estimation_tcorr_coarse} we defined ``nice'' events $\cA$ and $\cB$. 
Here, we define analogous ``nice'' events, which we denote by $\cC$ and $\cD$. 
First, we define 
\[
K \equiv K \left( \tcorr \right) := 
\left\lfloor \tfrac{1}{384} \log \tcorr \right\rfloor,
\] 
which we fix for the rest of Section~\ref{sec:estimation_large_tcorr}. 
We are now ready to define the event $\cC$. 

\begin{definition}[The event $\cC$]\label{def:nice_event_fine}
Given a sequence of trees $\left\{ T_{n} \right\}_{n \geq \tcorr}$, we say that the event $\cC$ holds if and only if the following three properties all hold: 
\begin{enumerate}[label=(C\arabic*)]
\item\label{prop:nice1_fine}
The centroid $\theta(n)$ is unique for all $n \geq \tcorr$ 
and $\theta \left( n \right) = \theta \left( \tcorr \right)$ for all $n \geq \tcorr$. 
\item\label{prop:nice2_fine} 
For all integers $1 \leq k \leq K$, 
the vertex $\wt{\theta}_{n} \left( k \right)$ is uniquely defined for all $n \geq \tcorr$, 
and also $\wt{\theta}_{n} \left( k \right) = \wt{\theta}_{\tcorr} \left( k \right)$ for all $n \geq \tcorr$. 
\item\label{prop:nice3_fine}
For all $n \geq \tcorr$ and all $1 \leq k \leq K$, we have that 
\begin{equation}\label{eq:C3}
\left| \frac{1}{n} \left| T_{n} \left( k \right) \right| 
- \frac{1}{\tcorr} \left| T_{\tcorr} \left( k \right) \right|  \right| 
\leq \frac{1}{\tcorr^{1/3}} \min \left\{ \frac{1}{\tcorr} \left| T_{\tcorr} \left( k \right) \right|, 1 - \frac{1}{\tcorr} \left| T_{\tcorr} \left( k \right) \right| \right\}.
\end{equation}
\end{enumerate}
\end{definition}

As in Definition~\ref{def:nice_event}, the exponent $1/3$ in~\eqref{eq:C3} is chosen for simplicity; 
any positive constant that is less than~$1/2$ is a good choice for everything that follows (though the choice impacts the choice of other constants/exponents later on). 
Also, we always have that $\left| T_{\tcorr} \left( k \right) \right| \leq \left| T_{\tcorr} \left( 1 \right) \right| = \Psi_{\tcorr} \left( \theta \left( \tcorr \right) \right) \leq \tcorr / 2$, 
so the minimum in~\eqref{eq:C3} is always attained by the first term; we include the second term in the definition just for clarity. 
Given a sequence of trees 
$\left\{ T_{n} \right\}_{n \geq 2}$, 
we say that the event $\cC$ holds if and only if it holds for the subsequence $\left\{ T_{n} \right\}_{n \geq \tcorr}$. 
The event $\cC$ clearly depends on $\tcorr$, but we choose to omit $\tcorr$ from the notation in order to keep notation lighter. 
The following lemma shows that for PA and UA trees the event $\cC$ holds with probability close to $1$ when $\tcorr$ is large. 

\begin{lemma}\label{lem:nice_event_fine}
Let $\left\{ T_{n} \right\}_{n \geq 2}$ be a sequence of trees started from the seed $S$ and grown according to PA or UA. 
There exists a finite constant $C$ such that 
for every $\tcorr \geq 2$ we have that 
\begin{equation}\label{eq:notC}
\p \left( \cC^{c} \right) \leq \frac{C}{\tcorr^{1/2000}},
\end{equation}
where $\cC^{c}$ denotes the complement of $\cC$. 
\end{lemma}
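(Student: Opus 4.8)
The plan is to bound $\p(\cC^c)$ by a union bound over the three defining properties \ref{prop:nice1_fine}, \ref{prop:nice2_fine}, and \ref{prop:nice3_fine}, adapting the six-step argument used in the proof of Lemma~\ref{lem:nice_event}. The only conceptual change is that we now care about the $K$ largest pendent subtrees of the centroid rather than just the single largest one, where $K = \lfloor \tfrac{1}{384} \log \tcorr \rfloor$ grows (slowly) with $\tcorr$. As in Lemma~\ref{lem:nice_event}, we call a vertex ``early'' if its timestamp is at most $\log \tcorr$, and the backbone of the argument is that the centroid, and the roots of the $K$ largest pendent subtrees of the centroid, are all early vertices with probability close to $1$, and that once we restrict to early vertices the relevant subtree-size ratios are governed by P\'olya urns that concentrate by Lemma~\ref{lem:Polya_concentration}.

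First I would reuse verbatim Steps~1--3 of the proof of Lemma~\ref{lem:nice_event}: the centroid is an early vertex on an event $\cA_1$ with $\p(\cA_1^c) \le 4\tcorr^{-1/3}$ (Lemma~\ref{lem:persistent_centroid}); early subtrees $\left(T_{\tcorr}, v_i\right)_{v_j\downarrow}$ have size at least $\tcorr / \log^7 \tcorr$ on events $\cE_2(i,j)$, each failing with probability $O(\log^{-3}\tcorr)$; and the pairwise anti-centrality rankings of early vertices are stable after time $\tcorr$ on an event $\cA_3$ with $\p(\cA_3^c) \le C/\log \tcorr$. These together give property~\ref{prop:nice1_fine}. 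For property~\ref{prop:nice2_fine} I would adapt Steps~4 and~5: the analogue of Step~4 shows that if $v_i$ is early then the roots of its $K$ largest pendent subtrees are early vertices (one applies the concentration bound~\eqref{eq:concentration_step4}, now needing that the $K$-th largest early subtree of $v_i$ has size $\gtrsim (s-1)/(\sqrt{s}\log^2 s \cdot K)$ by pigeonhole, which is still $\ge \log^{1/3}\tcorr$ since $K = O(\log\log\tcorr)$ relative to $\log\tcorr$ wait — actually $K \asymp \log\tcorr$, so here the bound weakens; one must check $(s-1)/(\sqrt s \log^2 s)$ divided by $K$ is still $\gg 1$, which holds because $s = \log\tcorr$ and $\sqrt s \log^2 s \cdot K = O(\sqrt{\log\tcorr}\,(\log\log\tcorr)^2 \log\tcorr)$, far smaller than $s-1 = \log\tcorr - 1$ — no, that's false. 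The correct observation is that we only need $\wt\theta_n(k)$ early for $k \le K$, and one can instead argue via the ranking stability of Step~5 applied to the at most $\sqrt s \log^2 s$ early subtrees, combined with a bound showing the $K$-th largest subtree of the centroid at time $\tcorr$ is itself large — this uses that the centroid's subtrees are ``balanced'' in a way that follows from $\Psi_\tcorr(\theta) \le \tcorr/2$); the analogue of Step~5 shows pairwise rankings among the (at most polylog many) early subtrees of an early vertex are stable, which follows from the events $\cE_3(i,j)$ exactly as before. For property~\ref{prop:nice3_fine} I would adapt Step~6: for each pair $i,j$ of early vertices, the event $\cA_6(i,j)$ that $\tfrac1t |(T_t,v_i)_{v_j\downarrow}|$ stays within a $\tcorr^{-1/3}$-relative band of its time-$\tcorr$ value fails with probability $\le 2\exp(-c\tcorr^{1/3}\log^{-14}\tcorr) + O(\log^{-3}\tcorr)$ by Lemma~\ref{lem:Polya_concentration} and Step~2; a union bound over the $O(\log^2\tcorr)$ pairs gives $\p(\cA_6^c) = O(\log^{-1}\tcorr)$.

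Collecting all the pieces, each bad event has probability $O(\tcorr^{-1/3})$ (from $\cA_1$) or $O(\log^{-1}\tcorr)$ (from $\cA_3, \cA_5, \cA_6$) or $O(\log(\tcorr)\exp(-c(\log\log\tcorr)^3))$ (from $\cA_4$), and a union bound over all of them, together with the union bound over the $O(K^2) = O(\log^2\tcorr)$ relevant pairs and indices, yields $\p(\cC^c) = O(\log^{-1}\tcorr)$. This is stronger than the claimed $C\tcorr^{-1/2000}$, so the polynomial-in-$\tcorr^{-1}$ bound in~\eqref{eq:notC} is comfortably met; the slack is presumably there to absorb the weaker concentration one gets when tracking $K \asymp \log\tcorr$ subtrees simultaneously, where the smallest of them can be as small as $\tcorr / \poly(\log\tcorr)$ and some of the exponential tail bounds degrade — but since $K$ is only logarithmic, these degradations cost at most polylogarithmic factors, which are absorbed into the $\tcorr^{-1/2000}$ margin.

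The main obstacle is the analogue of Step~4: controlling that the roots of \emph{all} of the $K$ largest pendent subtrees of the centroid remain early vertices for all $t \ge \tcorr$. Unlike the single-subtree case, one of the top-$K$ subtrees could be relatively small at time $\tcorr$ (only $\gtrsim \tcorr/\log^7\tcorr$ by Step~2, or even smaller a priori), so the P\'olya urn concentration in~\eqref{eq:concentration_step4} gives a weaker guarantee, and one must rule out a late-born subtree of the centroid overtaking the $K$-th largest early one. The way I would handle this is to first establish a lower bound on $|T_\tcorr(K)|$ — the size of the $K$-th largest pendent subtree of the centroid at time $\tcorr$ — of the form $\tcorr / \tcorr^{o(1)}$ or at least $\tcorr/\poly(\log\tcorr)$, using that the centroid has degree at most $O(\sqrt{\tcorr}\log^2\tcorr)$ (Lemma~\ref{lem:deg_tail_bound}) so its subtree sizes cannot all be tiny given they sum to $\tcorr - 1$, combined with the balancedness constraint $\Psi_\tcorr(\theta) \le \tcorr/2$; then the concentration bound kicks in with an exponent that is a negative power of $\tcorr$ times a polylog, which still beats the $O(\log^{-1}\tcorr)$ target after the union bound over $K$ indices and over all $t \ge \tcorr$.
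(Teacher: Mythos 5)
Your conclusion contains a quantitative error that is fatal to the argument as stated: you write that $\p(\cC^c) = O(\log^{-1}\tcorr)$ is ``stronger than the claimed $C\tcorr^{-1/2000}$,'' but the comparison goes the other way. Since $\tcorr^{-1/2000} = \exp\!\left(-\tfrac{1}{2000}\log\tcorr\right)$ tends to zero much faster than $1/\log\tcorr$, the target $C\tcorr^{-1/2000}$ is the \emph{stronger} upper bound; a polylogarithmic bound does not establish~\eqref{eq:notC}. This is not a cosmetic slip that the ``margin absorbs'': the polynomial decay is used downstream (e.g., in the proof of Lemma~\ref{lem:first_moment}, the quantity $\p\!\left(\left(\cC^1\cap\cC^2\right)^c \mid \cD\right)$ is multiplied by $\tcorr^{1/8}$ and must still vanish), so the polylogarithmic rate obtained by recycling Steps~1--6 of Lemma~\ref{lem:nice_event} verbatim is genuinely insufficient here.

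The deeper obstacle is the one you notice mid-paragraph and then wave past. With ``early'' meaning timestamp $\leq \log\tcorr$ and $K \asymp \log\tcorr$, the centroid has only about $\log\tcorr$ descendants at the cutoff time, so the $K$-th largest of its pendent subtrees at that time can be a single vertex; the pigeonhole lower bound $\Psi_s(v_i) \geq (s-1)/(\sqrt{s}\log^2 s)$ from Step~4 of Lemma~\ref{lem:nice_event} says nothing about the $K$-th largest subtree, and the P\'olya-urn concentration in~\eqref{eq:concentration_step4} becomes vacuous. Your proposed fix (a lower bound on $|T_{\tcorr}(K)|$ from the upper bound on the centroid's degree plus balancedness) does not give a lower bound on the $K$-th largest subtree at time $\log\tcorr$, which is what the concentration step needs as input. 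The paper's proof of Lemma~\ref{lem:D} takes a different route: it redefines ``early'' as timestamp $\leq \gamma\log\tcorr$ with $\gamma = 18$ (yielding a polynomially small $\p(\cA_1^c) \leq 4\tcorr^{-\gamma/3}$), and, crucially, introduces two intermediate \emph{polynomial} scales $s_1 = \tcorr^{1/64}$ and $s_2 = \tcorr^{1/48}$. It uses a degree \emph{lower} bound (Lemma~\ref{lem:deg_lb}, not the upper bound of Lemma~\ref{lem:deg_tail_bound}) at time $s_1$ to guarantee $\geq K$ subtrees exist; it then propagates lower bounds on these $K$ subtree sizes from $s_1$ to $s_2$ (size $\geq s_2^{1/8}$), from $s_2$ to $\tcorr$ (size $\geq \tcorr^{7/8}$), and only then runs concentration for $t \geq \tcorr$. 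Lemma~\ref{lem:nice_event_fine} is then a one-line corollary of Lemma~\ref{lem:D} via $\p(\cC^c) \leq \p(\cC^c \mid \cD) + \p(\cD^c)$, rather than a direct six-step argument.
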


Lemma~\ref{lem:nice_event_fine} follows directly from Lemma~\ref{lem:D} below. 

Since the event $\cC$ is analogous to the event $\cA$, the intuition is similar. 
Let $\cC^{1}$ and $\cC^{2}$ denote the ``nice'' events corresponding to $\left\{ T_{n}^{1} \right\}_{n \geq 2}$ and $\left\{ T_{n}^{2} \right\}_{n \geq 2}$, respectively. 
The key point of the construction is that on the event $\cC^{1} \cap \cC^{2}$, 
studying $X_{n}^{1} \left( k \right)$ and $X_{n}^{2} \left( k \right)$ reduces to studying the evolution of \emph{fixed} subtrees that are present in the tree at time $\tcorr$. 

Formally, condition on the tree $T_{\tcorr}^{1} = T_{\tcorr}^{2} =: T_{\tcorr}$. 
To abbreviate notation, 
we write 
$\theta := \theta \left( \tcorr \right)$ 
and 
$\wt{\theta} \left( k \right) := \wt{\theta}_{\tcorr} \left( k \right)$ for all $1 \leq k \leq K$; 
importantly, note that these are now \emph{fixed} vertices 
(i.e., they do not change with $n$). Define the random variables 
\[
Z_{n}^{i} \left( k \right) 
:= \frac{1}{n} \left| \left( T_{n}^{i}, \theta \right)_{\wt{\theta} \left( k \right) \downarrow} \right|
\]
for $i \in \left\{ 1, 2 \right\}$, $1 \leq k \leq K$,  and $n \geq \tcorr$. 
On the event $\cC^{i}$ we have that 
$X_{n}^{i} \left( k \right) = Z_{n}^{i} \left( k \right)$ 
for all $n \geq \tcorr$ and all $1 \leq k \leq K$. 

As discussed in Section~\ref{sec:estimation_tcorr_coarse} for $k = 1$, 
by classical results on P\'olya urns it follows that the limiting random variables 
\[
Z^{i} \left( k \right) := \lim_{n \to \infty} Z_{n}^{i} \left( k \right) 
\]
exist almost surely for $i \in \left\{ 1, 2 \right\}$ and $1 \leq k \leq K$, 
for both PA and UA trees. 
Moreover, for any $1 \leq k \leq K$, 
we have that 
$Z^{1} \left( k \right)$ and $Z^{2} \left( k \right)$ are i.i.d.\ 
(this is conditional independence given $T_{\tcorr}$) 
beta random variables, with parameters given as follows: 
\begin{equation}\label{eq:Zk}
Z \left( k \right) \sim 
\begin{cases} 
\Beta \left( \left| T_{\tcorr} \left( k \right) \right| , \tcorr - \left| T_{\tcorr} \left( k \right) \right| \right) &\text{ for } \UA, \\
\Beta \left( \left| T_{\tcorr} \left( k \right) \right| - \frac{1}{2}, \tcorr - \left| T_{\tcorr} \left( k \right) \right| - \frac{1}{2} \right) &\text{ for } \PA. 
\end{cases}
\end{equation}
Here $Z \left( k \right)$ is a random variable with the same distribution as $Z^{1} \left( k \right)$ and $Z^{2} \left( k \right)$.

From~\eqref{eq:Zk} it is clear that the quantity 
$\left| T_{\tcorr} \left( k \right) \right|$ plays an important role in the distribution of~$Z \left( k \right)$. 
In Section~\ref{sec:estimation_tcorr_coarse} we defined $\cB$ 
to be the event that 
$\left| T_{\tcorr} \left( 1 \right) \right| 
\geq \tcorr / \sqrt{\log \tcorr}$. 
Here we analogously want to define an event $\cD$ on which we have lower bounds for 
$\left| T_{\tcorr} \left( k \right) \right|$ 
for all $1 \leq k \leq K$. 
However, it turns out that we need some further properties from the event 
$\cD$; 
because of this we do not define it explicitly here---see Section~\ref{sec:estimation_nice_events} for an implicit definition. 
The following lemma guarantees the existence of an event $\cD$ 
with the appropriate properties.

\begin{lemma}\label{lem:D}
Let $\left\{ T_{n} \right\}_{n \geq 2}$ be a sequence of trees started from the seed $S$ and grown according to PA or UA. 
There exists a finite constant $C$ such that for every $\tcorr \geq C$ the following holds. 
There exists a $T_{\tcorr}$-measurable event $\cD$ such that the following three things hold. 
First, on $\cD$ we have for all $1 \leq k \leq K$ that 
\[
\left| T_{\tcorr} \left( k \right) \right| \geq \tcorr^{7/8}.
\]
Second, 
\[
\p \left( \cD^{c} \right) \leq \frac{C}{\tcorr^{1/2000}}. 
\]
Finally, 
\begin{equation}\label{eq:probCc_given_D}
\p \left( \cC^{c} \, \middle| \, \cD \right) \leq \frac{C}{\tcorr^{3}}. 
\end{equation}
\end{lemma}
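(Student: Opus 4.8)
The plan is to construct the event $\cD$ explicitly as an intersection of ``nice'' subtree-size events, in the spirit of the six-step proof of Lemma~\ref{lem:nice_event}, and then to argue that (a) on $\cD$ all of the first $K$ pendent subtrees of the centroid are large, (b) $\cD$ has probability $\geq 1 - C\tcorr^{-1/2000}$, and (c) conditionally on $\cD$ the event $\cC$ fails with only polynomially small probability. The starting observation is that, just as in the proof of Lemma~\ref{lem:nice_event}, the centroid $\theta(\tcorr)$ and the roots $\wt\theta_\tcorr(1), \ldots, \wt\theta_\tcorr(K)$ of the $K$ largest pendent subtrees are all ``early'' vertices (timestamp at most $\log\tcorr$) with overwhelming probability; Steps~1, 4, and 5 of that proof already essentially give this, the only change being that we must handle the top $K = \lfloor\frac{1}{384}\log\tcorr\rfloor$ subtrees of the centroid rather than just the single largest one. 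This requires slightly strengthening Step~4 (the root of the $k$th largest subtree is early, for all $k \le K$) and Step~5 (subtree rankings among early vertices are stable), which goes through because $K$ is only logarithmic in $\tcorr$, so a union bound over $O(\log^2\tcorr)$ pairs still leaves the error at $O(1/\log\tcorr)$.

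First I would define $\cD$ to be the intersection of the events ``$\left|\left(T_\tcorr, v_i\right)_{v_j \downarrow}\right| \geq \tcorr^{7/8}$'' ranging over all ordered pairs of distinct early vertices $i, j \leq \log\tcorr$, together with the early-centroid event $\cA_1$, the early-largest-subtree-roots events $\cA_4$ (extended to the top $K$ subtrees), and the stable-ranking events $\cA_5$ (extended to the top $K$ subtrees). This is manifestly $T_\tcorr$-measurable. On $\cD$, the centroid is an early vertex and each $\wt\theta_\tcorr(k)$ for $k\le K$ is an early vertex, so $\left|T_\tcorr(k)\right| = \left|\left(T_\tcorr, \theta\right)_{\wt\theta_\tcorr(k) \downarrow}\right|$ equals one of the quantities $\left|\left(T_\tcorr, v_i\right)_{v_j \downarrow}\right|$ with both $i,j$ early, hence is at least $\tcorr^{7/8}$; this gives the first claim. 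For the probability bound $\p(\cD^c) \le C\tcorr^{-1/2000}$, the dominant contribution comes from the events ``$\left|\left(T_\tcorr, v_i\right)_{v_j \downarrow}\right| < \tcorr^{7/8}$.'' These are controlled exactly as in Step~2 of the proof of Lemma~\ref{lem:nice_event}, via P\'olya urn and martingale arguments: the normalized subtree size converges to a $\Beta$ variable (with parameters depending on $i,j$; $\Beta(1,J)$ for UA, $\Beta(1/2, J - 1/2)$ for PA, with $1 \le J \le j-1 \le \log\tcorr$), and the probability that such a variable is below $\tcorr^{-1/8}$ is at most a constant times $\tcorr^{-1/8}$ in the UA case (more than enough) and at most a constant times $\sqrt{\log\tcorr}\cdot\tcorr^{-1/16}$ in the PA case (because the density blows up like $x^{-1/2}$, giving a $\sqrt{z}$ tail). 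Summing over $O(\log^2\tcorr)$ pairs and comparing with the errors from $\cA_1, \cA_4, \cA_5$ (which are $O(1/\log\tcorr)$), the PA case yields the binding bound, and choosing the exponent $7/8$ makes all of these comfortably smaller than $\tcorr^{-1/2000}$ for large $\tcorr$.

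The last and most delicate claim is~\eqref{eq:probCc_given_D}, that $\p(\cC^c \mid \cD) \le C\tcorr^{-3}$. The point is that $\cC$ asks for three things to hold for all $n \ge \tcorr$ simultaneously: persistence of the centroid (C1), persistence of the roots of the top $K$ subtrees (C2), and a quantitative concentration estimate~\eqref{eq:C3} for each of the top $K$ normalized subtree sizes. Crucially, \emph{on $\cD$} each relevant subtree has size at least $\tcorr^{7/8}$ at time $\tcorr$, and the future evolution of each such subtree size (and of each pairwise comparison of subtree sizes of early vertices) is governed by a P\'olya urn whose total count at time $\tcorr$ is at least $\tcorr^{7/8}$. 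Then Lemma~\ref{lem:Polya_concentration} gives, for a single such urn, a failure probability of $\exp(-c\tcorr^{7/8}\eps^2)$ with $\eps \asymp \tcorr^{-1/3}$, i.e.\ $\exp(-c\tcorr^{7/8 - 2/3}) = \exp(-c\tcorr^{5/24})$, which is far smaller than $\tcorr^{-3}$. The persistence statements C1 and C2 are reduced, via Lemma~\ref{lem:from_centralities_to_subtrees} and the sign-matching identity~\eqref{eq:signs_matching}, to the statement that none of the finitely many ($O(\log^2\tcorr)$) pairwise subtree-size ratios among early vertices ever crosses $1/2$ after time $\tcorr$; since on $\cD$ each such ratio is bounded away from $1/2$ at time $\tcorr$ by an amount $\gtrsim$ (some negative power of $\log\tcorr$, coming from the $\Beta$ density near $1/2$ — here we incur an additional event, included in $\cD$, that these ratios start away from $1/2$) and can move by at most $\tcorr^{7/8}/\tcorr = \tcorr^{-1/8}$ per step — wait, more carefully, the ratio moves by at most the reciprocal of the urn total, which is $\le \tcorr^{-7/8}$ — the same P\'olya concentration bound applies and gives a failure probability $\exp(-c\tcorr^{5/24})$ after a union bound over the $O(\log^2\tcorr)$ pairs. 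The main obstacle, and the step that requires the most care, is the bookkeeping in $\cD$: we must include in $\cD$ not only the ``subtrees are large'' events but also the ``ranking ratios start away from $1/2$'' events (so that the persistence conclusions can be drawn conditionally on $\cD$), and we must verify that adding these events does not spoil the $\tcorr^{-1/2000}$ bound — they contribute only $O(1/\log^3\tcorr)$ each by the $\Beta$-density estimate~\eqref{eq:close_to_half_prob}, so this is fine. Once $\cD$ is set up with all these ingredients, \eqref{eq:probCc_given_D} follows by combining Lemma~\ref{lem:Polya_concentration}, Lemma~\ref{lem:from_centralities_to_subtrees}, and a union bound, and Lemma~\ref{lem:nice_event_fine} is then immediate since $\p(\cC^c) \le \p(\cC^c \mid \cD) + \p(\cD^c) \le C\tcorr^{-3} + C\tcorr^{-1/2000}$.
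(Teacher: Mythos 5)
Your construction of $\cD$ fails on two fronts, and both failures are central to why the paper's proof looks the way it does. First, you include the events $\cA_{1}$, $\cA_{4}$, $\cA_{5}$ (extended to the top $K$ subtrees) and call the result ``manifestly $T_{\tcorr}$-measurable,'' but each of those events is a statement about the entire trajectory $\{T_{n}\}_{n\ge\tcorr}$ (no future vertex ever becomes the centroid, subtree rankings never flip after time $\tcorr$, and so on), so they depend on the infinite future and are emphatically not $T_{\tcorr}$-measurable. Second, even setting measurability aside, you state that $\cA_{1},\cA_{4},\cA_{5}$ fail with probability $O(1/\log\tcorr)$ and that the ``ratio bounded away from $1/2$'' events each contribute $O(1/\log^{3}\tcorr)$, and you declare this ``fine'' --- but both of these are vastly larger than the required $C/\tcorr^{1/2000}$, so the second conclusion of the lemma is missed by an exponential margin. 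The paper resolves both problems simultaneously with a Markov-inequality device: it never puts the future-measurable events into $\cD$ at all, but rather $T_{\tcorr}$-measurable surrogates, e.g.\ $\cD_{1} := \{ \p(\cC_{1}^{c} \mid T_{\tcorr}) \le \tcorr^{-\gamma/6} \}$ with $\gamma = 18$ and an analogous $\cD_{4}$, and the ratio cutoff in $\cD_{3}(i,j)$ is the polynomially small $2\tcorr^{-1/4}$ rather than a power of $1/\log\tcorr$. These replacements are what drive $\p(\cD^{c})$ below $\tcorr^{-1/2000}$ and, crucially, what leave $\p(\cC^{c} \mid \cD)$ of order $\tcorr^{-3}$ (in fact super-polynomially small).

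There is also a structural gap in how you locate the top-$K$ pendent subtrees. You claim their roots are early in the sense of timestamp $\le\log\tcorr$ and that ``Steps~1, 4, 5 essentially give this,'' but Step~4 of Lemma~\ref{lem:nice_event} only treats the single largest subtree, and extending it to $K\approx\frac{1}{384}\log\tcorr$ is not a minor tweak --- indeed the claim as you state it is false. For UA trees the centroid has degree only $\Theta(\log\tcorr)$ at time $\tcorr$, so a constant fraction of the top-$K$ subtree roots will have timestamps far larger than $\log\tcorr$; the paper only establishes that these roots have timestamp at most $s_{2} = \tcorr^{1/48}$, and getting even that much requires the intermediate timescale $s_{1} = \tcorr^{1/64}$, the degree lower bound of Lemma~\ref{lem:deg_lb} (to guarantee the centroid accrues $\ge K$ neighbors by time $s_{1}$), and a two-stage P\'olya-urn argument tracking subtree sizes from $s_{1}$ to $s_{2}$ to $\tcorr$. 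Because your $\cD$ only bounds subtree sizes for pairs of vertices with timestamps $\le\log\tcorr$, it does not actually deliver $|T_{\tcorr}(k)| \ge \tcorr^{7/8}$ for all $k \le K$, which is the first conclusion you need.
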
 
We note that the bound in~\eqref{eq:probCc_given_D} can be improved to a bound that decays faster than any polynomial in $\tcorr$; however, we only state this simpler, weaker bound, since this is all we need for our purposes. 
The proof of Lemma~\ref{lem:D} is deferred to Section~\ref{sec:estimation_nice_events}. 
In the following, $\cD$ always refers to the event guaranteed by Lemma~\ref{lem:D}.

\subsection{First and second moment estimates} \label{sec:estimation_first_second_moments} 

We first state and prove the following first moment estimates. 

\begin{lemma}\label{lem:first_moment}
Let $\left( T_{n}^{1}, T_{n}^{2} \right) \sim \CPA \left( n, \tcorr, S \right)$. 
Fix $k \in \left\{ 1, 2, \ldots, K \right\}$. 
For all $\tcorr$ large enough we have that 
\begin{equation}\label{eq:first_moment_UB}
\limsup_{n \to \infty} 
\E \left[ S_{n} \left( k \right) \mathbf{1}_{\cC^{1} \cap \cC^{2}} \, \middle| \, \cD \right] 
\leq 
\frac{1 + 3 \tcorr^{-1/3}}{\tcorr}
\end{equation}
and that 
\begin{equation}\label{eq:first_moment_LB}
\liminf_{n \to \infty} 
\E \left[ S_{n} \left( k \right) \mathbf{1}_{\cC^{1} \cap \cC^{2}} \, \middle| \, \cD \right] 
\geq 
\frac{1 - 3 \tcorr^{-1/3}}{\tcorr}.
\end{equation}
The same bounds also hold when $\left( T_{n}^{1}, T_{n}^{2} \right) \sim \CUA \left( n, \tcorr, S \right)$. 
\end{lemma}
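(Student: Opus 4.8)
The plan is to follow the same route as the proof of Lemma~\ref{lem:coarse_first_moment}, now adapted to the averaged statistic $S_{n}(k)=\frac{1}{k}\sum_{\ell=1}^{k}Y_{n}(\ell)$ and to the conditioning on $\cD$. As recorded in Section~\ref{sec:estimation_preliminaries}, on $\cC^{1}\cap\cC^{2}$ we have $X_{n}^{i}(\ell)=Z_{n}^{i}(\ell)$ for all $n\geq\tcorr$ and all $1\leq\ell\leq K$, and, conditionally on $T_{\tcorr}$, $Z_{n}^{i}(\ell)\to Z^{i}(\ell)$ almost surely with $Z^{1}(\ell),Z^{2}(\ell)$ i.i.d.\ beta random variables with the parameters in~\eqref{eq:Zk}. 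Writing $p_{\ell}:=\frac{1}{\tcorr}\left|T_{\tcorr}(\ell)\right|$, property~\ref{prop:nice3_fine} (using $p_{\ell}\leq 1/2$) gives $\left|X_{n}^{1}(\ell)-p_{\ell}\right|\leq\tcorr^{-1/3}p_{\ell}$ on $\cC^{1}\cap\cC^{2}$, hence, on that event,
\[
p_{\ell}(1-p_{\ell})(1-\tcorr^{-1/3})^{2}\leq X_{n}^{1}(\ell)\bigl(1-X_{n}^{1}(\ell)\bigr)\leq p_{\ell}(1-p_{\ell})(1+\tcorr^{-1/3})^{2}.
\]
I would also use two consequences of Lemma~\ref{lem:D}: on $\cD$ we have $p_{\ell}\geq\tcorr^{-1/8}$ (so $p_{\ell}(1-p_{\ell})\geq\tfrac12\tcorr^{-1/8}$) for all $1\leq\ell\leq K$, and $\p(\cC^{c}\mid\cD)\leq C\tcorr^{-3}$.

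For the upper bound~\eqref{eq:first_moment_UB}, I would repeat the termwise computation of Lemma~\ref{lem:coarse_first_moment}: use the lower bound above for the denominator $X_{n}^{1}(\ell)(1-X_{n}^{1}(\ell))$, replace $\bigl(X_{n}^{1}(\ell)-X_{n}^{2}(\ell)\bigr)^{2}\mathbf{1}_{\cC^{1}\cap\cC^{2}}$ by $\bigl(Z_{n}^{1}(\ell)-Z_{n}^{2}(\ell)\bigr)^{2}$ by dropping the indicator, apply bounded convergence conditionally on $T_{\tcorr}$, and insert $\E[(Z^{1}(\ell)-Z^{2}(\ell))^{2}\mid T_{\tcorr}]=2\Var(Z(\ell)\mid T_{\tcorr})$ together with the explicit beta variance. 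This yields the deterministic bound $\limsup_{n\to\infty}\E[Y_{n}(\ell)\mathbf{1}_{\cC^{1}\cap\cC^{2}}\mid T_{\tcorr}]\leq(1+3\tcorr^{-1/3})/\tcorr$, valid for every $T_{\tcorr}$ and every $\ell$ once $\tcorr$ is large (exactly as in the two cases PA and UA treated there). Averaging over $\ell=1,\dots,k$ and then taking $\E[\,\cdot\mid\cD\,]$ — justifying the interchange with $\limsup_{n}$ by reverse Fatou, using that on $\cD$ the conditional expectation is dominated by $O(\tcorr^{1/8})$ — gives~\eqref{eq:first_moment_UB}.

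The lower bound~\eqref{eq:first_moment_LB} is the step requiring a genuinely new argument, since the indicator $\mathbf{1}_{\cC^{1}\cap\cC^{2}}$ cannot simply be dropped. I would use the \emph{upper} bound above for $X_{n}^{1}(\ell)(1-X_{n}^{1}(\ell))$ together with $\bigl(Z_{n}^{1}(\ell)-Z_{n}^{2}(\ell)\bigr)^{2}\mathbf{1}_{\cC^{1}\cap\cC^{2}}\geq\bigl(Z_{n}^{1}(\ell)-Z_{n}^{2}(\ell)\bigr)^{2}-\mathbf{1}_{(\cC^{1}\cap\cC^{2})^{c}}$ (valid since $(Z_{n}^{1}(\ell)-Z_{n}^{2}(\ell))^{2}\leq 1$), and bounded convergence conditionally on $T_{\tcorr}$, to get
\[
\liminf_{n\to\infty}\E\bigl[Y_{n}(\ell)\mathbf{1}_{\cC^{1}\cap\cC^{2}}\mid T_{\tcorr}\bigr]\geq\frac{2\Var(Z(\ell)\mid T_{\tcorr})-\p\bigl((\cC^{1}\cap\cC^{2})^{c}\mid T_{\tcorr}\bigr)}{2p_{\ell}(1-p_{\ell})(1+\tcorr^{-1/3})^{2}}.
\]
The main term equals $(\tcorr+1)^{-1}(1+\tcorr^{-1/3})^{-2}$ for UA (and, for PA, differs from this by a factor $1+O(\tcorr^{-7/8})$ once one uses $\left|T_{\tcorr}(\ell)\right|\geq\tcorr^{7/8}$ on $\cD$), which is at least $(1-3\tcorr^{-1/3})/\tcorr$ for $\tcorr$ large. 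For the error term, on $\cD$ one has $p_{\ell}(1-p_{\ell})\geq\tfrac12\tcorr^{-1/8}$, so it is at most $\tcorr^{1/8}\p((\cC^{1}\cap\cC^{2})^{c}\mid T_{\tcorr})$; taking $\E[\,\cdot\mid\cD\,]$ and bounding $\p((\cC^{1}\cap\cC^{2})^{c}\cap\cD)\leq\p((\cC^{1})^{c}\cap\cD)+\p((\cC^{2})^{c}\cap\cD)\leq 2C\tcorr^{-3}$ via Lemma~\ref{lem:D} (here $\cD$ is $T_{\tcorr}$-measurable and $T_{\tcorr}^{1}=T_{\tcorr}^{2}=T_{\tcorr}$) makes its contribution $O(\tcorr^{-23/8})=o(1/\tcorr)$, which is negligible against the gap $\tcorr^{-4/3}$ above. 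Averaging over $\ell=1,\dots,k$ and moving $\liminf_{n}$ inside $\E[\,\cdot\mid\cD\,]$ by Fatou's lemma (the summands are nonnegative) then yields~\eqref{eq:first_moment_LB}.

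The only real obstacle is thus the bookkeeping in the lower bound: controlling the contribution of $(\cC^{1}\cap\cC^{2})^{c}$ by combining the polynomially small estimate $\p(\cC^{c}\mid\cD)\leq C\tcorr^{-3}$ with the lower bound $\left|T_{\tcorr}(k)\right|\geq\tcorr^{7/8}$ on $\cD$ — precisely the two features that Lemma~\ref{lem:D} is designed to guarantee.
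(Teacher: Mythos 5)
Your proposal is correct and follows essentially the same route as the paper: the upper bound by dropping the indicator and reducing to the beta variance as in Lemma~\ref{lem:coarse_first_moment}, and the lower bound by splitting $\mathbf{1}_{\cC^{1}\cap\cC^{2}}=1-\mathbf{1}_{(\cC^{1}\cap\cC^{2})^{c}}$, controlling the denominator via $\left|T_{\tcorr}(\ell)\right|\geq\tcorr^{7/8}$ on $\cD$, and absorbing the $\tcorr^{1/8}\p((\cC^{1}\cap\cC^{2})^{c}\mid\cD)=O(\tcorr^{-23/8})$ error into the $\tcorr^{-4/3}$ slack. The only small presentational differences are that you explicitly invoke reverse Fatou for the $\limsup_{n}$--expectation interchange (justified, given the $O(\tcorr^{1/8})$ domination on $\cD\cap\cC^{1}\cap\cC^{2}$) and spell out the union bound $\p((\cC^{1}\cap\cC^{2})^{c}\mid\cD)\leq 2\p(\cC^{c}\mid\cD)$, both of which the paper leaves implicit.
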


\begin{proof} 
We start with the upper bound. 
By the exact same arguments as in the proof of Lemma~\ref{lem:coarse_first_moment}, we have for every $\ell \in \left\{ 1, \ldots, K \right\}$ that 
\[
\limsup_{n \to \infty} \E \left[ Y_{n} \left( \ell \right) \mathbf{1}_{\cC^{1} \cap \cC^{2}} \, \middle| \, \cD \right] 
\leq 
\left( 1 + \tfrac{1}{\tcorr - 1} \right)^{2} \left( 1 - \tcorr^{-1/3} \right)^{-2} \frac{1}{\tcorr}.
\]
Therefore by linearity of expectation we also have that 
\[
\limsup_{n \to \infty} 
\E \left[ S_{n} \left( k \right) \mathbf{1}_{\cC^{1} \cap \cC^{2}} \, \middle| \, \cD \right] 
\leq 
\left( 1 + \tfrac{1}{\tcorr - 1} \right)^{2} \left( 1 - \tcorr^{-1/3} \right)^{-2} \frac{1}{\tcorr}.
\]
The right hand side of the display above is at most $\left( 1 + 3 \tcorr^{-1/3} \right) / \tcorr$ for all $\tcorr$ large enough, which concludes the proof of~\eqref{eq:first_moment_UB}. 

We now turn to the lower bound. This follows similar lines as the upper bound, but an additional argument is needed. Fix $\ell \in \left\{ 1, \ldots, K \right\}$. We again condition on the tree $T_{\tcorr}$ at time $\tcorr$; 
by the tower rule we have that 
\[
\E \left[ Y_{n} \left( \ell \right) \mathbf{1}_{\cC^{1} \cap \cC^{2}} \, \middle| \, \cD \right] 
= \E \left[ \E \left[ Y_{n} \left( \ell \right) \mathbf{1}_{\cC^{1} \cap \cC^{2}} \, \middle| \, T_{\tcorr} \right] \, \middle| \, \cD \right].
\]
Now given $T_{\tcorr}$ such that $\cD$ holds, 
property~\ref{prop:nice3_fine} in Definition~\ref{def:nice_event_fine} implies that on the event $\cC^{1}$ we have that 
\[
X_{n}^{1} \left( \ell \right) \left( 1 - X_{n}^{1} \left( \ell \right) \right)
\leq 
\frac{1}{\tcorr} \left| T_{\tcorr}^{1} \left( \ell \right) \right| \left( 1 - \frac{1}{\tcorr} \left| T_{\tcorr}^{1} \left( \ell \right) \right| \right)
\left( 1 + \tcorr^{-1/3} \right)^{2}
\]
for $n \geq \tcorr$. Plugging this inequality into the definition of $Y_{n} \left( \ell \right)$ we obtain that 
\begin{align*}
\E \left[ Y_{n} \left( \ell \right) \mathbf{1}_{\cC^{1} \cap \cC^{2}} \, \middle| \, T_{\tcorr} \right] 
&\geq \frac{\E \left[ \left( X_{n}^{1} \left( \ell \right) - X_{n}^{2} \left( \ell \right) \right)^{2} \mathbf{1}_{\cC^{1} \cap \cC^{2}} \, \middle| \, T_{\tcorr} \right]}{2 \cdot \frac{1}{\tcorr} \left| T_{\tcorr} \left( \ell \right) \right| \left( 1 - \frac{1}{\tcorr} \left| T_{\tcorr} \left( \ell \right) \right| \right) \left( 1 + \tcorr^{-1/3} \right)^{2}} \\
&= \frac{\E \left[ \left( Z_{n}^{1} \left( \ell \right) - Z_{n}^{2} \left( \ell \right) \right)^{2} \mathbf{1}_{\cC^{1} \cap \cC^{2}} \, \middle| \, T_{\tcorr} \right]}{2 \cdot \frac{1}{\tcorr} \left| T_{\tcorr} \left( \ell \right) \right| \left( 1 - \frac{1}{\tcorr} \left| T_{\tcorr} \left( \ell \right) \right| \right) \left( 1 + \tcorr^{-1/3} \right)^{2}},
\end{align*}
where the equality follows by observing that on the event $\cC^{1} \cap \cC^{2}$ we have that $X_{n}^{i} \left( \ell \right) = Z_{n}^{i} \left( \ell \right)$ for $i \in \left\{ 1, 2 \right\}$. 
Now writing the indicator as 
$\mathbf{1}_{\cC^{1} \cap \cC^{2}} 
= 1 - \mathbf{1}_{\left( \cC^{1} \cap \cC^{2} \right)^{c}}$, 
we have that  
\begin{multline}\label{eq:LB_two_terms}
\E \left[ Y_{n} \left( \ell \right) \mathbf{1}_{\cC^{1} \cap \cC^{2}} \, \middle| \, T_{\tcorr} \right] \\
\geq 
\frac{\E \left[ \left( Z_{n}^{1} \left( \ell \right) - Z_{n}^{2} \left( \ell \right) \right)^{2} \, \middle| \, T_{\tcorr} \right]}{2 \cdot \frac{1}{\tcorr} \left| T_{\tcorr} \left( \ell \right) \right| \left( 1 - \frac{1}{\tcorr} \left| T_{\tcorr} \left( \ell \right) \right| \right) \left( 1 + \tcorr^{-1/3} \right)^{2}}
- 
\frac{\E \left[ \left( Z_{n}^{1} \left( \ell \right) - Z_{n}^{2} \left( \ell \right) \right)^{2} \mathbf{1}_{\left( \cC^{1} \cap \cC^{2} \right)^{c}} \, \middle| \, T_{\tcorr} \right]}{2 \cdot \frac{1}{\tcorr} \left| T_{\tcorr} \left( \ell \right) \right| \left( 1 - \frac{1}{\tcorr} \left| T_{\tcorr} \left( \ell \right) \right| \right) \left( 1 + \tcorr^{-1/3} \right)^{2}}
\end{multline} 
We deal with the two terms in~\eqref{eq:LB_two_terms} separately, starting with the first term, for which the analysis is similar to that in the upper bound.

By the bounded convergence theorem we have that 
\[
\lim_{n \to \infty} \E \left[ \left( Z_{n}^{1} \left( \ell \right) - Z_{n}^{2} \left( \ell \right) \right)^{2} \, \middle| \, T_{\tcorr} \right]
= 
\E \left[ \left( Z^{1} \left( \ell \right) - Z^{2} \left( \ell \right) \right)^{2} \, \middle| \, T_{\tcorr} \right]. 
\]
Now using conditional independence, the limiting conditional distribution obtained in~\eqref{eq:Zk}, and plugging in the variance of the beta distribution, 
we have that 
\[
\E \left[ \left( Z^{1} \left( \ell \right) - Z^{2} \left( \ell \right) \right)^{2} \, \middle| \, T_{\tcorr} \right] 
= 2 \Var \left( Z \left( \ell \right) \, \middle| \, T_{\tcorr} \right) 
= 
\begin{cases} 
\frac{2 \left| T_{\tcorr} \left( \ell \right) \right| \left( \tcorr - \left| T_{\tcorr} \left( \ell \right) \right|\right)}{\tcorr^{2} \left( \tcorr + 1 \right)} 
&\text{ for } \UA, \\
\frac{2 \left( \left| T_{\tcorr} \left( \ell \right) \right| - 1/2 \right) \left( \tcorr - \left| T_{\tcorr} \left( \ell \right) \right| - 1/2 \right)}{\left( \tcorr - 1 \right)^{2} \tcorr} 
&\text{ for } \PA. 
\end{cases}
\]
Plugging these formulas into the above, we obtain for UA trees that 
\[
\lim_{n \to \infty} 
\frac{\E \left[ \left( Z_{n}^{1} \left( \ell \right) - Z_{n}^{2} \left( \ell \right) \right)^{2} \, \middle| \, T_{\tcorr} \right]}{2 \cdot \frac{1}{\tcorr} \left| T_{\tcorr} \left( \ell \right) \right| \left( 1 - \frac{1}{\tcorr} \left| T_{\tcorr} \left( \ell \right) \right| \right) \left( 1 + \tcorr^{-1/3} \right)^{2}} 
= \frac{1}{\left( \tcorr + 1 \right) \left( 1 + \tcorr^{-1/3} \right)^{2}} 
\geq \frac{1 - 2.5 \tcorr^{-1/3}}{\tcorr},
\]
where the inequality holds for all $\tcorr$ large enough. 
For PA trees we obtain that
\begin{multline*}
\lim_{n \to \infty} 
\frac{\E \left[ \left( Z_{n}^{1} \left( \ell \right) - Z_{n}^{2} \left( \ell \right) \right)^{2} \, \middle| \, T_{\tcorr} \right]}{2 \cdot \frac{1}{\tcorr} \left| T_{\tcorr} \left( \ell \right) \right| \left( 1 - \frac{1}{\tcorr} \left| T_{\tcorr} \left( \ell \right) \right| \right) \left( 1 + \tcorr^{-1/3} \right)^{2}} \\
= 
\frac{1}{\tcorr} 
\cdot 
\frac{1}{\left( 1 + \tcorr^{-1/3} \right)^{2}} 
\cdot 
\frac{\tcorr^{2}}{\left( \tcorr - 1 \right)^{2}} 
\cdot 
\frac{\left| T_{\tcorr} \left( \ell \right) \right| - 1/2}{\left| T_{\tcorr} \left( \ell \right) \right|} 
\cdot 
\frac{\tcorr - \left| T_{\tcorr} \left( \ell \right) \right| - 1/2}{\tcorr - \left| T_{\tcorr} \left( \ell \right) \right|}.
\end{multline*}
We always have that 
$\left| T_{\tcorr} \left( \ell \right) \right| \leq \tcorr / 2$. 
Since $T_{\tcorr}$ is such that $\cD$ holds, 
by Lemma~\ref{lem:D} 
we also have that 
$\left| T_{\tcorr} \left( \ell \right) \right| \geq \tcorr^{7/8}$. 
Plugging these inequalities into the display above, we obtain that 
\begin{multline*}
\lim_{n \to \infty} 
\frac{\E \left[ \left( Z_{n}^{1} \left( \ell \right) - Z_{n}^{2} \left( \ell \right) \right)^{2} \, \middle| \, T_{\tcorr} \right]}{2 \cdot \frac{1}{\tcorr} \left| T_{\tcorr} \left( \ell \right) \right| \left( 1 - \frac{1}{\tcorr} \left| T_{\tcorr} \left( \ell \right) \right| \right) \left( 1 + \tcorr^{-1/3} \right)^{2}} \\
\geq
\frac{1}{\tcorr} 
\cdot 
\frac{1}{\left( 1 + \tcorr^{-1/3} \right)^{2}} 
\cdot 
\frac{\tcorr^{2}}{\left( \tcorr - 1 \right)^{2}} 
\cdot 
\frac{\tcorr^{7/8} - 1/2}{\tcorr^{7/8}} 
\cdot 
\frac{\tcorr / 2 - 1/2}{\tcorr / 2} 
\geq \frac{1 - 2.5 \tcorr^{-1/3}}{\tcorr},
\end{multline*}
where the second inequality holds for all $\tcorr$ large enough.

We now turn to the second term in~\eqref{eq:LB_two_terms}. 
Since $Z_{n}^{1} \left( \ell \right) - Z_{n}^{2} \left( \ell \right) \in [-1,1]$, 
we have that 
\[
\E \left[ \left( Z_{n}^{1} \left( \ell \right) - Z_{n}^{2} \left( \ell \right) \right)^{2} \mathbf{1}_{\left( \cC^{1} \cap \cC^{2} \right)^{c}} \, \middle| \, T_{\tcorr} \right] 
\leq 
\p \left( \left( \cC^{1} \cap \cC^{2} \right)^{c} \, \middle| \, T_{\tcorr} \right).
\]
As mentioned above, we always have that 
$\left| T_{\tcorr} \left( \ell \right) \right| \leq \tcorr / 2$; 
moreover,  
since $T_{\tcorr}$ is such that $\cD$ holds, 
by Lemma~\ref{lem:D} 
we also have that 
$\left| T_{\tcorr} \left( \ell \right) \right| \geq \tcorr^{7/8}$. 
Using these inequalities we may bound the second term in~\eqref{eq:LB_two_terms}: 
\[
\frac{\E \left[ \left( Z_{n}^{1} \left( \ell \right) - Z_{n}^{2} \left( \ell \right) \right)^{2} \mathbf{1}_{\left( \cC^{1} \cap \cC^{2} \right)^{c}} \, \middle| \, T_{\tcorr} \right]}{2 \cdot \frac{1}{\tcorr} \left| T_{\tcorr} \left( \ell \right) \right| \left( 1 - \frac{1}{\tcorr} \left| T_{\tcorr} \left( \ell \right) \right| \right) \left( 1 + \tcorr^{-1/3} \right)^{2}} 
\leq 
\tcorr^{1/8} \p \left( \left( \cC^{1} \cap \cC^{2} \right)^{c} \, \middle| \, T_{\tcorr} \right).
\] 
Taking an expectation over $T_{\tcorr}$, 
this bound becomes 
$\tcorr^{1/8} \p \left( \left( \cC^{1} \cap \cC^{2} \right)^{c} \, \middle| \, \cD \right)$. 
By Lemma~\ref{lem:D} we have that 
$\p \left( \left( \cC^{1} \cap \cC^{2} \right)^{c} \, \middle| \, \cD \right) 
\leq C \tcorr^{-3}$ 
for some finite constant $C$ and all $\tcorr$ large enough. 
Thus ultimately the bound becomes 
$C \tcorr^{-23/8}$, 
which is at most 
$0.5 \tcorr^{-4/3}$ for all $\tcorr$ large enough. 

Overall, we have thus shown that 
\[
\liminf_{n \to \infty} \E \left[ Y_{n} \left( \ell \right) \mathbf{1}_{\cC^{1} \cap \cC^{2}} \, \middle| \, \cD \right] 
\geq 
\frac{1 - 3 \tcorr^{-1/3}}{\tcorr} 
\]
for all $\tcorr$ large enough (where here ``large enough'' does not depend on $\ell$).
The bound in~\eqref{eq:first_moment_LB} follows by linearity of expectation. 
\end{proof}

The following lemma gives a variance bound that we will use. 

\begin{lemma}\label{lem:variance}
Let $\left( T_{n}^{1}, T_{n}^{2} \right) \sim \CPA \left( n, \tcorr, S \right)$. 
There exists a finite constant $C$ such that 
for all $\tcorr$ large enough we have 
for all $k \in \left\{ 1, 2, \ldots, K \right\}$ that 
\begin{equation}\label{eq:var_bound}
\limsup_{n \to \infty} 
\Var \left( S_{n} \left( k \right) \mathbf{1}_{\cC^{1} \cap \cC^{2}} \, \middle| \, \cD \right) 
\leq 
\frac{C}{k \tcorr^{2}}.
\end{equation}
The same bound also holds when $\left( T_{n}^{1}, T_{n}^{2} \right) \sim \CUA \left( n, \tcorr, S \right)$. 
\end{lemma}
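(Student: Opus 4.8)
The plan is to pass to the $n\to\infty$ limit, where $X_n^i(k)$ becomes a limiting normalized subtree fraction, and then to run a second-moment computation using the fact that, conditioned on $T_\tcorr$, the joint law of the relevant limiting fractions in a single tree is Dirichlet (for UA) or the analogous distribution coming from the replacement-matrix-$2I$ urn (for PA), and that the two trees are conditionally independent.

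\textbf{Step 1: reduction to the limiting variables.} Condition on $T_\tcorr$ and work on the event $\cD$. On $\cC^1\cap\cC^2$ we have $X_n^i(\ell)=Z_n^i(\ell):=\tfrac1n|(T_n^i,\theta)_{\wt\theta(\ell)\downarrow}|$ for all $n\ge\tcorr$ and $1\le\ell\le K$; moreover property~\ref{prop:nice3_fine} together with the bound $|T_\tcorr(\ell)|\ge\tcorr^{7/8}$ from Lemma~\ref{lem:D} forces $Z_n^1(\ell)(1-Z_n^1(\ell))\ge c\,\tcorr^{-1/8}$, so $S_n(k)\mathbf 1_{\cC^1\cap\cC^2}\le C\tcorr^{1/8}$ is bounded. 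Hence by the bounded convergence theorem $\limsup_{n\to\infty}\Var(S_n(k)\mathbf 1_{\cC^1\cap\cC^2}\mid\cD)=\Var(S(k)\mathbf 1_{\cC^1\cap\cC^2}\mid\cD)$, where $S(k)=\tfrac1k\sum_{\ell=1}^k Y(\ell)$ and $Y(\ell)=(Z^1(\ell)-Z^2(\ell))^2/(2Z^1(\ell)(1-Z^1(\ell)))$ with $Z^i(\ell)=\lim_n Z_n^i(\ell)$, which (given $T_\tcorr$) is a $\Beta(a_\ell,b_\ell)$ variable as in~\eqref{eq:Zk}. Since $a_\ell\ge\tcorr^{7/8}-1$ on $\cD$, all the inverse moments $\E[(Z^i(\ell)(1-Z^i(\ell)))^{-m}\mid T_\tcorr]$ are of the expected order $\mu_\ell^{-m}$ (where $\mu_\ell=a_\ell/(a_\ell+b_\ell)$), so $Y(\ell)$ has finite, well-controlled conditional moments; combined with $\p((\cC^1\cap\cC^2)^c\mid\cD)\le C\tcorr^{-3}$ from Lemma~\ref{lem:D} and Cauchy--Schwarz, dropping the indicator changes the variance by $o(\tcorr^{-2})$, so it suffices to bound $\Var(S(k)\mid\cD)$.

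\textbf{Step 2: law of total variance over $T_\tcorr$.} Write $\Var(S(k)\mid\cD)=\E[\Var(S(k)\mid T_\tcorr)\mid\cD]+\Var(\E[S(k)\mid T_\tcorr]\mid\cD)$. For the between-$T_\tcorr$ term, the first-moment analysis already carried out in the proof of Lemma~\ref{lem:first_moment} gives $\E[Y(\ell)\mid T_\tcorr]=\tcorr^{-1}(1+O(\tcorr^{-1/3}))$ on $\cD$, uniformly in $\ell\le K$; hence $\E[S(k)\mid T_\tcorr]=\tcorr^{-1}+O(\tcorr^{-4/3})$, and this term is $O(\tcorr^{-8/3})$, which is at most $C/(k\tcorr^2)$ since $k\le K=O(\log\tcorr)$. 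So the entire burden is on the within-$T_\tcorr$ term $\E[\Var(S(k)\mid T_\tcorr)\mid\cD]$, and it suffices to show $\Var(S(k)\mid T_\tcorr)\le C/(k\tcorr^2)$ uniformly over $T_\tcorr\in\cD$.

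\textbf{Step 3: the covariance bound.} Fix such a $T_\tcorr$ and expand $\Var(S(k)\mid T_\tcorr)=k^{-2}\sum_{\ell,\ell'\le k}\Cov(Y(\ell),Y(\ell')\mid T_\tcorr)$. For the \emph{diagonal} terms, conditioning on $Z^1(\ell)$ gives $\E[(Z^1(\ell)-Z^2(\ell))^2\mid Z^1(\ell),T_\tcorr]=\sigma_\ell^2+(Z^1(\ell)-\mu_\ell)^2$ with fourth moment of order $\sigma_\ell^4+(Z^1(\ell)-\mu_\ell)^4$ (explicit Beta moments); dividing by $2Z^1(\ell)(1-Z^1(\ell))$, whose reciprocal concentrates around $(\mu_\ell(1-\mu_\ell))^{-1}$ with relative fluctuation $O(\tcorr^{-7/16})$ and controlled inverse moments (this is exactly where $a_\ell\gtrsim\tcorr^{7/8}$ enters), yields $\Var(Y(\ell)\mid T_\tcorr)=O(\tcorr^{-2})$, so the diagonal contributes $O(1/(k\tcorr^2))$. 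For the \emph{off-diagonal} terms $\ell\ne\ell'$, the only dependence between $Y(\ell)$ and $Y(\ell')$ comes, within each tree, from the joint Dirichlet structure of $(Z^i(1),\dots,Z^i(K),1-\sum_{m\le K}Z^i(m))$, whose covariances are $\Cov(Z^i(\ell),Z^i(\ell')\mid T_\tcorr)=-a_\ell a_{\ell'}/(\tcorr^2(\tcorr+1))$ (and the PA analogue). Tracking how these covariances propagate through the quadratic numerator and the $\mu_\ell^{-1}$ scaling of the denominator gives $|\Cov(Y(\ell),Y(\ell')\mid T_\tcorr)|\le C(a_\ell+a_{\ell'})/\tcorr^3$; since the subtrees $T_\tcorr(1),\dots,T_\tcorr(K)$ are disjoint we have $\sum_{\ell\le K}a_\ell\le\tcorr$, so $\sum_{\ell\ne\ell'\le k}|\Cov(Y(\ell),Y(\ell')\mid T_\tcorr)|\le Ck/\tcorr^2$, and the off-diagonal contributes $O(1/(k\tcorr^2))$ as well. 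Adding the two gives $\Var(S(k)\mid T_\tcorr)=O(1/(k\tcorr^2))$, and feeding this into Step 2 finishes the proof.

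\textbf{Main obstacle.} The crux is the off-diagonal bound in Step~3: an individual Dirichlet covariance is only of order $\tcorr^{-1}$, not small enough on its own, and one must exploit that $Y(\ell)$ is a ratio with a quadratic numerator — the covariance of $Y(\ell)$ with $Y(\ell')$ gains a numerator factor of order $\mu_\ell\mu_{\ell'}$ and loses a $(\mu_\ell\mu_{\ell'})^{-1}$ from the two denominators, improving the bound to $O((a_\ell+a_{\ell'})/\tcorr^3)$, which summed against the constraint $\sum_\ell a_\ell\le\tcorr$ is exactly enough to recover the $1/(k\tcorr^2)$ rate. Making this "quadratic-in-Dirichlet ratio" moment bookkeeping rigorous — in particular controlling the cross-terms between the numerator covariances and the fluctuations of the denominators $Z^1(\ell)(1-Z^1(\ell))$, and redoing the computation for the PA urn with replacement matrix $\left(\begin{smallmatrix}2&0\\0&2\end{smallmatrix}\right)$ where only the constants change — is the laborious part; everything else (Steps 1 and 2, and the diagonal of Step 3) is routine given the tools already developed in the excerpt.
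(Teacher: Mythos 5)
Your overall plan follows the same backbone as the paper's proof: law of total variance over $T_{\tcorr}$, disposal of the between-$T_{\tcorr}$ term via the first-moment estimates of Lemma~\ref{lem:first_moment}, expansion of the within-$T_{\tcorr}$ variance into diagonal and off-diagonal covariances of $Y(\ell),Y(m)$, Beta-variance bound on the diagonal, Dirichlet-covariance bound off-diagonal, and finally the constraint $\sum_\ell |T_{\tcorr}(\ell)|\le\tcorr$ to collapse the double sum.

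Where you differ, and where the proposal has a gap, is the handling of the random denominator $Z^1(\ell)\bigl(1-Z^1(\ell)\bigr)$. The paper keeps the indicator $\mathbf{1}_{\cC^1\cap\cC^2}$ in place throughout, so that property~\ref{prop:nice3_fine} gives \emph{deterministic} two-sided bounds on the denominator of the form $\tfrac{|T_{\tcorr}(\ell)|}{\tcorr}\bigl(1-\tfrac{|T_{\tcorr}(\ell)|}{\tcorr}\bigr)\bigl(1\pm\tcorr^{-1/3}\bigr)^{2}$; the covariance of $Y(\ell)\mathbf{1}_{\cC^1\cap\cC^2}$ and $Y(m)\mathbf{1}_{\cC^1\cap\cC^2}$ then reduces, up to a three-term error decomposition, to the covariance of the pure numerators $(Z^1-Z^2)^2$, which is an exact, closed-form Beta/Dirichlet fourth-moment computation (Lemmas~\ref{lem:beta} and~\ref{lem:dirichlet}). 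You instead propose to drop the indicator in Step~1, asserting this costs $o(\tcorr^{-2})$, and then to confront the random denominator head-on via inverse-moment concentration. This shift is not cost-free. The error $\E[A^2\mathbf{1}_{(\cC^1\cap\cC^2)^c}\mid\cD]$ from dropping the indicator can only be bounded by Cauchy--Schwarz against $\E[A^4\mid\cD]$, which \emph{without} the indicator involves $\E[(Z^1(1-Z^1))^{-4}]$; these inverse Beta moments are finite on $\cD$ but only of order $\tcorr^{1/2}$ (since $a_\ell\ge\tcorr^{7/8}$), so the bound you would get is $\sqrt{\tcorr^{1/2}\cdot\tcorr^{-3}}=\tcorr^{-5/4}$ — not $o(\tcorr^{-2})$. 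To make the ``drop the indicator'' step work you would have to first establish $\Var(A\mid T_{\tcorr})\le C/(k\tcorr^2)$ and then argue circularly that $\E[A^2]$ is small; this can be arranged but it is not the one-line reduction you claim. Similarly, your off-diagonal bound $\lvert\Cov(Y(\ell),Y(m))\rvert\le C(a_\ell+a_m)/\tcorr^3$ is weaker than, and does not follow in one step from, the paper's $Ca_\ell a_m/\tcorr^4$; with the random denominator, the covariance has additional contributions from $\Cov\bigl((Z^1(\ell)-Z^2(\ell))^2,\,Z^1(m)(1-Z^1(m))\bigr)$ and from the fluctuation of $1/(Z^1(\ell)(1-Z^1(\ell)))$, and none of these are computed. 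So the proposal is architecturally sound and would likely yield the lemma if the moment bookkeeping were done, but the two steps you label as ``routine'' (dropping the indicator, and the off-diagonal covariance propagation) are exactly the places the paper's construction of $\cC$ and $\cD$ was engineered to trivialize.
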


The proof of Lemma~\ref{lem:variance} is somewhat lengthy, so we defer it to Section~\ref{sec:estimation_variance_proof}. 

\subsection{Putting everything together: proof of Theorem~\ref{thm:estimation_inverse}} \label{sec:estimation_proof} 

\begin{proof}[Proof of Theorem~\ref{thm:estimation_inverse}] 
In the following we set 
\begin{equation}\label{eq:eps}
\eps := \frac{\log\log \tcorr}{2 \sqrt{\log \tcorr}}
\end{equation}
to abbreviate notation. Our goal is to show that 
\begin{equation}\label{eq:estimation_goal}
\lim_{\tcorr \to \infty} \limsup_{n \to \infty} 
\p \left( \left| S_{n} \left( K_{n} \right) - \frac{1}{\tcorr} \right| \geq \frac{\eps}{\tcorr} \right) = 0. 
\end{equation}

To do this, we first fix $k \in \left\{ 1, 2, \ldots, K \right\}$ 
and bound the probability 
$\p \left( \left| S_{n} \left( k \right) - 1 / \tcorr \right| \geq \eps / \tcorr \right)$. 
By conditioning on the ``nice'' event $\cD$, we have that 
\[
\p \left( \left| S_{n} \left( k \right) - \frac{1}{\tcorr} \right| \geq \frac{\eps}{\tcorr} \right) 
\leq 
\p \left( \left| S_{n} \left( k \right) - \frac{1}{\tcorr} \right| \geq \frac{\eps}{\tcorr} \, \middle| \, \cD \right) 
+ \p \left( \cD^{c} \right). 
\]
The second term above is at most $C / \tcorr^{1/2000}$ by Lemma~\ref{lem:D}. 
We can break the first term above into two further terms, based on whether the ``nice'' event $\cC^{1} \cap \cC^{2}$ holds or not: by a union bound we have that 
\[
\p \left( \left| S_{n} \left( k \right) - \frac{1}{\tcorr} \right| \geq \frac{\eps}{\tcorr} \, \middle| \, \cD \right)  
\leq 
\p \left( \left| S_{n} \left( k \right) \mathbf{1}_{\cC^{1} \cap \cC^{2}} - \frac{1}{\tcorr} \right| \geq \frac{\eps}{\tcorr} \, \middle| \, \cD \right) 
+ 
\p \left( \left( \cC^{1} \cap \cC^{2} \right)^{c} \, \middle| \, \cD \right).
\] 
The second term in the display above is at most $C/\tcorr^{3}$ by Lemma~\ref{lem:D}, 
so it remains to deal with the first term above. 
Recall that Lemma~\ref{lem:first_moment} implies that 
for all $\tcorr$ large enough 
we have for all $n$ large enough that 
\[
\left| 
\E \left[ S_{n} \left( k \right) \mathbf{1}_{\cC^{1} \cap \cC^{2}} \, \middle| \, \cD \right] 
- \frac{1}{\tcorr} 
\right| 
\leq 
\frac{4 \tcorr^{-1/3}}{\tcorr}.
\]
Recalling the definition of $\eps$ from~\eqref{eq:eps}, 
note that 
$\eps \geq 8 \tcorr^{-1/3}$ for all $\tcorr$ large enough 
and hence $\eps - 4 \tcorr^{-1/3} \geq \eps / 2$ for all $\tcorr$ large enough. 
By the triangle inequality we thus have that 
\[
\p \left( \left| S_{n} \left( k \right) \mathbf{1}_{\cC^{1} \cap \cC^{2}} - \frac{1}{\tcorr} \right| \geq \frac{\eps}{\tcorr} \, \middle| \, \cD \right) 
\leq 
\p \left( \left| S_{n} \left( k \right) \mathbf{1}_{\cC^{1} \cap \cC^{2}} - \E \left[ S_{n} \left( k \right) \mathbf{1}_{\cC^{1} \cap \cC^{2}} \, \middle| \, \cD \right] \right| \geq \frac{\eps}{2\tcorr} \, \middle| \, \cD \right). 
\]
Finally, by Chebyshev's inequality we have that 
\[
\p \left( \left| S_{n} \left( k \right) \mathbf{1}_{\cC^{1} \cap \cC^{2}} - \E \left[ S_{n} \left( k \right) \mathbf{1}_{\cC^{1} \cap \cC^{2}} \, \middle| \, \cD \right] \right| \geq \frac{\eps}{2\tcorr} \, \middle| \, \cD \right) 
\leq 
\frac{4 \tcorr^{2}}{\eps^{2}} \Var \left( S_{n} \left( k \right) \mathbf{1}_{\cC^{1} \cap \cC^{2}} \, \middle| \, \cD \right).
\]
Taking a limit as $n \to \infty$ and putting all the above bounds together we have thus obtained that 
\begin{equation}\label{eq:final_bound_k}
\limsup_{n \to \infty} 
\p \left( \left| S_{n} \left( k \right) - \frac{1}{\tcorr} \right| \geq \frac{\eps}{\tcorr} \right)  
\leq \frac{C}{k \eps^{2}} + \frac{C}{\tcorr^{1/2000}} 
\end{equation}
for some finite constant $C$ and all $\tcorr$ large enough.

Now we are ready to show~\eqref{eq:estimation_goal}. Define the event 
\[
\cE := \left\{ 
\log \tcorr - \log \log \tcorr 
\leq 
- \log Y_{n}
\leq 
\log \tcorr + \log \log \tcorr
\right\}. 
\]
By a union bound we have that 
\[
\p \left( \left| S_{n} \left( K_{n} \right) - \frac{1}{\tcorr} \right| \geq \frac{\eps}{\tcorr} \right) 
\leq 
\p \left( \left\{ \left| S_{n} \left( K_{n} \right) - \frac{1}{\tcorr} \right| \geq \frac{\eps}{\tcorr} \right\} \cap \cE \right) 
+ \p \left( \cE^{c} \right). 
\]
By Theorem~\ref{thm:estimation_coarse_inverse} we have that 
$\lim_{\tcorr \to \infty} \limsup_{n \to \infty} \p \left( \cE^{c} \right) = 0$, 
so what remains is to deal with the first term on the right hand side of the display above. 
On the event $\cE$ we have that 
\[
\left\lfloor \tfrac{1}{400} \log \tcorr - \tfrac{1}{400} \log \log \tcorr \right\rfloor
\leq 
K_{n} 
\leq
\left\lfloor \tfrac{1}{400} \log \tcorr + \tfrac{1}{400} \log \log \tcorr \right\rfloor, 
\]
so by a union bound we have that 
\[
\p \left( \left\{ \left| S_{n} \left( K_{n} \right) - \frac{1}{\tcorr} \right| \geq \frac{\eps}{\tcorr} \right\} \cap \cE \right) 
\leq 
\sum_{k = \left\lfloor \frac{1}{400} \log \tcorr - \frac{1}{400} \log \log \tcorr \right\rfloor}^{\left\lfloor \frac{1}{400} \log \tcorr + \frac{1}{400} \log \log \tcorr \right\rfloor} 
\p \left( \left| S_{n} \left( k \right) - \frac{1}{\tcorr} \right| \geq \frac{\eps}{\tcorr} \right). 
\]
Note that 
$\frac{1}{400} \log \tcorr - \frac{1}{400} \log \log \tcorr \leq K$ 
for all $\tcorr$ large enough, 
so we can apply the bound~\eqref{eq:final_bound_k} that holds for fixed $k \leq K$. 
Thus taking a limit as $n \to \infty$ and applying~\eqref{eq:final_bound_k} we thus obtain that 
\[
\limsup_{n \to \infty} 
\p \left( \left\{ \left| S_{n} \left( K_{n} \right) - \frac{1}{\tcorr} \right| \geq \frac{\eps}{\tcorr} \right\} \cap \cE \right) 
\leq 
\frac{C \log \log \tcorr}{\eps^{2} \log \tcorr} + \frac{C \log \log \tcorr}{\tcorr^{1/2000}} 
\leq 
\frac{C'}{\log \log \tcorr}
\]
for some finite constants $C$ and $C'$, and all $\tcorr$ large enough, 
where in the second inequality we used the definition of $\eps$ from~\eqref{eq:eps}. 
Taking the limit as $\tcorr \to \infty$ concludes the proof. 
\end{proof}

\subsection{Proof of Lemma~\ref{lem:D}} \label{sec:estimation_nice_events} 

We start with a preliminary lemma. 

\begin{lemma}\label{lem:deg_lb}
Let $\left\{ T_{n} \right\}_{n \geq 2}$ be a sequence of trees started from the seed $S = S_{2}$ and grown according to PA or UA. Let $v_{1}, v_{2}, v_{3}, \ldots$ denote the vertices in order of appearance. Let $d_{n} (v)$ denote the degree of $v$ in $T_{n}$. 
Fix $\eps > 0$. 
There exists a finite constant $C$ such that the following holds. 
For every $\tcorr \geq C$, $t \geq \tcorr^{\eps}$, and $i \leq 100 \log \tcorr$, 
we have that 
\[
\p \left( d_{t} \left( v_{i} \right) \leq \tfrac{1}{6} \log t \right) 
\leq 
t^{-1/28}.
\]
\end{lemma}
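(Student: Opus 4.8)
The plan is to bound $d_{t}(v_{i})$ from below by a sum of independent Bernoulli random variables, via a coupling that works simultaneously for PA and UA, and then apply a Chernoff bound. The key point is that in \emph{both} models, at the step in which the tree grows from $T_{k}$ to $T_{k+1}$, the conditional probability (given the current tree) that the new vertex attaches to $v_{i}$ is at least $1/(2k)$: for UA this probability equals $1/k$, while for PA it equals $d_{k}(v_{i})/(2(k-1)) \geq 1/(2(k-1)) \geq 1/(2k)$, since the degree of $v_{i}$ is never less than $1$. Writing $b := \tau(v_{i})$ for the birth time of $v_{i}$ (so $b \leq \max\{i,2\}$), I would construct on a common probability space a sequence $\{\wt{B}_{k}\}_{k \geq b}$ of independent $\mathrm{Bernoulli}(1/(2k))$ random variables such that $\wt{B}_{k}=1$ forces the vertex arriving at step $k$ to attach to $v_{i}$ (possible precisely because of the floor $1/(2k)$ on the conditional attachment probability). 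It then follows that
\[
d_{t}(v_{i}) \geq 1 + \sum_{k=b}^{t-1} \wt{B}_{k} =: 1 + W_{t}.
\]

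Next I would estimate the mean $\mu := \E[W_{t}] = \sum_{k=b}^{t-1}\tfrac{1}{2k} \geq \tfrac12\bigl(\log t - \log b\bigr)$. Using $i \leq 100\log\tcorr$ together with $t \geq \tcorr^{\eps}$ --- the latter giving $\log\tcorr \leq \eps^{-1}\log t$, hence $b \leq 100\eps^{-1}\log t$ and $\log b \leq \log\log t + \log(100/\eps)$ --- one gets $\mu \geq \tfrac13\log t$ for all $\tcorr$ at least some constant $C(\eps)$ (recall $t \geq \tcorr^{\eps} \geq C(\eps)^{\eps}$). In particular $\tfrac16\log t \leq \tfrac12\mu$, and since $d_{t}(v_{i}) \leq \tfrac16\log t$ forces $W_{t} \leq \tfrac16\log t$, we have $\{d_{t}(v_{i}) \leq \tfrac16\log t\} \subseteq \{W_{t} \leq \tfrac12\mu\}$.

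Finally I would invoke the multiplicative Chernoff lower-tail bound $\p(W_{t} \leq (1-\delta)\mu) \leq \exp(-\delta^{2}\mu/2)$ for sums of independent Bernoulli random variables, with $\delta = 1/2$, yielding $\p(W_{t} \leq \tfrac12\mu) \leq \exp(-\mu/8) \leq \exp(-\tfrac{1}{24}\log t) = t^{-1/24} \leq t^{-1/28}$, as claimed. I do not expect a genuine obstacle here; the only delicate points are (i) noticing that the crude floor $1/(2k)$ on the attachment probability --- which holds merely because $d_{k}(v_{i}) \geq 1$ --- already makes the argument model-independent, so PA and UA are handled at once, and (ii) verifying that $\mu \geq \tfrac13\log t$ holds \emph{uniformly in $t \geq \tcorr^{\eps}$}, which is exactly where the hypotheses $i \leq 100\log\tcorr$ and $t \geq \tcorr^{\eps}$ enter. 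The exponent $1/28$ in the statement leaves room to spare: the computation above actually produces $t^{-1/24}$, so one could afford to be even more wasteful in the coupling.
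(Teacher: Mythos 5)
Your proof is correct and takes essentially the same approach as the paper: couple $d_{t}(v_{i})$ from below to a sum of independent Bernoulli random variables whose parameters are a model-independent lower bound on the conditional attachment probability (valid because $d \geq 1$), then apply a one-sided concentration inequality after showing the mean is $\geq \tfrac{1}{3}\log t$. The only cosmetic differences are that the paper first reduces to the worst case $i = \lfloor 100\log\tcorr\rfloor$ via stochastic dominance and applies Bernstein's inequality, whereas you handle general $i$ directly through the birth time $b$ and use the multiplicative Chernoff bound; both routes land comfortably within the stated exponent $1/28$.
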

\begin{proof} 
If $i \leq i'$, then $d_{t}(v_{i})$ stochastically dominates $d_{t}(v_{i'})$ for every $t \geq i'$, 
so it suffices to prove the inequality for 
$i = i_{\star} := \left\lfloor 100 \log \tcorr \right\rfloor > 2$. 
Let $t > i_{\star}$, 
and let 
$X_{i_{\star}+1}, \ldots, X_{t}$ be independent Bernoulli random variables 
such that $\E \left[ X_{k} \right] = 1 / (2k-4)$. 
Conditioned on $T_{t-1}$, the probability that $v_{t}$ connects to $v_{i_{\star}}$ is at least $1 / (2t - 4)$, for both PA and UA trees. 
This implies that $d_{t} \left( v_{i_{\star}} \right)$ stochastically dominates 
$Y_{t} := 1 + X_{i_{\star} + 1} + \ldots X_{t}$. 
Thus we have that 
\[
\p \left( d_{t} \left( v_{i_{\star}} \right) \leq \tfrac{1}{6} \log t \right) 
\leq 
\p \left( Y_{t} \leq \tfrac{1}{6} \log t \right).
\]
Since $Y_{t} - 1$ is the sum of independent Bernoulli random variables, we have that $\Var \left( Y_{t} \right) \leq \E \left[ Y_{t} \right]$. 
Thus by Bernstein's inequality we have for every $x \geq 0$ that 
\[
\p \left( Y_{t} \leq \E \left[ Y_{t} \right] - x \right) 
\leq \exp \left( - \frac{x^{2} / 2}{\E \left[ Y_{t} \right] + x /3} \right).
\]
Setting $x = \E \left[ Y_{t} \right] / 2$, we obtain that 
\[
\p \left( Y_{t} \leq \tfrac{1}{2} \E \left[ Y_{t} \right] \right) 
\leq \exp \left( - \tfrac{3}{28} \E \left[ Y_{t} \right] \right).
\]
We have that 
\[
\E \left[ Y_{t} \right] 
= 1 + \frac{1}{2} \sum_{k = i_{\star} - 1}^{t-2} \frac{1}{k}
\geq 1 + \frac{1}{2} \int_{i_{\star}-1}^{t-1} \frac{1}{x} dx 
= 1 + \frac{1}{2} \log \left( \frac{t-1}{i_{\star}-1} \right)
\]
and so $\E \left[ Y_{t} \right] \geq \frac{1}{3} \log t$ 
for all $\tcorr$ large enough. 
Plugging this inequality into the displays above and putting them together concludes the proof. 
\end{proof}


\begin{proof}[Proof of Lemma~\ref{lem:D}]
The proof is similar to that of Lemma~\ref{lem:nice_event} and most of the work has already been done there. However, we modify the proof in a few key places to show the desired result. 
First, we slightly change the definition of an early vertex. 
Fix $\gamma := 18$. 
In the following we informally call a vertex an ``early'' vertex if its timestamp is at most $\gamma \log \tcorr$. 
We also fix 
$s_{1} := \tcorr^{1/64}$ 
and 
$s_{2} := \tcorr^{1/48}$, 
and note that $s_{1} = s_{2}^{3/4}$. 

\textbf{Modified Step 1:} \emph{The centroid is an early vertex.} 

Recall the definition of $\cA_{1}(i)$ from Lemma~\ref{lem:nice_event}. 
Define 
$\cC_{1} := \cap_{i > \gamma \log \tcorr} \cA_{1} \left( i \right)$, 
the event that only early vertices are ever a centroid. 
Similarly as in Step~1 of Lemma~\ref{lem:nice_event}, we thus have, for all~$\tcorr$ large enough, that 
\begin{equation}\label{eq:centroid_early_vertices}
\p \left( \cC_{1}^{c} \right) 
\leq \sum_{i > \gamma \log \tcorr} \p \left( \cA_{1} \left( i \right)^{c} \right) 
\leq \sum_{i > \gamma \log \tcorr} e^{-i/3} 
\leq \frac{4}{\tcorr^{\gamma/3}}. 
\end{equation}
Now let $\cD_{1}$ denote the event that 
$T_{\tcorr}$ satisfies 
$\p \left( \cC_{1}^{c} \, \middle| \, T_{\tcorr} \right) \leq \tcorr^{-\gamma / 6}$. 
By Markov's inequality, the tower rule, and~\eqref{eq:centroid_early_vertices} we have that 
\begin{equation}\label{eq:D1_bound}
\p \left( \cD_{1}^{c} \right) 
= \p \left( \p \left( \cC_{1}^{c} \, \middle| \, T_{\tcorr} \right) > \tcorr^{-\gamma / 6} \right) 
\leq \tcorr^{\gamma / 6} \E \left[ \p \left( \cC_{1}^{c} \, \middle| \, T_{\tcorr} \right) \right]
= \tcorr^{\gamma / 6} \p \left( \cC_{1}^{c} \right) 
\leq 4 \tcorr^{-\gamma / 6},
\end{equation}
where the last inequality holds for all $\tcorr$ large enough. 

\textbf{Modified Step 2:} \emph{Subtrees formed before time $s_{2}$ are large in $T_{\tcorr}$.} 

In Step~2 of Lemma~\ref{lem:nice_event} we proved that early subtrees are large in $T_{\tcorr}$. Here we need to show that many more subtrees are large---though what ``large'' means is relaxed here---for reasons that will become clear in later steps. 
Formally, define the $T_{\tcorr}$-measurable events 
\[
\cD_{2} \left( i, j \right) 
:= \left\{ \left| \left( T_{\tcorr}, v_{i} \right)_{v_{j} \downarrow} \right| \geq \tcorr^{7/8} \right\}.
\]
for distinct positive integers $i, j \leq s_{2}$, and also 
\[
\cD_{2} := \bigcap_{\substack{1 \leq i, j \leq s_{2} \\ i \neq j}}\cD_{2} \left( i, j \right).
\]
We proceed by bounding the probability of the complement of $\cD_{2} \left( i, j \right)$. 
Since the arguments are identical to those in Step~2 of Lemma~\ref{lem:nice_event}, 
we omit most details and only give the final bounds. 

Assume in the following that $1 \leq i < j \leq s_{2}$. 
In Step~2 of Lemma~\ref{lem:nice_event} we showed that, for both PA and UA trees, and for every $z \in [0,1]$, we have that 
\[
\max \left\{ \p \left( \frac{1}{\tcorr} \left| \left( T_{\tcorr}, v_{i} \right)_{v_{j} \downarrow} \right| \leq z \right), \p \left( \frac{1}{\tcorr} \left| \left( T_{\tcorr}, v_{j} \right)_{v_{i} \downarrow} \right| \leq z \right) \right\} 
\leq 
12 \sqrt{jz}.
\]
Setting $z = \tcorr^{-1/8}$ and using the bound $j \leq s_{2}$, we obtain that 
\[
\max \left\{ \p \left( \cD_{2} \left( i, j \right)^{c} \right), \p \left( \cD_{2} \left( i, j \right)^{c} \right) \right\} 
\leq 12 s_{2}^{1/2} \tcorr^{-1/16}.
\] 
By a union bound we thus have, for both PA and UA trees, that 
\begin{equation}\label{eq:D2c_bound}
\p \left( \cD_{2}^{c} \right) 
\leq 12 s_{2}^{5/2} \tcorr^{-1/16}
\leq 12 \tcorr^{-1/96}.
\end{equation}

\textbf{Modified Step 3:} \emph{The size-based ranking in $T_{\tcorr}$of subtrees formed before time $s_{2}$ persists.}

This is similar to Step~3 of Lemma~\ref{lem:nice_event}, but with some differences, which we highlight. 
Define the events 
\begin{align*}
\cC_{3} \left( i, j \right) &:= \left\{ \forall\, t \geq \tcorr : \left( \left| \left( T_{\tcorr}, v_{i} \right)_{v_{j} \downarrow} \right| - \left| \left( T_{\tcorr}, v_{j} \right)_{v_{i} \downarrow} \right| \right) \left( \left| \left( T_{t}, v_{i} \right)_{v_{j} \downarrow} \right| - \left| \left( T_{t}, v_{j} \right)_{v_{i} \downarrow} \right| \right) > 0 \right\}, \\
\cH_{3} \left( i, j \right) &:= \left\{ \forall\, t \geq \tcorr : \left| \frac{\left| \left( T_{t}, v_{i} \right)_{v_{j} \downarrow} \right|}{\left| \left( T_{t}, v_{i} \right)_{v_{j} \downarrow} \right| + \left| \left( T_{t}, v_{j} \right)_{v_{i} \downarrow} \right|} - \frac{1}{2} \right| > \tcorr^{-1/4} \right\}, \\
\cD_{3} \left( i, j \right) &:= \left\{ \left| \frac{\left| \left( T_{\tcorr}, v_{i} \right)_{v_{j} \downarrow} \right|}{\left| \left( T_{\tcorr}, v_{i} \right)_{v_{j} \downarrow} \right| + \left| \left( T_{\tcorr}, v_{j} \right)_{v_{i} \downarrow} \right|} - \frac{1}{2} \right| \geq 2 \tcorr^{-1/4} \right\}
\end{align*}
for distinct positive integers $i, j \leq s_{2}$, and also 
\[
\cC_{3} := \bigcap_{\substack{1 \leq i, j \leq s_{2} \\ i \neq j}} \cC_{3} \left( i, j \right) 
\qquad \qquad 
\text{ and }
\qquad \qquad 
\cD_{3} := \bigcap_{\substack{1 \leq i, j \leq s_{2} \\ i \neq j}} \cD_{3} \left( i, j \right).
\]
By the same arguments as in Step~3 of Lemma~\ref{lem:nice_event}, 
we have that if $\cH_{3} \left( i, j \right) \cap \cD_{2}$ holds, 
then $\cC_{3} \left( i, j \right)$ must also hold. By Lemma~\ref{lem:Polya_concentration} we have, for every tree $T_{\tcorr}$ such that $\cD_{3} \left( i, j \right) \cap \cD_{2}$ holds, that 
\[
\p \left( \cH_{3} \left( i, j \right)^{c} \, \middle| \, T_{\tcorr} \right) 
\leq 2 \exp \left( - \left( 2\tcorr^{7/8} - 1 \right) \tcorr^{-1/2} / 8 \right) 
\leq 2 \exp \left( - \tcorr^{3/8} / 8 \right).
\]
Thus by a union bound we have, for every tree $T_{\tcorr}$ such that $\cD_{3} \cap \cD_{2}$ holds, that 
\begin{equation}\label{eq:C3_bound}
\p \left( \cC_{3}^{c} \, \middle| \, T_{\tcorr} \right) 
\leq 2 s_{2}^{2} \exp \left( - \tcorr^{3/8} / 8 \right),
\end{equation}
and note that this decays faster than any polynomial in $\tcorr$. 

In the remainder of this step we bound the probability $\p \left( \left( \cD_{3} \cap \cD_{2} \right)^{c} \right)$. A union bound shows that 
$\p \left( \left( \cD_{3} \cap \cD_{2} \right)^{c} \right) 
\leq \p \left( \cD_{2}^{c} \right) + \p \left( \cD_{3}^{c} \cap \cD_{2} \right)$; 
the former probability is at most $12 \tcorr^{-1/96}$ by~\eqref{eq:D2c_bound}, 
so it suffices to bound $\p \left( \cD_{3}^{c} \cap \cD_{2} \right)$. 
By a further union bound, and incurring a factor of~$s_{2}^{2}$, 
it suffices to bound 
$\p \left( \cD_{3} \left( i, j \right)^{c} \cap \cD_{2} \right)$, 
where $1 \leq i < j \leq s_{2}$. 
To this end, define $\varphi_{i,j}$ as in~\eqref{eq:phi_ij}; 
again 
this limiting random variable exists almost surely. 
By a union bound we then have that 
\[
\p \left( \cD_{3} \left( i, j \right)^{c} \cap \cD_{2} \right) 
\leq 
\p \left( \left| \varphi_{i,j} - \tfrac{1}{2} \right| \leq 4 \tcorr^{-1/4} \right)
+ \p \left( \cD_{3} \left( i, j \right)^{c} \cap \cD_{2} \cap \left\{ \left| \varphi_{i,j} - \tfrac{1}{2} \right| > 4 \tcorr^{-1/4} \right\} \right).
\]
Both of these terms can be bounded by the same arguments as in Step~3 of Lemma~\ref{lem:nice_event}. 
First, there exists a finite absolute constant $C$ such that the first term above is at most $C \tcorr^{-1/4}$. 
Next, the second term is at most $2 \exp \left( - \tcorr^{3/8} / 8 \right)$. 
Altogether this gives that 
$\p \left( \cD_{3} \left( i, j \right)^{c} \cap \cD_{2} \right) 
\leq C' \tcorr^{-1/4}$ 
for some finite absolute constant $C'$. By a union bound we thus have that 
$\p \left( \cD_{3}^{c} \cap \cD_{2} \right) \leq C' \tcorr^{-5/24}$. 
Putting everything together we have thus obtained that 
\begin{equation}\label{eq:D3_bound}
\p \left( \left( \cD_{3} \cap \cD_{2} \right)^{c} \right) 
\leq C'' \tcorr^{-1/96}
\end{equation}
for some finite absolute constant $C''$.

\textbf{Modified Step 4:} \emph{The roots of the $K$ largest pendent subtrees of the centroid have timestamp at most~$s_{2}$.}

This is similar to Step~4 of Lemma~\ref{lem:nice_event}, but with significant differences---this step has the biggest differences among all. 
For one, we have to additionally show that the centroid has degree at least~$K$.

For a positive integer $i \leq \gamma \log \tcorr$ define the event 
\[
\cD_{4}' \left( i \right) 
:= \left\{ d_{s_{1}} \left( v_{i} \right) > \tfrac{1}{6} \log s_{1} \right\},
\]
and also define $\cD_{4}' := \cap_{1 \leq i \leq \gamma \log \tcorr} \cD_{4}' \left( i \right)$. 
By Lemma~\ref{lem:deg_lb} and a union bound we have that 
\begin{equation}\label{eq:D4'_bound}
\p \left( \left( \cD_{4}' \right)^{c} \right) 
\leq 
\sum_{i=1}^{\gamma \log \tcorr} \p \left( \left( \cD_{4}' \left( i \right) \right)^{c} \right) 
\leq \left( \gamma \log \tcorr \right) s_{1}^{-1/28} 
= \left( \gamma \log \tcorr \right) \tcorr^{-1/1792}
\end{equation}
for all $\tcorr$ large enough. 
Observe that if $T_{s_{1}}$ is such that $\cD_{4}'$ holds, 
then---since $K \leq (1/6) \log s_{1}$---all early vertices have degree at least $K$ in $T_{s_{1}}$, and hence also in $T_{t}$ for $t \geq s_{1}$ (in particular $t = \tcorr$). 
For every $T_{s_{1}}$ such that $\cD_{4}'$ holds, 
and for every $i \leq \gamma \log \tcorr$, 
choose and fix $K$ neighbors of $v_{i}$ in $T_{s_{1}}$ arbitrarily (e.g., the $K$ neighbors with largest pendent subtrees: $\wt{v}_{i,s_{1}}(1), \ldots, \wt{v}_{i,s_{1}}(K)$, with ties broken by favoring earlier vertices), and label them as $u_{1}^{i}, \ldots, u_{K}^{i}$. 
In the following, whenever we refer to a tree $T_{s_{1}}$ such that $\cD_{4}'$ holds, 
we automatically assume this fixed choice of $K \gamma \log \tcorr$ labeled vertices (where repetitions are possible). 
In the following we fix $T_{s_{1}}$ such that $\cD_{4}'$ holds and condition on $T_{s_{1}}$. 

Now fix $i \leq \gamma \log \tcorr$. 
To simplify notation, we write $u_{1}, \ldots, u_{K}$ instead of $u_{1}^{i}, \ldots, u_{K}^{i}$. 
By P\'olya urn arguments it follows that (conditioned on $T_{s_{1}}$) 
for every $\ell \in [K]$ the limiting random variable 
\[
\phi_{i,\ell} := \lim_{t \to \infty} \frac{1}{t} \left| \left( T_{t}, v_{i} \right)_{u_{\ell} \downarrow} \right| 
\]
exists almost surely. Moreover, its distribution (conditioned on $T_{s_{1}}$) is given by 
\[
\phi_{i,\ell} \sim 
\begin{cases} 
\Beta \left( \left| \left( T_{s_{1}}, v_{i} \right)_{u_{\ell} \downarrow} \right| , s_{1} - \left| \left( T_{s_{1}}, v_{i} \right)_{u_{\ell} \downarrow} \right| \right) &\text{ for } \UA, \\
\Beta \left( \left| \left( T_{s_{1}}, v_{i} \right)_{u_{\ell} \downarrow} \right| - \frac{1}{2}, s_{1} - \left| \left( T_{s_{1}}, v_{i} \right)_{u_{\ell} \downarrow} \right| - \frac{1}{2} \right) &\text{ for } \PA. 
\end{cases}
\]
We now argue that, for both PA and UA trees, for all $x \in [0,1]$ we have that 
\begin{equation}\label{eq:phi_bound}
\p \left( \phi_{i,\ell} < x \, \middle| \, T_{s_{1}} \right) \leq 2 \sqrt{s_{1} x}.
\end{equation}
We start with UA trees. When 
$\left| \left( T_{s_{1}}, v_{i} \right)_{u_{\ell} \downarrow} \right| = 1$, 
a direct computation shows that 
$\p \left( \phi_{i,\ell} < x \, \middle| \, T_{s_{1}} \right) = 1 - (1 - x)^{s_{1} - 1} \leq s_{1} x$. 
Otherwise, 
Markov's inequality implies that 
\begin{equation}\label{eq:phi_markov}
\p \left( \phi_{i,\ell} < x \, \middle| \, T_{s_{1}} \right) 
= \p \left( \phi_{i,\ell}^{-1} > x^{-1} \, \middle| \, T_{s_{1}} \right) 
\leq x \E \left[ \phi_{i,\ell}^{-1} \, \middle| \, T_{s_{1}} \right] 
= x \frac{s_{1} - 1}{\left| \left( T_{s_{1}}, v_{i} \right)_{u_{\ell} \downarrow} \right| - 1} 
\leq s_{1} x.
\end{equation}
For PA trees, when 
$\left| \left( T_{s_{1}}, v_{i} \right)_{u_{\ell} \downarrow} \right| = 1$, 
a direct computation shows that 
\[
\p \left( \phi_{i,\ell} < x \, \middle| \, T_{s_{1}} \right) 
= \frac{1}{B \left( \frac{1}{2}, s_{1} - \frac{3}{2} \right)} \int_{0}^{x} y^{-1/2} (1-y)^{s_{1} - 5/2} dy 
\leq \sqrt{s_{1}} \int_{0}^{x} y^{-1/2} dy 
= 2 \sqrt{s_{1} x},  
\]
where in the inequality we used that 
$B \left( \frac{1}{2}, s_{1} - \frac{3}{2} \right) \geq 1 / \sqrt{s_{1}}$ (see~\eqref{eq:beta_function_lb}) 
and also that $\tcorr$ is large enough (so that $s_{1} \geq 5/2$). 
Otherwise, Markov's inequality (just like in~\eqref{eq:phi_markov}) implies a bound of $2 s_{1} x$. 
In conclusion, we have shown~\eqref{eq:phi_bound} in all cases. 
As a consequence, using the same martingale arguments as in the proof of Lemma~\ref{lem:B}, we have that 
\begin{equation}\label{eq:bound_s2}
\p \left( \frac{1}{s_{2}} \left| \left( T_{s_{2}}, v_{i} \right)_{u_{\ell} \downarrow} \right| \leq x \, \middle| \, T_{s_{1}} \right) 
\leq 
2 \p \left( \phi_{i,\ell} \leq 4 x \, \middle| \, T_{s_{1}} \right) 
\leq 8 \sqrt{s_{1} x}.
\end{equation}
Now define the event 
\[
\cD_{4}'' \left( i \right) 
:= 
\bigcap_{1 \leq \ell \leq K} \left\{ \left| \left( T_{s_{2}}, v_{i} \right)_{u_{\ell} \downarrow} \right| \geq s_{2}^{1/8} \right\}, 
\]
which is well-defined when $T_{s_{1}}$ is such that $\cD_{4}'$ holds. 
By a union bound and using~\eqref{eq:bound_s2} with $x = s_{2}^{-7/8}$, 
we have that 
\[
\p \left( \cD_{4}'' \left( i \right)^{c} \, \middle| \, T_{s_{1}} \right) 
\leq \sum_{\ell = 1}^{K} \p \left( \left| \left( T_{s_{2}}, v_{i} \right)_{u_{\ell} \downarrow} \right| < s_{2}^{1/8} \, \middle| \, T_{s_{1}} \right) 
\leq 8 K s_{2}^{-1/16} 
= 8 K \tcorr^{-1/768}.
\]

Now define the event $\cD_{4}'' := \cap_{1 \leq i \leq \gamma \log \tcorr} \cD_{4}'' \left( i \right)$, which is well-defined when $T_{s_{1}}$ is such that $\cD_{4}'$ holds. By the display above, together with a union bound, we have, for every $T_{s_{1}}$ such that $\cD_{4}'$ holds, that 
\begin{equation}\label{eq:D4''_bound}
\p \left( \left( \cD_{4}'' \right)^{c} \, \middle| \, T_{s_{1}} \right) 
\leq \left( 8 K \gamma \log \tcorr \right) \tcorr^{-1/768}.
\end{equation}

For $i \leq \gamma \log \tcorr$ define the event 
\[
\cC_{4} \left( i \right) 
:= \left\{ d_{\tcorr} \left( v_{i} \right) \geq K \right\} \cap \left\{ \forall\, t \geq \tcorr: 
\text{the timestamps of } \wt{v}_{i,t} \left( 1 \right), \ldots, \wt{v}_{i,t} \left( K \right) \text{are all at most } s_{2} \right\},
\]
and also let 
$\cC_{4} := \cap_{1 \leq i \leq \gamma \log \tcorr} \cC_{4} \left( i \right)$. 
Note that if $\cD_{4}'$ holds, 
then $\left\{ d_{\tcorr} \left( v_{i} \right) \geq K \right\}$ holds as well, 
so to understand $\cC_{4} \left( i \right)$ we need to understand the second event in the display above. 
To do this, we consider the subtree $T_{t}'$ of $\left( T_{t}, v_{i} \right)$ which is rooted at $v_{i}$ and consists of~$v_{i}$ together with all subtrees of $v_{i}$ that are formed after time~$s_{2}$. 
We can then define the event
\[
\cH_{4} \left( i \right) 
:= 
\bigcap_{1 \leq \ell \leq K} 
\left\{ \forall\, t \geq s_{2} : \left| \frac{\left| T_{t}' \right|}{\left| T_{t}' \right| + \left| \left( T_{t}, v_{i} \right)_{u_{\ell} \downarrow} \right|} - \frac{1}{1+\left| \left( T_{s_{2}}, v_{i} \right)_{u_{\ell} \downarrow} \right|} \right| \leq \frac{1}{3} \right\}, 
\] 
which is well-defined whenever $T_{s_{1}}$ is such that $\cD_{4}'$ holds. 
Provided that $\tcorr$ is large enough, 
if $\cH_{4} \left( i \right)$ holds, 
then 
$\left| T_{t}' \right| / \left( \left| T_{t}' \right| + \left| \left( T_{t}, v_{i} \right)_{u_{\ell} \downarrow} \right| \right) < 1/2$ 
for all $t \geq s_{2}$, 
which implies that no subtree born after time $s_{2}$ will ever become larger than any of the subtrees with roots $u_{1}, \ldots, u_{K}$. 
This, in turn, means that no subtree born after time $s_{2}$ will ever become one of the $K$ largest subtrees of $v_{i}$. 
Therefore 
$\cH_{4} \left( i \right) \subseteq \cC_{4} \left( i \right)$.

If $T_{s_{1}}$ is such that $\cD_{4}'$ holds, 
and also $T_{s_{2}}$ is such that $\cD_{4}''$ holds, 
then by Lemma~\ref{lem:Polya_concentration} and a union bound 
we have that 
\[
\p \left( \cH_{4} \left( i \right)^{c} \, \middle| \, T_{s_{1}}, T_{s_{2}} \right) 
\leq 2 K \exp \left( - \tfrac{1}{72} \tcorr^{1/384} \right).
\] 
Together with the previous paragraph and a union bound we thus have that 
\begin{equation}\label{eq:C4_bound}
\p \left( \cC_{4}^{c} \, \middle| \, T_{s_{1}}, T_{s_{2}} \right) 
\leq \left( 2 K \gamma \log \tcorr \right) \exp \left( - \tfrac{1}{72} \tcorr^{1/384} \right) 
\end{equation}
whenever $T_{s_{1}}$ is such that $\cD_{4}'$ holds, 
and also $T_{s_{2}}$ is such that $\cD_{4}''$ holds. 

The display above motivates defining $\cD_{4}$ 
to be the event that $T_{\tcorr}$ satisfies 
\begin{equation}\label{eq:C4_bound_2}
\p \left( \cC_{4}^{c} \, \middle| \, T_{\tcorr} \right) \leq \exp \left( - \tfrac{1}{144} \tcorr^{1/384} \right);
\end{equation}
note that $\cD_{4}$ is $T_{\tcorr}$-measurable. In the rest of this step we bound $\p \left( \cD_{4}^{c} \right)$. 
By conditioning first on~$T_{s_{1}}$ and then on $T_{s_{2}}$, together with a couple of union bounds, we obtain that 
\begin{equation}\label{eq:D4_bound_union_bound}
\p \left( \cD_{4}^{c} \right) 
\leq 
\E \left[ \p \left( \cD_{4}^{c} \, \middle| \, T_{s_{1}}, T_{s_{2}} \right) \mathbf{1}_{\cD_{4}'} \mathbf{1}_{\cD_{4}''} \right] 
+ \E \left[ \p \left( \left( \cD_{4}'' \right)^{c} \, \middle| \, T_{s_{1}} \right) \mathbf{1}_{\cD_{4}'} \right] 
+ \p \left( \left( \cD_{4}' \right)^{c} \right).
\end{equation}
By~\eqref{eq:D4'_bound} and~\eqref{eq:D4''_bound} 
we have that the second and the third term in the display above are together at most 
$\tcorr^{-1/1800}$ for all $\tcorr$ large enough. 
Turning to the first term in the display above, 
let $T_{s_{1}}$ be such that $\cD_{4}'$ holds, 
and subsequently let $T_{s_{2}}$ be such that $\cD_{4}''$ holds. 
Then by Markov's inequality we have that 
\begin{align*}
\p \left( \cD_{4}^{c} \, \middle| \, T_{s_{1}}, T_{s_{2}} \right) 
&= 
\p \left( \p \left( \cC_{4}^{c} \, \middle| \, T_{\tcorr} \right) > \exp \left( - \tfrac{1}{144} \tcorr^{1/384} \right) \, \middle| \, T_{s_{1}}, T_{s_{2}} \right) \\
&\leq 
\exp \left(  \tfrac{1}{144} \tcorr^{1/384} \right) 
\E \left[ \p \left( \cC_{4}^{c} \, \middle| \, T_{\tcorr} \right) \, \middle| \, T_{s_{1}}, T_{s_{2}} \right] 
= 
\exp \left(  \tfrac{1}{144} \tcorr^{1/384} \right) 
\p \left( \cC_{4}^{c} \, \middle| \, T_{s_{1}}, T_{s_{2}} \right). 
\end{align*}
Now plugging in~\eqref{eq:C4_bound}, we obtain that 
\[
\p \left( \cD_{4}^{c} \, \middle| \, T_{s_{1}}, T_{s_{2}} \right) 
\leq \left( 2 K \gamma \log \tcorr \right) \exp \left( - \tfrac{1}{144} \tcorr^{1/384} \right).
\]
Plugging this back into~\eqref{eq:D4_bound_union_bound} we finally obtain, for all $\tcorr$ large enough, that 
\begin{equation}\label{eq:D4_bound}
\p \left( \cD_{4}^{c} \right) 
\leq 2 \tcorr^{-1/1800}.
\end{equation}

\textbf{Modified Step 5:} 
In Step~5 of Lemma~\ref{lem:nice_event} we showed that early subtree rankings are stable. Here we already showed in Modified Step~3 that the size-based ranking in $T_{\tcorr}$ of subtrees formed before time $s_{2}$ persists.

\textbf{Modified Step 6:} \emph{Concentration of subtree sizes.} 

In light of the previous steps, we define the events 
\[
\cC_{6} \left( i, j \right) := \left\{ 
\forall\, t \geq \tcorr : 
\left| \frac{1}{t} \left| \left( T_{t}, v_{i} \right)_{v_{j} \downarrow} \right| 
- \frac{1}{\tcorr} \left| \left( T_{\tcorr}, v_{i} \right)_{v_{j} \downarrow} \right| 
\right| 
\leq 
\frac{1}{\tcorr^{1/3}} \cdot \frac{1}{\tcorr} \left| \left( T_{\tcorr}, v_{i} \right)_{v_{j} \downarrow} \right|
\right\}
\]
for distinct positive integers $i, j \leq s_{2}$, and also 
\[
\cC_{6} := \bigcap_{\substack{1 \leq i, j \leq s_{2} \\ i \neq j}} \cC_{6} \left( i, j \right).
\]
In Step~6 of Lemma~\ref{lem:nice_event} we showed that 
\[
\p \left( \cC_{6} \left( i, j \right)^{c} \, \middle| \, T_{\tcorr} \right) 
\leq 
2 \exp \left( - c \tcorr^{-5/3} \left| \left( T_{\tcorr}, v_{i} \right)_{v_{j} \downarrow} \right|^{2} \right)
\]
for some positive constant $c$ and all $\tcorr$ large enough. 
Thus if $T_{\tcorr}$ is such that $\cD_{2}$ holds, then 
\[
\p \left( \cC_{6} \left( i, j \right)^{c} \, \middle| \, T_{\tcorr} \right) 
\leq 
2 \exp \left( - c \tcorr^{-5/3} \tcorr^{7/4} \right)
= 2 \exp \left( -c \tcorr^{1/12} \right). 
\]
Thus by a union bound we have that if $T_{\tcorr}$ is such that $\cD_{2}$ holds, then 
\begin{equation}\label{eq:C6_bound}
\p \left( \cC_{6}^{c} \, \middle| \, T_{\tcorr} \right) 
\leq 
2 s_{2}^{2} \exp \left( -c \tcorr^{1/12} \right), 
\end{equation}
which decays faster than any polynomial in $\tcorr$.

\textbf{Putting everything together.} 
Define the events 
\begin{align*}
\cD &:= \cD_{1} \cap \cD_{2} \cap \cD_{3} \cap \cD_{4}, \\
\wt{\cC} &:= \cC_{1} \cap \cC_{3} \cap \cC_{4} \cap \cC_{6}.
\end{align*}
The event $\cD$ is $T_{\tcorr}$-measurable by construction. 
Putting together~\eqref{eq:D1_bound},~\eqref{eq:D3_bound}, and~\eqref{eq:D4_bound}, we have that 
$\p \left( \cD^{c} \right) \leq 3 \tcorr^{-1/1800}$ for all $\tcorr$ large enough. 

Next, we argue that if $\cD$ holds, then $\left| T_{\tcorr} \left( k \right) \right| \geq \tcorr^{7/8}$. 
First, note that if $\cD_{1}$ holds, then the centroid at time $\tcorr$ is an early vertex. 
If $\cD_{4}$ holds, then all early vertices have degree at least $K$ in~$T_{\tcorr}$, and for every early vertex the timestamps of their neighbors corresponding to the $K$ largest pendent subtrees are all at most $s_{2}$. 
Finally, if $\cD_{2}$ holds, then all subtrees formed before time $s_{2}$ have size at least $\tcorr^{7/8}$, 
and if $\cD_{3}$ holds, then none of these subtree sizes are equal (i.e., everything is well defined). 
Putting these observations together we indeed have that 
$\left| T_{\tcorr} \left( k \right) \right| \geq \tcorr^{7/8}$ if $\cD$ holds. 

Finally, turning to the event $\cC$, observe that $\wt{\cC} \subseteq \cC$ by construction. Therefore 
\[
\p \left( \cC^{c} \, \middle| \, \cD \right)
\leq 
\p \left( \wt{\cC}^{c} \, \middle| \, \cD \right)
\]
and it suffices to bound this latter quantity. 
Putting together the definition of $\cD_{1}$,~\eqref{eq:C3_bound}, the definition of $\cD_{4}$ (see~\eqref{eq:C4_bound_2}), and~\eqref{eq:C6_bound}, 
we have that for every tree $T_{\tcorr}$ such that $\cD$ holds, 
we have that 
\[
\p \left( \wt{\cC}^{c} \, \middle| \, T_{\tcorr} \right)
\leq C \tcorr^{-\gamma / 6}
\]
for some universal finite constant $C$. 
Taking an expectation over $T_{\tcorr}$ and recalling that $\gamma = 18$ concludes the proof of~\eqref{eq:probCc_given_D}, 
and thus also the proof of the lemma.  
\end{proof}

\subsection{Proof of the variance estimate} \label{sec:estimation_variance_proof} 

We start with two preliminary lemmas regarding the variance and covariance of functions of Beta and Dirichlet random variables, which will be useful in the proof of Lemma~\ref{lem:variance}. 

\begin{lemma}\label{lem:beta}
There exists a finite constant $C$ such that the following holds. 
Let $\alpha$ and $t$ be such that $1/2 \leq \alpha < t$ and $1/2 \leq t - \alpha$. 
Let $\psi_{1}$ and $\psi_{2}$ be i.i.d.\ $\Beta \left( \alpha, t - \alpha \right)$ random variables. Then 
\[
\Var \left( \left( \psi_{1} - \psi_{2} \right)^{2} \right) 
\leq 
\frac{C \alpha^{2} \left( t - \alpha \right)^{2}}{t^{6}}.
\]
\end{lemma}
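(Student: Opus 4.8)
\textbf{Proof proposal for Lemma~\ref{lem:beta}.}

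The plan is to reduce the variance of $(\psi_1-\psi_2)^2$ to a small collection of mixed moments of a single $\Beta(\alpha,t-\alpha)$ random variable, bound each of them using the explicit moment formula for the beta distribution, and then collect terms. Write $\beta := t - \alpha$ and let $\psi$ denote a generic $\Beta(\alpha,\beta)$ variable with mean $\mu = \alpha/t$ and variance $\sigma^2 = \alpha\beta / (t^2(t+1))$. Since $\psi_1$ and $\psi_2$ are i.i.d., expanding $(\psi_1-\psi_2)^2 = \psi_1^2 - 2\psi_1\psi_2 + \psi_2^2$ and using independence gives $\E[(\psi_1-\psi_2)^2] = 2(\E[\psi^2] - \mu^2) = 2\sigma^2$, and $\E[(\psi_1-\psi_2)^4]$ expands into a sum of terms of the form $\E[\psi^a]\E[\psi^b]$ with $a+b=4$. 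Thus $\Var((\psi_1-\psi_2)^2)$ is an explicit polynomial in the first four moments $\E[\psi], \E[\psi^2], \E[\psi^3], \E[\psi^4]$ of a single $\Beta(\alpha,\beta)$ variable.

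Next I would substitute the closed-form rising-factorial expressions $\E[\psi^k] = \frac{\alpha^{(k)}}{t^{(k)}}$, where $x^{(k)} = x(x+1)\cdots(x+k-1)$, and simplify. The key structural fact to exploit is centering: writing everything in terms of the centered variable $\psi - \mu$, the leading contribution to $\Var((\psi_1-\psi_2)^2)$ is of order $\sigma^4$ (coming from $\Var$ of a sum of two independent mean-zero squares) plus lower-order corrections involving the third and fourth central moments of $\psi$, which for the beta distribution are themselves $O(\sigma^3)$ and $O(\sigma^4 + \sigma^2/t)$ respectively. Since $\sigma^2 = \alpha\beta/(t^2(t+1)) \le \alpha\beta/t^3$, we get $\sigma^4 \le \alpha^2\beta^2/t^6$, which is exactly the claimed bound; the remaining pieces need to be checked to be no larger. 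Concretely, I would show
\[
\Var\!\left( (\psi_1-\psi_2)^2 \right)
\le C_1 \sigma^4 + C_2 \,\frac{\alpha\beta}{t^4}\,\sigma^2 + C_3 \,\frac{(\text{third central moment})^2}{1}
\]
and bound each summand by a constant times $\alpha^2\beta^2/t^6$ using $\alpha,\beta \ge 1/2$ and $t = \alpha+\beta$.

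The main obstacle — really the only one — is that this is a somewhat delicate bookkeeping exercise: the naive expansion of $\Var((\psi_1-\psi_2)^2)$ in terms of raw moments produces cancellations, and one must be careful to extract those cancellations (by working with central moments) rather than bounding each raw-moment term separately, since the raw fourth moment $\E[\psi^4]$ is only $O(\mu^4)$, not $O(\sigma^4)$, and a term-by-term bound would give something far too weak (of order $\mu^4$ rather than $\sigma^4$). So the proof is really: (i) expand using independence; (ii) re-express in central moments of a single beta variable; (iii) plug in $\E[(\psi-\mu)^j]$ for $j=2,3,4$ from the beta moment formulas, noting each is $O((\alpha\beta/t^3)^{j/2})$ up to the $t+1$-type denominators; (iv) combine and use $\alpha,\beta\ge 1/2$ to absorb constants. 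I would double-check step (iii) by noting the well-known bounds $\E[(\psi-\mu)^3] = \frac{2(\beta-\alpha)\alpha\beta}{t^3(t+1)(t+2)}$ and $\E[(\psi-\mu)^4] = \frac{3\alpha\beta(\alpha\beta(t-6)+2t^2)}{t^4(t+1)(t+2)(t+3)} = O(\sigma^4) + O(\sigma^2/t^2)$, both of which are comfortably $O(\alpha^2\beta^2/t^6)$ when combined appropriately, and none of which dominates $\sigma^4$.
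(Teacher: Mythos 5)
Your plan is correct, and it would yield the lemma, but the paper takes a noticeably leaner route that you may find worth comparing against. Rather than computing $\Var\left((\psi_1-\psi_2)^2\right)$ and organizing it around central moments, the paper simply drops the $-\left(\E[(\psi_1-\psi_2)^2]\right)^2$ term: since $\psi_1-\psi_2$ has mean zero, one has
\[
\Var\left((\psi_1-\psi_2)^2\right) \le \E\left[(\psi_1-\psi_2)^4\right]
= 2\E[\psi^4] - 8\E[\psi^3]\E[\psi] + 6\E[\psi^2]^2 ,
\]
and substituting $\E[\psi^k]=\prod_{i=0}^{k-1}(\alpha+i)/(t+i)$ and simplifying gives the exact closed form
\[
\E\left[(\psi_1-\psi_2)^4\right] = \frac{12\,\alpha(\alpha+1)(t-\alpha)(t-\alpha+1)}{t^2(t+1)^2(t+2)(t+3)} ,
\]
from which the bound is immediate using $\alpha, t-\alpha \ge 1/2$ so that $\alpha+1\le 3\alpha$ and $t-\alpha+1\le 3(t-\alpha)$. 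This sidesteps the bookkeeping you were worried about: the ``cancellations'' are not handled by recentering but are swallowed by the algebra, and the product form of the answer makes the scaling transparent. Your central-moment decomposition $\Var\left((\psi_1-\psi_2)^2\right) = 2\mu_4 + 2\sigma^4$ (note there is actually no third-central-moment contribution, since $\psi_1-\psi_2$ is symmetric; your $C_3$ term is unnecessary but harmless) would also work, provided you carry out step (iii) carefully and use $\alpha\beta \ge t/4$ to absorb the $\alpha\beta/t^5$ piece of $\mu_4$ into $\alpha^2\beta^2/t^6$. So both routes land in the same place; the paper's is shorter because it gives up the exact variance in exchange for a clean factorization of the fourth moment.
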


\begin{proof}
Let $\psi \sim \Beta \left( \alpha, t - \alpha \right)$. 
Bounding the variance by the second moment we have that 
\begin{align*}
\Var \left( \left( \psi_{1} - \psi_{2} \right)^{2} \right) 
\leq 
\E \left[ \left( \psi_{1} - \psi_{2} \right)^{4} \right] 
&= \E \left[ \psi_{1}^{4} - 4 \psi_{1}^{3} \psi_{2} + 6 \psi_{1}^{2} \psi_{2}^{2} - 4 \psi_{1} \psi_{2}^{3} + \psi_{2}^{4} \right] \\
&= 2 \E \left[ \psi^{4} \right] 
- 8 \E \left[ \psi^{3} \right] \E \left[ \psi \right] 
+ 6 \E \left[ \psi^{2} \right]^{2}.
\end{align*}
For every positive integer $k$ 
we have that 
$\E \left[ \psi^{k} \right] = \prod_{i=0}^{k-1} \left( \alpha + i \right) / \left( t + i \right)$. Plugging this into the display above we obtain that 
\[
\E \left[ \left( \psi_{1} - \psi_{2} \right)^{4} \right] 
= \frac{12 \alpha \left( \alpha + 1 \right) \left( t - \alpha \right) \left( t - \alpha + 1 \right)}{t^{2} \left( t + 1 \right)^{2} \left( t + 2 \right) \left( t + 3 \right)}
\]
and the claim follows. 
\end{proof}

\begin{lemma}\label{lem:dirichlet}
There exists a finite constant $C$ such that the following holds. 
Let $\alpha_{1}$, $\alpha_{2}$, and $t$ be such that $1/2 \leq \alpha_{1}, \alpha_{2}$ and $\alpha_{1} + \alpha_{2} < t$. 
Let $\left( \psi_{1}, \phi_{1}, 1 - \psi_{1} - \phi_{1} \right)$ 
and $\left( \psi_{2}, \phi_{2}, 1 - \psi_{2} - \phi_{2} \right)$ 
be i.i.d.\ $\Dir \left( \alpha_{1}, \alpha_{2}, t - \alpha_{1} - \alpha_{2} \right)$ random vectors, 
where $\Dir$ denotes the Dirichlet distribution. 
Then 
\[
\Cov \left( \left( \psi_{1} - \psi_{2} \right)^{2}, \left( \phi_{1} - \phi_{2} \right)^{2} \right) 
\leq 
\frac{C \alpha_{1}^{2} \alpha_{2}^{2}}{t^{6}}.
\]
\end{lemma}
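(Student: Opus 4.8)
The plan is to condition on the second coordinates $\phi_1,\phi_2$, use the amalgamation property of the Dirichlet distribution to reduce everything to moments of a single Beta variable, and then finish with a careful sign argument. Write $U := \psi_1 - \psi_2$, $V := \phi_1 - \phi_2$, and set $\alpha_3 := t - \alpha_1 - \alpha_2 > 0$. Recall that if $(\psi,\phi,1-\psi-\phi) \sim \Dir(\alpha_1,\alpha_2,\alpha_3)$, then $\psi = (1-\phi) B$ with $B \sim \Beta(\alpha_1,\alpha_3)$ independent of $\phi$. Applying this to both indices, we may write $\psi_i = (1-\phi_i) B_i$, where $B_1, B_2, \phi_1, \phi_2$ are mutually independent, $B_1,B_2$ are i.i.d.\ $\Beta(\alpha_1,\alpha_3)$, and $\phi_1,\phi_2$ are i.i.d.\ $\Beta(\alpha_2, t-\alpha_2)$. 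Writing $\mu := \E[B_1] = \alpha_1/(t-\alpha_2)$ and $\sigma^2 := \Var(B_1) = \alpha_1\alpha_3/\big((t-\alpha_2)^2(t-\alpha_2+1)\big)$, a one-line computation gives $\E\big[U^2 \mid \phi_1,\phi_2\big] = \mu^2 V^2 + \sigma^2\big[(1-\phi_1)^2 + (1-\phi_2)^2\big]$, and since $V^2$ is $\sigma(\phi_1,\phi_2)$-measurable, the tower property yields the exact decomposition
\[
\Cov\big(U^2, V^2\big) = \mu^2\, \Var(V^2) \;+\; 2\sigma^2\, \Cov\big((1-\phi_1)^2, V^2\big) \;=:\; \mathrm{I} + \mathrm{II}.
\]

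Term $\mathrm{I}$ is immediately of the right order: Lemma~\ref{lem:beta} applied to the i.i.d.\ $\Beta(\alpha_2,t-\alpha_2)$ variables $\phi_1,\phi_2$ gives $\Var(V^2) \le C\alpha_2^2(t-\alpha_2)^2/t^6$, and multiplying by $\mu^2 = \alpha_1^2/(t-\alpha_2)^2$ cancels the $(t-\alpha_2)^2$ factor exactly, so $\mathrm{I} \le C\alpha_1^2\alpha_2^2/t^6$. Thus everything hinges on $\mathrm{II}$. Writing $W_i := 1-\phi_i$ (i.i.d.\ $\Beta(t-\alpha_2,\alpha_2)$), we have $V^2 = (W_1-W_2)^2$ and $\Cov\big((1-\phi_1)^2, V^2\big) = \Var(W_1^2) - 2\,\E[W_1]\,\Cov(W_1^2, W_1)$; substituting the explicit Beta moments and simplifying gives, with $\beta := t-\alpha_2$,
\[
\Cov\big((1-\phi_1)^2, V^2\big) = \frac{2\alpha_2\,\beta(\beta+1)\,\big(3t(t+1) - \beta(5t+6)\big)}{t^3(t+1)^2(t+2)(t+3)}.
\]

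The main obstacle is that $\mathrm{II}$ is \emph{not} divisible by $\alpha_1^2\alpha_2^2$: indeed $|\mathrm{II}|$ can be of order $1/t^5$, which overshoots the target (of order $1/t^6$ when $\alpha_1,\alpha_2 = O(1)$); the saving grace is that precisely in that regime $\mathrm{II}$ is \emph{negative}. Concretely, since $\alpha_3 < \beta$ we have $\sigma^2 < \alpha_1/\big(\beta(\beta+1)\big)$, and the sign of $\mathrm{II}$ equals the sign of $3t(t+1) - \beta(5t+6)$. So the plan is a case split. If $3t(t+1) \le \beta(5t+6)$, then $\mathrm{II} \le 0$ and there is nothing to prove. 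Otherwise $\beta < 3(t+1)/5$, which for $t$ not too small forces $\alpha_2 = t-\beta \ge t/5$, hence $t \le 10\,\alpha_1\alpha_2$ using $\alpha_1 \ge 1/2$; bounding the bracket by its positive part $3t(t+1)$ then gives
\[
\mathrm{II} \;\le\; \frac{12\,\alpha_1\alpha_2}{t^2(t+1)(t+2)(t+3)} \;\le\; \frac{12\,\alpha_1\alpha_2 \cdot t}{t^6} \;\le\; \frac{120\,\alpha_1^2\alpha_2^2}{t^6}.
\]

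Adding the bounds on $\mathrm{I}$ and $\mathrm{II}$ yields the claim for all $t$ above an absolute threshold; for the remaining bounded range of $(t,\alpha_1,\alpha_2)$ one finishes by crudely estimating $\Cov(U^2,V^2) \le \Var(U^2)^{1/2}\Var(V^2)^{1/2} \le C\alpha_1\alpha_2/t^4$ via Lemma~\ref{lem:beta} and noting that $\alpha_1^2\alpha_2^2/t^6$ is bounded below there, all of which can be absorbed into the constant $C$. (An alternative, less transparent, route would skip the conditioning and expand $\Cov(U^2,V^2)$ directly into Dirichlet joint moments over the common denominator $t^4(t+1)^2(t+2)(t+3)$; the same cancellations would then have to be verified at the level of the resulting numerator polynomial.)
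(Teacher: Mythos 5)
Your proof is correct, and it takes a genuinely different route from the paper's. The paper expands $\Cov(U^2,V^2)$ directly into joint Dirichlet moments over the common denominator $t^4(t+1)^2(t+2)(t+3)$, obtains a single closed-form rational expression in $\alpha_1,\alpha_2,t$, and bounds it by discarding the negative terms in the numerator. You instead use the stick-breaking representation $\psi_i = (1-\phi_i)B_i$ to decompose
\[
\Cov(U^2,V^2) = \mu^2\Var(V^2) + 2\sigma^2\Cov\bigl((1-\phi_1)^2,V^2\bigr),
\]
so that the first piece reduces immediately to Lemma~\ref{lem:beta} (with an exact cancellation of the $(t-\alpha_2)^2$ factors), while the second piece requires a separate Beta-moment computation and a sign-based case split. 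I verified your explicit formula for $\Cov\bigl((1-\phi_1)^2,V^2\bigr)$ (e.g.\ at $\beta=1$, $t=2$ it gives $1/180$, matching a direct computation with uniform variables) and the subsequent arithmetic; all checks out. The key observation that the second term is negative precisely in the regime where it would otherwise be too large (small $\alpha_2$ relative to $t$) is the crux, and is the analogue of the paper's step of dropping negative terms in the numerator. One small remark: the bound $\alpha_2 > t(2t+3)/(5t+6)\ge t/5$ actually holds for all $t$ in range, not just $t\ge 3$, so the bounded-range Cauchy--Schwarz fallback is unnecessary; but this is a harmless inefficiency, not an error. Your approach is somewhat longer than the paper's direct expansion but arguably makes the origin of the $\alpha_1^2\alpha_2^2/t^6$ scaling more transparent, and reuses Lemma~\ref{lem:beta} for one of the two pieces rather than redoing the fourth-moment computation from scratch.
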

\begin{proof}
Let $\left( \psi, \phi, 1 - \psi - \phi \right) \sim \Dir \left( \alpha_{1}, \alpha_{2}, t - \alpha_{1} - \alpha_{2} \right)$. 
By expanding the terms in the definition of the covariance and using independence, we have that 
\begin{align*}
\Cov \left( \left( \psi_{1} - \psi_{2} \right)^{2}, \left( \phi_{1} - \phi_{2} \right)^{2} \right) 
&= 2 \E \left[ \psi^{2} \phi^{2} \right] - 2 \E \left[ \psi^{2} \right] \E \left[ \phi^{2} \right]
+ 4 \E \left[ \psi \right] \left\{ \E \left[ \psi \right] \E \left[ \phi^{2} \right] - \E \left[ \psi \phi^{2} \right] \right\} \\ 
&\quad 
+ 4 \E \left[ \phi \right] \left\{ \E \left[ \psi^{2} \right] \E \left[ \phi \right] - \E \left[ \psi^{2} \phi \right] \right\} 
+ 4 \E \left[ \psi \phi \right]^{2} - 4 \left( \E \left[ \psi \right] \E \left[ \phi \right] \right)^{2}.
\end{align*}
For nonnegative integers $\beta_{1}$ and $\beta_{2}$, 
the joint moments of $\psi$ and $\phi$ are given by 
\[
\E \left[ \psi^{\beta_{1}} \phi^{\beta_{2}} \right] 
= \frac{\prod_{i=0}^{\beta_{1}-1} \left( \alpha_{1} + i \right) \prod_{j=0}^{\beta_{2} - 1} \left( \alpha_{2} + j \right)}{\prod_{i=0}^{\beta_{1} + \beta_{2} - 1} \left( t + i \right)}.
\]
Plugging this into the display above we obtain that 
\begin{multline*}
\Cov \left( \left( \psi_{1} - \psi_{2} \right)^{2}, \left( \phi_{1} - \phi_{2} \right)^{2} \right)  \\
= 
\frac{4 \alpha_{1} \alpha_{2} \left\{ -2 t^{3} + \left( 2 \alpha_{1} \alpha_{2} + 5 \alpha_{1} + 5 \alpha_{2} - 3 \right) t^{2} + \left( - 5 \alpha_{1} \alpha_{2} + 6 \alpha_{1} + 6 \alpha_{2} \right) t - 6 \alpha_{1} \alpha_{2}  \right\}}{t^{4} \left( t + 1 \right)^{2} \left( t + 2 \right) \left( t + 3 \right)}.
\end{multline*}
To obtain an upper bound, we can drop all negative terms in the numerator. Using also the trivial bounds $t \leq t^{2}$ and $\alpha_{1}, \alpha_{2} \leq 2 \alpha_{1} \alpha_{2}$, we thus obtain that 
\[
\Cov \left( \left( \psi_{1} - \psi_{2} \right)^{2}, \left( \phi_{1} - \phi_{2} \right)^{2} \right)  
\leq 
\frac{200 \alpha_{1}^{2} \alpha_{2}^{2}}{t^{2} \left( t + 1 \right)^{2} \left( t + 2 \right) \left( t + 3 \right)}
\]
and the claim follows. 
\end{proof}

We are now ready to prove Lemma~\ref{lem:variance}. 

\begin{proof}[Proof of Lemma~\ref{lem:variance}]
We bound the variance by conditioning on the tree $T_{\tcorr}$. 
By the law of total variance we have that 
\begin{multline*}
\Var \left( S_{n} \left( k \right) \mathbf{1}_{\cC^{1} \cap \cC^{2}} \, \middle| \, \cD \right) \\
= \E \left[ \Var \left( S_{n} \left( k \right) \mathbf{1}_{\cC^{1} \cap \cC^{2}} \, \middle| \, T_{\tcorr} \right) \, \middle| \, \cD \right] 
+ \E \left[ \E \left[ S_{n} \left( k \right) \mathbf{1}_{\cC^{1} \cap \cC^{2}} \, \middle| \, T_{\tcorr} \right]^{2} \, \middle| \, \cD \right] 
- \E \left[ S_{n} \left( k \right) \mathbf{1}_{\cC^{1} \cap \cC^{2}} \, \middle| \, \cD \right]^{2}.
\end{multline*}
From the proof of Lemma~\ref{lem:first_moment} (see also the proof of Lemma~\ref{lem:coarse_first_moment}) it follows that 
\[
\limsup_{n \to \infty} \E \left[ \E \left[ S_{n} \left( k \right) \mathbf{1}_{\cC^{1} \cap \cC^{2}} \, \middle| \, T_{\tcorr} \right]^{2} \, \middle| \, \cD \right] 
\leq 
\left( \frac{1 + 3 \tcorr^{-1/3}}{\tcorr} \right)^{2}
\]
for all $\tcorr$ large enough, 
and by Lemma~\ref{lem:first_moment} we also have that 
\[
\liminf_{n \to \infty} \E \left[ S_{n} \left( k \right) \mathbf{1}_{\cC^{1} \cap \cC^{2}} \, \middle| \, \cD \right]^{2} 
\geq \left( \frac{1 - 3 \tcorr^{-1/3}}{\tcorr} \right)^{2}
\] 
for all $\tcorr$ large enough (where in both cases ``large enough'' does not depend on $k$). Putting these displays together we obtain that 
\[
\limsup_{n \to \infty} \Var \left( S_{n} \left( k \right) \mathbf{1}_{\cC^{1} \cap \cC^{2}} \, \middle| \, \cD \right)
\leq 
\limsup_{n \to \infty} \E \left[ \Var \left( S_{n} \left( k \right) \mathbf{1}_{\cC^{1} \cap \cC^{2}} \, \middle| \, T_{\tcorr} \right) \, \middle| \, \cD \right] 
+ \frac{12}{\tcorr^{7/3}}
\]
for all $\tcorr$ large enough. Since $k \leq K \leq \log \tcorr$, the latter term in the display above is at most $12/(k \tcorr^{2})$, so it remains to bound the first term. 

Interchanging the limsup and the expectation, we have that 
\begin{equation}\label{eq:exch_limsup_exp}
\limsup_{n \to \infty} \E \left[ \Var \left( S_{n} \left( k \right) \mathbf{1}_{\cC^{1} \cap \cC^{2}} \, \middle| \, T_{\tcorr} \right) \, \middle| \, \cD \right]  
\leq 
\E \left[ \limsup_{n \to \infty} \Var \left( S_{n} \left( k \right) \mathbf{1}_{\cC^{1} \cap \cC^{2}} \, \middle| \, T_{\tcorr} \right) \, \middle| \, \cD \right], 
\end{equation}
so in what follows we study the conditional variance of 
$S_{n} \left( k \right) \mathbf{1}_{\cC^{1} \cap \cC^{2}}$ 
given $T_{\tcorr}$ (with $T_{\tcorr}$ such that~$\cD$ holds). 
Expanding the variance of the sum we have that 
\begin{equation}\label{eq:var_expanded}
\Var \left( S_{n} \left( k \right) \mathbf{1}_{\cC^{1} \cap \cC^{2}} \, \middle| \, T_{\tcorr} \right) 
= 
\frac{1}{k^{2}} \sum_{\ell = 1}^{k} \sum_{m = 1}^{k} \Cov \left( Y_{n} \left( \ell \right) \mathbf{1}_{\cC^{1} \cap \cC^{2}}, Y_{n} \left( m \right) \mathbf{1}_{\cC^{1} \cap \cC^{2}} \, \middle| \, T_{\tcorr} \right).
\end{equation}
Recall from Section~\ref{sec:estimation_preliminaries} the definition of $Z_{n}^{i} \left( \ell \right)$, the limit $Z^{i} \left( \ell \right) := \lim_{n \to \infty} Z_{n}^{i} \left( \ell \right)$, 
and the distribution of the limit from~\eqref{eq:Zk}. In particular, recall that on the event $\cC^{1} \cap \cC^{2}$ we have that $X_{n}^{i} \left( \ell \right) = Z_{n}^{i} \left( \ell \right)$ for all $n \geq \tcorr$ and all $1 \leq \ell \leq K$.  
To bound the covariance in~\eqref{eq:var_expanded}, we bound from above the expectation of the product, and bound from below the individual expectations. 
First, using property~\ref{prop:nice3_fine} of Definition~\ref{def:nice_event_fine} 
we have that 
\begin{align}
\E \left[ Y_{n} \left( \ell \right) Y_{n} \left( m \right) \mathbf{1}_{\cC^{1} \cap \cC^{2}} \, \middle| \, T_{\tcorr} \right] 
&\leq 
\frac{\E \left[ \left( X_{n}^{1} \left( \ell \right) - X_{n}^{2} \left( \ell \right) \right)^{2} \left( X_{n}^{1} \left( m \right) - X_{n}^{2} \left( m \right) \right)^{2} \mathbf{1}_{\cC^{1} \cap \cC^{2}}  \, \middle| \, T_{\tcorr} \right]}{4 \left( 1 - \tcorr^{-1/3} \right)^{4} \frac{\left| T_{\tcorr} \left( \ell \right) \right|}{\tcorr}  \left( 1 - \frac{\left| T_{\tcorr} \left( \ell \right) \right|}{\tcorr} \right)  \frac{\left| T_{\tcorr} \left( m \right) \right|}{\tcorr}  \left( 1 - \frac{\left| T_{\tcorr} \left( m \right) \right|}{\tcorr} \right)} \notag \\
&\leq 
\frac{\E \left[ \left( Z_{n}^{1} \left( \ell \right) - Z_{n}^{2} \left( \ell \right) \right)^{2} \left( Z_{n}^{1} \left( m \right) - Z_{n}^{2} \left( m \right) \right)^{2} \, \middle| \, T_{\tcorr} \right]}{4 \left( 1 - \tcorr^{-1/3} \right)^{4} \frac{\left| T_{\tcorr} \left( \ell \right) \right|}{\tcorr}  \left( 1 - \frac{\left| T_{\tcorr} \left( \ell \right) \right|}{\tcorr} \right)  \frac{\left| T_{\tcorr} \left( m \right) \right|}{\tcorr}  \left( 1 - \frac{\left| T_{\tcorr} \left( m \right) \right|}{\tcorr} \right)}, \label{eq:exp_product_UB}
\end{align}
where the second inequality follows by replacing 
$X_{n}^{i} \left( \ell \right)$ 
and 
$X_{n}^{i} \left( m \right)$
with 
$Z_{n}^{i} \left( \ell \right)$ 
and 
$Z_{n}^{i} \left( m \right)$ 
on the event 
$\cC^{1} \cap \cC^{2}$, 
and then removing the indicator. 
Turning to the lower bound, from the proof of Lemma~\ref{lem:first_moment} 
we have, for any $\ell \leq K$ and any $T_{\tcorr}$ such that $\cD$ holds, that 
\[
\E \left[ Y_{n} \left( \ell \right) \mathbf{1}_{\cC^{1} \cap \cC^{2}} \, \middle| \, T_{\tcorr} \right] 
\geq 
\frac{\E \left[ \left( Z_{n}^{1} \left( \ell \right) - Z_{n}^{2} \left( \ell \right) \right)^{2} \, \middle| \, T_{\tcorr} \right]}{2 \left( 1 + \tcorr^{-1/3} \right)^{2} \frac{\left| T_{\tcorr} \left( \ell \right) \right|}{\tcorr}  \left( 1 - \frac{\left| T_{\tcorr} \left( \ell \right) \right|}{\tcorr} \right)} 
- \tcorr^{1/8} \p \left( \left( \cC^{1} \cap \cC^{2} \right)^{c} \, \middle| \, T_{\tcorr} \right).
\]
On the event $\cD$ we have that 
$\left| T_{\tcorr} \left( \ell \right) \right| \geq \tcorr^{7/8}$, 
which implies that the fraction in the display above is at most $\tcorr^{1/8}$. 
Therefore multiplying the bounds in the display above with indices $\ell$ and $m$ we obtain that 
\begin{multline}\label{eq:exp_product_LB}
\E \left[ Y_{n} \left( \ell \right) \mathbf{1}_{\cC^{1} \cap \cC^{2}} \, \middle| \, T_{\tcorr} \right] 
\E \left[ Y_{n} \left( m \right) \mathbf{1}_{\cC^{1} \cap \cC^{2}} \, \middle| \, T_{\tcorr} \right] \\
\geq 
\frac{\E \left[ \left( Z_{n}^{1} \left( \ell \right) - Z_{n}^{2} \left( \ell \right) \right)^{2} \, \middle| \, T_{\tcorr} \right] \E \left[ \left( Z_{n}^{1} \left( m \right) - Z_{n}^{2} \left( m \right) \right)^{2} \, \middle| \, T_{\tcorr} \right]}{4 \left( 1 + \tcorr^{-1/3} \right)^{4} \frac{\left| T_{\tcorr} \left( \ell \right) \right|}{\tcorr}  \left( 1 - \frac{\left| T_{\tcorr} \left( \ell \right) \right|}{\tcorr} \right)  \frac{\left| T_{\tcorr} \left( m \right) \right|}{\tcorr}  \left( 1 - \frac{\left| T_{\tcorr} \left( m \right) \right|}{\tcorr} \right)} 
- 2 \tcorr^{1/4} \p \left( \left( \cC^{1} \cap \cC^{2} \right)^{c} \, \middle| \, T_{\tcorr} \right).
\end{multline}
Putting together~\eqref{eq:exp_product_UB} and~\eqref{eq:exp_product_LB}, 
we obtain an upper bound on the covariance in~\eqref{eq:var_expanded} that consists of three terms: 
\begin{multline}\label{eq:cov_bound}
\Cov \left( Y_{n} \left( \ell \right) \mathbf{1}_{\cC^{1} \cap \cC^{2}}, Y_{n} \left( m \right) \mathbf{1}_{\cC^{1} \cap \cC^{2}} \, \middle| \, T_{\tcorr} \right) \\
\begin{aligned}
&\leq \frac{\Cov \left( \left( Z_{n}^{1} \left( \ell \right) - Z_{n}^{2} \left( \ell \right) \right)^{2},  \left( Z_{n}^{1} \left( m \right) - Z_{n}^{2} \left( m \right) \right)^{2} \, \middle| \, T_{\tcorr} \right)}{4 \left( 1 - \tcorr^{-1/3} \right)^{4} \frac{\left| T_{\tcorr} \left( \ell \right) \right|}{\tcorr}  \left( 1 - \frac{\left| T_{\tcorr} \left( \ell \right) \right|}{\tcorr} \right)  \frac{\left| T_{\tcorr} \left( m \right) \right|}{\tcorr}  \left( 1 - \frac{\left| T_{\tcorr} \left( m \right) \right|}{\tcorr} \right)} \\
&\quad + \left\{ \left( 1 - \tcorr^{-1/3} \right)^{-4} - \left( 1 + \tcorr^{-1/3} \right)^{-4} \right\} \frac{\E \left[ \left( Z_{n}^{1} \left( \ell \right) - Z_{n}^{2} \left( \ell \right) \right)^{2} \, \middle| \, T_{\tcorr} \right] \E \left[ \left( Z_{n}^{1} \left( m \right) - Z_{n}^{2} \left( m \right) \right)^{2} \, \middle| \, T_{\tcorr} \right]}{4 \frac{\left| T_{\tcorr} \left( \ell \right) \right|}{\tcorr}  \left( 1 - \frac{\left| T_{\tcorr} \left( \ell \right) \right|}{\tcorr} \right)  \frac{\left| T_{\tcorr} \left( m \right) \right|}{\tcorr}  \left( 1 - \frac{\left| T_{\tcorr} \left( m \right) \right|}{\tcorr} \right)}  \\
&\quad + 2 \tcorr^{1/4} \p \left( \left( \cC^{1} \cap \cC^{2} \right)^{c} \, \middle| \, T_{\tcorr} \right).
\end{aligned}
\end{multline} 

We now deal with each term in turn, starting with the last one. 
Since this term does not depend on the indices $\ell$ and $m$, nor on $n$, averaging over $\ell$ and $m$, and taking the limit as $n \to \infty$, this term remains 
$2 \tcorr^{1/4} \p \left( \left( \cC^{1} \cap \cC^{2} \right)^{c} \, \middle| \, T_{\tcorr} \right)$. 
Taking an expectation over $T_{\tcorr}$ (see~\eqref{eq:exch_limsup_exp}), 
this becomes 
$2 \tcorr^{1/4} \p \left( \left( \cC^{1} \cap \cC^{2} \right)^{c} \, \middle| \, \cD \right)$, 
which by Lemma~\ref{lem:D} is at most $C/\tcorr^{11/4}$ for some finite constant $C$.  

Turning to the second term in~\eqref{eq:cov_bound}, first note that 
\[
\left( 1 - \tcorr^{-1/3} \right)^{-4} - \left( 1 + \tcorr^{-1/3} \right)^{-4} 
\leq 9 \tcorr^{-1/3}
\]
for all $\tcorr$ large enough. In the proof of Lemma~\ref{lem:first_moment} we showed that 
\[
\lim_{n \to \infty} 
\frac{\E \left[ \left( Z_{n}^{1} \left( \ell \right) - Z_{n}^{2} \left( \ell \right) \right)^{2} \, \middle| \, T_{\tcorr} \right]}{\frac{\left| T_{\tcorr} \left( \ell \right) \right|}{\tcorr}  \left( 1 - \frac{\left| T_{\tcorr} \left( \ell \right) \right|}{\tcorr} \right)} 
\leq \frac{C}{\tcorr}
\]
for all $\ell \leq K$ and some universal finite constant $C$. 
Putting these bounds together, we obtain that, after taking a limit as $n \to \infty$ (which exists), the second term in~\eqref{eq:cov_bound} is at most $C / \tcorr^{7/3}$ for some universal finite constant $C$. This holds for all indices $\ell$ and $m$, and for all trees $T_{\tcorr}$. Thus after averaging over all these we still have a bound of $C / \tcorr^{7/3}$. 

Finally, we turn to the first term in~\eqref{eq:cov_bound}, which is the main term among the three. By the bounded convergence theorem the limit as $n \to \infty$ of this term exists and is equal to 
\[
\frac{\Cov \left( \left( Z^{1} \left( \ell \right) - Z^{2} \left( \ell \right) \right)^{2},  \left( Z^{1} \left( m \right) - Z^{2} \left( m \right) \right)^{2} \, \middle| \, T_{\tcorr} \right)}{4 \left( 1 - \tcorr^{-1/3} \right)^{4} \frac{\left| T_{\tcorr} \left( \ell \right) \right|}{\tcorr}  \left( 1 - \frac{\left| T_{\tcorr} \left( \ell \right) \right|}{\tcorr} \right)  \frac{\left| T_{\tcorr} \left( m \right) \right|}{\tcorr}  \left( 1 - \frac{\left| T_{\tcorr} \left( m \right) \right|}{\tcorr} \right)}.
\]
To obtain a slightly simpler expression, 
recall that 
$\left| T_{\tcorr} \left( \ell \right) \right| \leq \tcorr / 2$ 
for all $\ell \in \left\{ 1, \ldots, K \right\}$, 
and hence the display above is bounded from above by 
\begin{equation}\label{eq:main_cov_term_to_bound}
\frac{C \tcorr^{2} \Cov \left( \left( Z^{1} \left( \ell \right) - Z^{2} \left( \ell \right) \right)^{2},  \left( Z^{1} \left( m \right) - Z^{2} \left( m \right) \right)^{2} \, \middle| \, T_{\tcorr} \right)}{\left| T_{\tcorr} \left( \ell \right) \right| \left| T_{\tcorr} \left( m \right) \right|} 
\end{equation} 
for some universal finite constant $C$. 
We now distinguish two cases based on whether or not the indices $\ell$ and $m$ are equal. 

First, when $\ell = m$, we have from~\eqref{eq:Zk} and Lemma~\ref{lem:beta} that 
\[
\Var \left( \left( Z^{1} \left( \ell \right) - Z^{2} \left( \ell \right) \right)^{2} \, \middle| \, T_{\tcorr} \right) 
\leq \frac{C \left| T_{\tcorr} \left( \ell \right) \right|^{2} \left( \tcorr - \left| T_{\tcorr} \left( \ell \right) \right| \right)^{2}}{\tcorr^{6}} 
\leq \frac{C \left| T_{\tcorr} \left( \ell \right) \right|^{2}}{\tcorr^{4}} 
\]
for some universal finite constant $C$. Thus the expression in~\eqref{eq:main_cov_term_to_bound} is bounded from above by 
$C' / \tcorr^{2}$ for some universal finite constant $C'$. 
There are $k$ terms in~\eqref{eq:var_expanded} where the indices are equal; furthermore, there is a $1/k^{2}$ factor in front of the sum. 
Putting all this together we see that the contribution from these terms is at most 
$C' / \left( k \tcorr^{2} \right)$, which is the bound in the claim. 

We turn now to the case when $\ell \neq m$. 
By P\'olya urn arguments (see, e.g.,~\cite[Section~4.5]{RB17}) it follows that 
$\left( Z^{1} \left( \ell \right), Z^{1} \left( m \right), 1 - Z^{1} \left( \ell \right) - Z^{1} \left( m \right) \right)$ 
and 
$\left( Z^{2} \left( \ell \right), Z^{2} \left( m \right), 1 - Z^{2} \left( \ell \right) - Z^{2} \left( m \right) \right)$ 
are i.i.d.\ (conditionally given $T_{\tcorr}$) Dirichlet random vectors, 
with parameters given as follows: 
\begin{multline*}
\left( Z \left( \ell \right), Z \left( m \right), 1 - Z \left( \ell \right) - Z \left( m \right) \right) \\
\sim 
\begin{cases} 
\Dir \left( \left| T_{\tcorr} \left( \ell \right) \right|, \left| T_{\tcorr} \left( m \right) \right|, \tcorr - \left| T_{\tcorr} \left( \ell \right) \right| - \left| T_{\tcorr} \left( m \right) \right|  \right) &\text{ for } \UA, \\
\Dir \left( \left| T_{\tcorr} \left( \ell \right) \right| - \frac{1}{2}, \left| T_{\tcorr} \left( m \right) \right| - \frac{1}{2}, \tcorr - \left| T_{\tcorr} \left( \ell \right) \right| - \left| T_{\tcorr} \left( m \right) \right|  \right)  &\text{ for } \PA. 
\end{cases}
\end{multline*}
By Lemma~\ref{lem:dirichlet} we thus have for $\ell \neq m$ that 
\[
 \Cov \left( \left( Z^{1} \left( \ell \right) - Z^{2} \left( \ell \right) \right)^{2},  \left( Z^{1} \left( m \right) - Z^{2} \left( m \right) \right)^{2} \, \middle| \, T_{\tcorr} \right) 
 \leq 
 \frac{C \left| T_{\tcorr} \left( \ell \right) \right|^{2} \left| T_{\tcorr} \left( m \right) \right|^{2}}{\tcorr^{6}}
\]
for some universal finite constant $C$. Thus the expression in~\eqref{eq:main_cov_term_to_bound} is bounded from above by 
$C' \left| T_{\tcorr} \left( \ell \right) \right| \left| T_{\tcorr} \left( m \right) \right| / \tcorr^{4}$ 
for some universal finite constant $C'$. 
Plugging this back into~\eqref{eq:var_expanded} we see that 
the contribution to this expression from terms where $\ell \neq m$ is at most 
\[
\frac{C'}{k^{2} \tcorr^{4}} 
\sum_{\ell = 1}^{k} \sum_{m = 1}^{k} \left| T_{\tcorr} \left( \ell \right) \right| \left| T_{\tcorr} \left( m \right) \right| 
= 
\frac{C'}{k^{2} \tcorr^{4}} 
\left( \sum_{\ell = 1}^{k} \left| T_{\tcorr} \left( \ell \right) \right| \right)^{2} 
\leq 
\frac{C'}{k^{2} \tcorr^{4}}  \tcorr^{2} 
= \frac{C'}{k^{2} \tcorr^{2}},
\]
which concludes the claim. 
\end{proof}




\bibliographystyle{abbrv}
\bibliography{bib}

\begin{thebibliography}{10}

\bibitem{banerjee2018fluctuation}
S.~Banerjee, S.~Bhamidi, and I.~Carmichael.
\newblock Fluctuation bounds for continuous time branching processes and
  nonparametric change point detection in growing networks.
\newblock Preprint available at \url{https://arxiv.org/abs/1808.02439}, 2018.

\bibitem{BA99}
A.-L. Barab{\'a}si and R.~Albert.
\newblock Emergence of scaling in random networks.
\newblock {\em Science}, 286(5439):509--512, 1999.

\bibitem{barak2019}
B.~Barak, C.-N. Chou, Z.~Lei, T.~Schramm, and Y.~Sheng.
\newblock {(Nearly) Efficient Algorithms for the Graph Matching Problem on
  Correlated Random Graphs}.
\newblock In {\em Advances in Neural Information Processing Systems (NeurIPS)},
  pages 9190--9198, 2019.

\bibitem{berg2005shape}
A.~C. Berg, T.~L. Berg, and J.~Malik.
\newblock Shape matching and object recognition using low distortion
  correspondences.
\newblock In {\em Proceedings of the IEEE Conference on Computer Vision and
  Pattern Recognition (CVPR)}, pages 26--33, 2005.

\bibitem{BeBoChSa:14}
N.~Berger, C.~Borgs, J.~T. Chayes, and A.~Saberi.
\newblock Asymptotic behavior and distributional limits of preferential
  attachment graphs.
\newblock {\em The Annals of Probability}, 42(1):1--40, 2014.

\bibitem{bhamidi2018change}
S.~Bhamidi, J.~Jin, and A.~Nobel.
\newblock Change point detection in network models: Preferential attachment and
  long range dependence.
\newblock {\em The Annals of Applied Probability}, 28(1):35--78, 2018.

\bibitem{BRST01}
B.~Bollob{\'a}s, O.~Riordan, J.~Spencer, and G.~Tusn{\'a}dy.
\newblock {The Degree Sequence of a Scale-Free Random Graph Process}.
\newblock {\em Random Structures \& Algorithms}, 18(3):279--290, 2001.

\bibitem{BDL16}
S.~Bubeck, L.~Devroye, and G.~Lugosi.
\newblock Finding {A}dam in random growing trees.
\newblock {\em Random Structures \& Algorithms}, 50(2):158--172, 2017.

\bibitem{BEMR17}
S.~Bubeck, R.~Eldan, E.~Mossel, and M.~Z. R\'{a}cz.
\newblock From trees to seeds: on the inference of the seed from large trees in
  the uniform attachment model.
\newblock {\em Bernoulli}, 23(4A):2887--2916, 2017.

\bibitem{BMR15}
S.~Bubeck, E.~Mossel, and M.~Z. R\'{a}cz.
\newblock On the influence of the seed graph in the preferential attachment
  model.
\newblock {\em IEEE Transactions on Network Science and Engineering},
  2(1):30--39, 2015.

\bibitem{cho2012progressive}
M.~Cho and K.~M. Lee.
\newblock Progressive graph matching: Making a move of graphs via probabilistic
  voting.
\newblock In {\em Proceedings of the IEEE Conference on Computer Vision and
  Pattern Recognition (CVPR)}, pages 398--405. IEEE, 2012.

\bibitem{conte2004thirty}
D.~Conte, P.~Foggia, C.~Sansone, and M.~Vento.
\newblock Thirty years of graph matching in pattern recognition.
\newblock {\em International Journal of Pattern Recognition and Artificial
  Intelligence}, 18(03):265--298, 2004.

\bibitem{cour2007balanced}
T.~Cour, P.~Srinivasan, and J.~Shi.
\newblock Balanced graph matching.
\newblock In {\em Advances in Neural Information Processing Systems}, pages
  313--320, 2007.

\bibitem{cullina2016improved}
D.~Cullina and N.~Kiyavash.
\newblock {Improved achievability and converse bounds for Erd\H{o}s-R\'enyi
  graph matching}.
\newblock In {\em ACM SIGMETRICS}, volume~44, pages 63--72, 2016.

\bibitem{cullina2017exact}
D.~Cullina and N.~Kiyavash.
\newblock {Exact alignment recovery for correlated Erd\H{o}s-R\'enyi graphs}.
\newblock Preprint available at \url{https://arxiv.org/abs/1711.06783}, 2018.

\bibitem{CDKM15}
N.~Curien, T.~Duquesne, I.~Kortchemski, and I.~Manolescu.
\newblock Scaling limits and influence of the seed graph in preferential
  attachment trees.
\newblock {\em Journal de l'{\'E}cole polytechnique --- Math{\'e}matiques},
  2:1--34, 2015.

\bibitem{DR19}
L.~Devroye and T.~Reddad.
\newblock On the discovery of the seed in uniform attachment trees.
\newblock {\em Internet Mathematics}, 2019.

\bibitem{ding2018efficient}
J.~Ding, Z.~Ma, Y.~Wu, and J.~Xu.
\newblock Efficient random graph matching via degree profiles.
\newblock Preprint available at \url{https://arxiv.org/abs/1811.07821}, 2018.

\bibitem{drmota2009random}
M.~Drmota.
\newblock {\em Random trees: an interplay between combinatorics and
  probability}.
\newblock Springer, 2009.

\bibitem{fan2019spectral1}
Z.~Fan, C.~Mao, Y.~Wu, and J.~Xu.
\newblock {Spectral Graph Matching and Regularized Quadratic Relaxations I: The
  Gaussian Model}.
\newblock Preprint available at \url{https://arxiv.org/abs/1907.08880}, 2019.

\bibitem{fan2019spectral2}
Z.~Fan, C.~Mao, Y.~Wu, and J.~Xu.
\newblock {Spectral Graph Matching and Regularized Quadratic Relaxations II:
  Erd\H{o}s-R{\'e}nyi Graphs and Universality}.
\newblock Preprint available at \url{https://arxiv.org/abs/1907.08883}, 2019.

\bibitem{fanti2016irregular}
G.~Fanti, P.~Kairouz, S.~Oh, K.~Ramchandran, and P.~Viswanath.
\newblock {Rumor Source Obfuscation on Irregular Trees}.
\newblock In {\em ACM SIGMETRICS}, volume~44, pages 153--164, 2016.

\bibitem{fanti2017hide}
G.~Fanti, P.~Kairouz, S.~Oh, K.~Ramchandran, and P.~Viswanath.
\newblock {Hiding the Rumor Source}.
\newblock {\em IEEE Transactions on Information Theory}, 63(10):6679--6713,
  2017.

\bibitem{fanti2015spy}
G.~Fanti, P.~Kairouz, S.~Oh, and P.~Viswanath.
\newblock {Spy vs. Spy: Rumor Source Obfuscation}.
\newblock In {\em ACM SIGMETRICS}, volume~43, pages 271--284, 2015.

\bibitem{ganassali2020tree}
L.~Ganassali and L.~Massouli{\'e}.
\newblock From tree matching to sparse graph alignment.
\newblock Preprint available at \url{https://arxiv.org/abs/2002.01258}, 2020.

\bibitem{jog_loh_sublinear_pa}
V.~Jog and P.-L. Loh.
\newblock {Analysis of centrality in sublinear preferential attachment trees
  via the Crump-Mode-Jagers branching process}.
\newblock {\em IEEE Transactions on Network Science and Engineering},
  4(1):1--12, 2016.

\bibitem{persistent_centrality}
V.~Jog and P.-L. Loh.
\newblock Persistence of centrality in random growing trees.
\newblock {\em Random Structures \& Algorithms}, 52(1):136--157, 2018.

\bibitem{kazemi2015growing}
E.~Kazemi, S.~H. Hassani, and M.~Grossglauser.
\newblock Growing a graph matching from a handful of seeds.
\newblock {\em Proceedings of the VLDB Endowment}, 8(10):1010--1021, 2015.

\bibitem{kazemi2015can}
E.~Kazemi, L.~Yartseva, and M.~Grossglauser.
\newblock When can two unlabeled networks be aligned under partial overlap?
\newblock In {\em Proceedings of the 53rd Annual Allerton Conference on
  Communication, Control, and Computing (Allerton)}, pages 33--42. IEEE, 2015.

\bibitem{korula2014efficient}
N.~Korula and S.~Lattanzi.
\newblock An efficient reconciliation algorithm for social networks.
\newblock {\em Proceedings of the VLDB Endowment}, 7(5):377--388, 2014.

\bibitem{livi2013graph}
L.~Livi and A.~Rizzi.
\newblock The graph matching problem.
\newblock {\em Pattern Analysis and Applications}, 16(3):253--283, 2013.

\bibitem{LP19}
G.~Lugosi and A.~S. Pereira.
\newblock Finding the seed of uniform attachment trees.
\newblock {\em Electronic Journal of Probability}, 24(18):1--15, 2019.

\bibitem{lyzinski2014seeded}
V.~Lyzinski, D.~E. Fishkind, and C.~E. Priebe.
\newblock {Seeded graph matching for correlated Erd\H{o}s-R{\'e}nyi graphs}.
\newblock {\em Journal of Machine Learning Research}, 15(1):3513--3540, 2014.

\bibitem{Mah92}
H.~M. Mahmoud.
\newblock Distances in random plane-oriented recursive trees.
\newblock {\em Journal of Computational and Applied Mathematics},
  41(1-2):237--245, 1992.

\bibitem{Mori:05}
T.~F. M{\'o}ri.
\newblock {The Maximum Degree of the Barab{\'a}si--Albert Random Tree}.
\newblock {\em Combinatorics, Probability and Computing}, 14(03):339--348,
  2005.

\bibitem{mossel2019seeded}
E.~Mossel and J.~Xu.
\newblock Seeded graph matching via large neighborhood statistics.
\newblock In {\em Proceedings of the Thirtieth Annual ACM-SIAM Symposium on
  Discrete Algorithms (SODA)}, pages 1005--1014, 2019.

\bibitem{narayanan2009anonymizing}
A.~Narayanan and V.~Shmatikov.
\newblock De-anonymizing social networks.
\newblock In {\em Proceedings of the 30th IEEE Symposium on Security and
  Privacy}, pages 173--187. IEEE Computer Society, 2009.

\bibitem{navlakha2011network}
S.~Navlakha and C.~Kingsford.
\newblock Network archaeology: uncovering ancient networks from present-day
  interactions.
\newblock {\em PLoS Computational Biology}, 7(4):e1001119, 2011.

\bibitem{o2014hardness}
R.~O'Donnell, J.~Wright, C.~Wu, and Y.~Zhou.
\newblock {Hardness of robust graph isomorphism, Lasserre gaps, and asymmetry
  of random graphs}.
\newblock In {\em Proceedings of the Twenty-fifth Annual ACM-SIAM Symposium on
  Discrete Algorithms (SODA)}, pages 1659--1677, 2014.

\bibitem{overgoor2019choosing}
J.~Overgoor, A.~Benson, and J.~Ugander.
\newblock {Choosing to Grow a Graph: Modeling Network Formation as Discrete
  Choice}.
\newblock In {\em Proceedings of the World Wide Web Conference (WWW)}, pages
  1409--1420. ACM, 2019.

\bibitem{pedarsani2011privacy}
P.~Pedarsani and M.~Grossglauser.
\newblock On the privacy of anonymized networks.
\newblock In {\em Proceedings of the 17th ACM SIGKDD International Conference
  on Knowledge Discovery and Data Mining (KDD)}, pages 1235--1243, 2011.

\bibitem{RB17}
M.~Z. R{\'a}cz and S.~Bubeck.
\newblock Basic models and questions in statistical network analysis.
\newblock {\em Statistics Surveys}, 11:1--47, 2017.

\bibitem{redner1998popular}
S.~Redner.
\newblock How popular is your paper? an empirical study of the citation
  distribution.
\newblock {\em The European Physical Journal B-Condensed Matter and Complex
  Systems}, 4(2):131--134, 1998.

\bibitem{shah2010detecting}
D.~Shah and T.~Zaman.
\newblock {Detecting Sources of Computer Viruses in Networks: Theory and
  Experiment}.
\newblock In {\em ACM SIGMETRICS}, volume~38, pages 203--214, 2010.

\bibitem{sz11rumor}
D.~Shah and T.~Zaman.
\newblock {Rumors in a Network: Who's the Culprit?}
\newblock {\em IEEE Transactions on Information Theory}, 57(8):5163--5181,
  2011.

\bibitem{shah2016finding}
D.~Shah and T.~Zaman.
\newblock Finding rumor sources on random trees.
\newblock {\em Operations Research}, 64(3):736--755, 2016.

\bibitem{singh2008global}
R.~Singh, J.~Xu, and B.~Berger.
\newblock Global alignment of multiple protein interaction networks with
  application to functional orthology detection.
\newblock {\em Proceedings of the National Academy of Sciences},
  105(35):12763--12768, 2008.

\bibitem{yartseva2013performance}
L.~Yartseva and M.~Grossglauser.
\newblock On the performance of percolation graph matching.
\newblock In {\em Proceedings of the First ACM Conference on Online Social
  Networks}, pages 119--130, 2013.

\end{thebibliography}




\end{document}